\documentclass[12pt,reqno,english]{amsart}
\usepackage{fullpage}
\usepackage{amsfonts}
\usepackage{amssymb}
\usepackage[T1]{fontenc}
\usepackage[utf8]{inputenc}
\usepackage{babel}
\usepackage{stmaryrd}
\usepackage{xcolor}
\usepackage[shortlabels]{enumitem}

\usepackage{csquotes}
\usepackage{newtx} 
\usepackage{amsmath}
\usepackage{url}
\usepackage{blindtext}
\usepackage{tikz-cd}
\usepackage{mathtools}
\usepackage{graphicx}
\usepackage{amsmath,calligra,mathrsfs}
\usepackage%
 [pdfusetitle,
 bookmarksdepth=subsubsection,
 unicode]%
 {hyperref}

\usepackage{cleveref}

\newtheorem{Theorem}{Theorem}[section]
\newtheorem{Corollary}[Theorem]{Corollary}
\newtheorem{Lemma}[Theorem]{Lemma}
\newtheorem{Proposition}[Theorem]{Proposition}
\newtheorem{Conjecture}[Theorem]{Conjecture}

\theoremstyle{definition}

\newtheorem{Definition}[Theorem]{Definition}

\newtheorem{Remark}[Theorem]{Remark}

\newcommand{\id}{\textnormal{id}}
\newcommand{\aug}{\textnormal{aug}}

\newcommand{\A}{\mathbb{A}}

\newcommand{\ov}{\overline}
\newcommand\restr[2]{{
  \left.\kern-\nulldelimiterspace %
  #1 %
  \vphantom{\big|} 
  \right|_{#2} 
  }}

\newcommand{\Hom}{\textnormal{Hom}}
\newcommand*{\sheafhom}{\textnormal{H}\kern -.5pt om}
\newcommand{\Spec}{\textnormal{Spec}}

\DeclareMathOperator{\sHom}{\mathscr{H}\text{\kern -3pt {\calligra\large om}}\,}
\newcommand{\Z}{\mathbb{Z}}
\newcommand{\Q}{\mathbb{Q}}

\newcommand{\Kal}{\tn{Kal}_{F}}

\newcommand\tn{\textnormal}

\newcommand{\N}{\mathbb{N}}

\begin{document}

\title[GlobalKF]{Moduli of $G$-bundles on rigid gerbes over affine curves}

\author{Peter Dillery}
\address{University of Bonn, Mathematics Institute}
\email{dillery@math.uni-bonn.de}

\maketitle

\begin{abstract}
We geometrize the basic cohomology set $H^{1}(\tn{Kal}_{F}, G)_{\tn{basic}}$ for a global function field $F$. We do this by constructing a v-stack $\tn{Bun}_{G,F}^{e}$ which has localization maps to Fargues' analogous stack $\tn{Bun}_{G,F_{v}}^{e}$ for all places $v$ of $F$ and whose semistable locus is the disjoint union of $\tn{Bun}_{G_{b},F}$ for all $b \in H^{1}(\tn{Kott}_{F} \times_{F} \tn{Kal}_{F},G)_{\tn{basic}}$. We also prove a version of Tate-Nakayama duality for $H^{1}(\tn{Kott}_{F} \times_{F} \tn{Kal}_{F},G)_{\tn{basic}}$, which lets us state a conjectural multiplicity formula for discrete automorphic representations of $G(\mathbb{A}_{F})$ adapted to this new cohomology set. 
\end{abstract}

\tableofcontents

\section{Introduction}

\subsection{Motivation and overview}\hfill\\
The goal of this paper is to realize rigid inner forms of a reductive group $G$ over a function field $F = \mathbb{F}_{q}(C)$ for a projective curve $C$ over $\mathbb{F}_{q}$ using the moduli of $G$-bundles on gerbes over affine open subsets of $C$.  We achieve this by constructing a gerbe $\tn{Kal}_{F,\Sigma} \to C \setminus |\Sigma|$ for each finite set of places $\Sigma$ of $F$, generalizing the construction of \cite{Dillery23b}, which defines a gerbe over the generic fiber of $C$. We then synthesize the new gerbes $\tn{Kal}_{F,\Sigma}$ with the geometric framework of \cite{DLH26} in order to give an approach to the Langlands correspondence over $F$ compatible with the local geometrization program \cite{FS21} of Fargues and Scholze which is adapted to the cohomology set $H^{1}(\tn{Kal}_{F},G)_{\tn{basic}}$ from \cite{Dillery23b}, laying the foundations for studying the multiplicity of discrete automorphic representations using the aforementioned geometrization program. 

We remark that the global Kottwitz set $B(G)$ and its local analogues $B(G_{F_{v}})$ are the cohomological objects that appear naturally in the geometric approaches of \cite{FS21} and \cite{DLH26} in order to organize the inner forms of $G$. However, there is no known conjectural multiplicity formula using the global Langlands correspondence for $L$-packets constructed using an element $b \in B(G)_{\tn{basic}}$, even when $G$ has connected center. One can give such a formula \cite[\S 4.6]{KT} when $G$ satisfies the Hasse principle, but there are serious technical obstructions to a generalization beyond this case. As such, it is reasonable to believe that any attempt at studying these formulas in full generality using the geometrization program requires the set $H^{1}(\tn{Kal}_{F}, G)_{\tn{basic}}$---the key ingredient in \cite{Dillery23b} for giving the general multiplicity formula---to appear in the structure of the moduli stack of $G$-bundles. For example, \cite{HKW} uses a geometric trace formula argument to describe the multiplicities appearing in the cohomology of certain local shtuka spaces (in the Grothendieck group) corresponding to supercuspidal parameters. They use $B(G_{F_{v}})$, relying on the comparison map $B(G_{F_{v}})_{\tn{basic}} \to H^{1}(\tn{Kal}_{v}, G)_{\tn{basic}}$ from \cite{Kal18}; the relation between the analogous cohomology sets in the global function field case is much complicated. We hope that working directly with geometric objects from this paper---rather than using a comparison map---opens the door for applying analogous methods in this global situation for broad classes of groups.

We begin by reviewing the global multiplicity formula, which studies the decomposition of the Hilbert space $L_{\tn{disc}}^{2}(Z(\A_{F})G(F) \backslash G(\A_{F}))$ as a $G(\A_{F})$-representation using the global Langlands correspondence. There is a conjectural (\cite{Art02}) Langlands group $L_{F}$ which has the Weil group $W_{F}$ as a quotient such that admissible, tempered, discrete homomorphisms $\phi \colon L_{F} \to \prescript{L}{}G$ correspond to sets $\Pi_{\phi}$ of isomorphism classes of discrete, tempered automorphic irreducible representations of $G(\A_{F})$. Kottwitz \cite{Kott84} conjectured the existence of a $\mathbb{C}$-valued pairing 
$\langle - , - \rangle \colon \mathcal{S}_{\phi} \times \Pi_{\phi} \to \mathbb{C}$, where $\mathcal{S}_{\phi}$ is a finite group related to the centralizer of $\phi$ in $\widehat{G}$, such that the multiplicity $m(\pi)$ of such a fixed $G(\A_{F})$-representation $\pi$ in $L_{\tn{disc}}^{2}(Z(\A_{F})G(F) \backslash G(\A_{F}))$ is given by
\begin{equation}\label{introMF}
m(\pi) = \sum_{\phi} |\mathcal{S}_{\phi}|^{-1} \sum_{s \in \mathcal{S}_{\phi}} \langle s, \pi \rangle,
\end{equation}
where the outer sum runs over all discrete, tempered $\phi$ such that $\pi \in \Pi_{\phi}$.

Assuming a strong version of the local Langlands correspondence (the so-called \textit{refined} correspondence, see e.g. \cite[Conjecture 6.13]{Taibi22}), this pairing has been constructed in many cases, for example \cite{Kaletha18}, \cite{KT}, and \cite{Dillery23b}. All of these approaches rely on decomposing $\pi = \bigotimes'_{v \in V_{F}} \pi_{v}$ into a restricted tensor product of representations of $G(F_{v})$. Each such representation $\pi_{v}$ comes from the local $L$-packet $\Pi_{\phi_{v}}$, where $\phi_{v}$ is the localization of $\phi$ at $v$, and can be conjecturally associated to an irreducible representation $\rho_{v}$ of a finite group $\mathcal{S}_{\phi_{v}}$\footnote{There are a few different possibilities for the group $\mathcal{S}_{\phi_{v}}$ depending on one's approach, cf. \cite[\S 6.3]{Taibi22} for a summary of the approaches. We use this notation to be deliberately vague.} obtained from the centralizer of $\phi_{v}$. The pairing is then a weighted sum over all $v$ of the trace of $\rho_{v}$ averaged over $\mathcal{S}_{\phi_{v}}$.

There is a fundamental subtlety in using the local correspondences at each $v$ to define this global pairing rooted in the notion of \textit{rigid inner twists}. The local $L$-packet $\Pi_{\phi_{v}}$ does not consist of just representations $\pi_{v}$ of $G(F_{v})$, but rather of a certain enrichment of $\pi_{v}$ called a rigid inner twist obtained by adding the data of a $G^{*}$-torsor $\mathscr{T}_{v}$ on a local gerbe $\tn{Kal}_{v}$ which encodes the structure of $G$ as an inner form of a quasi-split group $G^{*}$. The papers \cite{Kaletha18} and \cite{Dillery23b} construct a global gerbe $\tn{Kal}_{F}$ and then use its $G^{*}$-torsors to create families of local torsors $\{\mathscr{T}_{v}\}_{v \in V_{F}}$ which enrich the family $\{\pi_{v}\}_{v}$ to an element of $\prod'_{v \in V_{F}} \Pi_{\phi_{v}}$ (see \eqref{eq:globalpi} for the precise restriction condition) which, using the recipe of the previous paragraph, gives the desired multiplicity formula. 

There are constructions of candidate versions of the global (\cite{Lafforgue18}) and local (\cite{GL18}, \cite{FS21}) Langlands correspondences over function fields using algebraic and adic geometry, but they have not yet been used to prove the multiplicity formula \eqref{introMF}. We expect, in view of the local-to-global definition of the pairing $\langle -, - \rangle$ given above, that, roughly speaking, doing so involves two key steps:
\begin{enumerate}[(a)] 
\item{Geometrizing the rigid refined local Langlands correspondence of \cite{Dil23a} in either the language of adic geometry or algebraic geometry;}
\item{Re-formulating the geometrization of the global Langlands correspondence so that it is compatible with the local geometrization from the first step.}
\end{enumerate}

Recent progress has been made for both steps. For (a), \cite{Fargues} replaces the v-stack of $G$-bundles on the Fargues--Fontaine curve $X_{\tn{FF}}$ over $F_{v}$ with $\tn{Bun}_{G,F_{v}}^{e}$, the v-stack of $G$-bundles on the gerbe $X_{\tn{FF}} \times_{F_{v}} \tn{Kal}_{v} \to X_{\tn{FF}}$ whose restriction to the band of $\tn{Kal}_{v}$ is valued in the center of $G$. This enables a version \cite[Conjecture 12.8]{Fargues} of the Fargues--Scholze categorical conjecture that includes $\tn{Kal}_{v}$--inner forms. On the other hand, progress towards step (b) is made by \cite{DLH26}, which defines and studies a global analogue of Hartl--Pink's \cite{HP04} quotient of the punctured unit disk---the Fargues--Fontaine curve over a local function field. A key consequence of Li-Huerta's work is to give a conjectural statement \cite[Conjecture F]{DLH26} of global Langlands duality in the language of adic geometry using the categorical local conjecture from \cite{FS21} at all places of $F$.

This paper's goal is to synthesize these two different directions of progress, by proving the following main results:

\begin{Theorem}\label{introthm1}
Let $G$ be any connected reductive group over $F$. Define $B_{e}(G)$ to be all elements of $H^{1}_{\tn{\'{e}t}}(\tn{Kott}_{F} \times_{F} \tn{Kal}_{F}, G)$ whose restriction to the band of $\tn{Kal}_{F}$ is central. Then:
\begin{enumerate}
\item{Every inner form of $G$ can be lifted to an element of $B_{e}(G)_{\tn{basic}}$.}
\item{There is a linear algebraic object $\pi_{1}(G)^{e}_{\Gamma_{F^{s}/F}}$ generalizing $(\pi_{1}(G)[V_{F^{s}}]_{0})_{\Gamma_{F^{s}/F}}$ such that one has a Tate-Nakayama duality isomorphism 
\begin{equation*}
B_{e}(G)_{\tn{basic}} \xrightarrow{\sim} \pi_{1}(G)^{e}_{\Gamma_{F^{s}/F}}
\end{equation*}
which recovers, via pullback along projections, the Tate-Nakayama duality isomorphisms for $B(G)_{\tn{basic}}$ and $H^{1}(\tn{Kal}_{F}, G)_{\tn{basic}}$ and is compatible (via the local duality isomorphism from \cite{Fargues}) with localization at each place $v$.}
\item{By localizing certain elements $\mathscr{T} \in Z^{1}_{\tn{\'{e}t}}(\tn{Kott}_{F} \times_{F} \tn{Kal}_{F}, G)$ at each place $v$, we can, using the $\tn{Kott}^{1/\infty}_{v}$-refined correspondence at all $v$, define a candidate for the multiplicity pairing $\langle - , - \rangle \colon \mathcal{S}_{\phi} \times \Pi_{\phi} \to \mathbb{C}$ discussed above for $G$.} 
\end{enumerate}
\end{Theorem}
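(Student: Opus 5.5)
We will prove the three parts in order; the duality in (2) is the heart of the matter, and (1) and (3) follow from it.

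\emph{Setup.} The first task is to make $\tn{Kott}_{F} \times_{F} \tn{Kal}_{F}$ and its cohomology tractable. Recall that $\tn{Kott}_{F}$ is the Kottwitz gerbe, banded by a pro-torus $\mathbb{D}_{F}$ (Tate-dual to $\Z[V_{F^{s}}]_{0}$). The gerbe $\tn{Kal}_{F}$ is banded by a profinite multiplicative group $P$ of \cite{Dillery23b}. Hence the fiber product is a gerbe banded by $\mathbb{D}_{F} \times P$.

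For a $G$-torsor $\mathscr{T}$ on it, restriction to the band gives a homomorphism $\mathbb{D}_{F} \times P \to G$. The condition defining $B_{e}(G)$ says the $P$-component lands in $Z(G)$. The basic condition says the $\mathbb{D}_{F}$-component is central as well.

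We then reduce to tori. The plan is to prove a Steinberg/Borovoi-type statement: every basic class comes from a maximal torus $T \subset G$, which is elliptic at a suitable place. The argument runs as in \cite{Kal18} and \cite{Dillery23b}: first twist by the band morphism, then apply the vanishing of $H^{1}$ of the simply connected cover locally at the relevant places.

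\emph{Duality for tori.} Define
\begin{equation*}
\pi_{1}(G)^{e}_{\Gamma_{F^{s}/F}} := \varinjlim \bigl(X_{*}(T_{\tn{ad}}) \otimes \Z[V_{F^{s}}]_{0} \text{-type data glued with } \Hom(\mu, Z)\bigr)_{\Gamma},
\end{equation*}
with the limit taken over finite multiplicative $\mu$ and over $\Sigma$. Concretely, this is the fiber product of the target groups of the two known dualities over their common quotient $\pi_{1}(G)_{\Gamma}$-type term.

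For a torus $T$, one uses the exact sequence
\begin{equation*}
1 \to H^{1}(F,T) \to B_{e}(T) \to \Hom(\mathbb{D}_{F} \times P, T)^{\Gamma}.
\end{equation*}
The map $B_{e}(T) \to \pi_{1}(T)^{e}_{\Gamma}$ is then obtained by gluing two known maps along the common term $H^{1}(F,T)$ (Tate--Nakayama): Kottwitz's map for $B(T)$, and the map of \cite{Dillery23b} for $H^{1}(\tn{Kal}_{F},T)$. The five lemma then gives the isomorphism. The hard input here is the surjectivity onto the Hom-term. For this we use the Tate--Nakayama/Poitou cohomology computation of $\check{H}^{2}(\mathbb{D}_{F}\times P)$, via the explicit Čech cocycles of \cite{Dillery23b} computed on $\tn{Kal}_{F,\Sigma}$ over $C\setminus|\Sigma|$. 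We must also check that the class of the product gerbe is the sum of the two pulled-back canonical classes, so that the Künneth-type splitting holds.

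\emph{Passing to reductive $G$.} The isomorphism for $G$ comes from the tori. We show the resulting map $B_{e}(T) \to B_{e}(G)_{\tn{basic}}$ is compatible with $\pi_{1}(T)^{e} \to \pi_{1}(G)^{e}$. We then verify independence of $T$ using the $z$-extension argument and Kottwitz's reduction to groups with simply connected derived subgroup, where the claim follows from the abelianization $G \to D = G/G_{\tn{der}}$. Compatibility with the dualities for $B(G)_{\tn{basic}}$ and $H^{1}(\tn{Kal}_{F},G)_{\tn{basic}}$ is by construction, since these arise by pulling back along the projections. Local compatibility reduces, again through tori, to the compatibility of the localization maps on bands, $\mathbb{D}_{F_{v}} \times P_{v} \to \mathbb{D}_{F}\times P$, together with \cite{Fargues}.

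\emph{Parts (1) and (3).} Part (1) follows from (2): the map $\pi_{1}(G)^{e}_{\Gamma} \to \pi_{1}(G_{\tn{ad}})_{\Gamma}$-type term is surjective onto the class of any inner form, because this already holds for $H^{1}(\tn{Kal}_{F},G)$ in \cite{Dillery23b}. Alternatively, one lifts via $B_{e}(G) \to H^{1}(F,G_{\tn{ad}})$ using the $\tn{Kal}_{F}$ component alone.

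For part (3), we localize $\mathscr{T}$ at each $v$ to get torsors on $\tn{Kott}_{v}\times\tn{Kal}_{v}$. These are paired with the $\tn{Kott}^{1/\infty}_{v}$-refined local correspondence to give characters $\langle -,\pi_{v}\rangle$. We define $\langle s,\pi\rangle = \prod_{v}\langle s_{v}, (\pi_{v},\mathscr{T}_{v})\rangle$. Well-definedness, namely independence of $\mathscr{T}$ and finiteness of the product, follows from the global duality of (2): the sum over $v$ of the local invariants vanishes, via the compatibility with localization.

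The main obstacle is the torus case of (2), specifically controlling $\varprojlim^{1}$ terms and the band-product cohomology needed to identify $B_{e}(T)$ precisely.
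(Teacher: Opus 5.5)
\textbf{The torus step does not work, and it is the heart of (2).} You propose to get $B_e(T)\to\pi_1(T)^e_{\Gamma_{F^s/F}}$ by gluing Kottwitz's map on $B(T)$ with the map of \cite{Dillery23b} on $H^1(\tn{Kal}_F,T)$ along $H^1(F,T)$. The target would be a fiber product of the two known targets. But $B_e(T)$ is not generated by the images of $B(T)$ and $H^1(\tn{Kal}_F,T)$. A class in $B_e(T)$ restricts to a pair $(\phi_1,\phi_2)\in\Hom(\mathbb{D},T)^{\Gamma_{F^s/F}}\times\Hom(P,T)^{\Gamma_{F^s/F}}$ subject only to $\phi_{1*}[\tn{Kott}_F]+\phi_{2*}\xi=0$ in $H^2(F,T)$. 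The two pieces only produce pairs with $\phi_{1*}[\tn{Kott}_F]=0$ and $\phi_{2*}\xi=0$ separately.

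Concretely, take $T=\mathbb{G}_m$. Then $B(\mathbb{G}_m)=\Z[V_F]_0$, and $H^1(\tn{Kal}_F,\mathbb{G}_m)=0$, since by \cite[Theorem 4.4]{Dillery23b} it is the torsion subgroup of $\Q[V_F]_0$. Yet $B_e(\mathbb{G}_m)\cong\Q[V_F]_0$. So your gluing defines the map only on a subgroup of infinite index, and there is no map of exact sequences for the five lemma to act on. Your proposed target is also the wrong group: the correct one, $\frac{X_*(T)_{\Q}[V_{F^s},\dot V]_0}{I_{\Gamma_{F^s/F}}\cdot X_*(T)[V_{F^s}]_0}$, needs rational coefficients at $\dot V$.

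Surjectivity onto $\Hom_F(P,T)$ is the easy part; it follows from $H^2(\tn{Kott}_F,T)=0$. What is missing is the extension class of $1\to B(T)\to B_e(T)\to\Hom_F(P,T)\to 1$, which the two known dualities do not encode. The paper obtains it as follows:
\begin{enumerate}
\item The explicit Tannakian description of $\tn{Kott}^{1/\infty,E}_F$ (Proposition \ref{prop:tannakian}) identifies $B_e(\mathbb{G}_m)$ with $X^*(\mathbb{T}^{\tn{mid}})^{\Gamma_{F^s/F}}\cong\Q[V_F]_0$ via \cite{KT}, compatibly with that sequence.
\item The case $\tn{Res}_{E/F}\mathbb{G}_m$ is handled by pulling back along $N_{E/F}^*$.
\item General $T$ is handled by an explicit formula and resolution by induced tori.
\end{enumerate}
The logic runs opposite to yours: the $\tn{Kal}_F$ duality is an output (Corollary \ref{TNcompat}), not an input.

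\textbf{The reductive step has a related defect.} Checking independence of $T$ through $z$-extensions and the abelianization $G\to G/G_{\tn{der}}$ loses information. The group $\langle\Phi\rangle^*/\langle\check\Phi\rangle\cong\pi_1(G_{\tn{ad}})\oplus X_*(Z(G))_{\Q}$ has a torsion part that $G/G_{\tn{der}}$ cannot see. For $G=\mathrm{SL}_2$ the target is $(\Z/2\Z)[\dot V]_0$, while $G/G_{\tn{der}}$ is trivial. The paper instead combines Lemma \ref{toruslift}, the fact that fibers of $B_e(T)\to B_e(G)$ are $W$-orbits, and the triviality of the $W$-action on $\langle\Phi\rangle^*/\langle\check\Phi\rangle$. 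Injectivity then comes from Fargues' kernel argument.

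\textbf{Parts (1) and (3).} Your second route to (1), lifting through $H^1(\tn{Kal}_F,G)_{\tn{basic}}$, agrees with the paper. For (3), the paper needs no new global sum-of-invariants argument. The torsors used come from $G^*_{\tn{sc}}$, hence from $\tn{Kal}_F$ by \eqref{sceq}, so everything reduces to \cite{Dillery23b} via Corollary \ref{TNcompat}.
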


The above result gives a multiplicity formula which can be stated for an arbitrary connected reductive group $G$ over $F$. The reason to work with the gerbe $\tn{Kott}_{F} \times_{F} \tn{Kal}_{F}$ rather than $\tn{Kal}_{F}$ is that we can geometrize its basic $G$-cohomology, in the following sense:

\begin{Theorem}\label{introthm2}
For $G$ a split connected reductive group over a global function field $F = \mathbb{F}_{q}(C)$, there is a small v-stack $\tn{Bun}_{G,F}^{e}$ over $\ov{\mathbb{F}_{q}}$ such that:
\begin{enumerate}
\item{There is a canonical bijection $\tn{Bun}_{G,F}^{e}(\tn{Spd}(\overline{\mathbb{F}_{q}})) \xrightarrow{\sim} B_{e}(G)$.}
\item{The semistable locus of $\tn{Bun}_{G,F}^{e}$ equals $\bigsqcup_{b \in B_{e}(G)_{\tn{basic}}} \tn{Bun}_{G,F}^{e,b}$.}
\item{There is a canonical localization map $\tn{Bun}_{G,F}^{e} \to \prod'_{v \in V_{F}} \tn{Bun}_{G,F_{v}}^{e}$, where $\tn{Bun}_{G,F_{v}}^{e}$ is as in \cite{Fargues}.}
\end{enumerate}
\end{Theorem}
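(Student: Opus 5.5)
The plan is to build $\tn{Bun}_{G,F}^{e}$ as a limit over finite sets of places $\Sigma$ of stacks attached to the gerbes $\tn{Kal}_{F,\Sigma} \to C \setminus |\Sigma|$, combined with the global Hartl--Pink/Fargues--Fontaine-type object of \cite{DLH26}.

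\textbf{Step 1: finite-level stacks.} For each $\Sigma$, let $X_{\Sigma}$ be the relative curve from \cite{DLH26} attached to $C \setminus |\Sigma|$; it comes with a functor on perfectoid spaces $S$ over $\ov{\mathbb{F}_{q}}$. Let $\mathcal{X}_{\Sigma,S} = X_{\Sigma,S} \times_{C\setminus|\Sigma|} \tn{Kal}_{F,\Sigma}$. Define $\tn{Bun}^{e}_{G,\Sigma}(S)$ to be the groupoid of $G$-bundles on $\mathcal{X}_{\Sigma,S}$ whose induced action of the band of $\tn{Kal}_{F,\Sigma}$ factors through $Z(G)$; this is the same condition Fargues imposes for $\tn{Bun}^{e}_{G,F_{v}}$. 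Set $\tn{Bun}_{G,F}^{e} = \varprojlim_{\Sigma} \tn{Bun}^{e}_{G,\Sigma}$, where the transition maps are pullback along the inclusions $C\setminus|\Sigma'| \subset C\setminus|\Sigma|$ and the projections of gerbes. Since $G$ is split, I would treat a gerbe-twisted bundle as a $G$-bundle on the underlying space together with a compatible central twist. Smallness then reduces to Li-Huerta's result for $\tn{Bun}_{G}$ plus v-descent for the band, which is profinite and of multiplicative type.

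\textbf{Step 2: part (1).} Over a geometric point, Li-Huerta identifies isomorphism classes of $G$-bundles on the global curve with the global $B(G)$, realized as $H^{1}(\tn{Kott}_{F},G)$. Adding the gerbe gives $H^{1}(\tn{Kott}_{F}\times_{F}\tn{Kal}_{F}, G)$ restricted to central band classes, which is $B_{e}(G)$. I would check this at each finite level $\Sigma$ using the Hochschild--Serre-type exact sequence for the gerbe. That sequence relates band data, $\Hom(u_{\Sigma}, Z(G))$, to $B(G)$. I would then pass to the limit, using vanishing of $\varprojlim^{1}$ via finiteness of the relevant cohomology of the finite band quotients.

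\textbf{Step 3: part (3).} For each $v \notin \Sigma$, the local object of \cite{DLH26} gives a map from the punctured disk over $F_{v}$ to $X_{\Sigma}$. Pulling back $\tn{Kal}_{F,\Sigma}$ along $\Spec F_{v}$ yields a map $\tn{Kal}_{v} \to \tn{Kal}_{F,\Sigma}$, which comes from the construction of $\tn{Kal}_{F,\Sigma}$ as in \cite{Dillery23b}. Together these give maps $\tn{Bun}^{e}_{G,\Sigma} \to \tn{Bun}^{e}_{G,F_{v}}$. The restricted-product condition, trivial band action at almost all $v$, should follow because the global band maps to local bands through finitely many places at each finite level.

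\textbf{Step 4: part (2), the main obstacle.} I would define semistability by pulling back along the forgetful map to $\tn{Bun}_{G,F}$, using Li-Huerta's Harder--Narasimhan/Newton stratification; twisting by a central band action does not change slopes. The semistable locus of $\tn{Bun}_{G,F}$ is a disjoint union of open-closed pieces indexed by $B(G)_{\tn{basic}}$. The basic classes of $B_{e}(G)$ fibre over these, with fibres torsors under $\Hom$ of the band into $Z(G)$ modulo coboundaries. What must be shown is that each fibre stratum is open and closed and isomorphic to $\tn{Bun}_{G_{b}}$ with trivial gerbe data, i.e.\ $[\ast/\underline{G_{b}(F)}]$-like. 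For this I would twist by $b$ to reduce to $b$ trivial. I would then show that a semistable object of degree zero has its band class locally constant on $S$, since it is a discrete invariant valued in a profinite set of homomorphisms. The hardest part is the continuity of this band invariant in families, and I would attack it by working at finite level $\Sigma$, where the band is a finite multiplicative group.
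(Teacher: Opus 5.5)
Your proposal has the right ingredients: the gerbes $\tn{Kal}_{F,\Sigma}$ over $U=C\setminus|\Sigma|$, Li-Huerta's $U^{\tn{an}}_{S}$, the central-band condition, twisting by a basic class, and local constancy of the band invariant at finite level. However, Steps 2 and 4 are organized around a map that does not exist. There is no forgetful morphism $\tn{Bun}^{e}_{G,F}\to\tn{Bun}_{G,F}$, and no map $B_{e}(G)_{\tn{basic}}\to B(G)_{\tn{basic}}$.

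\begin{itemize}
\item The gerbe $\mathfrak{U}_{S}\to U^{\tn{an}}_{S}$ has no canonical section. Pulling a $\tn{Kal}_{F}$-basic torsor back along two different sections changes it by $\lambda_{*}$ of a $P_{\dot{\Sigma}}$-torsor. Splitness of $G$ does not help, since the obstruction lies in the gerbe, not in $G$.
\item The actual extension is $1\to B(G)\to B_{e}(G)\to\Hom_{F}(P,Z(G))\to 1$. So $B_{e}(G)$ fibres over the band invariant $\lambda$, not over $B(G)$.
\item After twisting by a basic lift $b$ of $\lambda$ (Lemma~\ref{lem:qsplit}), the fibre over $\lambda$ is $B(G_{b})$. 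Here $G_{b}$ is an inner form that is in general not quasi-split. Geometrically, objects with $\lambda\neq 0$ are $G_{b}$-bundles, not $G$-bundles.
\end{itemize}

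Consequences for your steps:
\begin{itemize}
\item In Step 4 you cannot define semistability by pulling back Li-Huerta's Harder--Narasimhan stratification of $\tn{Bun}_{G,F}$.
\item Your ``fibres over $B(G)_{\tn{basic}}$ which are torsors under $\Hom(P,Z(G))$'' have the extension backwards.
\item Step 2 has the same defect. Two exact sequences of pointed sets give no bijection without an actual comparison map and a twisting argument over each $\lambda$. The fibre over $\lambda\neq 0$ needs Li-Huerta's results for $G_{b}$, not for $G$.
\end{itemize}

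The missing idea is to split off $\lambda$ first, on all of $\tn{Bun}^{e}_{G,U}$ rather than only on a semistable locus.
\begin{itemize}
\item Reduce to a finite band $A$: every torsor and isomorphism comes from a finite level (Lemma~\ref{keyunionprop}).
\item The inertial action gives $\Lambda\colon\tn{Bun}^{e}_{G,U}\to\underline{\Hom}_{U}(A,Z(G))$. The paper shows the locus $\lambda=0$ is closed, by connectedness of $U^{\tn{an}}_{\bar{s}}$ plus diagonalizability.
\item It is also open: after a finite \'{e}tale cover the target is a product of functors $S\mapsto\underline{\Z/n\Z}(U^{\tn{an}}_{S})$, and $\pi_{0}(|U^{\tn{an}}_{S}|)=\pi_{0}(|S|)$. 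This is Proposition~\ref{Nclopenprop}, which is the continuity statement you flagged as hardest.
\item Lift $\lambda$ to a basic $b$ over $U$ using Proposition~\ref{Sigmavan}, and twist to get $\tn{Bun}^{e,\lambda}_{G,U}\simeq\tn{Bun}_{G_{b},U}$ (Corollary~\ref{prop:newtondecomp}).
\end{itemize}
Part (2) then follows from \cite[Theorem 4.11]{DLH26} applied to each $G_{b}$. There, semistability is defined by localizing to Fargues' $\tn{Bun}^{e,\tn{ss}}_{G,F_{v}}$ for $v\in\Sigma$ and to $\tn{Bun}_{G,O_{v}}$ otherwise. Part (1) comes from Corollary~\ref{cor:spd} together with Proposition~\ref{AlgBij}. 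The comparison map there is pullback along $(U/\tn{Frob}^{\Z})\times_{U}\tn{Kal}_{F,\Sigma}\to\tn{Kott}_{F,\Sigma}\times_{U}\tn{Kal}_{F,\Sigma}$.

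Two smaller corrections:
\begin{itemize}
\item $\tn{Bun}^{e}_{G,F}$ must be the colimit $\varinjlim_{U}$ over shrinking $U$. Your transition maps point toward larger $\Sigma$, so $\varprojlim_{\Sigma}$ makes no sense, and no $\varprojlim^{1}$ argument is needed for $H^{1}$.
\item The restricted-product condition in (3) is integrality: the localization lands in $\tn{Bun}_{G,O_{v}}$ for $v\notin\Sigma$ because the object lives over $U$. Triviality of the local band action is not enough.
\end{itemize}
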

The definition of $\tn{Bun}_{G,F}^{e}$ is given as the direct limit of small Artin v-stacks $\tn{Bun}_{G,U}^{e}$ over $\overline{\mathbb{F}_{q}}$ for each $U \subseteq C$ affine open. We can give the statement of the theorem for more general $G$ than split groups, but we assume split in the introduction for clarity of exposition. We remark that some of the parts of Theorem \ref{introthm2} are proved in \cite[Appendix C]{DLH26}, but are consequences of combining the work done here with the main body of \cite{DLH26}.

The proof of Theorem \ref{introthm1} relies on developing the theory of the cohomology set $B_{e}(G)$ defined above. The most difficult part is the Tate-Nakayama duality result, which is obtained in part by giving (Proposition \ref{prop:tannakian}) an explicit Tannakian description of the gerbe $\tn{Kott}_{F} \times_{F} \tn{Kal}_{F}$. We remark that, by taking the inclusion map $H^{1}(\tn{Kal}_{F}, G)_{\tn{basic}} \to B_{e}(G)_{\tn{basic}}$, this result gives a substantially shorter proof of the analogous duality result for $\tn{Kal}_{F}$, which is \cite[Theorem 4.11]{Dillery23b}. 

Theorem \ref{introthm2} is proved in two steps: First, since the v-stack $\tn{Bun}_{G,F}$ from \cite{DLH26} is defined as the direct limit of the stacks $\tn{Bun}_{G,U}$ for affine open subsets $U = C \setminus |\Sigma|$ of the curve $C$, we construct an analogue of the gerbe $\tn{Kal}_{F}$ which is defined over $U$, denoted by $\tn{Kal}_{F,\Sigma}$. The existence of such a gerbe is nontrivial, and relies on combining a projective limit of global classes on $U$ with a localization argument involving complexes of tori over $U$. We then use the gerbe $\tn{Kal}_{F,\Sigma}$ to define the stack $\tn{Bun}_{G,U}^{e}$ by sending a perfectoid space $S$ over $\mathbb{F}_{q}$ to the set of $G$-torsors on $U_{S}^{\tn{an}} \times_{\tn{Spa}(O_{F,\Sigma})} \tn{Kal}_{F,\Sigma}^{\tn{ad}}$ equipped with an isomorphism to their $(\tn{Frob}_{S} \times \text{id})$-twist (and whose restriction to the band of $\tn{Kal}_{F,\Sigma}^{\tn{ad}}$ is central), where $U_{S}^{\tn{an}}$ is as in \cite[Proposition 1.4]{DLH26}. The key result for understanding the geometry of this new stack is Proposition \ref{Nclopenprop}, which shows (Corollary \ref{prop:newtondecomp}) that $\tn{Bun}_{G,F}^{e}$ is isomorphic to a disjoint union of stacks $\tn{Bun}_{G',F}$ for varying inner forms $G'$ of $G$. 

It is natural to ask why one does not try to carry out an analogue of Theorem \ref{introthm2} for the gerbe $\tn{Kal}_{F}$ by itself, rather than appending $\tn{Kott}_{F}$, since all that one needs for a general statement of the multiplicity formula is $\tn{Kal}_{F}$. While this is possible in theory, we do not know how to do this, even in the local case at a place $v$. For example, the work \cite{DK25} shows that isomorphism classes of rank-$n$ vector bundles on the canonical $\mathbb{G}_{m}$-torsor over the Fargues--Fontaine curve corresponding to $\mathcal{O}(-1)$ are in bijection with $H^{1}_{\tn{\'{e}t}}(\tn{Kal}_{v}^{(F_{v})},\mathrm{GL}_{n})$, where $\tn{Kal}_{v}^{(F_{v})}$ is the gerbe\footnote{This gerbe can be thought of as a ``level-$F_{v}$'' approximation of $\tn{Kal}_{v}$.} corresponding to the canonical class $-1 \in \widehat{\Z} = H^{2}_{\tn{fppf}}(F_{v}, \varprojlim_{n} \mu_{n})$. The geometry is thus substantially different---if we write the schematic Fargues--Fontaine curve as $\tn{Proj}(A)$ for Fontaine's ring $A$, then the space on which these vector bundles are defined is $\tn{Spec}(A) \setminus \{I_{+}\}$, where $I_{+}$ denotes the ideal of all positive-degree elements. We also do not yet know how to obtain an analogue of this bijection for $H^{1}_{\tn{\'{e}t}}(\tn{Kal}_{F},\mathrm{GL}_{n})$, the cohomology of the actual Kaletha gerbe.

We will initiate the study of an analogue of shtukas adapted to the stack $\tn{Bun}_{G,F}^{e}$ in future work. It would also be interesting to see if one can define the ``restricted ramification'' gerbe $\tn{Kal}_{F,\Sigma}$ in the number field setting.

\subsection{Structure of the paper}\hfill\\
In \S \ref{sec:2} we begin by defining the gerbes $\tn{Kal}_{F,\Sigma}$ by gluing together two gerbes defined over an open cover of $U = C \setminus |\Sigma|$ and then using the local canonical classes from \cite{Dil23a} at each $v \in \Sigma$ to uniquely characterize the resulting class in the projective limit (over all $\Sigma$-unramified $E/F$). This last step requires developing a basic theory of complexes of tori over the rings of $\Sigma$-integers. We conclude this section by defining the gerbe $\tn{Kott}_{F}^{1/\infty}$ and proving some vanishing results about its cohomology in degree two. We then study the set of $G$-torsors on these gerbes in \S \ref{sec:H1} by constructing an explicit Tannakian category over $F$ corresponding to $\tn{Kott}_{F}^{1/\infty}$ and then, using this Tannakian description, we prove an analogue of the Tate-Nakayama isomorphism for $H^{1}_{\tn{\'{e}t}}(\tn{Kott}_{F}^{1/\infty},T)$ with $T$ a torus. The torus case allows us to construct an analogue of the Kottwitz map for $B_{e}(G)$ that restricts to an isomorphism on $B_{e}(G)_{\tn{basic}}$, giving a linear-algebraic description of this cohomology set when $G$ is connected and reductive.

We use the gerbes $\tn{Kal}_{F,\Sigma}$ in \S \ref{sec:geom} to define the v-stacks $\tn{Bun}_{G,U}^{e}$ and $\tn{Bun}_{G,F}^{e}$, construct a localization map $\tn{Bun}_{G,F}^{e} \to \prod_{v}' \tn{Bun}^{e}_{G,F_{v}}$, and prove that $\tn{Bun}_{G,U}^{e}$ fits into the Cartesian square where all the local contributions are integral away from $\Sigma$. We then prove that $\tn{Bun}_{G,U}^{e}$ is isomorphic to a disjoint union of Li-Huerta's v-stacks $\tn{Bun}_{G',U}$ for varying $G'$; this decomposition lets us deduce the descriptions of $\tn{Bun}_{G,F}^{e}(\tn{Spd}(\ov{\mathbb{F}_{q}}))$ and $\tn{Bun}_{G,F}^{e,\tn{ss}}$ from Theorem \ref{introthm2}. The last part, \S \ref{sec:mult}, gives a conjectural statement of the $\tn{Kott}^{1/\infty}_{v}$-refined correspondence at each place $v$ and then localizes an element $B_{e}(G)_{\tn{basic}}$ at all places to state the $\tn{Kott}^{1/\infty}_{F}$-adapted conjectural global multiplicity formula using the local correspondences and the global duality map for $B_{e}(G)_{\tn{basic}}$. 

\subsection{Notation and conventions}\hfill\\
We denote by $C$ a smooth, projective curve over $\mathbb{F}_{q}$ corresponding to a global function field $F$. We denote by $F^{s}$ a separable closure of $F$ inside a fixed algebraic closure $\ov{F}$. For $E/F$ a Galois extension, we denote by $\Gamma_{E/F}$ the Galois group of $E$ over $F$. For a ring homomorphism $R \to S$ and commutative $R$-group scheme $A$, one has the \v{C}ech differential maps $A(S^{\bigotimes_{R} i+1}) \to A(S^{\bigotimes_{R} i+2})$, and we denote by $\check{H}^{i}(S/R, A)$ the $i$th cohomology group of the corresponding complex (cf. \cite[Definition 2.11]{Dil23a}). When $A$ is a multiplicative group scheme of finite type over $R$ and $\Spec(S) \to \Spec(R)$ is a pro-fppf cover satisfying the hypotheses of \cite[Proposition 2.6]{Dillery23b} (as will always be the case in this paper) there are canonical isomorphisms $\check{H}^{i}(S/R, A) \xrightarrow{\sim} H^{i}_{\text{fppf}}(R, A)$ for all $i$. When $R=F$ is a field and $G$ is a smooth group scheme over $F$, we write $H^{1}(F,G)$ to denote $H^{1}_{\tn{\'{e}t}}(F, G)$, which agrees with analogous cohomology set for the fppf and fpqc topologies. When we say a ``gerbe over $F$'', we always mean a gerbe $\mathcal{E} \to \tn{Spec}(F)_{\tn{fppf}}$, where for an object $X$ we denote by $X_{?}$ the $?$-site over $X$ for $? \in \{\tn{fpqc}, \tn{fppf}, \tn{v}\}$.

We denote by $V_{F}$ the set of all places of $F$. For $\Sigma \subseteq V_{F}$ and $E/F$ an algebraic field extension, we denote by $\Sigma_{E}$ all places of $E$ above $\Sigma$. For $\Sigma'$ a set of places of $E$, we denote by $\Sigma'_{F}$ all places of $F$ below $\Sigma'$. The notation $\mathbb{A}_{F}$ denotes the ring of ad\`{e}les of $F$, and $\mathbb{A}_{F,\Sigma}$ denotes the subring of elements which are integral away from $\Sigma$. Moreover, $O_{F,\Sigma}$ denotes the ring of $\Sigma$-integers, and for $E/F$ we sometimes write $O_{E,\Sigma}$ rather than $O_{E,\Sigma_{E}}$. Finally, for a topological space $X$ and a topological abelian group $A$, $\mathcal{C}(X,A)$ denotes the group of continuous functions from $X$ to $A$.

\subsection{Acknowledgements}\hfill\\
We thank Siyan Daniel Li-Huerta for many helpful discussions, including one which led to the definition of $\tn{Kal}_{F,\Sigma}$, and for sending an early draft of \cite{DLH26}. We also thank Arnaud Eteve for helping to clarify several geometric details. In addition, we thank Alexander Bertoloni Meli for useful conversations about \cite{Fargues} and the associated refined local correspondence.

\section{Gerbes and their cohomology}\label{sec:2} 
This section recalls the definition of some gerbes over $F$ from the literature and then uses them to construct new global gerbes.
\subsection{A review of global gerbes}

\subsubsection{The Kottwitz gerbe}
Fix $E/F$ a finite Galois extension and $\Sigma^{(E)}$ a set of places  of $F$ with preimage $\Sigma^{(E)}_{E}$ in $V_{E}$ such that $E$ is unramified outside of $\Sigma$ and $\tn{Cl}(O_{E,\Sigma^{(E)}_{E}}) = \{1\}$; denote by $T_{2,E,\Sigma^{(E)}}$ the torus with character group $\mathbb{Z}[\Sigma^{(E)}_{E}]$, by $T_{3,E,\Sigma^{(E)}}$ the pro-torus with character group $\mathbb{Z}[\Sigma^{(E)}_{E}]_{0}$. Kottwitz constructed in \cite{Kot14} a canonical class $[\alpha^{\tn{Kott}}_{E/F,\Sigma^{(E)}}] \in H^{2}(\Gamma_{E/F}, T_{3,E,\Sigma^{(E)}}(E))$ for each $E$, along with transition maps $T_{3,K,\Sigma^{(K)}} \to T_{3,E,\Sigma^{(E)}}$.

Kottwitz then shows that we obtain a canonical class
\begin{equation*}
\varprojlim_{\Sigma^{(E)},E} [\alpha^{\tn{Kott}}_{E/F,\Sigma^{(E)}}] \in \varprojlim_{\Sigma^{(E)},E} H^{2}(\Gamma_{E/F}, T_{3,E,\Sigma^{(E)}}(E)),
\end{equation*}
where the limit is over all finite Galois $E/F$ and an exhaustive nested system of places $\Sigma^{(E)}$; we choose a group extension representing this class, denoted by $\tn{Kott}_{F}$, and call it a \textit{global Kottwitz gerbe}, which is an extension of $\Gamma_{\ov{F}/F}$ by $\mathbb{D}(\ov{F})$, where $\mathbb{D} := \varprojlim T_{3,E,\Sigma^{(E)}}$. 

\subsubsection{The Kaletha gerbe}\label{Kalgerbedef}
There is a parallel construction of a different gerbe: Fix an exhaustive system of finite Galois extensions $\{E_{k}/F\}_{k \in \mathbb{N}}$ along with a cofinal multiplicative system of natural numbers $\{n_{k}\}$, for each $E_{k}$ a set of places $\Sigma^{(E_{k})}$ of $V_{F}$ containing all places which ramify in $E_{k}$ and such that $\tn{Cl}(O_{E_{k},V^{(E_{k})}})$ is trivial, and a set of lifts $\dot{\Sigma}^{(E_{k})} \subseteq V_{E_{k}}$ of $\Sigma^{(E_{k})}$ such that, for $\ell \geq k$, we have $\dot{\Sigma}^{(E_{k})} \subseteq (\dot{S}^{(E_{\ell})})_{E_{k}}$ and $\dot{v}_{l}|_{E_{k}} = \dot{v}_{k}$---this implies that there is a unique $\dot{v} \in V_{F^{s}}$ lifting each $\dot{v}_{k}$, and denote the set of all such $\dot{v}$ by $\dot{V}$. We also insist that for any $w \in V_{F^{s}}$ there is some $\dot{v} \in \dot{V}$ with $(\Gamma_{F^{s}/F})_{w} = (\Gamma_{F^{s}/F})_{\dot{v}}$.

One then defines $\dot{P}_{k}$ to be the multiplicative $O_{E,\Sigma^{(E_{k})}}$-group scheme with character module given as follows: First denote by $(\frac{1}{n_{k}}\mathbb{Z}/\mathbb{Z})[\Gamma_{E_{k}/F} \times \dot{\Sigma}^{(E_{k})}]_{0,0}$ the subgroup killed by both augmentation maps, and then take $\dot{M}_{k}$ the submodule of all elements such that the coefficient of $[(\sigma, w)]$ is zero unless $\sigma^{-1}(w) \in \dot{\Sigma}^{(E_{k})}$. One can define a canonical class $\xi_{E_{k},\dot{\Sigma}^{(E_{k})},n_{k}} \in \check{H}^{2}(O_{E_{k},\Sigma^{(E_{k})}}^{\tn{perf}}/O_{F,\Sigma^{(E_{k})}}, \dot{P}_{k})$ which are compatible across a projective system of transition maps $\dot{P}_{\ell} \to \dot{P}_{k}$, and one defines $P := \varprojlim \dot{P}_{k}$. Finally, one defines a canonical class $\xi$ in $H_{\tn{fppf}}^{2}(F, P)$ whose image in $\varprojlim_{k} H_{\tn{fppf}}^{2}(F, \dot{P}_{k})$ is the corresponding projective system of canonical classes (to pin down uniqueness, we need an additional condition which we omit for the moment---see Theorem \ref{canonclassthm} for details). Denote a representative of this gerbe by $\tn{Kal}_{F}$---this gerbe is, up to isomorphism, independent of any choices and the set of isomorphism classes of its \'{e}tale $G$-torsors for a finite-type group scheme $G$ is unique up to canonical isomorphism (by \cite[Proposition 4.8]{Dillery23b}).

One can also define (cf. \cite[\S 3.1]{Dil23a}), for a place $v \in V_{F}$, a local analogue of this gerbe, denoted by $\tn{Kal}_{v}$, which is banded by $u_{v} := \varprojlim_{k} \tn{Res}_{E/F_{v}}(\mu_{n_{k}})/\mu_{n_{k}}$, where as above $(E_{k},n_{k})$ is a cofinal system of finite Galois extensions and natural numbers. There is a non-canonical localization map $\tn{Kal}_{v} \xrightarrow{\tn{loc}_{v}} (\tn{Kal}_{F})_{F_{v}}$ which induces a canonical map on cohomology $H_{\tn{\'{e}t}}^{1}(\tn{Kal}_{F}, G) \to H_{\tn{\'{e}t}}^{1}(\tn{Kal}_{v}, G)$ for a finite-type $F$-group scheme $G$, also denoted by $\tn{loc}_{v}$, see \cite[\S 4.1]{Dillery23b} for details.

\subsection{Unramified complexes of tori}\label{sec:unram} \hfill\\
We need to generalize some constructions and duality results from \cite[\S A.3, A.4]{Dillery23b} to allow for restricted ramification---the reader interested only in the main results and ideas of this paper may want to skip this subsection on a first reading. Fix a finite set of places $\Sigma$ of $F$, $T \xrightarrow{f} U$ a complex of tori over $O_{F,\Sigma}$ split over $O_{\Sigma} := \varinjlim_{E \subseteq F_{\Sigma}} O_{E,\Sigma_{E}}$, where $F_{\Sigma}/F$ is the maximal Galois extension of $F$ unramified outside of $\Sigma$, and also for every place $v$ of $F$ an embedding $\ov{F} \to \ov{F_{v}}$, with corresponding place $\dot{v} \in V_{\ov{F}}$, determining embeddings $F_{\Sigma} \to (F_{\Sigma})_{\dot{v}}$. By abuse of notation, we will denote $\dot{v}|_{F_{\Sigma}}$ also by $\dot{v}$. Set $\A_{\Sigma} = \A_{F_{\Sigma},(\Sigma)_{F_{\Sigma}}}^{\tn{perf}}$, and define $H^{i}(\A_{\Sigma}/\A_{F,\Sigma}, T \xrightarrow{f} U)$ to be the hypercohomology of the double complex associated to the \v{C}ech cover $\A_{\Sigma}/\A_{F,\Sigma}$.

Define $\bar{H}^{i}(\A_{\Sigma}/\A_{F,\Sigma}, T \xrightarrow{f} U)$ to be the hypercohomology of the following double complex
\[ 
\begin{tikzcd}
\frac{T(\A_{\Sigma})}{T(O_{\Sigma}^{\tn{perf}})} \arrow{r} \arrow{d} & \frac{T(\A_{\Sigma} \otimes_{\A_{F,\Sigma}} \A_{\Sigma})}{T(O_{\Sigma}^{\tn{perf}} \otimes_{O_{F,\Sigma}} O_{\Sigma}^{\tn{perf}})} \arrow{r} \arrow{d} & \frac{T(\A_{\Sigma} \otimes_{\A_{F,\Sigma}} \A_{\Sigma} \otimes_{\A_{F,\Sigma}} \A_{\Sigma})}{T(O_{\Sigma}^{\tn{perf}} \otimes_{O_{F,\Sigma}} O_{\Sigma}^{\tn{perf}} \otimes_{O_{F,\Sigma}} O_{\Sigma}^{\tn{perf}})} \arrow{r} \arrow{d} & \dots \\
\frac{U(\A_{\Sigma})}{U(O_{\Sigma}^{\tn{perf}})} \arrow{r} & \frac{U(\A_{\Sigma} \otimes_{\A_{F,\Sigma}} \A_{\Sigma})}{U(O_{\Sigma}^{\tn{perf}} \otimes_{O_{F,\Sigma}} O_{\Sigma}^{\tn{perf}})} \arrow{r} & \frac{U(\A_{\Sigma} \otimes_{\A_{F,\Sigma}} \A_{\Sigma} \otimes_{\A_{F,\Sigma}} \A_{\Sigma})}{U(O_{\Sigma}^{\tn{perf}} \otimes_{O_{F,\Sigma}} O_{\Sigma}^{\tn{perf}} \otimes_{O_{F,\Sigma}} O_{\Sigma}^{\tn{perf}})} \arrow{r}  & \dots,
\end{tikzcd}
\]

As in \cite[Lemma A.6]{Dillery23b}, we are free to replace $O_{\Sigma}^{\tn{perf}}$ by $O_{\Sigma}$ and $\A_{\Sigma}$ by $\A_{F_{\Sigma},(\Sigma)_{F_{\Sigma}}}$ without changing any of these hypercohomology groups (up to canonical isomorphism). The following decomposition is from Shapiro's Lemma:

\begin{Proposition}\label{prop:resprod}
We have a canonical isomorphism
\begin{equation*}
H^{i}(\A_{\Sigma}/\mathbb{A}_{F,\Sigma}, T \xrightarrow{f} U) \xrightarrow{\sim} \prod_{v \in \Sigma} H^{i}((F_{\Sigma})_{\dot{v}}/F_{v}, T \xrightarrow{f} U) \times \prod_{v \notin \Sigma} H^{i}(O_{v}^{\tn{nr}}/O_{v}, T \xrightarrow{f} U).
\end{equation*}
In particular, when $i \geq 2$ the above product is really a direct sum.
\end{Proposition}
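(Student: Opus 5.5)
We argue exactly as in the unrestricted case \cite[\S A.3]{Dillery23b}, with $\A_{F^{s}}$ replaced by $\A_{\Sigma}$. By the remark preceding the statement, we may drop the perfections and work with $\A_{F_{\Sigma},(\Sigma)_{F_{\Sigma}}}$ over $\A_{F,\Sigma}$. The plan has three steps.

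\textbf{Step 1: decompose the \v{C}ech complex at each finite level.} Since $T$ and $U$ are split over $O_{\Sigma}$, we may filter by finite Galois subextensions $E \subseteq F_{\Sigma}$ splitting both tori. For each such $E$, set $\A_{E,\Sigma} := \A_{E,\Sigma_{E}}$ and identify
\[
\A_{E,\Sigma} \otimes_{\A_{F,\Sigma}} \cdots \otimes_{\A_{F,\Sigma}} \A_{E,\Sigma} \cong \mathcal{C}\bigl(\Gamma_{E/F}^{j}, \A_{E,\Sigma}\bigr),
\]
using that $\A_{E,\Sigma} = E \otimes_{F} \A_{F,\Sigma}$ is Galois over $\A_{F,\Sigma}$. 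Next decompose $\A_{E,\Sigma}$ as the restricted product over $v \in V_{F}$ of $E \otimes_{F} F_{v}$, which is integral, i.e.\ $O_{E} \otimes O_{v}$, for $v \notin \Sigma$. Each factor $E \otimes_{F} F_{v}$ is $\prod_{w \mid v} E_{w}$, induced from $\Gamma_{E_{\dot v}/F_{v}}$ via the chosen place $\dot v$. The \v{C}ech double complex for $T \to U$ at level $E$ then becomes the restricted product over $v$ of the induced complexes. Shapiro's lemma, applied degreewise in the compatible form for complexes as in \cite[Lemma A.4]{Dillery23b}, identifies the $v$-factor with the \v{C}ech complex of $E_{\dot v}/F_{v}$ for $v \in \Sigma$, and of $O_{E_{\dot v}}/O_{v}$ for $v \notin \Sigma$.

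\textbf{Step 2: pass to cohomology and to the limit.} Cohomology of a restricted product of complexes is the restricted product of the cohomologies, provided the integral local complexes have acyclicity making the restriction compatible. Since $T$ and $U$ are tori over $O_{v}$ split over unramified extensions, Lang's theorem gives $H^{i}(O_{E_{\dot v}}/O_{v}, T)=0$ for $i\ge 1$ (and likewise for $U$). Hence for $i \geq 1$ the unramified factors contribute only through $H^{0}$ of the tori, i.e.\ through the $H^{i}$ of the cone. Taking $\varinjlim_{E}$, which commutes with cohomology and with the restricted products (the index set is fixed and $\Sigma$ is finite), yields the map in the statement, with the factors $(F_{\Sigma})_{\dot v}$ and $O_{v}^{\tn{nr}}$ appearing because every place outside $\Sigma$ is unramified in $F_{\Sigma}$.

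\textbf{Step 3: the direct sum claim.} For $i \geq 2$ we must show that for all but finitely many $v \notin \Sigma$ the factor $H^{i}(O_{v}^{\tn{nr}}/O_{v}, T \to U)$ vanishes. Use the long exact sequence
\[
\cdots \to H^{i-1}(O_{v}^{\tn{nr}}/O_{v},U) \to H^{i}(O_{v}^{\tn{nr}}/O_{v}, T \to U) \to H^{i}(O_{v}^{\tn{nr}}/O_{v},T) \to \cdots
\]
together with the Lang-theorem vanishing $H^{j}(O_{v}^{\tn{nr}}/O_{v},T)=H^{j}(O_{v}^{\tn{nr}}/O_{v},U)=0$ for $j \geq 1$ (Galois cohomology of $\hat{\Z}$ with coefficients in $T(O_{v}^{\tn{nr}})$, whose $H^{2}$ also vanishes since the cohomological dimension is $1$ and the coefficients are torsion-free). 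Hence $H^{i}$ of the complex vanishes for every $v \notin \Sigma$ once $i \geq 2$, so only the finitely many $v \in \Sigma$ contribute.

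\textbf{Main obstacle.} The delicate point is in Step 2: commuting hypercohomology with the restricted product. The integral factors need not be acyclic in degree $0$ and $1$. We handle this by the \v{C}ech description at finite level, where restricted products of complexes are filtered colimits over finite $S \supseteq \Sigma$ of honest products. Products are exact, and colimits are exact, so cohomology commutes with both. This reduces everything to the local Shapiro identifications of Step 1.
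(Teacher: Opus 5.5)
Your overall route (finite-level Shapiro decomposition, splitting over places, passage to $\varinjlim_E$, and vanishing of the unramified factors in degrees $\geq 2$) is the same as the paper's, which simply invokes Shapiro's lemma. However, you have misplaced the one nontrivial step and justified it incorrectly, and the vanishing argument in Step 3 is wrong as written.

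\textbf{Products, not restricted products.} Since $\Sigma$ is finite, $\A_{F,\Sigma}=\prod_{v\in\Sigma}F_v\times\prod_{v\notin\Sigma}O_v$ is an honest product. It is also not an $F$-algebra, so you should write $\A_{E,\Sigma}=O_{E,\Sigma}\otimes_{O_{F,\Sigma}}\A_{F,\Sigma}$ rather than $E\otimes_F\A_{F,\Sigma}$. Because $O_{E,\Sigma}$ is finite \'etale Galois over $O_{F,\Sigma}$, all tensor powers are again honest products of the local rings. So at level $E$ the \v{C}ech double complex is literally a product of local complexes, and its cohomology is the product of the local cohomologies.

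Your ``main obstacle'' paragraph therefore addresses a non-issue. Had $\A_{F,\Sigma}$ really been a restricted product, your argument would output a restricted product in degrees $0,1$, not the product in the statement.

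\textbf{The actual gap: commuting $\varinjlim_E$ with the product.} The real subtlety is in Step 2. Filtered colimits do not commute with infinite products, and ``the index set is fixed and $\Sigma$ is finite'' does not justify the interchange. What makes it work is that each factor with $v\notin\Sigma$ is independent of $E$. Since $H^j(\Gamma_{E_w/F_v},T(O_{E_w}))=0=H^j(\Gamma_{E_w/F_v},U(O_{E_w}))$ for $j\geq 1$, the local factor is $\ker(T(O_v)\to U(O_v))$ for $i=0$, $\mathrm{coker}(T(O_v)\to U(O_v))$ for $i=1$, and $0$ for $i\geq 2$. The inflation maps between levels are identities. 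Hence
$\varinjlim_E\bigl(\prod_{v\in\Sigma}A_{v,E}\times\prod_{v\notin\Sigma}B_v\bigr)=\prod_{v\in\Sigma}\varinjlim_E A_{v,E}\times\prod_{v\notin\Sigma}B_v$,
which is what you need. You already have this ingredient in Step 2; it has to be used precisely here.

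\textbf{The vanishing arguments.} The finite-level vanishing is not Lang's theorem alone. Lang gives $H^1(\Gamma_{k_w/k_v},T(k_w))=0$. For all degrees you filter $T(O_{E_w})$ and look at the graded pieces:
\begin{enumerate}
\item $T(k_w)$ is finite, so the Herbrand quotient for the cyclic group $\Gamma_{k_w/k_v}$ is $1$. Lang's vanishing of $H^1$ then kills all its Tate cohomology.
\item The pieces $\mathfrak{t}\otimes k_w$ are induced by the normal basis theorem, hence cohomologically trivial.
\item Completeness of $T(O_{E_w})$ passes the vanishing to the whole group.
\end{enumerate}

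The parenthetical reason in Step 3 is false. $\hat{\Z}$ has cohomological dimension $1$ only for torsion modules; its strict cohomological dimension is $2$, e.g.\ $H^2(\hat{\Z},\Z)\cong\Q/\Z$. Moreover $T(O_v^{\tn{nr}})$ is not torsion-free, since it contains the prime-to-$p$ roots of unity. Instead, deduce $H^j(O_v^{\tn{nr}}/O_v,T)=0$ for $j\geq 1$ as the colimit of the finite-level vanishing. With these repairs, Step 3 and the direct-sum claim for $i\geq 2$ follow as you indicate.
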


All of these groups can be topologized using completely analogous arguments as in \cite[Appendix A]{Dillery23b}. The following result will be crucial to our arguments:

\begin{Lemma}\label{divlemma}
When $|\Sigma| \geq 2$, the group $(F_{\Sigma})_{\dot{v}}^{\tn{perf},\times}$ is divisible for $v \in \Sigma$.
\end{Lemma}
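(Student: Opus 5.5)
The plan is to reduce divisibility to two separate statements: $p$-divisibility, where $p = \tn{char}(F)$, and $n$-divisibility for every $n$ prime to $p$. We write $L := (F_{\Sigma})_{\dot{v}}$. This is the completion, or rather the union of completions of finite subextensions, of $F_{\Sigma}$ at $\dot{v}$; concretely it is an algebraic extension of $F_{v}$, namely the decomposition field's local counterpart. The $p$-part is formal. In characteristic $p$, every element of $L^{\tn{perf}} = \varinjlim_{x \mapsto x^{p}} L$ has a $p$-th root in $L^{\tn{perf}}$, so $L^{\tn{perf},\times}$ is uniquely $p$-divisible. It therefore suffices to show that for every prime $\ell \neq p$ and every $a \in L^{\times}$, the polynomial $X^{\ell} - a$ has a root in $L$. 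Roots in $L^{\tn{perf}}$ of elements of $L^{\tn{perf}}$ reduce to this case after raising to a $p$-power, since $p$-power Frobenius is bijective on $L^{\tn{perf}}$.

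For the prime-to-$p$ part, we first note that $X^{\ell}-a$ is separable. So it suffices to show that $L$ contains every finite separable extension of $F_{v}$ that is tamely ramified of degree prime to $p$. Even more simply, we will show that $\Gamma_{L}$, the absolute Galois group of $L$, is a pro-$p$ group. The key input is a global realization statement, and this is where the hypothesis $|\Sigma| \geq 2$ enters. Given a finite Galois extension $K_{w}/F_{v}$ of degree prime to $p$, we want a finite Galois extension $K/F$ that is unramified outside $\Sigma$ and whose completion at a place over $v$ contains $K_{w}$.

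To construct $K$, we would use class field theory together with Kummer theory. Since $\ell \neq p$, after adjoining $\mu_{\ell}$, which gives a constant field extension and is hence unramified everywhere, cyclic degree-$\ell$ extensions are Kummer extensions. We then need a global element $b$, a $\Sigma$-unit up to $\ell$-th powers, that approximates $a$ at $v$ $\ell$-adically closely enough. Approximating $a$ in $L^{\times}/L^{\times \ell}$, a finite group, is enough. The ramification of $F(\mu_{\ell}, b^{1/\ell})$ is then controlled by the support of $\tn{div}(b)$ modulo $\ell$, so we need that support to lie in $\Sigma$.

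Here we use a second place $v' \in \Sigma$, $v' \neq v$. By strong approximation and the Riemann--Roch theorem, we can find $b \in F^{\times}$ with prescribed valuation and prescribed unit class mod $\ell$-th powers at $v$, whose divisor is supported on $\{v, v'\}$ up to multiples of $\ell$. The leftover class-group obstruction is killed because $\tn{Pic}^{0}(C)$ is finite: we allow the $v'$-coefficient to absorb the degree, and we allow an $\ell$-power correction after passing to a finite unramified extension where the class becomes an $\ell$-th power. Unramified extensions lie in $F_{\Sigma}$, so this is harmless.

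Iterating over primes $\ell$ and using the fact that $\Gamma_{L}/\tn{(wild)}$ is solvable, namely pro-cyclic unramified by pro-cyclic tame, we conclude that every element of $L^{\times}$ has $\ell$-th roots in $L$. For the unramified part one can alternatively just use constant field extensions $\mathbb{F}_{q^{n}}F \subseteq F_{\Sigma}$.

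The main obstacle is the global approximation step. We must produce $b$ with divisor supported in $\Sigma$ modulo $\ell$ and with the correct local class at $v$, and this is exactly where a single place would fail, since the degree-zero constraint on principal divisors would pin down $v(b)$. I would first try the cleanest version. The subgroup $O_{F,\Sigma}^{\times}$ has rank $|\Sigma|-1 \geq 1$ by Dirichlet's unit theorem. I would combine this with the finiteness of $\tn{Cl}(O_{F,\Sigma})$, together with Chevalley's theorem that the closure of $O_{F,\Sigma}^{\times}$ in $F_{v}^{\times}/F_{v}^{\times\ell}$ is large. If needed, I would enlarge by passing to $\Sigma$-unramified finite extensions $E$, where $E_{\dot{v}}$ grows and the class number obstruction is annihilated.
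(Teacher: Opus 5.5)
Your outline matches the paper's: $p$-divisibility comes from the perfection, and prime-to-$p$ roots come from Kummer extensions, with $\mu_\ell$ supplied by a constant field extension and ramification governed by $\mathrm{div}(b)$ modulo $\ell$. The second place of $\Sigma$ is used to escape the degree constraint. But the step you flag as the main obstacle is not established, and the justification you give for it fails.

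Over $F$ the element $b$ you describe need not exist:
\begin{itemize}
\item If $\ell\mid\deg v'$ but $\ell\nmid\deg v$, the degree of $\mathrm{div}(b)=m[v]+m'[v']+\ell D$ forces $\ell\mid m$.
\item If $\deg v=\deg v'=1$ and $[v]-[v']\notin\ell\,\mathrm{Pic}^{0}(C)$ (which happens, e.g., on an elliptic curve with $\ell\mid |C(\mathbb{F}_q)|$), the degree forces $m'\equiv -m \pmod{\ell}$. One then needs $m([v]-[v'])\in\ell\,\mathrm{Pic}^{0}(C)$, so again $\ell\mid m$.
\end{itemize}
Strong approximation, Riemann--Roch, finiteness of $\mathrm{Pic}^{0}(C)$, Dirichlet's unit theorem and Chevalley's theorem do not remove this obstruction.

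Your repair, ``a finite unramified extension where the class becomes an $\ell$-th power'', is exactly the nontrivial input, and it needs an argument: the principal ideal theorem. Put $S=\Sigma\setminus\{v\}$; this is where $|\Sigma|\geq 2$ enters, since $S\neq\emptyset$ makes $O_{F,S}$ a Dedekind domain with finite class group. In the $S$-Hilbert class field $E$, which is finite and unramified everywhere and hence lies in $F_\Sigma$, the ideal $\mathfrak{p}_vO_{E,S}$ becomes principal. This is what the paper takes from the proof of \cite[Proposition 8.3.6]{NSW}. A generator $\varpi$ has valuation $1$ at $\dot v|_E$ and divisor supported in $\Sigma_E$, so $E(\mu_n,\varpi^{1/n})\subseteq F_\Sigma$ for every $n$ prime to $p$. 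Its existence also settles the degree issue automatically.

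Second, you should not try to prescribe the unit class of $b$ at $v$ globally. It is unnecessary, and the capitulation argument does not provide it: correcting by a capitulating element changes the unit class at $v$. The paper instead splits $L^\times$ into units and value group. The units $O_L^\times$ are prime-to-$p$ divisible for purely local reasons: the residue field of $L$ is $\overline{\mathbb{F}_q}$, because constant field extensions lie in $F_\Sigma$, and Hensel's lemma applies. So only the value group needs a global input, and the single element $\varpi$ above makes it prime-to-$p$ divisible. Combining these gives prime-to-$p$ divisibility of $L^\times$, and the perfection supplies $p$-th roots. Your remarks about $\Gamma_L$ being pro-$p$ and about solvability of the tame quotient can be dropped.
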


\begin{proof}
First, note that $O_{(F_{\Sigma})_{\dot{v}}}^{\times}$ is $n$-divisible for $(n,p)=1$ by the structure of unit groups of rings of integers over local function fields, and thus $O_{(F_{\Sigma})_{\dot{v}}}^{\tn{perf},\times}$ is divisible. The claimed divisibility will then follow if we can show that the value group of $(F_{\Sigma})_{\dot{v}}$ is $n$-divisible for $(n,p)=1$. To see this value group claim, we note that in the ring $O_{F,\Sigma \setminus \{v\}}$ the ideal $\mathfrak{p}_{v}$ becomes principal over some finite unramified extension $E/F$ according to the proof of \cite[Proposition 8.3.6]{NSW}; now $E \subseteq F_{\Sigma}$, and taking a generator $\varpi$ of $\mathfrak{p}_{v} \cdot O_{E, \Sigma \setminus \{v\}}$ allows us to take $n$th roots without introducing ramification outside of $\Sigma$.
\end{proof}

\begin{Proposition}\label{dualitypropS}
We have a canonical pairing
\begin{equation}\label{barH1pairglob}
\bar{H}^{r}(\A_{\Sigma}/\A_{F,\Sigma}, T \xrightarrow{f} U) \times H^{3-r}(\Gamma_{F_{\Sigma}/F}, X^{*}(U) \to X^{*}(T)) \to \mathbb{Q}/\mathbb{Z},
\end{equation}
which is a perfect pairing for $r \geq 2$.  
\end{Proposition}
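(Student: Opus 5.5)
The plan is to follow the strategy of \cite[\S A.3, A.4]{Dillery23b}, where the analogous statement is proved without ramification restrictions, and to replace the Galois-cohomological inputs by their restricted-ramification analogues for $\Gamma_{F_{\Sigma}/F}$ (cf.\ \cite[Ch.~VIII]{NSW}).

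\textbf{Step 1: reduce to Galois hypercohomology of an idele-class complex.} By the remark following the double complex, we may drop the perfections. Since $T$ and $U$ split over $O_{\Sigma}$, the \v{C}ech complexes for $\A_{\Sigma}/\A_{F,\Sigma}$ and $O_{\Sigma}/O_{F,\Sigma}$ compute continuous $\Gamma_{F_{\Sigma}/F}$-cohomology, and they are compatible with the inclusion $O_{\Sigma} \subset \A_{F_{\Sigma},\Sigma}$. This identifies $\bar{H}^{r}(\A_{\Sigma}/\A_{F,\Sigma}, T \to U)$ with the Galois hypercohomology
\begin{equation*}
H^{r}\bigl(\Gamma_{F_{\Sigma}/F}, T(C_{\Sigma}) \to U(C_{\Sigma})\bigr),
\end{equation*}
where $C_{\Sigma} := \A_{F_{\Sigma},\Sigma}^{\times}/O_{\Sigma}^{\times}$ is the $\Sigma$-idele class module, and $T(C_{\Sigma}) = X_{*}(T) \otimes C_{\Sigma}$.

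\textbf{Step 2: construct the pairing.} Cup product $X_{*} \otimes X^{*} \to \Z$ pairs the complex $X_{*}(T)\otimes C_{\Sigma} \to X_{*}(U)\otimes C_{\Sigma}$ (degrees $0,1$) with $X^{*}(U) \to X^{*}(T)$ (degrees $-1,0$). This gives a pairing into $H^{2}(\Gamma_{F_{\Sigma}/F}, C_{\Sigma})$. We then compose with the invariant map $H^{2}(\Gamma_{F_{\Sigma}/F}, C_{\Sigma}) \to \Q/\Z$. This invariant map exists because $(\Gamma_{F_{\Sigma}/F}, C_{\Sigma})$ is a class formation when $\Sigma$ is nonempty \cite[8.3]{NSW}. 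Canonicity and continuity follow from the topologies mentioned after Proposition \ref{prop:resprod}.

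\textbf{Step 3: perfectness for $r \ge 2$ by dévissage.} The complex sits in a distinguished triangle $U[-1] \to (T \to U) \to T$, and dually for the character complexes. This yields compatible long exact sequences. By the five lemma, perfectness reduces to the single-torus statements: the cup product pairing
\begin{equation*}
H^{r}(\Gamma_{F_{\Sigma}/F}, X_{*}(T)\otimes C_{\Sigma}) \times H^{2-r}(\Gamma_{F_{\Sigma}/F}, X^{*}(T)) \to \Q/\Z
\end{equation*}
must be perfect in the appropriate degrees. One has to be careful at the boundary degree $r=2$, where the adjacent terms involve $\bar{H}^{1}$ and $H^{2}$. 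There, the five lemma only needs injectivity on one side and surjectivity on the other. The relevant statements are the Tate–Nakayama isomorphisms for the class formation, namely $\hat{H}^{r}(\Gamma_{E/F}, X_{*}\otimes C_{\Sigma}) \cong \hat{H}^{r-2}(\Gamma_{E/F}, X_{*})$, together with duality between $\hat{H}^{r-2}(X_{*})$ and $\hat{H}^{2-r}(X^{*})$ at finite level. We then pass to the limit over $\Sigma$-unramified $E$ and use Pontryagin duality of profinite/discrete groups. For $r \ge 3$, both sides vanish or are torsion, since $H^{3}(\Gamma_{F_{\Sigma}/F}, C_{\Sigma})=0$ for a class formation with $p$-part handled by perfection.

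\textbf{Main obstacle.} I expect the hard step to be justifying the class formation axioms for $C_{\Sigma}$ with full (including $p$-primary and divisible) control. This is where Lemma \ref{divlemma}, which assumes $|\Sigma| \ge 2$, should enter. The divisibility of $(F_{\Sigma})_{\dot{v}}^{\tn{perf},\times}$ ensures that the local components at $v \in \Sigma$, seen via Proposition \ref{prop:resprod}, have no spurious $\varprojlim^{1}$ or torsion contributions. As a result, $H^{r}$ of the limit agrees with the limit of the finite-level Tate–Nakayama groups. I would first try to prove the needed vanishing of $\varprojlim^{1}$ directly from this divisibility, and then deduce the limit identification.
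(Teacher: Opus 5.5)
Your proposal is correct and follows the paper's proof. The paper likewise identifies the relevant \v{C}ech complex with Galois cohomology of the $\Sigma$-id\`ele class module $C_{\Sigma}$, builds the pairing from cup product and the $\Sigma$-invariant map $H^{2}(\Gamma_{F_{\Sigma}/F}, C_{\Sigma}) \to \mathbb{Q}/\mathbb{Z}$ of the class formation, and gets perfectness for $r \geq 2$ by rerunning the argument of \cite[Lemma A.2.A]{KS99}; your d\'evissage to tori plus Tate--Nakayama spells out that argument.

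Your ``main obstacle'' is misplaced, though. Lemma \ref{divlemma} plays no role here; the paper uses it only in Theorem \ref{canonclassthm}. No $\varprojlim^{1}$ arises, because cohomology with discrete coefficients commutes with $\varinjlim_{E}$. Finally, $H^{r}(\Gamma_{F_{\Sigma}/F}, X_{*}(T) \otimes C_{\Sigma}) = 0$ for $r \geq 3$ not because of perfection (which is irrelevant for tori) but because the transition maps on $\hat{H}^{r-2}(\Gamma_{E/F}, X_{*}(T))$ are inflation times $[E':E]$, and you can make $[E':E]$ divisible by $[E:F]$ using constant field extensions.
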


\begin{proof}
The pairing is constructed by identifying
\begin{equation*}
\mathcal{H}^{2}(\mathbb{G}_{m}[(\A_{\Sigma}^{\tn{sep}})^{\bigotimes_{\A_{\Sigma}}(\bullet+1)}]/\mathbb{G}_{m}[(O_{\Sigma})^{\bigotimes_{O_{F,\Sigma}}(\bullet+1)}])
\end{equation*}
with $H^{2}(\Gamma, C_{\Sigma})$ (where $C_{\Sigma} = \varinjlim_{F \subseteq K \subseteq F_{\Sigma}} C_{K}(\Sigma)$ is the universal $\Sigma$-id\`{e}le class group) and then taking the global $\Sigma$-invariant map. The proof that it's a perfect pairing for $r \geq 2$ is the same as the proof of \cite[Lemma A.2.A]{KS99}.
\end{proof}

We can also define a pairing 
\begin{equation}\label{barH1pairglobbis}
\bar{H}^{1}(\A_{\Sigma}/\A_{F,\Sigma}, T \xrightarrow{f} U) \times H^{1}(W_{F_{\Sigma}/F}, \widehat{U} \xrightarrow{\hat{f}} \widehat{T}) \to \mathbb{C}^{\times},
\end{equation}
where $W_{F_{\Sigma}/F}$ is the  $F_{\Sigma}$-relative Weil group of $F$ (built using the inverse limit of $W_{K/F}$ for all finite Galois $F \subseteq K \subseteq F_{\Sigma}$) and the right-hand group is as defined in \cite[\S A.4]{Dillery23b}.

The pairing is defined via the composition of the canonical map 
\begin{equation*}
\bar{H}^{1}(\A_{\Sigma}/\A_{F,\Sigma}, T \xrightarrow{f} U) \to \bar{H}^{1}(\A_{F_{\Sigma}}/\mathbb{A}, T \xrightarrow{f} U)
\end{equation*}
with the pairing from \cite[\S A.4]{Dillery23b}, which makes sense, since it's defined using a direct limit of finite-level pairings (via inflation). The same argument gives an analogous local pairing 
\begin{equation}\label{barH1pairloc}
H^{1}((F_{\Sigma})^{\tn{perf}}_{\dot{v}}/F_{v}, T \xrightarrow{f} U) \times H^{1}(W_{(F_{\Sigma})_{\dot{v}}/F}, \widehat{U} \xrightarrow{\hat{f}} \widehat{T}) \to \mathbb{C}^{\times}
\end{equation}
which will also be used.

First, a basic but important result is:

\begin{Lemma}\label{keytechlem}
For any $i$ and $T$ as above the natural map
\begin{equation*}
\varinjlim_{F \subseteq K \subseteq F_{\Sigma}} \bar{H}^{i}(\A_{K,\Sigma}/\A_{F,\Sigma}, T) \to \varinjlim_{F \subseteq K \subseteq F_{\Sigma}} \bar{H}^{i}(\A_{K}/\A_{F}, T)
\end{equation*}
is an isomorphism.
\end{Lemma}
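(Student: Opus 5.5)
We reduce the statement to a comparison of the double complexes at each finite level $K$ and then pass to the direct limit. For $K/F$ finite Galois inside $F_{\Sigma}$, the group $\bar{H}^{i}(\A_{K,\Sigma}/\A_{F,\Sigma}, T)$ is the cohomology of the cone of the map of \v{C}ech complexes
\begin{equation*}
T\bigl(O_{K,\Sigma}^{\otimes_{O_{F,\Sigma}}(\bullet+1)}\bigr) \to T\bigl(\A_{K,\Sigma}^{\otimes_{\A_{F,\Sigma}}(\bullet+1)}\bigr),
\end{equation*}
and similarly for $\bar{H}^{i}(\A_{K}/\A_{F}, T)$ with $K^{\otimes_{F}(\bullet+1)}$ and $\A_{K}^{\otimes_{\A_F}(\bullet+1)}$. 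Since $K/F$ is finite Galois, these \v{C}ech complexes are identified with the usual Galois cochain complexes $C^{\bullet}(\Gamma_{K/F}, -)$ of $T(O_{K,\Sigma})$, $T(\A_{K,\Sigma})$, $T(K)$ and $T(\A_K)$. Therefore the natural map is induced by the map of $\Gamma_{K/F}$-modules
\begin{equation*}
T(\A_{K,\Sigma})/T(O_{K,\Sigma}) \to T(\A_{K})/T(K).
\end{equation*}
Here $T$ is split over $K$ for $K$ large, so $T(K)\cap T(\A_{K,\Sigma}) = T(O_{K,\Sigma})$, and the map is injective.

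The first step is to compute the cokernel. It is $T(\A_K)/(T(K)\cdot T(\A_{K,\Sigma}))$, which for split $T$ is $X_{*}(T) \otimes \mathrm{Cl}_{\Sigma}(K)$, where $\mathrm{Cl}_{\Sigma}(K) = \mathrm{Cl}(O_{K,\Sigma_K})$ is the $\Sigma$-class group. This gives a short exact sequence of $\Gamma_{K/F}$-modules
\begin{equation*}
0 \to T(\A_{K,\Sigma})/T(O_{K,\Sigma}) \to T(\A_{K})/T(K) \to X_{*}(T)\otimes \mathrm{Cl}_{\Sigma}(K) \to 0,
\end{equation*}
and hence a long exact sequence in Galois cohomology. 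Because the transition maps in both direct limits are inflation maps compatible with this sequence, and filtered colimits are exact, it suffices to show
\begin{equation*}
\varinjlim_{K} H^{i}\bigl(\Gamma_{K/F}, X_{*}(T)\otimes \mathrm{Cl}_{\Sigma}(K)\bigr) = 0 \quad \text{for all } i.
\end{equation*}

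The key point, and the main obstacle, is this vanishing. We aim to show that every class dies after inflation to a larger $K'$. The inflation map is induced by the map $\mathrm{Cl}_{\Sigma}(K)\to \mathrm{Cl}_{\Sigma}(K')$ extending ideals, together with inflation of cochains. Following the argument quoted in the proof of Lemma \ref{divlemma} (cf. \cite[Proposition 8.3.6]{NSW}), every ideal of $O_{K,\Sigma}$ becomes principal in some finite extension unramified outside $\Sigma$; for example, one can take the Hilbert $\Sigma$-class field, or iterate it. Since $\mathrm{Cl}_{\Sigma}(K)$ is finite, we can choose $K'$ in which all of $\mathrm{Cl}_{\Sigma}(K)$ capitulates. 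The coefficient map is then zero, so the cochain map, and hence the map on cohomology, is zero.

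One must check that this zero map is indeed the transition map in the colimit system for $X_{*}(T)\otimes\mathrm{Cl}_{\Sigma}$. This compatibility holds because the short exact sequences above are compatible with inflation. Given that, the colimit vanishes in every degree, and the five lemma applied to the colimit of the long exact sequences gives the claimed isomorphism. We could also pass to the perfections and to the \v{C}ech descriptions $\A_{\Sigma}$, but by the analogue of \cite[Lemma A.6]{Dillery23b} cited above this does not change the groups.
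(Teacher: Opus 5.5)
Your proposal is correct and takes essentially the same route as the paper. The paper's proof is the one-line observation that $\varinjlim_K T(\A_{K,\Sigma})/T(O_{K,\Sigma}) \to \varinjlim_K T(\A_K)/T(K)$ is an isomorphism by the capitulation result \cite[Proposition 8.3.6]{NSW}; your finite-level exact sequence with cokernel $X_{*}(T)\otimes \mathrm{Cl}_{\Sigma}(K)$, whose cohomology you kill in the colimit by capitulation, is an explicit unpacking of that same fact, with the long exact sequence taking the place of commuting Galois cohomology with the colimit of coefficient modules.
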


\begin{proof}
This follows from the fact that the map
\begin{equation*}
\varinjlim_{F \subseteq K \subseteq F_{\Sigma}} T(\A_{K,\Sigma})/T(O_{K,\Sigma}) \to \varinjlim_{F \subseteq K \subseteq F_{\Sigma}} T(\A_{K})/T(K)
\end{equation*}
is an isomorphism (by \cite[Proposition 8.3.6]{NSW}).
\end{proof}

\begin{Proposition}\label{appH1pairing}
The pairing \eqref{barH1pairglobbis} gives a surjection $H^{1}(W_{F_{\Sigma}/F}, \widehat{U} \xrightarrow{\hat{f}} \widehat{T})\to \bar{H}^{1}(\A_{\Sigma}/\A_{F,\Sigma}, T \xrightarrow{f} U)^{D}$ with kernel the image of $(\widehat{T}^{\Gamma_{F_{\Sigma}/F}})^{\circ}$ in $H^{1}(W_{F_{\Sigma}/F}, \widehat{U} \xrightarrow{\hat{f}} \widehat{T})$. Moreover, its local analogue \eqref{barH1pairloc} gives a surjective map $H^{1}(W_{(F_{\Sigma})_{\dot{v}}/F}, \widehat{U} \xrightarrow{\hat{f}} \widehat{T})\to \bar{H}^{1}((F_{\Sigma})_{\dot{v}}^{\tn{perf}}/F_{v}, T \xrightarrow{f} U)^{D}$ with kernel 
\begin{equation*}
 \tn{Im}[(\widehat{T}^{\Gamma_{(F_{\Sigma})_{\dot{v}}/F}})^{\circ} \to H^{1}(W_{(F_{\Sigma})_{\dot{v}}/F}, \widehat{U} \xrightarrow{\hat{f}} \widehat{T})].
 \end{equation*}
\end{Proposition}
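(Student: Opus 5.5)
The plan is to reduce to the unrestricted case \cite[\S A.4]{Dillery23b}, where the analogous statement is known. There the pairing on $\bar{H}^{1}(\A_{F^{s}}/\A, T \xrightarrow{f} U) \times H^{1}(W_{F}, \widehat{U} \to \widehat{T})$ gives a surjection onto the Pontryagin dual. Its kernel is the image of $(\widehat{T}^{\Gamma})^{\circ}$; this is the analogue of \cite[Lemma A.3.B]{KS99}.

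\textbf{Finite-level and direct-limit comparison.} Both pairings \eqref{barH1pairglobbis} and the unrestricted one are direct limits over finite Galois $K/F$ of finite-level pairings between $\bar{H}^{1}(\A_{K}/\A_{F}, T \to U)$ and $H^{1}(W_{K/F}, \widehat{U} \to \widehat{T})$. For $K \subseteq F_{\Sigma}$, Lemma \ref{keytechlem}, extended to the two-term complex by the five lemma on the long exact sequences for $T$, $U$ and the cone, identifies
\begin{equation*}
\varinjlim_{K \subseteq F_{\Sigma}} \bar{H}^{1}(\A_{K,\Sigma}/\A_{F,\Sigma}, T \to U) \cong \varinjlim_{K\subseteq F_{\Sigma}} \bar{H}^{1}(\A_{K}/\A_{F}, T \to U).
\end{equation*}
The remaining point is that, because $T$ and $U$ split over $O_{\Sigma}$, this subsystem already computes the full $\bar{H}^{1}(\A_{F^{s}}/\A,\cdot)$-type object relevant to the pairing with $W_{F_{\Sigma}/F}$. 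Dually, $H^{1}(W_{F_{\Sigma}/F}, \widehat{U}\to\widehat{T})$ is the inflation-limit of the finite-level groups for $K \subseteq F_{\Sigma}$. I would prove that the finite-level pairings for $K\subseteq F_{\Sigma}$ are compatible with inflation. Given that, I would rerun the \cite{KS99} argument at finite level: use the exact sequences attached to $T \to U$ and reduce to a single torus $T$. For a torus, the statement becomes Langlands' duality $H^{1}(W_{K/F}, \widehat{T}) \to \tn{Hom}_{\tn{cts}}(T(\A_{K})^{\Gamma}/T(K)^{\Gamma}, \mathbb{C}^{\times})$, whose kernel is the image of $(\widehat{T}^{\Gamma})^{\circ}$. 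The $\Sigma$-restricted version of this statement follows from the isomorphism in Lemma \ref{keytechlem}, since $T(\A_{K,\Sigma})/T(O_{K,\Sigma})$ equals the idele class group quotient in the limit.

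\textbf{Local statement.} For the local part, $(F_{\Sigma})_{\dot{v}}$ is a Galois extension of $F_{v}$ over which $T$ and $U$ split. The local pairing \eqref{barH1pairloc} is the restriction of Kottwitz–Shelstad's local pairing for $\ov{F_{v}}/F_{v}$ to the inflated subgroup $W_{(F_{\Sigma})_{\dot{v}}/F_{v}}$. The local version of \cite[Lemma A.3.B]{KS99}, namely Langlands' local duality for tori plus the five lemma, then gives surjectivity with kernel the image of $(\widehat{T}^{\Gamma_{(F_{\Sigma})_{\dot{v}}/F}})^{\circ}$. When $v \in \Sigma$ I would use Lemma \ref{divlemma} to guarantee that the perfection does not alter $\bar{H}^{1}$; when $v\notin\Sigma$ the same holds directly, since the extension is unramified.

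\textbf{Main obstacle: surjectivity.} The hard step is surjectivity onto the continuous dual in the global case. In \cite{KS99} this uses the identification of $\bar{H}^{1}$ of a torus with a quotient of idele classes, together with the fact that every continuous character extends through the Weil group. Here I would reduce to that result via Lemma \ref{keytechlem} plus the torus case, then handle the complex by a diagram chase with the five lemma, using exactness of Pontryagin duality on locally compact groups. This requires checking that the relevant maps are strict and have closed image, which is where I expect the most care to be needed. The kernel description then follows from the torus case, via the connecting map $\widehat{T}^{\Gamma} \to H^{1}(W, \widehat{U}\to\widehat{T})$.
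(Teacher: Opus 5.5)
Your overall route is the paper's: reduce via \cite[\S A.3]{KS99} to statements about a single torus, feed in Langlands' finite-level maps from \cite{Lan97}, pass to the limit over $F \subseteq K \subseteq F_{\Sigma}$, and transport to the $\Sigma$-restricted groups with Lemma \ref{keytechlem}. The gap is in the torus input. You state it as ``$H^{1}(W_{K/F},\widehat{T}) \to \Hom_{\tn{cts}}(T(\A_{K})^{\Gamma}/T(K)^{\Gamma},\mathbb{C}^{\times})$ with kernel the image of $(\widehat{T}^{\Gamma})^{\circ}$''. That is false, and it is not what the diagram chase consumes: $(\widehat{T}^{\Gamma})^{\circ}$ has no map to $H^{1}(W,\widehat{T})$, since it lives in degree $0$. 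Specializing the proposition to the complexes $T \to 1$ and $1 \to U$ shows that you need two separate torus statements. The first is that
\begin{equation*}
H^{1}(W_{F_{\Sigma}/F},\widehat{T}) \to \Hom_{\tn{cts}}(\bar{H}^{0}(\A_{\Sigma}/\A_{F,\Sigma},T),\mathbb{C}^{\times})
\end{equation*}
is an isomorphism. The second is that $\widehat{T}^{\Gamma_{F_{\Sigma}/F}} \to \Hom_{\tn{cts}}(\bar{H}^{1}(\A_{\Sigma}/\A_{F,\Sigma},T),\mathbb{C}^{\times})$ is surjective with kernel $(\widehat{T}^{\Gamma_{F_{\Sigma}/F}})^{\circ}$. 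Both must be compatible with the pairing. These are the cases $(i,j)=(1,0)$ and $(0,1)$, exactly what the paper establishes. The $(\widehat{T}^{\Gamma})^{\circ}$ in the final kernel enters only through the second statement and the connecting map $\widehat{T}^{\Gamma} \to H^{1}(W,\widehat{U}\to\widehat{T})$, as your closing sentence suggests you know.

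The target in your first statement is also wrong. $T(\A_{K})^{\Gamma}/T(K)^{\Gamma} = T(\A_{F})/T(F)$ is not $\bar{H}^{0} = (T(\A_{K})/T(K))^{\Gamma}$; the two differ by a finite group of $\ker^{1}$-type. Langlands' map into the characters of $T(\A_{F})/T(F)$ is in general only surjective with finite, possibly nontrivial, kernel. The chase that pins the kernel down to the image of $(\widehat{T}^{\Gamma})^{\circ}$ needs $H^{1}(W,\widehat{U}) \to \bar{H}^{0}(U)^{D}$ to be injective, and the long exact sequence for $T \to U$ only holds with $\bar{H}^{0}$ anyway. For the same reason, run the \cite{KS99} argument after passing to the limit over $K \subseteq F_{\Sigma}$, where the $(1,0)$ map becomes an isomorphism onto the dual of $\bar{H}^{0}$, not at a fixed finite level $K$. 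A smaller point: Lemma \ref{divlemma} plays no role in the local case. Replacing $(F_{\Sigma})_{\dot{v}}^{\tn{perf}}$ by $(F_{\Sigma})_{\dot{v}}$ is harmless for tori by \cite[Lemma A.6]{Dillery23b}; divisibility is needed only for the non-smooth groups $P_{\dot{\Sigma},i}$ in Theorem \ref{canonclassthm}.
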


\begin{proof}
As explained in \cite[\S A.3]{KS99} (and the analogous part of \cite[\S C]{KS99}), it suffices to show that, for a torus $T$ as above, that there is a natural map $H^{1}(W_{F_{\Sigma}/F}, \widehat{T}) \to \tn{Hom}_{\tn{cts}}(\bar{H}^{0}(\A_{\Sigma}/\A_{F,\Sigma}, T), \mathbb{C}^{\times})$ which is an isomorphism and a surjective homomorphism 
\begin{equation*}
H^{0}(W_{F_{\Sigma}/F}, \widehat{T}) \to \tn{Hom}_{\tn{cts}}(\bar{H}^{1}(\A_{\Sigma}/\A_{F,\Sigma}, T), \mathbb{C}^{\times})
\end{equation*}
with kernel the image of  $(\widehat{T}^{\Gamma_{F_{\Sigma}/F}})^{\circ}$, and that both of these maps are compatible with the above pairing---compatibility with the pairing is from compatibility with the analogous pairing from \cite[\S A.4]{Dillery23b}, and the other two facts are obtained as follows.

For a finite Galois extension $F \subseteq K \subseteq F_{\Sigma}$, Langlands constructed in \cite{Lan97} maps 
\begin{equation*}
H^{i}(W_{K/F}, \widehat{T}) \to \tn{Hom}_{\tn{cts}}(\bar{H}^{j}(\A_{K}/\A_{F}, T), \mathbb{C}^{\times})
\end{equation*}
for $(i,j) = (1,0)$ and $(0,1)$ and showed that in the limit they satisfy the analogues of our claimed properties. The maps are then obtained by the taking the direct limit over all such $K$ and then applying the isomorphisms (in the co-domain) obtained by dualizing those from Lemma \ref{keytechlem}. The claimed properties then evidently hold. The local case comes from a completely analogous argument, using that $T$ and $U$ are split over $(F_{\Sigma})_{\dot{v}}$.
\end{proof}

Since the statement of Proposition \ref{appH1pairing} holds if the right-hand side is replaced with the group $\bar{H}^{1}(\A_{F_{\Sigma}}/\A, T \xrightarrow{f} U)$ by \cite[\S A.4]{Dillery23b}, we can deduce:

\begin{Corollary}\label{keycor2}
The natural map $\bar{H}^{1}(\A_{\Sigma}/\A_{F,\Sigma}, T \xrightarrow{f} U) \to \bar{H}^{1}(\A_{F_{\Sigma}}/\A, T \xrightarrow{f} U)$ is an isomorphism.
\end{Corollary}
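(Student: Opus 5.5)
The plan is to deduce the isomorphism from the two duality statements by a Pontryagin-duality argument. Both groups are duals of the same Weil-cohomology quotient, and the natural map is compatible with these identifications.

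Write $A := \bar{H}^{1}(\A_{\Sigma}/\A_{F,\Sigma}, T \xrightarrow{f} U)$ and $B := \bar{H}^{1}(\A_{F_{\Sigma}}/\A, T \xrightarrow{f} U)$, and let $\iota \colon A \to B$ be the natural map.

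\textbf{Step 1 (compatibility).} By construction, the pairing \eqref{barH1pairglobbis} on $A$ is the composite of $\iota$ with the pairing from \cite[\S A.4]{Dillery23b} on $B$. Both pairings are against the same group $H^{1}(W_{F_{\Sigma}/F}, \widehat{U} \xrightarrow{\hat{f}} \widehat{T})$. So, writing $\Phi_{A}, \Phi_{B}$ for the induced maps to $A^{D}, B^{D}$, we get $\iota^{D} \circ \Phi_{B} = \Phi_{A}$.

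\textbf{Step 2 (both maps have the same kernel and are surjective).} By Proposition \ref{appH1pairing}, $\Phi_{A}$ is surjective with kernel the image of $(\widehat{T}^{\Gamma_{F_{\Sigma}/F}})^{\circ}$. By the remark preceding the corollary, the same holds for $\Phi_{B}$. Hence both induce isomorphisms from the quotient
\[ Q := H^{1}(W_{F_{\Sigma}/F}, \widehat{U} \to \widehat{T})/\tn{Im}(\widehat{T}^{\Gamma_{F_{\Sigma}/F}})^{\circ}, \]
and Step 1 shows that $\iota^{D} \colon B^{D} \to A^{D}$ is an isomorphism.

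\textbf{Step 3 (recover $\iota$).} To conclude that $\iota$ itself is an isomorphism, I need $A$ and $B$ to be locally compact Hausdorff abelian groups, with $\iota$ continuous, so that Pontryagin duality applies. The topologies come from the analogue of \cite[Appendix A]{Dillery23b}, as indicated after Proposition \ref{prop:resprod}. Here $(\cdot)^{D}$ means continuous characters to $\mathbb{C}^{\times}$, so I will work with unitary characters, or argue as in \cite{KS99} that the relevant groups are Hausdorff and that continuous characters separate points.

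Given this, injectivity of $\iota$ follows from surjectivity of $\iota^{D}$: a nonzero element of $\ker\iota$ would be detected by some character of $A$, and that character lifts to $B$, a contradiction. Surjectivity of $\iota$ follows from injectivity of $\iota^{D}$ together with the image being closed: a character of $B$ vanishing on $\iota(A)$ must be trivial, so $\iota(A)$ is dense.

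\textbf{Main obstacle.} I expect the hardest step to be closedness of the image, i.e.\ the topological input. For that step I would try an alternative route. At the torus level, $\varinjlim_K$ of $\bar{H}^{i}$ for $\A_{K,\Sigma}/\A_{F,\Sigma}$ is isomorphic to that for $\A_K/\A_F$, by Lemma \ref{keytechlem}. Since the complex $T \to U$ gives long exact sequences relating $\bar{H}^{1}$ of the complex to $\bar{H}^{0}, \bar{H}^{1}$ of $T$ and $U$, the five lemma then yields a bijection on $\bar{H}^{1}$ of the complex directly. This algebraic argument bypasses the topological issues entirely. I would present it as the main proof if the identification of $\bar{H}^{i}(\A_{\Sigma}/\A_{F,\Sigma},-)$ with the direct limit over finite $K$ is available, e.g.\ via the remark allowing $\A_{\Sigma}$ to be replaced by $\A_{F_{\Sigma},(\Sigma)_{F_{\Sigma}}}$.
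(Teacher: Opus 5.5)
Your Steps 1--3 are the paper's own argument. There the corollary is deduced in one line from Proposition \ref{appH1pairing} and its counterpart for $\bar{H}^{1}(\A_{F_{\Sigma}}/\A, T \xrightarrow{f} U)$ in \cite[\S A.4]{Dillery23b}, using that the pairing \eqref{barH1pairglobbis} is defined by pulling back along the natural map. The paper leaves implicit the topological input you isolate in Step 3. Note also that compactness in the isogeny case is deduced in the paper \emph{from} this corollary, so it cannot be used to get closedness of the image; and for general $f$ the groups need not be compact.

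Your fallback is a genuinely different proof, and it is correct. Both sides are filtered colimits of their finite-level versions over $F \subseteq K \subseteq F_{\Sigma}$, since tori are finitely presented and cokernels and cohomology commute with filtered colimits. The two-row bar double complex gives natural long exact sequences $\bar{H}^{0}(T) \to \bar{H}^{0}(U) \to \bar{H}^{1}(T \xrightarrow{f} U) \to \bar{H}^{1}(T) \to \bar{H}^{1}(U)$. Lemma \ref{keytechlem}, applied to $T$ and to $U$, then feeds the five lemma.

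You can even skip the five lemma. The argument behind Lemma \ref{keytechlem} works in every \v{C}ech degree: for $K/F$ Galois and unramified outside $\Sigma$, all tensor powers split as products of copies of $O_{K,\Sigma}$, $K$, $\A_{K,\Sigma}$, $\A_{K}$. Hence the colimit map of bar double complexes is already a termwise isomorphism. Injectivity comes from $T(K) \cap T(\A_{K,\Sigma}) = T(O_{K,\Sigma})$, and surjectivity from the fact that $\Sigma$-class groups die in $F_{\Sigma}$ \cite[Proposition 8.3.6]{NSW}.

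What your route buys: it is purely algebraic, needs neither Weil-group duality nor any topology, works for arbitrary $f$, and gives the isomorphism on $\bar{H}^{i}$ in every degree. It is also logically prior, since the paper's proof of Proposition \ref{appH1pairing} already runs through Lemma \ref{keytechlem}. The paper's route is simply the cheapest deduction once Proposition \ref{appH1pairing} is available.
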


In particular, the result \cite[Corollary A.16]{Dillery23b} holds in our context as well:

\begin{Lemma}
When $f$ is an isogeny, the group $\bar{H}^{1}(\A_{\Sigma}/\A_{F,\Sigma}, T \xrightarrow{f} U)$ is compact.
\end{Lemma}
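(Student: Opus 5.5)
The plan is to reduce the statement to the corresponding result in the unrestricted-ramification setting, \cite[Corollary A.16]{Dillery23b}, which says that $\bar{H}^{1}(\A_{F_{\Sigma}}/\A, T \xrightarrow{f} U)$ is compact when $f$ is an isogeny. The bridge is Corollary \ref{keycor2}: the natural map
\begin{equation*}
\bar{H}^{1}(\A_{\Sigma}/\A_{F,\Sigma}, T \xrightarrow{f} U) \to \bar{H}^{1}(\A_{F_{\Sigma}}/\A, T \xrightarrow{f} U)
\end{equation*}
is an isomorphism of abstract groups. Compactness of the source will then follow once we know that this map is a homeomorphism for the topologies constructed as in \cite[Appendix A]{Dillery23b}.

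The steps, in order, are as follows.

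\begin{enumerate}
\item Check that the map is continuous. This is formal: at every finite level $K$ it is induced by the continuous inclusions $T(\A_{K,\Sigma}) \to T(\A_{K})$ and $U(\A_{K,\Sigma}) \to U(\A_{K})$ on the terms of the double complex. Continuity then passes to the quotients by global points and to the direct limit over $K \subseteq F_{\Sigma}$.
\item Combine step 1 with the target being compact Hausdorff, by \cite[Corollary A.16]{Dillery23b} applied to the complex $T \xrightarrow{f} U$ base changed to $F$. Then the source is compact if and only if the continuous bijection is open, or equivalently has a continuous inverse.
\item Use the dual description to avoid proving openness directly. Proposition \ref{appH1pairing}, applied to both groups, identifies both continuous character groups with
\begin{equation*}
H^{1}(W_{F_{\Sigma}/F}, \widehat{U} \to \widehat{T}) \big/ \tn{Im}\bigl((\widehat{T}^{\Gamma_{F_{\Sigma}/F}})^{\circ}\bigr),
\end{equation*}
and these identifications are compatible with our map. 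When $f$ is an isogeny, $\widehat{T}^{\Gamma}$ has finite-index considerations making this quotient discrete. So the map induces an isomorphism on Pontryagin duals.
\item For locally compact Hausdorff abelian groups, a continuous bijection inducing an isomorphism on duals is a topological isomorphism, by Pontryagin duality. This requires the source to be locally compact Hausdorff.
\end{enumerate}

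The main obstacle is exactly this local compactness and Hausdorffness of the source, which is needed to invoke duality. I would try to establish it directly, mimicking the proof of \cite[Corollary A.16]{Dillery23b}. Since $f$ is an isogeny, there is an exact sequence relating $\bar{H}^{1}$ of the complex to $\bar{H}^{0}(T)=T(\A_{F,\Sigma})/T(O_{F,\Sigma})$, $\bar{H}^{0}(U)$, and the finite kernel $\ker f$. Compactness of $\bar{H}^{1}$ then reduces to two facts:
\begin{itemize}
\item the cokernel of $\bar{H}^{0}(T) \to \bar{H}^{0}(U)$ is compact, because $f$ has finite cokernel on norm-one parts and the $\Sigma$-id\`{e}le class group of a torus modulo its maximal compact is controlled by $X_{*}$, where the isogeny has finite cokernel;
\item the remaining contribution from $\bar{H}^{1}$ of the individual tori is finite or compact, using Proposition \ref{prop:resprod} and Lemma \ref{keytechlem}.
\end{itemize}
If this direct route works, it proves compactness outright and makes steps 3 and 4 unnecessary. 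Otherwise it at least supplies the local compactness needed for step 4.
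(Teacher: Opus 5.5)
Your top-level reduction is the paper's. There the lemma is recorded as an immediate consequence of Corollary~\ref{keycor2} and \cite[Corollary A.16]{Dillery23b}, with the comparison isomorphism tacitly treated as an isomorphism of topological groups. You correctly isolate that topological point, but your argument for it does not close.

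\textbf{Step~3 is circular.} In the proof of Proposition~\ref{appH1pairing}, the continuous dual of the restricted-ramification groups is computed by dualizing the isomorphisms of Lemma~\ref{keytechlem}. That already presupposes that these comparison maps identify continuous characters, which is exactly the compatibility you want to extract from the proposition.

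\textbf{Step~4 needs a hypothesis you never supply.} It requires the source to be locally compact Hausdorff, which you leave open. Even granting that, the implication ``continuous bijection inducing a bijection on characters $\Rightarrow$ homeomorphism'' is not formal Pontryagin duality. It needs an extra input such as Glicksberg's theorem, or $\sigma$-compactness together with the open mapping theorem.

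\textbf{The fallback route is only a sketch.} $\bar{H}^{0}(\A_{\Sigma}/\A_{F,\Sigma},T)$ is the limit over $K$ of the $\Gamma_{K/F}$-invariants of $T(\A_{K,\Sigma})/T(O_{K,\Sigma})$, not $T(\A_{F,\Sigma})/T(O_{F,\Sigma})$. Moreover, concluding compactness of the middle term of an exact sequence again requires knowing the maps are open onto their images, which is the same topological bookkeeping.

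\textbf{The missing observation is elementary and makes the duality detour unnecessary.} It is also what the paper implicitly uses, both here and when dualizing Lemma~\ref{keytechlem}.
Fix a finite Galois $K \subseteq F_{\Sigma}$ and drop perfections, as the paper permits. Then
\[
T(\A_{K,\Sigma}) = \prod_{w \in \Sigma_{K}} T(K_{w}) \times \prod_{w \notin \Sigma_{K}} T(O_{K_{w}})
\]
is an open subgroup of $T(\A_{K})$. Also $T(\A_{K,\Sigma}) \cap T(K) = T(O_{K,\Sigma})$, because $K \cap \A_{K,\Sigma} = O_{K,\Sigma}$. Hence $T(\A_{K,\Sigma})/T(O_{K,\Sigma}) \to T(\A_{K})/T(K)$ is a homeomorphism onto an open subgroup.

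The same holds for $U$ and for all \v{C}ech tensor powers. Since $K/F$ is unramified outside $\Sigma$, these are finite products of copies of $\A_{K,\Sigma}$ and $O_{K,\Sigma}$ (resp.\ $\A_{K}$ and $K$).

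So the map of total complexes is termwise an open embedding. It therefore restricts to an open embedding on $1$-cocycles and induces a continuous open map on $\bar{H}^{1}$ at each finite level. In the limit the map is bijective by Corollary~\ref{keycor2}, and openness passes to the direct limit, so it is a homeomorphism. Compactness then transfers from \cite[Corollary A.16]{Dillery23b}.
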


In the setting of the previous Lemma, we also have the following analogue of \cite[Proposition A.13]{Dillery23b}:

\begin{Lemma}\label{lem:isog}
When $f$ is an isogeny, the natural map $H^{1}(\Gamma_{F_{\Sigma}/F}, \widehat{U} \xrightarrow{\hat{f}} \widehat{T}) \to H^{1}(W_{F_{\Sigma}/F}, \widehat{U} \xrightarrow{\hat{f}} \widehat{T})$ is an isomorphism.
\end{Lemma}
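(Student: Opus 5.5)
We argue as in \cite[Proposition A.13]{Dillery23b}, since the new ingredient is only that $W_{F_{\Sigma}/F}$ and $\Gamma_{F_{\Sigma}/F}$ replace the full Weil and Galois groups. The plan is to compare the two hypercohomology long exact sequences attached to the two-term complex $\widehat{U} \xrightarrow{\hat{f}} \widehat{T}$ (placed in degrees $0,1$), and to use the inflation map along the surjection $W_{F_{\Sigma}/F} \to \Gamma_{F_{\Sigma}/F}$. Set $A := \ker(\hat{f})$. Since $f$ is an isogeny, $\hat{f}$ is a surjective isogeny of complex tori, so $A$ is finite and the complex is quasi-isomorphic to $A[0]$. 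Thus both groups in question are identified, compatibly with inflation, with $H^{1}(\Gamma_{F_{\Sigma}/F}, A)$ and $H^{1}(W_{F_{\Sigma}/F}, A)$, where the Weil-group cohomology is continuous cohomology with $W_{F_{\Sigma}/F}$ acting through $\Gamma_{F_{\Sigma}/F}$. One small point to check is that the hypercohomology of the complex on the Weil side, as defined in \cite[\S A.4]{Dillery23b}, also reduces to that of $A$. This holds because $\widehat{T}$ is divisible and the definition uses continuous cochains, so the quasi-isomorphism argument goes through verbatim.

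It therefore suffices to show that inflation $H^{1}(\Gamma_{F_{\Sigma}/F}, A) \to H^{1}(W_{F_{\Sigma}/F}, A)$ is an isomorphism for a finite discrete module $A$ on which the kernel $I := \ker(W_{F_{\Sigma}/F} \to \Gamma_{F_{\Sigma}/F})$ acts trivially. We use the inflation--restriction sequence
\begin{equation*}
0 \to H^{1}(\Gamma_{F_{\Sigma}/F}, A) \to H^{1}(W_{F_{\Sigma}/F}, A) \to \Hom_{\tn{cts}}(I, A)^{\Gamma_{F_{\Sigma}/F}}.
\end{equation*}
Injectivity is then immediate, and surjectivity reduces to showing $\Hom_{\tn{cts}}(I,A)^{\Gamma}=0$. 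In the function field case, $W_{F_{\Sigma}/F}$ is the preimage of $\Z$ in $\Gamma_{F_{\Sigma}/F}$ under the map to $\widehat{\Z} = \Gamma_{\mathbb{F}_{q}}$, or rather the corresponding limit of relative Weil groups $W_{K/F}$. Its kernel to $\Gamma$ is then trivial on a dense subgroup, so $I$ is not a genuine quotient problem. Instead, $W_{F_{\Sigma}/F} \to \Gamma_{F_{\Sigma}/F}$ is injective with dense image. Continuous cocycles of $W$ valued in finite discrete $A$ are locally constant and extend uniquely to the profinite completion. Hence the right approach is to show that $\Gamma_{F_{\Sigma}/F}$ is the profinite completion of $W_{F_{\Sigma}/F}$ (more precisely, of each $W_{K/F}$ to $\Gamma_{K/F}$ at the level of finite-index open subgroups), and then that inflation on $H^{1}$ with finite coefficients is a bijection, by the standard density argument for $1$-cocycles.

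The main obstacle is this comparison of $W_{F_{\Sigma}/F}$ with $\Gamma_{F_{\Sigma}/F}$, because the relative Weil groups $W_{K/F}$ are extensions of $\Gamma_{K/F}$ by $C_{K}$ rather than subgroups. I would first handle it by passing to the limit, using Lemma \ref{keytechlem}: the limit of the $C_{K}$ (or of the $\Sigma$-id\`{e}le class groups $C_{K}(\Sigma)$) under transfer maps has connected/divisible part killed by finite $A$. What remains is to check that every continuous homomorphism from the limit kernel to $A$ which is $\Gamma$-invariant vanishes. I expect this to follow from the vanishing of $\Gamma$-invariant characters of finite order on $\varprojlim C_{K}(\Sigma)$, which comes from class field theory (such a character would factor through the norm-compatible system, and the transfer maps in the limit kill it). Failing that, one can fall back on the cited \cite[Proposition A.13]{Dillery23b} for $W_{F}$ versus $\Gamma_{F}$ and deduce our statement by a restriction/inflation compatibility along $\Gamma_{F} \to \Gamma_{F_{\Sigma}/F}$. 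This is valid because $A$ is unramified outside $\Sigma$ and, by Corollary \ref{keycor2}, both sides match their full-level counterparts.
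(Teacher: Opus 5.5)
Your reduction to the finite module $A=\ker(\hat f)$ is fine. But the step that carries all the content, namely surjectivity of $H^{1}(\Gamma_{F_{\Sigma}/F},A)\to H^{1}(W_{F_{\Sigma}/F},A)$, is never proved. You use three incompatible pictures of $W_{F_{\Sigma}/F}\to\Gamma_{F_{\Sigma}/F}$:
\begin{enumerate}
\item a quotient map with kernel $I$, which your inflation--restriction sequence needs;
\item an injection with dense image, which makes that sequence useless, since $\Hom_{\tn{cts}}(I,A)=0$ then says nothing about surjectivity onto $H^{1}$ of $W$;
\item again a map with a ``limit kernel'' whose ``connected/divisible part'' is killed by $A$. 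That is the number-field mechanism and has no counterpart for function fields.
\end{enumerate}
The profinite-completion statement you propose as ``the right approach'' is only asserted. The paper reruns the argument of \cite[Proposition A.13]{Dillery23b} with one new input: the universal norm group of the $\Sigma$-id\`{e}le class group of a global function field is trivial \cite[Theorem 8.3.15]{NSW}. This is what controls $\varprojlim_{K}C_{K}(\Sigma)$ under norms, i.e.\ the kernel of $W_{F_{\Sigma}/F}\to\Gamma_{F_{\Sigma}/F}$. Your sentence about characters being ``killed by the transfer maps in the limit'' does not supply it.

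The hedge ``$C_{K}$ (or $C_{K}(\Sigma)$)'' is not harmless either, because with the full id\`{e}le class groups the lemma is false. Take $A=\Z/p$ with trivial action, and let $\chi$ be the character of $C_{F}$ attached to an Artin--Schreier extension ramified at some $v\notin\Sigma$. Its pullback along $W_{K/F}\to W_{K/F}^{\tn{ab}}\cong C_{F}$ restricts to $\chi\circ N_{K/F}$ on $C_{K}$. This is nontrivial for every finite Galois $K\subseteq F_{\Sigma}$, because $K/F$ is unramified at $v$ and so $N_{K/F}(C_{K})$ contains the image of $O_{v}^{\times}$. Hence this class is never inflated from $\Gamma_{K/F}$, so identifying the correct $\Sigma$-Weil group and its kernel is exactly the crux.

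Your fallback also fails as stated. Inflation $H^{1}(\Gamma_{F_{\Sigma}/F},A)\to H^{1}(\Gamma_{F},A)$ is injective but not surjective, since there are classes ramified outside $\Sigma$. So the two sides do not ``match their full-level counterparts''. Moreover, Corollary~\ref{keycor2} concerns the adelic groups $\bar{H}^{1}(\A_{\Sigma}/\A_{F,\Sigma},T\to U)$, not the group cohomology of $W$ or $\Gamma$ with finite coefficients. To descend the full-level isomorphism, you would have to show that a class in $H^{1}(W_{F_{\Sigma}/F},A)$, transported to $H^{1}(\Gamma_{F},A)$, restricts trivially to $\Gamma_{F_{\Sigma}}$. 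That again needs the missing description of how $W_{F}$, $W_{F_{\Sigma}/F}$ and $\Gamma_{F_{\Sigma}}$ fit together.
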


\begin{proof}
The argument from the proof of \cite[Proposition A.13]{Dillery23b} works here, using that the universal norm group of the $\Sigma$-id\`{e}le class group of a global function field is trivial, by \cite[Theorem 8.3.15]{NSW}.
\end{proof}

We introduce some more objects that are used in this paper: We have 
\begin{equation*}
H^{1}(W_{F_{\Sigma}/F}, \widehat{U} \xrightarrow{\hat{f}} \widehat{T}) _{\tn{red}} := H^{1}(W_{F_{\Sigma}/F}, \widehat{U} \xrightarrow{\hat{f}} \widehat{T})/([\widehat{T}^{\Gamma_{F_{\Sigma}/F}}]^{\circ}),
\end{equation*}
along with
\begin{equation*}
\tn{ker}^{1}(W_{F_{\Sigma}/F}, \widehat{U} \to \widehat{T}):= \tn{ker}[H^{1}(W_{F_{\Sigma}/F}, \widehat{U} \xrightarrow{\hat{f}} \widehat{T}) \to \prod_{v \in V_{F}} H^{1}(W_{(F_{\Sigma})_{\dot{v}}/F_{v}}, \widehat{U} \to \widehat{T})],
\end{equation*}
 and define $\tn{ker}^{1}(W_{F_{\Sigma}/F}, \widehat{U} \to \widehat{T})_{\tn{red}}$ in the obvious way, and similarly for $W_{(F_{\Sigma})_{\dot{v}}/F_{v}}$ replaced by $\Gamma_{(F_{\Sigma})_{\dot{v}}/F_{v}}$. For a multiplicative group scheme $M$ defined over $O_{F,\Sigma}$ and split over $O_{\Sigma}$, define $\tn{ker}^{i}(O_{F,\Sigma}, M)$ as the kernel of the natural map
\begin{equation*}
\check{H}^{i}(O_{\Sigma}^{\tn{perf}}/O_{F,\Sigma}, M) \to \prod_{v \in V_{F}} \check{H}^{i}((F_{\Sigma})_{\dot{v}}/F_{v}, M_{F_{v}}),
\end{equation*}
similarly with $\tn{ker}^{i}(O_{F,\Sigma}, T \to U)$, and we define $\tn{cok}^{i}(O_{F,\Sigma}, T \to U)$ as the cokernel of the map 
\begin{equation*}
H^{i}(O_{\Sigma}^{\tn{perf}}/O_{F,\Sigma}, T \to U) \to H^{i}(\A_{\Sigma}/\A_{F,\Sigma}, T \to U).
\end{equation*}

The duality results for these groups which are relevant to our applications are summarized here:

\begin{Proposition}\label{prop:cokdual}
The pairings \eqref{barH1pairglob}, \eqref{barH1pairglobbis}, and \eqref{barH1pairloc} induce the following duality isomorphisms:
\begin{enumerate}
\item $\tn{cok}^{1}(O_{F,\Sigma}, T \to U) \xrightarrow{\sim} [H^{1}(W_{F_{\Sigma}/F}, \widehat{U} \to \widehat{T})_{\tn{red}}/\tn{ker}^{1}(W_{F_{\Sigma}/F}, \widehat{U} \to \widehat{T})_{\tn{red}}]^{D}$;
\item $\tn{ker}^{1}(O_{F,\Sigma}, T \to U) \xrightarrow{\sim} \tn{ker}^{1}(W_{F_{\Sigma}/F}, \widehat{U} \to \widehat{T})^{D}$.
\end{enumerate}
\end{Proposition}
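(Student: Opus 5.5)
The plan follows \cite[\S A.4, Proposition A.17]{Dillery23b} and \cite[\S C]{KS99}, which prove the same statements over $F$ itself. We transport those arguments to the $\Sigma$-integral setting using Corollary \ref{keycor2}, Proposition \ref{appH1pairing} and Lemma \ref{keytechlem}.

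\textbf{Step 1 (the long exact sequence).} The short exact sequence of double complexes
\[0 \to (T,U)(O_{\Sigma}^{\tn{perf}}{}^{\otimes \bullet}) \to (T,U)(\A_{\Sigma}^{\otimes \bullet}) \to (T,U)(\A_{\Sigma}^{\otimes\bullet})/(T,U)(O_{\Sigma}^{\tn{perf}}{}^{\otimes\bullet}) \to 0\]
gives a long exact sequence
\[H^{1}(O_{\Sigma}^{\tn{perf}}/O_{F,\Sigma}, T\to U) \to H^{1}(\A_{\Sigma}/\A_{F,\Sigma}, T \to U) \to \bar{H}^{1}(\A_{\Sigma}/\A_{F,\Sigma}, T\to U) \to H^{2}(O_{\Sigma}^{\tn{perf}}/O_{F,\Sigma}, T\to U) \to H^{2}(\A_{\Sigma}/\A_{F,\Sigma}, T \to U).\]
From it we read off two facts. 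First, $\tn{cok}^{1}$ is the image of $H^{1}(\A_{\Sigma}/\A_{F,\Sigma})$ in $\bar{H}^{1}$. Second, the kernel of the next map $\bar{H}^{1} \to H^{2}(O_{\Sigma}^{\tn{perf}})$ equals $\tn{cok}^{1}$.

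We identify the kernel of the last map with $\tn{ker}^{2}$-type data. By Proposition \ref{prop:resprod}, $H^{2}(\A_{\Sigma}/\A_{F,\Sigma})$ is the sum of local groups at $v \in \Sigma$ and unramified groups at $v \notin \Sigma$. The unramified $O_{v}^{\tn{nr}}/O_{v}$ groups inject into $H^{2}((F_{\Sigma})_{\dot v}/F_{v})$. Indeed, by Lang's theorem and the vanishing of $H^{i}$ for tori over unramified integral covers, they are essentially $H^{1}$ of the finite kernel. Hence $\tn{ker}^{2}(O_{F,\Sigma}, T\to U)$ is exactly the image of $\bar{H}^{1} \to H^{2}(O_{\Sigma}^{\tn{perf}})$.

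\textbf{Step 2 (part (1)).} Dualize the injection $\tn{cok}^{1} \hookrightarrow \bar{H}^{1}(\A_{\Sigma}/\A_{F,\Sigma}, T\to U)$. Proposition \ref{appH1pairing} identifies $\bar{H}^{1}(\ldots)^{D}$ with $H^{1}(W_{F_{\Sigma}/F}, \widehat U \to \widehat T)_{\tn{red}}$. So $(\tn{cok}^{1})^{D}$ is the quotient of $H^{1}(W_{F_{\Sigma}/F},\widehat{U}\to\widehat{T})_{\tn{red}}$ by the annihilator of $\tn{cok}^{1}$. It therefore suffices to show that this annihilator equals $\tn{ker}^{1}(W_{F_{\Sigma}/F},\widehat U\to\widehat T)_{\tn{red}}$.

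The global pairing \eqref{barH1pairglobbis} restricted to the image of $H^{1}(\A_{\Sigma}/\A_{F,\Sigma}) = \prod_{v} H^{1}(\cdot_{v})$ is the sum over $v$ of the local pairings \eqref{barH1pairloc}. This compatibility holds because it already holds for the pairings of \cite[\S A.4]{Dillery23b}, via Corollary \ref{keycor2} and inflation. At $v \notin \Sigma$ the integral groups pair with unramified classes. Given this, an element annihilates $\tn{cok}^{1}$ exactly when every localization pairs trivially with all local classes. By the local surjectivity-with-kernel statement in Proposition \ref{appH1pairing}, this is the same as all localizations lying in the reduced local kernels, i.e.\ lying in $\tn{ker}^{1}_{\tn{red}}$. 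We must also check that a globally trivial element which is locally in $(\widehat T^{\Gamma_{v}})^{\circ}$ is still detected correctly; this is the same bookkeeping as in \cite[Lemma C.3.A]{KS99}. Finally, we pass to the dual using compactness: $\tn{cok}^{1}$ is a closed subgroup, and the topologies come from \cite[Appendix A]{Dillery23b}.

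\textbf{Step 3 (part (2)).} Use Step 1 to write $\tn{ker}^{1}(O_{F,\Sigma},T\to U)$ as the cokernel of
\[H^{0}(\A_{\Sigma}/\A_{F,\Sigma}) \to \bar{H}^{0}(\A_{\Sigma}/\A_{F,\Sigma}),\]
one degree down. Alternatively, and I would try this first, argue as in \cite[Lemma C.3.C]{KS99}. Consider the nine-term diagram formed by the Step 1 sequence and the dual Poitou--Tate type sequence for $W_{F_{\Sigma}/F}$. Part (1) and its degree-0 analogue, coming from the $(i,j)=(1,0)$ isomorphism in the proof of Proposition \ref{appH1pairing}, give compatible dualities on all the other terms. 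The five lemma then yields $\tn{ker}^{1}(O_{F,\Sigma}) \cong \tn{ker}^{1}(W_{F_{\Sigma}/F})^{D}$. Everything is reduced to the $F$-level statement of \cite{Dillery23b} via Lemma \ref{keytechlem} and Corollary \ref{keycor2}: both sides are compared to the corresponding objects with $\A_{F_{\Sigma}}/\A$, and these agree.

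\textbf{Main obstacle.} The hardest point is the local analysis at $v \notin \Sigma$. We need that integral cohomology $H^{i}(O_{v}^{\tn{nr}}/O_{v}, T\to U)$ is exactly the orthogonal of the unramified dual classes under \eqref{barH1pairloc}. We also need that this holds uniformly enough to pass to restricted products, given that $T \to U$ has only finite (possibly non-étale) kernel. I would handle this with Lang's theorem on $O_{v}^{\tn{nr}}$ and the fact that $\widehat T$ is unramified at such $v$.
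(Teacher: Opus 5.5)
\textbf{Overall.} Your Steps 1--2 follow the paper's route for (1). That route takes the exact sequence $0 \to \tn{cok}^{1} \to \bar{H}^{1}(\A_{\Sigma}/\A_{F,\Sigma}, T \to U) \to \tn{ker}^{2} \to 0$ and dualizes it using Proposition~\ref{appH1pairing} and Proposition~\ref{prop:resprod}. The genuine gap is Step 3.

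\textbf{The gap in Step 3.} Dualizing the degree-$0$ sequence $0 \to \tn{cok}^{0} \to \bar{H}^{0} \to \tn{ker}^{1} \to 0$ places $\tn{ker}^{1}(O_{F,\Sigma}, T \to U)^{D}$ inside $\bar{H}^{0}(\A_{\Sigma}/\A_{F,\Sigma}, T \to U)^{D}$. The $(i,j)=(1,0)$ isomorphism you invoke identifies that group with an $H^{2}$ of the dual complex; for $U=1$ it is $H^{1}(W_{F_{\Sigma}/F}, \widehat{T}) = H^{2}(W_{F_{\Sigma}/F}, 1 \to \widehat{T})$. It is not built from $H^{1}(W_{F_{\Sigma}/F}, \widehat{U} \to \widehat{T})$. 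Your unspecified five-lemma diagram cannot undo this degree shift, so you would land on $\tn{ker}^{2}(W_{F_{\Sigma}/F}, \widehat{U}\to\widehat{T})$.

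In fact a $\tn{ker}^{1}$--$\tn{ker}^{1}$ duality fails as written. Take $T=\mathbb{G}_{m}$ and $U=1$.
\begin{itemize}
\item[(a)] On the torus side, $\tn{ker}^{1}(O_{F,\Sigma},\mathbb{G}_{m}) = \tn{Pic}(O_{F,\Sigma})$. Every class dies over $O_{\Sigma}$ by the capitulation input behind Lemma~\ref{keytechlem}, and every class is trivial at each $F_{v}$. For example, this group is $\mathbb{Z}/2$ for $C=\mathbb{P}^{1}$ and $\Sigma$ two places of degree $2$.
\item[(b)] On the dual side, $H^{1}(W_{F_{\Sigma}/F}, 1 \to \mathbb{C}^{\times}) = \mathbb{C}^{\times}$ injects into every local group, so its $\tn{ker}^{1}$ is trivial.
\end{itemize}

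\textbf{What the paper's argument gives for (2).} The paper runs both parts through $0 \to \tn{cok}^{i} \to \bar{H}^{i} \to \tn{ker}^{i+1} \to 0$ together with Proposition~\ref{appH1pairing}, which only controls $\bar{H}^{1}$. With $i=1$, the quotient term gives (1). The dual of the right-hand term $\tn{ker}^{2}(O_{F,\Sigma},T\to U)$ is the annihilator of $\tn{cok}^{1}$ in $H^{1}(W_{F_{\Sigma}/F},\widehat{U}\to\widehat{T})_{\tn{red}}$, i.e.\ the reduced $\tn{ker}^{1}$ (compare \cite[Lemma C.3.D]{KS99}). This is consistent with Lemma~\ref{lem:Pisog} plus Poitou--Tate for $P$, where $\tn{ker}^{1}(\Gamma_{F_{\Sigma}/F},X^{*}(P))$ is dual to $\tn{ker}^{2}$ of $P$, not $\tn{ker}^{1}$. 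So once Step 2 computes the annihilator, the correctly indexed second duality is immediate, and Step 3 should be dropped.

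\textbf{A slip in Step 2.} At $v \notin \Sigma$, the image of $H^{1}(\A_{\Sigma}/\A_{F,\Sigma},T\to U)$ contains only integral classes. By the unramified orthogonality you invoke in your last paragraph, these pair trivially with everything inflated from $W_{(F_{\Sigma})_{\dot v}/F_{v}}$. So annihilating $\tn{cok}^{1}$ imposes no condition at such $v$, and ``pairs trivially with all local classes'' is not what you get there.

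Your argument therefore identifies the annihilator with the classes that are locally trivial modulo $(\widehat{T}^{\Gamma_{v}})^{\circ}$ only at $v \in \Sigma$. This can be strictly larger than $\tn{ker}^{1}(W_{F_{\Sigma}/F},\widehat U\to\widehat T)_{\tn{red}}$ defined using all $v \in V_{F}$. Take $E=F\mathbb{F}_{q^{2}}$, $\Sigma$ two places of even degree, $T=R^{1}_{E/F}\mathbb{G}_{m}$ and $U=1$.
\begin{itemize}
\item[(a)] $\tn{cok}^{1}=0$, since the relevant local and integral $H^{1}$ all vanish.
\item[(b)] $H^{1}_{\tn{red}}=\{\pm1\}$, and $-1$ survives at inert places, so $\tn{ker}^{1}_{\tn{red}}$ is trivial.
\end{itemize}
So the dual-side $\tn{ker}^{1}$ must be taken over $v\in\Sigma$. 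On the $O_{F,\Sigma}$-side $\tn{ker}^{2}$ this makes no difference, since $H^{2}(O_{v}^{\tn{nr}}/O_{v},T\to U)=0$; this vanishing is also the correct reason behind your Step 1 injectivity claim.

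\textbf{Effect on the paper.} Neither issue affects the use in Theorem~\ref{canonclassthm}. That proof only needs $\tn{cok}^{1}(O_{F,\Sigma},T_{i}\to U_{i})^{D}$ to be a quotient of $H^{1}(\Gamma_{F_{\Sigma}/F},X^{*}(P_{\dot\Sigma,i}))$.
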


\begin{proof}
Both of these isomorphisms follow from combining Proposition \ref{appH1pairing} (the local and global cases) with Proposition \ref{prop:resprod} with the exact sequence 
\begin{equation*}
0 \to \tn{cok}^{i}(O_{F,\Sigma}, T \to U) \to \bar{H}^{i}(\A_{\Sigma}/\A_{F,\Sigma}, T \to U) \to \tn{ker}^{i+1}(O_{F,\Sigma}, T \to U) \to 0
\end{equation*}
(cf. the proof of \cite[Proposition A.19]{Dillery23b} for the first isomorphism and the proof of \cite[Lemma C.3.D]{KS99} for the second).
\end{proof}

We observe that when $T \xrightarrow{f} U$ is any isogeny, the identification from Lemma \ref{lem:isog} induces identifications $H^{1}(\Gamma_{F_{\Sigma}/F}, \widehat{U} \xrightarrow{\hat{f}} \widehat{T})_{\tn{red}} \xrightarrow{\sim} H^{1}(W_{F_{\Sigma}/F}, \widehat{U} \xrightarrow{\hat{f}} \widehat{T})_{\tn{red}}$ and $\tn{ker}^{1}(\Gamma_{F_{\Sigma}/F}, \widehat{U} \xrightarrow{\hat{f}} \widehat{T})_{\tn{red}} \xrightarrow{\sim} \tn{ker}^{1}(W_{F_{\Sigma}/F}, \widehat{U} \xrightarrow{\hat{f}} \widehat{T})_{\tn{red}}$. The last result needed is:

\begin{Lemma}\label{lem:Pisog}
When $f$ is an isogeny with kernel $P$, we have canonical identifications 
\begin{equation*}
H^{1}(\Gamma_{F_{\Sigma}/F}, \widehat{U} \xrightarrow{\hat{f}} \widehat{T})_{\tn{red}} \xrightarrow{\sim} H^{1}(\Gamma_{F_{\Sigma}/F}, X^{*}(P)), 
\end{equation*}
\begin{equation*}
\tn{ker}^{1}(\Gamma_{F_{\Sigma}/F}, \widehat{U} \xrightarrow{\hat{f}} \widehat{T})_{\tn{red}} \xrightarrow{\sim} \tn{ker}^{1}(\Gamma_{F_{\Sigma}/F}, X^{*}(P)).
\end{equation*}
\end{Lemma}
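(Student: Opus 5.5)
This is the $\Sigma$-restricted analogue of the corresponding statement in \cite[\S A]{Dillery23b}, and the plan is to copy that argument with $\Gamma$ replaced by $\Gamma_{F_{\Sigma}/F}$ throughout.

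The starting point is the dual of the exact sequence $1 \to P \to T \xrightarrow{f} U \to 1$ of multiplicative group schemes over $O_{F,\Sigma}$, all split over $O_{\Sigma}$. On character modules it gives
\begin{equation*}
0 \to X^{*}(U) \xrightarrow{f^{*}} X^{*}(T) \to X^{*}(P) \to 0.
\end{equation*}
Here $f^{*}$ is injective with finite cokernel because $f$ is an isogeny. On dual tori, $\hat{f}\colon \widehat{U} \to \widehat{T}$ is again an isogeny, and its kernel is $\tn{Hom}(X^{*}(P),\mathbb{C}^{\times}) \cong X^{*}(P)$ up to the usual Pontryagin identification. Indeed, applying $\tn{Hom}(-,\mathbb{C}^{\times})$ to the sequence above gives
\begin{equation*}
1 \to \tn{Hom}(X^{*}(P),\mathbb{C}^{\times}) \to \widehat{T} \to \widehat{U} \to 1,
\end{equation*}
and dually the complex $\widehat{U} \to \widehat{T}$ is quasi-isomorphic to the finite group $\ker(\hat f)$ placed in degree $0$. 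I will fix the convention so that this finite kernel is identified with $X^{*}(P)$ (or its dual, as the paper's conventions dictate), in the same way as in \cite{Dillery23b}.

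The first step is the hypercohomology long exact sequence for the two-term complex $\widehat{U} \to \widehat{T}$ (in degrees $0,1$) over $\Gamma_{F_{\Sigma}/F}$:
\begin{equation*}
\widehat{U}^{\Gamma} \to \widehat{T}^{\Gamma} \to H^{1}(\Gamma, \widehat{U} \to \widehat{T}) \to H^{1}(\Gamma,\widehat{U}) \to H^{1}(\Gamma, \widehat{T}).
\end{equation*}
Because $\hat f$ is an isogeny, $\widehat{U} \to \widehat{T}$ has finite kernel, so $H^{i}(\Gamma,\widehat U\to\widehat T)$ is computed by the kernel sequence. The cleanest route is to compare the cone with $\ker\hat{f}$ via the short exact sequence of complexes
\begin{equation*}
[\ker\hat f \to 1] \to [\widehat{U} \to \widehat{T}] \to [\widehat{U}/\ker \hat f \xrightarrow{\sim} \widehat{T}].
\end{equation*}
The last term is an acyclic complex up to the connected part: $\hat f$ surjects onto $\widehat T$ with finite kernel. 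So $H^{1}(\Gamma,\widehat U\to\widehat T)$ is related to $H^{1}(\Gamma,\ker\hat f)$ up to the image of $\widehat{T}^{\Gamma}$, and the precise effect is that the only discrepancy comes from $\pi_0$-versus-identity-component issues in $\widehat{T}^{\Gamma}$.

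Next I would check that the image of $(\widehat{T}^{\Gamma})^{\circ}$ is exactly the part killed when passing to $H^{1}(\Gamma, X^{*}(P))$, so that quotienting by it gives the stated isomorphism for the "red" groups. This is the content of \cite[Lemma A.14]{Dillery23b} (or its analogue there), whose argument uses only that $\Gamma$ acts through a profinite quotient on finitely generated modules. That hypothesis holds verbatim for $\Gamma_{F_{\Sigma}/F}$, since the tori split over $F_{\Sigma}$. An alternative is to invoke Lemma \ref{lem:isog} and the Weil-group version, but the Galois version is direct.

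For the second identification, I would run the same comparison locally for each $v$, with $\Gamma_{(F_{\Sigma})_{\dot v}/F_v}$, and use naturality of the isomorphism under restriction to decomposition groups. Passing to kernels of the localization maps then yields $\ker^{1}$ on both sides. The main obstacle is this last step: the reduced quotients are by different subgroups globally and locally, namely $(\widehat{T}^{\Gamma})^{\circ}$ versus $(\widehat{T}^{\Gamma_v})^{\circ}$. One therefore has to check that the quotient does not destroy injectivity when taking kernels, i.e.\ that an element that dies locally modulo $(\widehat T^{\Gamma_v})^\circ$ corresponds to one that dies in $H^{1}(\Gamma_v, X^{*}(P))$. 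I would handle this by noting that the local isomorphism also holds for the local red groups, so the localization map on red groups corresponds to restriction on $H^{1}(-,X^{*}(P))$, and the kernels match.
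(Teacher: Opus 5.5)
Your strategy of resolving $[\widehat{U}\to\widehat{T}]$ by $\ker\hat f$ on the dual side differs from the paper's and can be made to work. However, the step it rests on, the identification of $\ker\hat f$, is wrong as written. The group $\mathrm{Hom}(X^{*}(T),\mathbb{C}^{\times})$ is not $\widehat{T}$. Applying $\mathrm{Hom}(-,\mathbb{C}^{\times})$ to the character sequence only recovers $1\to P\to T\to U\to 1$ on complex points; note the arrow goes $T\to U$, not $\widehat{U}\to\widehat{T}$. Since $\widehat{T}=X^{*}(T)\otimes\mathbb{C}^{\times}$ and $\hat f=f^{*}\otimes\mathrm{id}$, you must instead \emph{tensor} $0\to X^{*}(U)\to X^{*}(T)\to X^{*}(P)\to 0$ with $\mathbb{C}^{\times}$. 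This shows that $\hat f$ is surjective, because $X^{*}(P)\otimes\mathbb{C}^{\times}=0$. It also gives $\ker\hat f=\mathrm{Tor}_{1}^{\mathbb{Z}}(X^{*}(P),\mathbb{C}^{\times})\cong\mathrm{Tor}_{1}^{\mathbb{Z}}(X^{*}(P),\mathbb{Q}/\mathbb{Z})\cong X^{*}(P)$, naturally and hence $\Gamma$-equivariantly. Your candidate $\mathrm{Hom}(X^{*}(P),\mathbb{C}^{\times})$ is the Pontryagin dual with the contragredient action. As a $\Gamma_{F_{\Sigma}/F}$-module it is in general not isomorphic to $X^{*}(P)$ (a twist by a character becomes the inverse twist). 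So this cannot be settled by ``fixing a convention'': with your identification you would be computing $H^{1}$ of the wrong module.

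Your handling of the subscript $\mathrm{red}$ is also off. The quotient complex $[\widehat{U}/\ker\hat f\to\widehat{T}]$ is acyclic outright, not ``up to the connected part''. So $\ker\hat f[0]\to[\widehat{U}\to\widehat{T}]$ is a quasi-isomorphism, and $H^{1}(\Gamma,\widehat{U}\to\widehat{T})\cong H^{1}(\Gamma,\ker\hat f)$ with no discrepancy. The image of $(\widehat{T}^{\Gamma})^{\circ}$ in this group is trivial, because $(\widehat{T}^{\Gamma})^{\circ}$ is divisible while $H^{1}(\Gamma,\ker\hat f)$ is killed by $|\ker\hat f|$. The same holds for each $\Gamma_{(F_{\Sigma})_{\dot v}/F_{v}}$. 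Hence for isogenies the reduction is vacuous both globally and locally, and no outside lemma is needed. The $\ker^{1}$ statement then follows at once from naturality of the quasi-isomorphism under restriction to decomposition groups, so the ``main obstacle'' you describe does not arise.

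With these fixes your argument is a sound alternative to the paper's, which works on the character side instead. Following the proof of \cite[Lemma C.3.C]{KS99}, the paper uses the exponential sequence $0\to X^{*}(\cdot)\to X^{*}(\cdot)\otimes\mathbb{C}\to\widehat{(\cdot)}\to 1$. This identifies $H^{1}(\Gamma,\widehat{U}\to\widehat{T})_{\mathrm{red}}$ with $H^{2}(\Gamma,X^{*}(U)\to X^{*}(T))$, and $\ker^{1}_{\mathrm{red}}$ with $\ker^{2}$. The quasi-isomorphism $[X^{*}(U)\to X^{*}(T)]\simeq X^{*}(P)[-1]$ then finishes. Your route is more elementary and shows the reduction is empty for isogenies. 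The paper's route is the uniform Kottwitz--Shelstad mechanism: it also handles non-isogenies, where quotienting by $(\widehat{T}^{\Gamma})^{\circ}$ genuinely matters, and it lands directly in the groups $H^{3-r}(\Gamma,X^{*}(U)\to X^{*}(T))$ of Proposition~\ref{dualitypropS}.
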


\begin{proof}
This is a consequence of the identifications 
\begin{equation*}
H^{1}(\Gamma_{F_{\Sigma}/F}, \widehat{U} \xrightarrow{\hat{f}} \widehat{T})_{\tn{red}} \xrightarrow{\sim} H^{2}(\Gamma_{F_{\Sigma}/F}, X^{*}(U) \to X^{*}(T))
\end{equation*}
and  $\tn{ker}^{1}(\Gamma_{F_{\Sigma}/F}, \widehat{U} \xrightarrow{\hat{f}} \widehat{T})_{\tn{red}} \xrightarrow{\sim} \tn{ker}^{2}(\Gamma_{F_{\Sigma}/F}, X^{*}(U) \to X^{*}(T))$, which follow from the proof of \cite[Lemma C.3.C]{KS99}. The former group is canonically identified with $H^{1}(\Gamma_{F_{\Sigma}/F}, X^{*}(P))$ and the latter with $\tn{ker}^{1}(\Gamma_{F_{\Sigma}/F}, X^{*}(P))$.
\end{proof}

\subsection{A Kaletha gerbe with restricted ramification}\hfill\\
The goal of this subsection is to define a variant of the gerbe $\tn{Kal}_{F}$ which is defined over $\tn{Spec}(O_{F,\Sigma})$ for a fixed finite set of places $\Sigma \subseteq V_{F}$---for technical reasons, we initially assume $|\Sigma| \geq 2$. Fix a co-final system $(E_{i},n_{i})$ of natural numbers $n_{i}$ and finite Galois extensions $E_{i}/F$ which are unramified outside of $\Sigma$, and a set of lifts $\dot{\Sigma}$ of $\Sigma$ to $V_{F_{\Sigma}}$, where we recall that $F_{\Sigma} = \bigcup E_{i}$ is the maximal Galois extension of $F$ unramified outside of $\Sigma$.

Set $P_{\dot{\Sigma},i}$ to be the $O_{F,\Sigma}$-group scheme Cartier dual to  $(\frac{1}{n_{i}}\mathbb{Z}/\mathbb{Z})[\Gamma_{E_{i}/F} \times \Sigma_{E_{i}}]_{0,0}$ consisting of all elements whose $[(\sigma,w)]$-coefficient is zero unless $\sigma^{-1}(w) \in \dot{\Sigma}^{(E_{i})}$. The first step is to define a ``level $i$'' canonical class $\xi_{i} \in H_{\tn{fppf}}^{2}(O_{F,\Sigma}, P_{\dot{\Sigma},i})$---as such, fix $i \in \mathbb{N}$.

By \cite[Lemma 1.12]{DLH26} we can find a cover $\{U_{i,1}, U_{i,2}\}$ of $\tn{Spec}(O_{E_{i}, \Sigma_{E_{i}}})$ with $\tn{Pic}(U_{i,j})=\{1\}$, where $U_{i,j} = \tn{Spec}(O_{E_{i}, \Sigma_{i,j,E_{i}}})$ for a finite sets of places $\Sigma_{i,j,E_{i}} \subset V_{E_{i}}$---note that $\tn{Spec}(O_{F,\Sigma}) = \bar{U}_{i,1} \cup \bar{U}_{i,2}$, where $\bar{U}_{i,j}$ denotes the image of $U_{i,j}$ in $C$. Denote the image of $\Sigma_{i,j,E_{i}}$ in $V_{F}$ by $\Sigma_{i,j}$. For a fixed $j \in \{1,2\}$, consider the map (8) from \cite{Dillery23b} (noting that $\Sigma_{i,j}$ satisfies \cite[Conditions 3.1]{Dillery23b} with respect to $E_{i}/F$)
\begin{equation}\label{JIMJ8}
\Hom_{O_{F,\Sigma_{i,j}}}(P_{E_{i}, \dot{(\Sigma_{i,j})_{E_{i}}, n_{i}}}, P_{\dot{\Sigma},i}) \to \check{H}^{2}(O_{\Sigma_{i,j}}^{\tn{perf}}/O_{F,\Sigma_{i,j}}, P_{\dot{\Sigma},i})
\end{equation}
and take the image of the homomorphism $P_{E_{i}, \dot{(\Sigma_{i,j})_{E_{i}}, n_{i}}} \to P_{\dot{\Sigma},i}$ induced by the canonical inclusion 
\begin{equation*}
(\frac{1}{n_{i}}\mathbb{Z}/\mathbb{Z})[\Gamma_{E_{i}/F} \times \Sigma_{E_{i}}]_{0,0} \to (\frac{1}{n_{i}}\mathbb{Z}/\mathbb{Z})[\Gamma_{E_{i}/F} \times (\Sigma_{i,j})_{E_{i}}]_{0,0};
\end{equation*}
denote the resulting class by $\xi_{i,j}$. Note that this does not yet determine a gerbe over $\tn{Spec}(O_{F,\Sigma})$, but rather over the open subscheme $\bar{U}_{i,j} = \tn{Spec}(O_{F,\Sigma_{i,j}})$. However, we have: 

\begin{Lemma}\label{gerbeoverlap}
The images of $\xi_{i,j}$ in $\check{H}^{2}(O_{\Sigma_{i,1} \cup \Sigma_{i,2}}^{\tn{perf}}/O_{F,\Sigma_{i,1} \cup \Sigma_{i,2}}, P_{\dot{\Sigma},i})$ coincide for $j=1,2$.
\end{Lemma}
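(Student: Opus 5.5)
The plan is to reduce the comparison to the functoriality of the map \eqref{JIMJ8} (the map (8) of \cite{Dillery23b}) under enlarging the set of places. Set $\Sigma' := \Sigma_{i,1} \cup \Sigma_{i,2}$. This set still satisfies \cite[Conditions 3.1]{Dillery23b} with respect to $E_{i}/F$: it contains $\Sigma$, hence all ramified places. Class number triviality is inherited, since $O_{E_{i},\Sigma'_{E_{i}}}$ is a localization of each $O_{E_{i},\Sigma_{i,j,E_{i}}}$, which has trivial Picard group. So we also have the map \eqref{JIMJ8} at level $\Sigma'$:
\begin{equation*}
\Hom_{O_{F,\Sigma'}}(P_{E_{i}, \dot{\Sigma}'_{E_{i}}, n_{i}}, P_{\dot{\Sigma},i}) \to \check{H}^{2}(O_{\Sigma'}^{\tn{perf}}/O_{F,\Sigma'}, P_{\dot{\Sigma},i}).
\end{equation*}
Here the lifts $\dot{\Sigma}'$ are chosen compatibly with those for $\Sigma_{i,1}$ and $\Sigma_{i,2}$, and all of them restrict to $\dot{\Sigma}$ on $\Sigma$.

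First, I would check that for each $j$ there is a commutative square. Its top row is the map \eqref{JIMJ8} for $\Sigma_{i,j}$, and its bottom row is the map for $\Sigma'$. The left vertical arrow is precomposition with the transition homomorphism $P_{E_{i},\dot{\Sigma}'_{E_{i}},n_{i}} \to P_{E_{i},\dot{(\Sigma_{i,j})}_{E_{i}},n_{i}}$. This homomorphism is Cartier dual to the inclusion of character modules induced by $\Sigma_{i,j} \subseteq \Sigma'$. The right vertical arrow is restriction along $O_{F,\Sigma_{i,j}} \to O_{F,\Sigma'}$ together with the inflation $O^{\tn{perf}}_{\Sigma_{i,j}} \to O^{\tn{perf}}_{\Sigma'}$. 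I expect to get commutativity by unwinding the construction of (8) in \cite{Dillery23b}. That map pushes forward a canonical Čech $2$-cocycle built from the Tate–Nakayama class, namely the canonical class of $T_{3,E_{i},\Sigma_{i,j}}$ together with a chosen $n_{i}$-th root. The needed input is that these cocycles for $\Sigma_{i,j}$ and $\Sigma'$ are compatible under the transition maps $T_{3,E_{i},\Sigma'} \to T_{3,E_{i},\Sigma_{i,j}}$, which is exactly Kottwitz's compatibility \cite{Kot14} of the canonical classes as $\Sigma$ grows. After this, one checks that the $n_{i}$-th roots can be taken compatibly, or that changing the choice of root changes the cocycle only by a coboundary, as in \cite{Dillery23b}.

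Second, both composites send the homomorphism $\iota_{j}$, which is induced by the inclusion $(\frac{1}{n_{i}}\Z/\Z)[\Gamma_{E_{i}/F} \times \Sigma_{E_{i}}]_{0,0} \hookrightarrow (\frac{1}{n_{i}}\Z/\Z)[\Gamma_{E_{i}/F} \times (\Sigma_{i,j})_{E_{i}}]_{0,0}$, to the same place. Precomposing $\iota_{j}$ with the transition map gives $\iota'$, the homomorphism induced by the single inclusion of $\Sigma_{E_{i}}$ into $\Sigma'_{E_{i}}$. This holds because the two inclusions of character modules compose, and because the condition that $\sigma^{-1}(w) \in \dot{\Sigma}$ is preserved under these inclusions. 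The commutative square then shows that the image of $\xi_{i,j}$ equals the image of $\iota'$ under the map \eqref{JIMJ8} at level $\Sigma'$. That image does not depend on $j$, which proves the lemma.

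I expect the main obstacle to be the first step: verifying that the map (8) is compatible with enlarging the set of places and with the corresponding change of lifts. If the literature only gives compatibility up to coboundary, that is enough, since we only need equality in $\check{H}^{2}$. I would reduce to this by noting that the Čech groups for $O_{\Sigma_{i,j}}^{\tn{perf}}$ map to those for $O_{\Sigma'}^{\tn{perf}}$ through the canonical isomorphisms with fppf cohomology, as set up in the notation section.
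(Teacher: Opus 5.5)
Your proposal is correct and follows the paper's argument, which consists only of citing the functoriality of the map \eqref{JIMJ8} in its first argument (\cite[\S 3.2]{Dillery23b}). Your commutative square for $\Sigma_{i,j} \subseteq \Sigma_{i,1} \cup \Sigma_{i,2}$, together with the observation that the composed character-module inclusions yield a single homomorphism independent of $j$, is exactly what that functoriality amounts to; you also make explicit the choice of lifts $\dot{\Sigma}' = \dot{\Sigma}_{i,1} \cup \dot{\Sigma}_{i,2}$ agreeing with $\dot{\Sigma}$ on $\Sigma$, which the paper leaves implicit.
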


\begin{proof}
This is a consequence of the functoriality (in the first argument) of the map \eqref{JIMJ8} (cf. \cite[\S 3.2]{Dillery23b}).
\end{proof}

\begin{Corollary}
There is a $P_{\dot{\Sigma},i}$-gerbe over $\tn{Spec}(O_{F,\Sigma})$ whose associated class in $\check{H}^{2}(O_{\Sigma}^{\tn{perf}}/O_{F,\Sigma}, P_{\dot{\Sigma},i})$, denoted by $\xi_{\Sigma,i}$, is canonical and restricts to $\xi_{i,j}$ on $\mathcal{O}(\bar{U}_{i,j})$ for $j=1,2$.
\end{Corollary}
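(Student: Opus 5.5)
We will glue the two gerbes along the overlap. Write $\bar{U}_{12} := \bar{U}_{i,1} \cap \bar{U}_{i,2} = \tn{Spec}(O_{F,\Sigma_{i,1} \cup \Sigma_{i,2}})$. The plan is to realize $\xi_{i,j}$ as actual $P_{\dot{\Sigma},i}$-gerbes $\mathcal{E}_{j} \to \bar{U}_{i,j}$. These are obtained via the identification of \v{C}ech cohomology for the pro-fppf covers $O^{\tn{perf}}_{\Sigma_{i,j}}/O_{F,\Sigma_{i,j}}$ with fppf cohomology, which is valid since $P_{\dot{\Sigma},i}$ is multiplicative of finite type. By Lemma \ref{gerbeoverlap}, the restrictions $\mathcal{E}_{1}|_{\bar{U}_{12}}$ and $\mathcal{E}_{2}|_{\bar{U}_{12}}$ have the same class. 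We therefore choose an isomorphism of banded gerbes $\phi \colon \mathcal{E}_{1}|_{\bar{U}_{12}} \xrightarrow{\sim} \mathcal{E}_{2}|_{\bar{U}_{12}}$. Since the cover has only two members, there is no cocycle condition on triple overlaps, so stack descent along the Zariski cover $\{\bar{U}_{i,1}, \bar{U}_{i,2}\}$ of $\tn{Spec}(O_{F,\Sigma})$ produces a $P_{\dot{\Sigma},i}$-gerbe $\mathcal{E}$ over $\tn{Spec}(O_{F,\Sigma})$ restricting to $\mathcal{E}_{j}$. We then transport its class to $\check{H}^{2}(O_{\Sigma}^{\tn{perf}}/O_{F,\Sigma}, P_{\dot{\Sigma},i})$ by the comparison isomorphism, again from \cite[Proposition 2.6]{Dillery23b}.

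Existence is routine; canonicity is the main obstacle. The choice of $\phi$ is ambiguous up to an automorphism of the banded gerbe over $\bar{U}_{12}$, that is, a $P_{\dot{\Sigma},i}$-torsor on $\bar{U}_{12}$. Hence the set of possible classes is a torsor under the image of the Mayer--Vietoris boundary
\[
\delta \colon H^{1}_{\tn{fppf}}(\bar{U}_{12}, P_{\dot{\Sigma},i}) \to H^{2}_{\tn{fppf}}(O_{F,\Sigma}, P_{\dot{\Sigma},i}).
\]
To pin down a canonical class, I would first try to show that this ambiguity disappears, i.e.\ that the restriction map
\[
H^{2}(O_{F,\Sigma}, P_{\dot{\Sigma},i}) \to H^{2}(\bar{U}_{i,1}, P_{\dot{\Sigma},i}) \times H^{2}(\bar{U}_{i,2}, P_{\dot{\Sigma},i})
\]
is injective, or at least injective on the relevant classes. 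Failing that, I would define $\xi_{\Sigma,i}$ by a normalization. In both cases the tool would be localization in the spirit of Proposition \ref{prop:cokdual}. Classes killed by restriction to both opens come from torsors on $\bar{U}_{12}$, and such classes are detected by their local components at the places of $(\Sigma_{i,1} \cup \Sigma_{i,2}) \setminus \Sigma$. Those places are unramified in $E_{i}$ and are not in $\dot{\Sigma}$. Consequently the character module of $P_{\dot{\Sigma},i}$ has no coefficients supported there, and the local $H^{2}$ contributions at these places vanish, since $P_{\dot{\Sigma},i}$ is split over the unramified local integers and $H^{2}(O_{v}^{\tn{nr}}/O_{v}, -)$ is trivial.

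If this vanishing does not quite hold at the required level, the fallback is to normalize. We would require the class to restrict correctly to $\xi_{i,j}$ and also to have localization at each $v \in \Sigma$ equal to the local canonical class from \cite{Dil23a}, checking on the overlap that the two candidate normalizations agree. In that case uniqueness only needs to hold modulo $\tn{ker}^{2}$, and independence of the choice of cover $\{U_{i,1}, U_{i,2}\}$ would follow by comparing any two covers through a common refinement.
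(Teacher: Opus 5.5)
Your existence argument is exactly the paper's proof. The paper's proof consists only of the Mayer--Vietoris gluing of the gerbes representing $\xi_{i,1}$ and $\xi_{i,2}$, along an isomorphism over $\bar{U}_{i,1}\cap\bar{U}_{i,2}$ provided by Lemma \ref{gerbeoverlap}. It says nothing further about how the result depends on that isomorphism. You are right that the glued class is a priori only determined modulo
$\mathrm{im}(\delta)=\ker\bigl(H^{2}(O_{F,\Sigma},P_{\dot{\Sigma},i})\to H^{2}(\bar{U}_{i,1},P_{\dot{\Sigma},i})\oplus H^{2}(\bar{U}_{i,2},P_{\dot{\Sigma},i})\bigr)$.
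The gap is in how you propose to remove this ambiguity: neither the injectivity argument nor the fallback can work.

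\emph{No localization sees these classes.} Since $(\Sigma_{i,1}\setminus\Sigma)\cap(\Sigma_{i,2}\setminus\Sigma)=\emptyset$, every $v\notin\Sigma$ is a point of one of the two opens, and $\Spec(F)$ maps to both. So every element of $\mathrm{im}(\delta)$ dies in $H^{2}(O_{v},-)$ for $v\notin\Sigma$, in $H^{2}(F_{v},-)$ for all $v$, and in $H^{2}(F,-)$. Consequently your fallback (normalizing by the local canonical classes at $v\in\Sigma$) imposes the same condition on every candidate and selects nothing. Likewise, ``uniqueness modulo $\tn{ker}^{2}$'' is vacuous, since the whole ambiguity already lies in $\tn{ker}^{2}$.

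\emph{The local vanishing claim is false.} It is true that $\delta(t)$ is determined by the residues of $t$ at the places of $(\Sigma_{i,1}\cup\Sigma_{i,2})\setminus\Sigma$. But these residues live in the cohomology with supports $H^{2}_{v}\cong H^{1}(F_{v},P_{\dot{\Sigma},i})/H^{1}(O_{v},P_{\dot{\Sigma},i})$, not in $H^{2}(O_{v}^{\tn{nr}}/O_{v},-)$. They are nonzero: for $\mu_{n}$ this quotient is $\mathbb{Z}/n\mathbb{Z}$, via the valuation. The fact that $X^{*}(P_{\dot{\Sigma},i})$ has no coefficients indexed by these places is irrelevant, since over $O_{v}$ the group is still \'{e}tale-locally a product of groups $\mu_{m}$.

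\emph{The resulting classes need not vanish.} They are of Picard type. Take $\tn{Res}_{O_{E_{i},\Sigma}/O_{F,\Sigma}}\mu_{n_{i}}$ (a finite product of copies of it surjects onto $P_{\dot{\Sigma},i}$) with the cover by the $U_{i,j}$, which have trivial Picard group. Shapiro's lemma and the Kummer sequence give $\mathrm{im}(\delta)\cong\mathrm{Pic}(O_{E_{i},\Sigma})/n_{i}$, because the Brauer part injects into $\mathrm{Br}(E_{i})$. Nontriviality of $\mathrm{Pic}(O_{E_{i},\Sigma})$ is precisely why the two-set cover was needed in the first place.

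So canonicity cannot come from localization at all. It would have to come from a canonical choice of $\phi$, for example a cochain-level refinement of Lemma \ref{gerbeoverlap} obtained from the functoriality of \eqref{JIMJ8}.
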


\begin{proof}
By Mayer-Vietoris (as in \cite[0A50]{stacks-project}, whose proof works for the flat topology) one can glue the two gerbes representing $\xi_{i,j}$ along an isomorphism obtained from Lemma \ref{gerbeoverlap} to obtain the desired gerbe.
\end{proof}

Note that there is a transition map $P_{\dot{\Sigma},\ell} \to P_{\dot{\Sigma},k}$ for $\ell \geq k$, as defined in \cite[pp. 2205-2206]{Dillery23b}. The following result also comes from the functoriality of the map \eqref{JIMJ8}:
\begin{Lemma}
For $\ell \geq k$, the image of $\xi_{\Sigma,\ell}$ in $\check{H}^{2}(O_{\Sigma}^{\tn{perf}}/O_{F,\Sigma}, P_{\dot{\Sigma},k})$ is $\xi_{\Sigma,k}$.
\end{Lemma}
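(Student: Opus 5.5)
The plan is to reduce the compatibility of the glued classes to compatibility on each member of the cover, and then to invoke functoriality of \eqref{JIMJ8} in both arguments, together with the corresponding compatibility for the Kaletha classes $\xi_{E,\dot{S},n}$ from \cite[\S 3]{Dillery23b}.

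\textbf{Choice of covers.} First I would fix the covers compatibly. The covers $\{U_{\ell,j}\}$ and $\{U_{k,j}\}$ were chosen independently, so I would pass to a common refinement. I would take finite sets of places $\Sigma'_{j} \supseteq \Sigma_{\ell,j} \cup \Sigma_{k,j}$ such that:
\begin{itemize}
\item the affine opens $\tn{Spec}(O_{F,\Sigma'_{1}})$ and $\tn{Spec}(O_{F,\Sigma'_{2}})$ still cover $\tn{Spec}(O_{F,\Sigma})$;
\item $\tn{Pic}$ of their preimages in $\tn{Spec}(O_{E_{\ell},\Sigma_{E_\ell}})$ is trivial.
\end{itemize}
Such sets exist by \cite[Lemma 1.12]{DLH26} applied to $E_{\ell}$. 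Triviality of the class group at level $E_{\ell}$ then also holds at level $E_{k}$ after enlarging, since conditions \cite[Conditions 3.1]{Dillery23b} are stable under enlarging the set of places. By Mayer--Vietoris uniqueness, the canonical glued class $\xi_{\Sigma,i}$ does not depend on the chosen cover. Its restriction to $\bar{U}$ for any admissible $\Sigma'$ is the image of the class built from \eqref{JIMJ8} with $\Sigma'$, which follows from the same functoriality used in Lemma \ref{gerbeoverlap}. So we may assume both levels use the same cover $\{\bar{U}_{1},\bar{U}_{2}\}$.

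\textbf{Reduction to the opens.} Next I would reduce to the opens. The sequence
\[
H^{1}(\bar{U}_{1}\cap\bar{U}_{2}, P_{\dot{\Sigma},k}) \to H^{2}(O_{F,\Sigma}, P_{\dot{\Sigma},k}) \to H^{2}(\bar{U}_{1}, P_{\dot{\Sigma},k}) \oplus H^{2}(\bar{U}_{2}, P_{\dot{\Sigma},k})
\]
is the Mayer--Vietoris sequence for the flat topology. The canonical class $\xi_{\Sigma,k}$ was defined by gluing along the specific isomorphism coming from \eqref{JIMJ8}. Hence it suffices to check two things:
\begin{enumerate}
\item on each $\bar{U}_{j}$, the image of $\xi_{\ell,j}$ under $P_{\dot{\Sigma},\ell}\to P_{\dot{\Sigma},k}$ equals $\xi_{k,j}$;
\item the gluing isomorphisms on the overlap are compatible.
\end{enumerate}
For (1), the homomorphism defining $\xi_{\ell,j}$ is the inclusion-induced $P_{E_{\ell},\dot{(\Sigma_{j})}_{E_\ell},n_{\ell}} \to P_{\dot{\Sigma},\ell}$. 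Composing with the transition map gives the composite
\[
P_{E_{\ell},\dot{(\Sigma_{j})},n_{\ell}} \to P_{E_{k},\dot{(\Sigma_{j})},n_{k}} \to P_{\dot{\Sigma},k},
\]
where the first map is the transition map of \cite[pp.~2205--2206]{Dillery23b}. This holds because both transition maps are defined by the same formula on character modules and commute with the inclusions of coefficient sets. Functoriality of \eqref{JIMJ8} in the target, combined with the compatibility $\xi_{E_{\ell}} \mapsto \xi_{E_{k}}$ of \cite[\S 3.2]{Dillery23b} (functoriality in the source, inflation from $E_{k}$ to $E_{\ell}$), then gives (1).

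\textbf{Main obstacle.} I expect step (2), the overlap, to be the main difficulty. The Mayer--Vietoris glued class is only determined up to the image of $H^{1}$ of the intersection unless the gluing datum is pinned down. I would handle this by working at the level of \v{C}ech cocycles. The map \eqref{JIMJ8} produces an explicit cocycle, namely the pushforward of the canonical cocycle, on $O^{\tn{perf}}_{\Sigma_{j}}$. The gluing isomorphism of Lemma \ref{gerbeoverlap} is induced by an explicit coboundary relating the two pushforwards over $O_{\Sigma_{1}\cup\Sigma_{2}}^{\tn{perf}}$, and this coboundary is itself natural in the homomorphism. Pushing these cochains along $P_{\dot{\Sigma},\ell}\to P_{\dot{\Sigma},k}$ then yields exactly the level-$k$ cochains, up to a coboundary that is again natural. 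Therefore the glued cocycles agree in $\check{H}^{2}(O_{\Sigma}^{\tn{perf}}/O_{F,\Sigma}, P_{\dot{\Sigma},k})$.
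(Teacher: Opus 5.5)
Your step (1) is exactly the paper's argument: the paper proves this lemma in one line, by functoriality of \eqref{JIMJ8}. Functoriality in the source gives the level-compatibility of the finite-level canonical classes, and functoriality in the target handles $p\colon P_{\dot{\Sigma},\ell}\to P_{\dot{\Sigma},k}$. You also correctly identify the real subtlety, namely the gluing, which the paper passes over. The problems are in the reductions you build around step (1).

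\textbf{The common refinement does not exist in general.} Since $\tn{Spec}(O_{F,\Sigma'_{j}})\subseteq\bar{U}_{\ell,j}\cap\bar{U}_{k,j}$, covering $\tn{Spec}(O_{F,\Sigma})$ forces $(\Sigma_{\ell,1}\cup\Sigma_{k,1})\cap(\Sigma_{\ell,2}\cup\Sigma_{k,2})=\Sigma$. This fails as soon as, say, $\Sigma_{\ell,1}\cap\Sigma_{k,2}$ contains a place outside $\Sigma$. Two independently chosen two-member covers only have the four-member common refinement $\{\bar{U}_{\ell,a}\cap\bar{U}_{k,b}\}_{a,b}$. Moreover, \cite[Lemma 1.12]{DLH26} produces a fresh cover, not one refining both given covers.

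\textbf{``Mayer--Vietoris uniqueness'' does not make the glued class cover-independent.} Mayer--Vietoris determines a class in $H^{2}_{\tn{fppf}}(O_{F,\Sigma},P_{\dot{\Sigma},k})$ from its restrictions to $\bar{U}_{1},\bar{U}_{2}$ only modulo $\delta(H^{1}(\bar{U}_{1}\cap\bar{U}_{2},P_{\dot{\Sigma},k}))$. This is nonzero in general: already for $\mu_{n}$ and a cover by Pic-trivial opens it contains $\tn{Pic}(O_{F,\Sigma})/n$. Your own last paragraph concedes this. So both the change of cover and step (2) depend entirely on the gluing isomorphisms being normalized compatibly with refinement and with $p$.

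\textbf{Step (2) is asserted, not proved.} You only say that the relevant coboundaries are ``natural,'' but they are not canonical. The $1$-cochains comparing the push-forward of the level-$\ell$ canonical cocycle with the level-$k$ one on each $\bar{U}_{j}$ are choices. So are the cochains realizing the overlap identification of Lemma \ref{gerbeoverlap}. Once step (1) holds, $p_{*}\xi_{\Sigma,\ell}-\xi_{\Sigma,k}$ lies in $\delta$ of $H^{1}$ of the overlap. It is therefore measured by $\tn{coker}[H^{1}(\bar{U}_{1},P_{\dot{\Sigma},k})\oplus H^{1}(\bar{U}_{2},P_{\dot{\Sigma},k})\to H^{1}(\bar{U}_{1}\cap\bar{U}_{2},P_{\dot{\Sigma},k})]$, and nothing in your argument kills this element.

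\textbf{What would close the gap.} You need to fix the normalization that makes $\xi_{\Sigma,i}$ canonical, and then verify that the transition maps respect it. One option is to choose covers nested in $i$, with $\bar{U}_{\ell,j}\subseteq\bar{U}_{k,j}$ and $\Sigma_{\ell,1}\cap\Sigma_{\ell,2}=\Sigma$, together with cocycle-level gluing data forming a compatible system. This is exactly the input that the paper's one-line appeal to functoriality tacitly takes for granted.
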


We thus obtain an element of $\varprojlim \check{H}^{2}(O_{\Sigma}^{\tn{perf}}/O_{F,\Sigma}, P_{\dot{\Sigma},k})$; the goal is to construct a canonical lift of this element in $\check{H}^{2}(O_{\Sigma}^{\tn{perf}}/O_{F,\Sigma}, P_{\dot{\Sigma}})$, which we turn to now, following the approach of \cite[\S 3.4]{Dillery23b}. For a place $v \in V_{F}$, define $\A_{v}^{\Sigma} := F_{\Sigma}^{\tn{perf}} \otimes_{F} F_{v}$. We have the following analogue of \cite[Lemma 3.19]{Dillery23b}, whose proof is the same:

\begin{Lemma}\label{adelicprojlim}
\begin{enumerate}
\item The natural map $\check{H}^{j}(\A_{v}^{\Sigma}/F_{v}, P_{\dot{\Sigma}}) \to \varprojlim \check{H}^{j}(\A_{v}^{\Sigma}/F_{v} P_{\dot{\Sigma},k})$ is an isomorphism for all $j$.
\item There is a canonical isomorphism $\check{H}^{1}(\A_{v}^{\Sigma}/F_{v}, P_{\dot{\Sigma}}) \xrightarrow{\sim} \check{H}^{1}((F_{\Sigma})_{\dot{v}}/F_{v}, P_{\dot{\Sigma}})$.
\end{enumerate}
\end{Lemma}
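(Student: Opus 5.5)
For (1), I will pass from the \v{C}ech complex of the projective limit to the projective limit of the \v{C}ech complexes and control the $\varprojlim^{1}$ term. The point is that $\A_{v}^{\Sigma} = F_{\Sigma}^{\tn{perf}} \otimes_{F} F_{v}$ is a product (direct limit of finite products) of copies of $(F_{\Sigma})^{\tn{perf}}_{\dot{v}}$ indexed by $\Gamma_{F_{\Sigma}/F}/\Gamma_{\dot{v}}$. Its tensor powers over $F_{v}$ are likewise described explicitly.

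Since $P_{\dot{\Sigma}} = \varprojlim_{k} P_{\dot{\Sigma},k}$ as group schemes, evaluation on each term of the \v{C}ech complex commutes with the limit. Concretely, $P_{\dot{\Sigma}}(R) = \varprojlim_{k} P_{\dot{\Sigma},k}(R)$ for any ring $R$, because $P_{\dot{\Sigma}}$ is represented by the colimit of the coordinate rings. Hence the \v{C}ech complex $C^{\bullet}(\A_{v}^{\Sigma}/F_{v}, P_{\dot{\Sigma}})$ is the inverse limit of the complexes $C^{\bullet}(\A_{v}^{\Sigma}/F_{v}, P_{\dot{\Sigma},k})$. The Milnor sequence then gives
\begin{equation*}
0 \to \varprojlim\nolimits^{1}_{k} \check{H}^{j-1}(\A_{v}^{\Sigma}/F_{v}, P_{\dot{\Sigma},k}) \to \check{H}^{j}(\A_{v}^{\Sigma}/F_{v}, P_{\dot{\Sigma}}) \to \varprojlim_{k} \check{H}^{j}(\A_{v}^{\Sigma}/F_{v}, P_{\dot{\Sigma},k}) \to 0.
\end{equation*}
This requires the transition maps of complexes to be termwise surjective, or at least Mittag-Leffler. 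That holds because each $P_{\dot{\Sigma},\ell} \to P_{\dot{\Sigma},k}$ is an fppf surjection of finite multiplicative groups, split over $F_{\Sigma}$, and the terms are points over perfect algebras in which the relevant $n$-th roots exist. The key input here is Lemma \ref{divlemma}: $(F_{\Sigma})^{\tn{perf},\times}_{\dot{v}}$ is divisible, and the kernels are products of $\mu_{n}$'s. After passing to the perfection, the $p$-part is harmless, so the point-sets surject.

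It then remains to kill $\varprojlim^{1}$. Each $\check{H}^{j-1}(\A_{v}^{\Sigma}/F_{v}, P_{\dot{\Sigma},k})$ is finite: by the decomposition of $\A_{v}^{\Sigma}$ and Shapiro, it is local fppf cohomology of a finite multiplicative group over $F_{v}$. An inverse system of finite groups is Mittag-Leffler, so $\varprojlim^{1} = 0$. This proves (1).

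For (2), I will apply the Shapiro-type decomposition, as in Proposition \ref{prop:resprod}, to $\A_{v}^{\Sigma}$. Since $\A_{v}^{\Sigma} = \varinjlim_{E} \prod_{w \mid v} E_{w}^{\tn{perf}}$ and $\Gamma_{F_{\Sigma}/F}$ acts transitively on the factors, the \v{C}ech complex for $\A_{v}^{\Sigma}/F_{v}$ is quasi-isomorphic to that for the single factor $(F_{\Sigma})_{\dot{v}}/F_{v}$. I will make the map explicit as projection onto the $\dot{v}$-component. Canonicity needs independence of the choice of $\dot{v}$ up to the action of $\Gamma_{F_{\Sigma}/F}$, and this action is trivial on $\check{H}^{1}$ because inner conjugation acts trivially on cohomology. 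Replacing the perfection $(F_{\Sigma})^{\tn{perf}}_{\dot{v}}$ by $(F_{\Sigma})_{\dot{v}}$ does not change cohomology, as in \cite[Lemma A.6]{Dillery23b}.

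The main obstacle is the surjectivity and Mittag-Leffler verification in (1) for the terms involving tensor powers of $\A_{v}^{\Sigma}$. For these I will rely on \cite[Lemma 3.19]{Dillery23b}, whose argument transfers once divisibility (Lemma \ref{divlemma}) replaces divisibility of $\ov{F_{v}}^{\times}$.
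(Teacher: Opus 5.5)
There is a genuine gap in (1), at the step where you kill $\varprojlim^{1}$, and it sits exactly in the degree that matters. Your skeleton is the natural one: write the \v{C}ech complex of $P_{\dot\Sigma}$ as the inverse limit of those of the $P_{\dot\Sigma,k}$ and apply the Milnor sequence. The paper itself only says that the proof of \cite[Lemma 3.19]{Dillery23b} carries over.

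You claim each $\check{H}^{j-1}(\A_v^{\Sigma}/F_v, P_{\dot\Sigma,k})$ is finite because it is local fppf cohomology of a finite multiplicative group. In characteristic $p$ this is false in degree $j-1=1$. The $n_k$ are cofinal in $\mathbb{N}^{\times}$, so for large $k$ the group $P_{\dot\Sigma,k}$ has a nontrivial infinitesimal $p$-primary part. For such a group $H^{1}_{\tn{fppf}}(F_v,-)$ is infinite; already $H^{1}_{\tn{fppf}}(F_v,\mu_p)=F_v^{\times}/F_v^{\times p}$ is infinite. So finiteness gives no control of $\varprojlim^{1}_{k}\check{H}^{1}(\A_v^{\Sigma}/F_v,P_{\dot\Sigma,k})$. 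That is exactly the term that must vanish for $j=2$, the degree relevant to $\xi_v^{(\Sigma)}$ and $x_{\Sigma}$.

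You need an input beyond finiteness, for example compactness. By local duality for finite flat group schemes over $F_v$, $H^{1}_{\tn{fppf}}(F_v,P_{\dot\Sigma,k})$ is, functorially in $k$, the Pontryagin dual of the discrete group $D_k:=H^{1}(F_v,P_{\dot\Sigma,k}^{D})$, where $P_{\dot\Sigma,k}^{D}$ is the \'{e}tale Cartier dual. Then $\varprojlim^{1}_{k}\Hom(D_k,\mathbb{Q}/\mathbb{Z})=0$, because $\Hom(-,\mathbb{Q}/\mathbb{Z})$ is exact: apply it to $0\to\bigoplus_k D_k\to\bigoplus_k D_k\to\varinjlim_k D_k\to 0$. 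Equivalently, $\varprojlim^{1}$ vanishes on countable towers of compact Hausdorff groups with continuous transition maps.

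There is a related error in (2). You drop the perfection ``as in \cite[Lemma A.6]{Dillery23b}'', but that comparison is for tori and does not apply to these coefficients. An infinitesimal multiplicative group has only the trivial point on reduced $\mathbb{F}_p$-algebras, and all tensor powers of the separable cover $(F_\Sigma)_{\dot v}/F_v$ are reduced. So the $p$-primary part of $P_{\dot\Sigma,k}$ is invisible to that \v{C}ech complex. The perfected cover, by contrast, computes $H^{\ast}_{\tn{fppf}}(F_v,-)$; for $\mu_p$ this is $F_v^{\times}/F_v^{\times p}$ in degree $1$ and $\frac{1}{p}\Z/\Z$ in degree $2$.

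What your Shapiro/refinement argument actually proves is the comparison with $(F_\Sigma)^{\tn{perf}}_{\dot v}/F_v$, i.e.\ with fppf cohomology. That is also what the subsequent application needs, since one pulls back $\xi_v^{\Sigma,\tn{mid}}\in H^{2}_{\tn{fppf}}(F_v,P_{\dot\Sigma})$. Because $P_{\dot\Sigma}$ is not of finite presentation, make that comparison level by level and pass to the limit using the Milnor sequences on both sides.

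A smaller point in (1): the higher tensor powers of $\A_v^{\Sigma}$ over $F_v$ are not perfect, nor even reduced, since $F_v^{1/p}\otimes_{F_v}F_v^{1/p}$ has nilpotents. Termwise surjectivity of $P_{\dot\Sigma,\ell}(R)\to P_{\dot\Sigma,k}(R)$ still holds, but for a different reason. These rings are semiperfect, so units have $p$-power roots. Prime-to-$p$ roots lift along the nilradical from the reduced quotient, where Lemma \ref{divlemma} applies.
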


\begin{Definition}
For $\Sigma$ a finite set of places of $F$ and $v \in \Sigma$, define $u_{v}^{\Sigma} := \varprojlim \tn{Res}_{E/F}(\mu)/\mu$, where the limit is over all finite Galois $F_{v} \subseteq E \subseteq (F_{\Sigma})_{\dot{v}}$; we have a canonical class $\xi_{v}^{\Sigma}$ in $H_{\tn{fppf}}^{2}(F_{v}, u^{\Sigma}_{v})$ given by taking the image of the canonical class in $H_{\tn{fppf}}^{2}(F_{v}, u_{v})$ defining $\tn{Kal}_{v}$ under the map $u_{v} \to u^{\Sigma}_{v}$ induced by a projective limit of field norms; the class in $H_{\tn{fppf}}^{2}(F_{v}, u_{v}^{\Sigma})$ does not depend on the choice of $\dot{v}$. Denote the corresponding gerbe over $F_{v}$ for a choice of cocycle representative by $\tn{Kal}_{v}^{(\Sigma)}$. 
\end{Definition}

Observe that the composition $u_{v} \to P_{F_{v}} \to (P_{\dot{\Sigma}})_{F_{v}}$ (the first map is the ``localization map'' from \cite[p. 27]{Dillery23b}) factors through the projection $u_{v} \to u_{v}^{\Sigma}$. Denote by $\xi_{v}^{(\Sigma)} \in \check{H}^{1}(\A_{v}^{\Sigma}/F_{v}, P_{\dot{\Sigma}})$ the preimage under the isomorphism (2) from Lemma \ref{adelicprojlim} of the class $\xi^{\Sigma,\tn{mid}}_{v} \in H_{\tn{fppf}}^{2}(F_{v}, P_{\dot{\Sigma}})$ obtained by taking the image of $\xi_{v}^{\Sigma} \in H_{\tn{fppf}}^{2}(F_{v}, u_{v})$ via the induced morphism $u^{\Sigma}_{v} \to (P_{\dot{\Sigma}})_{F_{v}}$.

The decomposition 
\begin{equation*}
\check{H}^{2}(\mathbb{A}_{\Sigma}/\mathbb{A}_{F,\Sigma}, P_{\dot{\Sigma}}) = \prod_{v \in \Sigma} \check{H}^{2}(\mathbb{A}_{v}^{\Sigma}/F_{v}, P_{\dot{\Sigma}})
\end{equation*}
shows (cf. \cite[pp. 2213-2214]{Dillery23b}) that there is a canonical class $x_{\Sigma} \in \check{H}^{2}(\mathbb{A}_{\Sigma}/\mathbb{A}_{F,\Sigma}, P_{\dot{\Sigma}})$ which localizes to each $\xi_{v}^{(\Sigma)}$ via this product decomposition. Our desired canonical class in $\check{H}^{2}(O_{\Sigma}^{\tn{perf}}/O_{F,\Sigma}, P_{\dot{\Sigma}})$ will be obtained by finding a simultaneous lift of $\varprojlim \xi_{\Sigma,k}$ and $x_{\Sigma}$.

First, some setup is needed: The projective system $\{P_{\dot{\Sigma},i}\}$ fits into a short exact sequence of projective systems of tori $\{T_{i}\}$, $\{U_{i}\}$
\begin{equation*}
1 \to \{P_{\dot{\Sigma},i}\} \to \{T_{i}\} \to \{U_{i}\} \to 1
\end{equation*}
such that each $T_{i+1} \to T_{i}$ is surjective with kernel a torus (this also holds for each $U_{i+1} \to U_{i}$); set $T = \varprojlim T_{i}$ and $U = \varprojlim U_{i}$. For the details of this construction, see \cite[Lemma 3.5.1]{Kaletha18}.

\begin{Theorem}\label{canonclassthm}
There is a unique class in $\check{H}^{2}(O_{\Sigma}^{\tn{perf}}/O_{F,\Sigma}, P_{\dot{\Sigma}})$ lifting $\varprojlim \xi_{\Sigma,k}$ whose image in $H^{2}(\A_{\Sigma}/\A_{F,\Sigma}, T \to U)$ coincides with the image of $x_{\Sigma}$ in the group $H^{2}(\A_{\Sigma}/\A_{F,\Sigma}, T \to U)$. 
\end{Theorem}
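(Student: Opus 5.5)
We follow the template of \cite[\S 3.4]{Dillery23b} (itself modelled on \cite[\S 3.5]{Kaletha18}), with the unramified complexes of tori from \S\ref{sec:unram} replacing their unrestricted counterparts. Since $P_{\dot{\Sigma}} = \ker(T \to U)$ with $T \to U$ surjective, we identify $\check{H}^{2}(O_{\Sigma}^{\tn{perf}}/O_{F,\Sigma}, P_{\dot{\Sigma}})$ with the hypercohomology $H^{2}(O_{\Sigma}^{\tn{perf}}/O_{F,\Sigma}, T \to U)$, placing $T$ in degree $0$ (up to the usual shift convention). We do the same at each finite level, $\check{H}^{2}(P_{\dot{\Sigma},k}) = H^{2}(T_{k} \to U_{k})$. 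The theorem then becomes a statement about the map
\begin{equation*}
H^{2}(O_{\Sigma}^{\tn{perf}}/O_{F,\Sigma}, T \to U) \to \varprojlim_{k} H^{2}(O_{\Sigma}^{\tn{perf}}/O_{F,\Sigma}, T_{k} \to U_{k}) \times_{\varprojlim_{k} H^{2}(\A_{\Sigma}/\A_{F,\Sigma}, T_{k}\to U_{k})} H^{2}(\A_{\Sigma}/\A_{F,\Sigma}, T \to U).
\end{equation*}
We must show that $(\varprojlim \xi_{\Sigma,k}, x_{\Sigma})$ lies in the fiber product and that the fiber over it has exactly one element.

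\textbf{Compatibility.} First we check that $\varprojlim \xi_{\Sigma,k}$ and $x_{\Sigma}$ have the same image in $\varprojlim_{k} H^{2}(\A_{\Sigma}/\A_{F,\Sigma}, P_{\dot{\Sigma},k})$. By Proposition \ref{prop:resprod} and Lemma \ref{adelicprojlim}, this reduces to a place-by-place check. For $v \notin \Sigma$ the relevant groups $H^{2}(O_{v}^{\tn{nr}}/O_{v}, -)$ vanish, because $H^{2}$ of a finite multiplicative group over a henselian dvr with unramified cover vanishes. For $v \in \Sigma$ we compare the localization of $\xi_{i,j}$ with the image of the local canonical class. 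This is the same computation as in \cite[\S 3.4]{Dillery23b}, via functoriality of \eqref{JIMJ8} with respect to localization, and we only need it on one of the two charts $\bar U_{i,j}$ containing $v$.

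\textbf{Existence.} Choose any lift $\eta \in H^{2}(O_{\Sigma}^{\tn{perf}}/O_{F,\Sigma}, P_{\dot{\Sigma}})$ of $\varprojlim\xi_{\Sigma,k}$. Surjectivity of the map to $\varprojlim$ follows from a Milnor $\varprojlim^{1}$ sequence. The relevant $\varprojlim^{1}$ of $H^{1}(O_{\Sigma}^{\tn{perf}}/O_{F,\Sigma}, P_{\dot{\Sigma},k})$ vanishes because these groups are finite, by finiteness of unramified cohomology of finite modules. Next we correct $\eta$ so that its adelic image in $H^{2}(\A_{\Sigma}/\A_{F,\Sigma}, T\to U)$ equals that of $x_{\Sigma}$. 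The discrepancy lies in the kernel of the map to $\varprojlim_{k}$, which is a $\varprojlim^{1}$ term of $H^{1}(\A_{\Sigma}/\A_{F,\Sigma}, T_{k} \to U_{k})$.

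\textbf{Main obstacle: uniqueness, and absorbing the discrepancy.} We must show that the discrepancy is hit by $\varprojlim^{1}_{k} H^{1}(O_{\Sigma}^{\tn{perf}}/O_{F,\Sigma}, T_{k}\to U_{k})$. We must also show that the kernel of $\varprojlim^{1}_{k}H^{1}(O_{\Sigma}^{\tn{perf}}/O_{F,\Sigma},T_{k}\to U_{k}) \to \varprojlim^{1}_{k}H^{1}(\A_{\Sigma}/\A_{F,\Sigma},T_{k}\to U_{k})$ vanishes. Using the exact sequence in the proof of Proposition \ref{prop:cokdual}, both statements reduce to two facts. The first is the vanishing of $\varprojlim^{1}_{k}\tn{ker}^{1}(O_{F,\Sigma}, T_{k}\to U_{k})$. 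The second is surjectivity of $\varprojlim^{1}_{k}$ of $H^{1}(O)\to H^{1}(\A)$ onto the $\tn{cok}^{1}$ part, i.e.\ the vanishing of $\varprojlim^{1}_{k}\tn{cok}^{1}(O_{F,\Sigma},T_{k}\to U_{k})$. By Proposition \ref{prop:cokdual} and Lemma \ref{lem:Pisog}, the $\tn{ker}^{1}$ groups are finite, being dual to $\tn{ker}^{1}(\Gamma_{F_{\Sigma}/F},X^{*}(P_{\dot{\Sigma},k}))$, so their $\varprojlim^{1}$ vanishes. The $\tn{cok}^{1}$ groups are compact, as quotients of the compact groups $\bar H^{1}$ for isogenies. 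They are also Pontryagin dual to $H^{1}(\Gamma_{F_{\Sigma}/F},X^{*}(P_{\dot{\Sigma},k}))/\tn{ker}^{1}$, and the transition maps between these duals are injective as $k$ grows. Hence the compact system has surjective transition maps and its $\varprojlim^{1}$ vanishes. This step requires $|\Sigma|\ge 2$, via Lemma \ref{divlemma}, to control the $\bar H$ groups at $v\in\Sigma$, in parallel with the divisibility argument of \cite{Dillery23b}. We expect verifying the injectivity of the dual transition maps, i.e.\ the Mittag--Leffler condition, to be the delicate point. We would handle it as in \cite[Lemma 3.5.x]{Kaletha18}, using the explicit description of $X^{*}(P_{\dot{\Sigma},k})$ and the requirement that every place of $\Sigma$ has a lift in $\dot{\Sigma}$.
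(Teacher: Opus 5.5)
The argument you give for the main step has a genuine gap. Let $I_k$ denote the image of $H^{1}(O_{\Sigma}^{\tn{perf}}/O_{F,\Sigma}, T_k \to U_k)$ in $H^{1}(\A_{\Sigma}/\A_{F,\Sigma}, T_k \to U_k)$.

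\begin{itemize}
\item Vanishing of $\varprojlim^{1}\tn{ker}^{1}_k$ identifies $\varprojlim^{1}$ of the source of \eqref{keyisom1} with $\varprojlim^{1}_k I_k$.
\item Vanishing of $\varprojlim^{1}\tn{cok}^{1}_k$ gives surjectivity of \eqref{keyisom1}.
\item Neither fact controls the kernel. The six-term sequence for $0 \to I_k \to H^{1}(\A_{\Sigma}/\A_{F,\Sigma}, T_k \to U_k) \to \tn{cok}^{1}_k \to 0$ shows that the kernel of $\varprojlim^{1}_k I_k \to \varprojlim^{1}_k H^{1}(\A_{\Sigma}/\A_{F,\Sigma}, T_k \to U_k)$ is the image of the connecting map out of $\varprojlim_k \tn{cok}^{1}_k$.
\end{itemize}

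For injectivity you need $\varprojlim_k \tn{cok}^{1}_k = 0$ (the limit, not $\varprojlim^{1}$). This is exactly the arithmetic core of the paper's proof. By Proposition \ref{prop:cokdual} and Lemma \ref{lem:Pisog} it reduces to $\varinjlim_k H^{1}(\Gamma_{F_{\Sigma}/F}, X^{*}(P_{\dot{\Sigma},k})) = 0$. That vanishing is proved from \eqref{JIMJeq1}, from the vanishing of $\mathbb{Q}/\mathbb{Z}[\Sigma_{E_i}]_{0} \to \varinjlim_{j} \mathbb{Q}/\mathbb{Z}[\Sigma_{E_j}]_{0}$ (local degrees at $v \in \Sigma$ become cofinal after enlarging the constant field), and from \cite[Lemma 3.4.4]{Kaletha18}.

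Your proposed delicate point, injectivity of the dual transition maps, goes the wrong way. The paper shows every class in these $H^{1}$'s dies in the colimit. Injective transition maps into a zero colimit would force every $H^{1}(\Gamma_{F_{\Sigma}/F}, X^{*}(P_{\dot{\Sigma},k}))/\tn{ker}^{1}$, and hence every $\tn{cok}^{1}_k$, to vanish. Moreover, compactness alone already gives $\varprojlim^{1}\tn{cok}^{1}_k = 0$, since these are Pontryagin duals of discrete groups. So no Mittag--Leffler check is needed there.

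Finally, $|\Sigma| \geq 2$ enters the paper at the compatibility step, not in the $\varprojlim^{1}$ analysis. It is needed for the identification \eqref{mainpfeq}, which requires $T_i \to U_i$ to be surjective on $(F_{\Sigma})^{\tn{perf}}_{\dot{v}}$-points via Lemma \ref{divlemma}.

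That said, your existence paragraph already contains the fact that makes this whole step unnecessary, and you did not follow it through.

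\begin{itemize}
\item The group $\check{H}^{1}(O_{\Sigma}^{\tn{perf}}/O_{F,\Sigma}, P_{\dot{\Sigma},k})$ embeds in $H^{1}_{\tn{fppf}}(O_{F,\Sigma}, P_{\dot{\Sigma},k})$. This is finite by Kummer theory over $O_{E_k,\Sigma}$, finite generation of $\Sigma$-units, finiteness of the Picard group, and Hochschild--Serre for the finite \'etale Galois cover. So its $\varprojlim^{1}$ vanishes.
\item Hence $\check{H}^{2}(O_{\Sigma}^{\tn{perf}}/O_{F,\Sigma}, P_{\dot{\Sigma}}) \to \varprojlim_k \check{H}^{2}(O_{\Sigma}^{\tn{perf}}/O_{F,\Sigma}, P_{\dot{\Sigma},k})$ is injective, and uniqueness needs no adelic input at all.
\item For existence, the discrepancy lies in $\varprojlim^{1}_k H^{1}(\A_{\Sigma}/\A_{F,\Sigma}, T_k \to U_k)$. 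This vanishes because $\varprojlim^{1}_k I_k = 0$ (each $I_k$ is finite) and $\varprojlim^{1}_k \tn{cok}^{1}_k = 0$ (compactness).
\end{itemize}

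So, granted the compatibility step, the unique lift automatically has the correct adelic image. In this restricted-ramification setting both sides of \eqref{keyisom1} are zero. The paper transplants Kaletha's argument over $F$, where $H^{1}(F, P_k)$ is typically infinite and $\varprojlim^{1}$ genuinely matters, and so does more work than is needed here.

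As written, however, your proposal asserts this vanishing and then treats $\varprojlim^{1}_k H^{1}(O_{\Sigma}^{\tn{perf}}/O_{F,\Sigma}, T_k \to U_k)$ as a group that must absorb a nonzero discrepancy. It then tries to establish injectivity through the invalid reduction above. You should reorganize the argument around the finiteness.
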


\begin{proof}
This argument will follow the proof of \cite[Proposition 3.21]{Dillery23b}, which itself follows the proof of the analogous result in \cite[Proposition 3.5.2]{Kaletha18}---the purpose of this proof will be to summarize the arguments loc. cit. and explain why they still apply in our situation.

The map $\check{H}^{2}(O_{\Sigma}^{\tn{perf}}/O_{F,\Sigma}, P_{\dot{\Sigma}}) \to \varprojlim \check{H}^{2}(O_{\Sigma}^{\tn{perf}}/O_{F,\Sigma}, P_{\dot{\Sigma},i})$ is always surjective, so we can take $\tilde{\xi}$ a lift of $\varprojlim \xi_{\Sigma,k}$. By construction, the images of $x_{\Sigma}$ and $\tilde{\xi}$ in $\varprojlim H^{2}(\A_{\Sigma}/\A_{F,\Sigma}, T_{i} \to U_{i})$ coincide, and there is an identification
\begin{equation}\label{mainpfeq}
H^{2}(\A_{\Sigma}/\A_{F,\Sigma}, T_{i} \to U_{i}) = \prod_{v \in \Sigma} H_{\tn{fppf}}^{2}(F_{v}, (P_{\dot{\Sigma},i})_{F_{v}})
\end{equation}
by \cite[Theorem C.1.B]{KS99} (the same arguments work in our setting) and the surjectivity of $T_{i}((F_{\Sigma})_{\dot{v}}^{\tn{perf}}) \to U_{i}((F_{\Sigma})_{\dot{v}}^{\tn{perf}})$. This latter surjectivity is deduced from the vanishing of $H_{\tn{fppf}}^{1}((F_{\Sigma})_{\dot{v}}^{\tn{perf}}, P_{\dot{\Sigma},i})$, which reduces to the case of $\mu_{n}$, where it follows from the divisibility of the group $(F_{\Sigma})_{\dot{v}}^{\tn{perf},\times}$ as in Lemma \ref{divlemma}.

With the equality \eqref{mainpfeq} established, the identical arguments in \cite{Kaletha18} show that the images of $\tilde{\xi}$ and $x_{\Sigma}$ coincide in $\varprojlim H^{2}(\A_{\Sigma}/\A_{F,\Sigma}, T_{i} \to U_{i})$. To upgrade this equality, we need to prove that the natural map
\begin{equation}\label{keyisom1}
\varprojlim{^{(1)}}\check{H}^{1}(O_{\Sigma}^{\tn{perf}}/O_{F,\Sigma}, P_{\dot{\Sigma},i}) \to \varprojlim{^{(1)}}H^{1}(\A_{\Sigma}/\A_{F,\Sigma}, T_{i} \to U_{i})
\end{equation}
is an isomorphism.

Replacing the results from \cite[Appendix C]{KS99} in \cite{Kaletha18} with their analogues in our \S \ref{sec:unram} shows that the above isomorphism holds if we can prove the vanishing of 
\begin{equation*}
\varprojlim \tn{cok}^{1}(O_{F,\Sigma}, T_{i} \to U_{i}),
\end{equation*}
cf. \S \ref{sec:unram} for the definition of $\tn{cok}^{1}$. Then by Lemma \ref{lem:isog} and Proposition \ref{prop:cokdual} we have that $\tn{cok}^{1}(O_{F,\Sigma}, T_{i} \to U_{i})$ is Pontryagin dual to 
\begin{equation*}
H^{1}(\Gamma_{F_{\Sigma}/F}, \widehat{U} \to \widehat{T})_{\tn{red}}/\tn{ker}^{1}(\Gamma_{F_{\Sigma}/F}, \widehat{U} \to \widehat{T})_{\tn{red}},
\end{equation*}
and by Proposition \ref{prop:cokdual} and Lemma \ref{lem:Pisog} there are canonical identifications
\begin{equation*}
H^{1}(\Gamma_{F_{\Sigma}/F}, \widehat{U} \to \widehat{T})_{\tn{red}} = H^{1}(\Gamma_{F_{\Sigma}/F}, X^{*}(P_{\dot{\Sigma},i})), 
\end{equation*}
\begin{equation*}
\tn{ker}^{1}(\Gamma_{F_{\Sigma}/F}, \widehat{U} \to \widehat{T})_{\tn{red}} = \tn{ker}^{1}(\Gamma_{F_{\Sigma}/F}, X^{*}(P_{\dot{\Sigma},i}));
\end{equation*}
again, see \S \ref{sec:unram} for the notation ``$\tn{ker}^{1}$''.

It thus suffices to show that $\varinjlim H^{1}(\Gamma_{F_{\Sigma}/F}, X^{*}(P_{\dot{\Sigma},i})) = 0$. First, fit $X^{*}(P_{\dot{\Sigma},i})$ into the exact sequence (see the proof of \cite[Corollary 3.12]{Dillery23b})
\begin{equation}\label{JIMJeq1}
0 \to X^{*}(P_{\dot{\Sigma},i})\to \mathbb{Q}/\mathbb{Z}[\Gamma_{E_{i}/F} \times \Sigma]_{0} \to \mathbb{Q}/\mathbb{Z}[\Sigma_{E_{i}}]_{0} \to 0.
\end{equation}
For a fixed $i$, we claim that map $\mathbb{Q}/\mathbb{Z}[\Sigma_{E_{i}}]_{0} \to \varinjlim_{j \geq i} \mathbb{Q}/\mathbb{Z}[\Sigma_{E_{j}}]_{0}$ is zero; to see this, use equation (10) and the sentences proceeding it in \cite{Dillery23b}, noting that at each $v \in \Sigma$, the extension $(F_{\Sigma})_{\dot{v}}/F_{v}$ can (because one can always enlarge the constant field) be written as a union of finite extensions whose degrees over $F_{v}$ are a cofinal system in $\mathbb{N}^{\times}$.

The long exact sequence associated to \eqref{JIMJeq1} and the above observation thus reduces us to showing that $\varinjlim H^{1}(\Gamma_{F_{\Sigma}/F}, \mathbb{Q}/\mathbb{Z}[\Gamma_{E_{i}/F} \times \Sigma]_{0}) = 0$, which follows from an identical argument as in \cite[Lemma 3.4.4]{Kaletha18}---this establishes the isomorphism \eqref{keyisom1}. From here, the argument is totally unchanged from the one loc. cit., but we summarize it anyway for expository completeness.

The isomorphism \eqref{keyisom1} means that we may modify $\tilde{\xi}$ by an element of 
\begin{equation*}
\varprojlim {^{(1)}} \check{H}^{1}(O_{\Sigma}^{\tn{perf}}/O_{F,\Sigma}, P_{\dot{\Sigma},i})
\end{equation*}
so that the images of $\tilde{\xi}$ and $x_{\Sigma}$ in $H^{2}(\A_{\Sigma}/\A_{F,\Sigma}, T \to U)$ are equal, proving existence. Uniqueness is a consequence of the injectivity of the composition 
\begin{equation*}
 \varprojlim {^{(1)}}\check{H}^{1}(O_{\Sigma}^{\tn{perf}}/O_{F,\Sigma}, P_{\dot{\Sigma},i}) \to \varprojlim {^{(1)}}H^{1}(\A_{\Sigma}/\A_{F,\Sigma}, T_{i} \to U_{i}) \to H^{2}(\A_{\Sigma}/\A_{F,\Sigma}, T \to U).
\end{equation*}
\end{proof}

Denote the resulting class by $[\dot{\xi}_{\Sigma}] \in \check{H}^{2}(O_{\Sigma}^\tn{perf}/O_{F,\Sigma}, P_{\dot{\Sigma}})$. The following is immediate from our above work:

\begin{Lemma}
For $(\Sigma, \dot{\Sigma}) \subseteq (\Sigma', \dot{\Sigma}')$ as above, the image of $[\dot{\xi}_{\Sigma'}]$ in $\check{H}^{2}(O_{\Sigma}^\tn{perf}/O_{F,\Sigma}, P_{\dot{\Sigma}})$ is $[\dot{\xi}_{\Sigma}]$. Moreover, the image of the canonical class $\xi \in H_{\tn{fppf}}^{2}(F, P)$ in the limit $\varprojlim \check{H}^{2}(O_{\Sigma}^\tn{perf}/O_{F,\Sigma}, P_{\dot{\Sigma}})$ is $\varprojlim_{\Sigma} [\dot{\xi}_{\Sigma}]$.
\end{Lemma}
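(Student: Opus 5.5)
The plan is to prove both compatibilities by checking the two conditions that pin down $[\dot{\xi}_{\Sigma}]$ in Theorem \ref{canonclassthm}, and then invoking the uniqueness part of that theorem. The point is that both characterizing conditions are functorial in $(\Sigma,\dot{\Sigma})$: the level-$k$ classes come from functoriality of \eqref{JIMJ8}, and the adelic class $x_{\Sigma}$ comes from the local classes $\xi_{v}^{\Sigma}$.

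For the first claim, fix $(\Sigma,\dot{\Sigma}) \subseteq (\Sigma',\dot{\Sigma}')$. We have a restriction map $\check{H}^{2}(O_{\Sigma'}^{\tn{perf}}/O_{F,\Sigma'}, P_{\dot{\Sigma}'}) \to \check{H}^{2}(O_{\Sigma}^{\tn{perf}}/O_{F,\Sigma}, P_{\dot{\Sigma}})$, which combines restriction along the open immersion $\tn{Spec}(O_{F,\Sigma'}) \subseteq \tn{Spec}(O_{F,\Sigma})$ with the transition map $P_{\dot{\Sigma}'} \to P_{\dot{\Sigma}}$, the dual of the inclusion of character modules as in \cite[pp. 2205-2206]{Dillery23b}. (This map goes in the direction of shrinking $\tn{Spec}$, so the identification of which side is which is a notational check.) We first check that it carries $\varprojlim_{k} \xi_{\Sigma',k}$ to $\varprojlim_{k}\xi_{\Sigma,k}$.

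Each $\xi_{\Sigma,k}$ is glued from the $\xi_{k,j}$. The $\xi_{k,j}$ are images under \eqref{JIMJ8} of canonical inclusion homomorphisms, and these inclusions are compatible with enlarging $\Sigma$. So the compatibility holds on each chart. By the uniqueness in the Mayer--Vietoris gluing, we may refine the covers $\{U_{k,j}\}$ to common ones, and the compatibility then holds globally.

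Next we check that the image of $x_{\Sigma'}$ under the corresponding adelic map agrees with $x_{\Sigma}$ in $H^{2}(\A_{\Sigma}/\A_{F,\Sigma}, T\to U)$. Using Proposition \ref{prop:resprod}, this reduces to a place-by-place statement. At places $v \in \Sigma$ it follows because the local classes $\xi_{v}^{\Sigma}$ are defined as norm images of the single canonical class of $\tn{Kal}_{v}$, and the maps $u_{v} \to u_{v}^{\Sigma'} \to u_{v}^{\Sigma}$ compose correctly with the localization maps to $P_{\dot{\Sigma}'}$ and $P_{\dot{\Sigma}}$. At places $v \in \Sigma'\setminus\Sigma$, the local components of $x_{\Sigma'}$ must map to zero in the integral unramified cohomology $H^{2}(O_{v}^{\tn{nr}}/O_{v},-)$, which vanishes for these tori and complexes.

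Given both checks, the image of $[\dot{\xi}_{\Sigma'}]$ satisfies the characterizing conditions of Theorem \ref{canonclassthm}, so uniqueness shows it equals $[\dot{\xi}_{\Sigma}]$.

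For the second claim, we argue the same way with $F$ in place of $O_{F,\Sigma'}$, using the uniqueness statement of \cite[Proposition 3.21]{Dillery23b} that characterizes $\xi$. The image of $\xi$ under $H^{2}_{\tn{fppf}}(F,P) \to H^{2}(O_{\Sigma}, P_{\dot{\Sigma}})$ requires restricting from the generic point to $O_{F,\Sigma}$, which is not a priori defined. So we should phrase the argument the other way: the system $\{[\dot{\xi}_{\Sigma}]\}$ pulled back to $\tn{Spec}(F)$ gives a compatible system, hence a class in $H^{2}(F,P)$ via $P=\varprojlim P_{\dot{\Sigma}}$ (with a $\varprojlim^{1}$ argument as in the proof of Theorem \ref{canonclassthm}). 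This class satisfies the level-$k$ condition and the adelic condition defining $\xi$, so it equals $\xi$.

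The main obstacle is this last step: controlling $\varprojlim^{1}$ over $\Sigma$ and matching the adelic condition for $\xi$, which involves all places, with the $\Sigma$-adelic conditions. I would handle it using Lemma \ref{keytechlem} and Corollary \ref{keycor2}, which identify the $\Sigma$-restricted adelic groups with the full ones in the limit.
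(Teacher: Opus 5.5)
The paper gives no proof beyond calling the lemma ``immediate from our above work.'' Your strategy for the first claim is the natural reading of that: check that both characterizing conditions of Theorem \ref{canonclassthm} are functorial in $(\Sigma,\dot\Sigma)$, then invoke uniqueness. As written, though, the argument is run in a group where it cannot be, and the direction issue you set aside as ``notational'' actually changes the argument.

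\textbf{First claim.} Restriction along $\tn{Spec}(O_{F,\Sigma'})\subseteq\tn{Spec}(O_{F,\Sigma})$ goes from $O_{F,\Sigma}$ to $O_{F,\Sigma'}$, and $P_{\dot\Sigma'}$ is only defined over $O_{F,\Sigma'}$. So the only meaningful comparison is between two classes in $\check{H}^{2}(O_{\Sigma'}^{\tn{perf}}/O_{F,\Sigma'},P_{\dot\Sigma})$: the push-forward of $[\dot\xi_{\Sigma'}]$ along $P_{\dot\Sigma'}\to P_{\dot\Sigma}$, and the restriction of $[\dot\xi_\Sigma]$. This creates three problems.
\begin{enumerate}
\item Theorem \ref{canonclassthm} gives no uniqueness in that group. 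You need its variant over the base $O_{F,\Sigma'}$ with coefficients $P_{\dot\Sigma}$, of the kind the paper invokes for $\tn{Kal}_{F,\{v\}}$. Its injectivity step must be re-checked. That step rests on $\varinjlim_i H^{1}(\Gamma_{F_\Sigma/F},X^{*}(P_{\dot\Sigma,i}))=0$, proved as in \cite[Lemma 3.4.4]{Kaletha18} using that the $E_i$ exhaust $F_\Sigma$. Over $O_{F,\Sigma'}$ the Galois group becomes $\Gamma_{F_{\Sigma'}/F}$ while still $E_i\subseteq F_\Sigma$, so that argument does not apply verbatim.
\item Your adelic check is placed in the wrong group. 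In $H^{2}(\A_{\Sigma'}/\A_{F,\Sigma'},T\to U)$, a place $v\in\Sigma'\setminus\Sigma$ contributes $H^{2}((F_{\Sigma'})_{\dot v}/F_v,T\to U)\simeq H^{2}_{\tn{fppf}}(F_v,P_{\dot\Sigma})$, which is nonzero, and there is no map from it to $H^{2}(O_v^{\tn{nr}}/O_v,-)$. Vanishing of the latter only shows that the restriction of $x_\Sigma$ has trivial $v$-component. For the push-forward of $x_{\Sigma'}$ the missing input is that $u_v\to P_{\dot\Sigma'}\to P_{\dot\Sigma}$ is the trivial homomorphism for $v\notin\Sigma$, because characters of $P_{\dot\Sigma}$ are supported on places over $\Sigma$.
\item Mayer--Vietoris gluing is not unique: classes agreeing on both charts can differ by the boundary of an $H^{1}$-class on the overlap. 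So the level-$k$ compatibility has to be checked on the gluing data, via cocycle-level functoriality of \eqref{JIMJ8}, not by refining covers.
\end{enumerate}

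\textbf{Second claim.} You leave the decisive step open, and as phrased it is not right. A lift $\eta\in H^{2}_{\tn{fppf}}(F,P)$ of the system $\{[\dot\xi_\Sigma]|_F\}_\Sigma$ is determined only modulo the image of $\varprojlim^{1}_{\Sigma}H^{1}(F,P_{\dot\Sigma})$. An arbitrary lift has no reason to satisfy the adelic condition characterizing $\xi$ in \cite[Proposition 3.21]{Dillery23b}.

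To make your route work, you would have to show that the adelic images of $\eta$ and of the class defining $\xi$ differ by the image of $\varprojlim^{1}_{\Sigma}H^{1}(F,P_{\dot\Sigma})$. These two images agree after projecting to $\varprojlim_\Sigma$, so this is a surjectivity statement for $\varprojlim^{1}$ in the $\Sigma$-direction on the adelic side. Lemma \ref{keytechlem} and Corollary \ref{keycor2} do not supply it: they compare $\Sigma$-adelic and full adelic groups for a single fixed complex.

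The paper presumably intends a direct comparison for each fixed $\Sigma$, using the two characterizing data.
\begin{itemize}
\item The level-$k$ classes of $\xi$ and $[\dot\xi_\Sigma]$ correspond by functoriality of \eqref{JIMJ8}.
\item The image of the adelic class of $\xi$ with $P_{\dot\Sigma}$-coefficients has $v$-component $\xi^{(\Sigma)}_v$ for $v\in\Sigma$ and $0$ for $v\notin\Sigma$, again by triviality of $u_v\to P_{\dot\Sigma}$ off $\Sigma$. So it matches the image of $x_\Sigma$.
\end{itemize}
This avoids any $\varprojlim^{1}$ over $\Sigma$. It reduces the second claim to the same kind of uniqueness statement on a common base as in the first part, with the same caveat about its injectivity step.
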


The following result says that $[\dot{\xi}]$ is characterized by the above property:

\begin{Proposition}\label{prop:H2invlim}
The map
\begin{equation*}
H_{\tn{fppf}}^{2}(F, P) \to \varprojlim \check{H}^{2}(O_{\Sigma}^{\tn{perf}}/O_{F,\Sigma}, P_{\dot{\Sigma}}) 
\end{equation*}
is an isomorphism; in particular, $\varprojlim^{1} \check{H}^{1}(O_{\Sigma}^{\tn{perf}}/O_{F,\Sigma}, P_{\dot{\Sigma}}) = 0$.
\end{Proposition}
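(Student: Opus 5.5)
We would prove this with a Milnor-type exact sequence for the limit over $\Sigma$, combined with a passage-to-the-limit identification of Čech cohomology. First note that $P = \varprojlim_{\Sigma} P_{\dot{\Sigma}}$ as pro-finite multiplicative group schemes over $F$. Here the transition maps $P_{\dot{\Sigma}'} \to P_{\dot{\Sigma}}$ for $\Sigma \subseteq \Sigma'$ are dual to the inclusions of character modules $X^{*}(P_{\dot{\Sigma}}) \hookrightarrow X^{*}(P_{\dot{\Sigma}'})$, so $X^{*}(P) = \varinjlim_{\Sigma} X^{*}(P_{\dot{\Sigma}})$. Moreover, $F^{s,\tn{perf}} = \varinjlim_{\Sigma} O_{\Sigma}^{\tn{perf}}$ and $F = \varinjlim_{\Sigma} O_{F,\Sigma}$.

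\textbf{Step 1: Čech complexes in the limit.} We identify the Čech complex computing $\check{H}^{\bullet}(\ov{F}/F, P)$ (which computes $H^{\bullet}_{\tn{fppf}}(F,P)$ by \cite[Proposition 2.6]{Dillery23b}) with the inverse limit over $\Sigma$ of the Čech complexes $C^{\bullet}_{\Sigma} := P_{\dot{\Sigma}}((O_{\Sigma}^{\tn{perf}})^{\otimes(\bullet+1)})$. Concretely, each term is $\Hom(X^{*}(P_{\dot{\Sigma}}), \mathbb{G}_{m}(\ldots))$, and the Hom out of a direct limit turns into an inverse limit. The point to check is that a homomorphism from $X^{*}(P_{\dot{\Sigma}})$ to units of $F^{s,\tn{perf}\otimes \bullet}$ that is compatible in $\Sigma$ lands, at level $\Sigma$, in the $\Sigma$-integral units. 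This holds because $X^{*}(P_{\dot{\Sigma}})$ is torsion and split over $O_{\Sigma}$: a torsion-valued point of $\mathbb{G}_{m}$ over the limit ring is integral. It is harmless to first reduce to the finite levels $P_{\dot{\Sigma},k}$, as in Lemma \ref{adelicprojlim}(1).

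\textbf{Step 2: Milnor sequence.} The transition maps of the complexes $C^{\bullet}_{\Sigma}$ are surjective in each degree, since they are restrictions of characters from a subgroup of a divisible-enough target (the same divisibility used in Lemma \ref{divlemma}). For such a tower we get the short exact sequence
\begin{equation*}
0 \to \varprojlim{^{1}}_{\Sigma}\, \check{H}^{1}(O_{\Sigma}^{\tn{perf}}/O_{F,\Sigma}, P_{\dot{\Sigma}}) \to H^{2}_{\tn{fppf}}(F,P) \to \varprojlim_{\Sigma} \check{H}^{2}(O_{\Sigma}^{\tn{perf}}/O_{F,\Sigma}, P_{\dot{\Sigma}}) \to 0.
\end{equation*}
Surjectivity of the map in the statement is therefore automatic, and the whole proposition reduces to the vanishing of the $\varprojlim^{1}$ term. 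This explains why the ``in particular'' clause is really equivalent to the statement.

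\textbf{Step 3: vanishing of $\varprojlim^{1}$ (main obstacle).} We run the argument from the proof of Theorem \ref{canonclassthm}. Using the resolutions $P_{\dot{\Sigma}} \to T \to U$ and Proposition \ref{prop:cokdual} together with Lemma \ref{lem:Pisog}, $\check{H}^{1}(O_{\Sigma}^{\tn{perf}}/O_{F,\Sigma}, P_{\dot{\Sigma}})$ sits in an exact sequence between the group $\tn{ker}^{1}(O_{F,\Sigma},P_{\dot{\Sigma}})$, which is Pontryagin dual to $\tn{ker}^{1}(\Gamma_{F_{\Sigma}/F}, X^{*}(P_{\dot{\Sigma}}))$, and adelic local terms $\prod_{v \in \Sigma}H^{1}(F_{v}, P_{\dot{\Sigma}})$. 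These groups are profinite, being duals of discrete torsion groups, or are compact by the compactness Lemma for isogenies. For inverse systems of compact Hausdorff groups with continuous maps, $\varprojlim^{1}$ vanishes. So the key is to establish that every group in the tower carries a compact Hausdorff topology for which the transition maps are continuous.

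The delicate point is the $\ker^{1}$ and $\tn{cok}^{1}$ contributions, and this is where we expect the difficulty. We would verify that $\varinjlim_{\Sigma}\tn{ker}^{1}(\Gamma_{F_{\Sigma}/F}, X^{*}(P_{\dot{\Sigma}}))$ and the $\tn{cok}^{1}$ duals form direct systems of discrete groups whose duals are profinite, so that their $\varprojlim^{1}$ vanishes. We would reuse the vanishing $\varinjlim H^{1}(\Gamma, \mathbb{Q}/\mathbb{Z}[\Gamma_{E_{i}/F}\times\Sigma]_{0})=0$ from the proof of Theorem \ref{canonclassthm}, now letting $\Sigma$ grow as well, as in \cite[Lemma 3.4.4]{Kaletha18}.
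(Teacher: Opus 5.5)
The gap is in Step 1, and Steps 2--3 depend on it. There is no inverse system of \v{C}ech complexes $C^{\bullet}_{\Sigma}$ indexed by $\Sigma$. Enlarging $\Sigma$ enlarges the rings $(O_{\Sigma}^{\tn{perf}})^{\otimes(n+1)}$ but shrinks the coefficients along $P_{\dot{\Sigma}'} \to P_{\dot{\Sigma}}$. So pushing a cochain of $C^{n}_{\Sigma'}$ forward lands in $P_{\dot{\Sigma}}((O_{\Sigma'}^{\tn{perf}})^{\otimes(n+1)})$, not in $C^{n}_{\Sigma}$.

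Your remedy, that torsion units over the limit ring are automatically $\Sigma$-integral, is false. Via $F^{s} \otimes_{F} F^{s} \cong \mathcal{C}(\Gamma_{F^{s}/F}, F^{s})$, a locally constant $\mu_{\ell}$-valued function on $\Gamma_{F^{s}/F}$ ($\ell \neq p$) that does not factor through $\Gamma_{F_{\Sigma}/F}$ is a torsion unit of $\ov{F} \otimes_{F} \ov{F}$ outside $O_{\Sigma}^{\tn{perf}} \otimes_{O_{F,\Sigma}} O_{\Sigma}^{\tn{perf}}$. The same holds for the $p$-torsion unit $1 + (a^{1/p} \otimes 1 - 1 \otimes a^{1/p})$, for any $a \in F$ with a pole outside $\Sigma$. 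Because the torsion of $(\ov{F} \otimes_{F} \ov{F})^{\times}$ is divisible, such values can be prescribed on $X^{*}(P_{\dot{\Sigma}})$ and extended to $X^{*}(P)$. The compatible families therefore form a proper subcomplex of $P(\ov{F}^{\otimes(\bullet+1)})$, with no reason to compute $H^{\bullet}_{\tn{fppf}}(F,P)$.

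Hence the Milnor sequence of Step 2 is not established. With it go the ``automatic'' surjectivity and the reduction of the statement to the vanishing of $\varprojlim^{1}$. The divisibility in Lemma \ref{divlemma} concerns the local fields $(F_{\Sigma})_{\dot{v}}^{\tn{perf}}$ and says nothing about these global rings.

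Step 3 carries the real content but is left as a plan (``we would verify''), and it lacks the mechanism the paper uses. The vanishing of $\varinjlim H^{1}(\Gamma_{F_{\Sigma}/F}, \Q/\Z[\Gamma_{E_{i}/F} \times \Sigma]_{0})$ that you want to reuse does not produce compactness. In the proof of Theorem \ref{canonclassthm} it gives $\varprojlim \tn{cok}^{1}(O_{F,\Sigma}, T_{i} \to U_{i}) = 0$, via Proposition \ref{prop:cokdual} and Lemma \ref{lem:Pisog}. That yields injectivity of the composite $\varprojlim^{1} \check{H}^{1}(O_{\Sigma}^{\tn{perf}}/O_{F,\Sigma}, P_{\dot{\Sigma},i}) \to \varprojlim^{1} H^{1}(\A_{\Sigma}/\A_{F,\Sigma}, T_{i} \to U_{i}) \to H^{2}(\A_{\Sigma}/\A_{F,\Sigma}, T \to U)$. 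The paper deduces the proposition from exactly this injectivity: a class is pinned down by its finite-level images together with its adelic image in $H^{2}(\A_{\Sigma}/\A_{F,\Sigma}, T \to U)$. Your argument never uses this adelic condition.

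Your description of $\check{H}^{1}(O_{\Sigma}^{\tn{perf}}/O_{F,\Sigma}, P_{\dot{\Sigma}})$ is also off. By Proposition \ref{prop:resprod}, the degree-one adelic terms at $v \notin \Sigma$ need not vanish, so local terms at $v \in \Sigma$ alone do not bound it.

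If you want a compactness route, the clean input is that each $H^{1}_{\tn{fppf}}(O_{F,\Sigma}, P_{\dot{\Sigma},i})$ is finite. To see this, reduce to $\mu_{n}$ over $O_{E_{i},\Sigma}$ and use finiteness of $O_{E_{i},\Sigma}^{\times}/n$ and $\tn{Pic}(O_{E_{i},\Sigma})$; the groups are then profinite. But that helps only once you have an honest tower with continuous transition maps and a valid Milnor sequence, which is exactly what Step 1 fails to supply.
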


\begin{proof}
This is a consequence of the injectivity part of the proof of Theorem \ref{canonclassthm}.
\end{proof}

In order to define the gerbe $\tn{Kal}_{F}$ we have picked a representative $\dot{\xi}$ for the canonical class in $H_{\tn{fppf}}^{2}(F, P)$, which determines representatives for each $[\dot{\xi}_{\Sigma}]$---denote the corresponding gerbes by $\tn{Kal}_{F,\Sigma}$. The above Proposition has the following useful consequence:
\begin{Corollary}\label{cor:transitions}
There exists a compatible system of transition maps
\begin{equation*}
\tn{Kal}_{F,\Sigma'} \to \tn{Kal}_{F,\Sigma}
\end{equation*}
across all pairs $(\Sigma, \dot{\Sigma}) \subseteq (\Sigma', \dot{\Sigma}')$. Moreover, these maps induce canonical pullback maps $H_{\tn{\'{e}t}}^{1}(\tn{Kal}_{F,\Sigma}, G) \to H_{\tn{\'{e}t}}^{1}(\tn{Kal}_{F,\Sigma'}, G)$.
\end{Corollary}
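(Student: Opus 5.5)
The plan is to build the transition maps at the level of cocycles, using the fixed representative $\dot{\xi}$ of the canonical class in $H_{\tn{fppf}}^{2}(F,P)$. For $(\Sigma,\dot{\Sigma}) \subseteq (\Sigma',\dot{\Sigma}')$ there is a natural projection $P_{\dot{\Sigma}'} \to P_{\dot{\Sigma}}$. It is the projective limit of the maps induced on character groups by the inclusions $(\frac{1}{n_{i}}\mathbb{Z}/\mathbb{Z})[\Gamma_{E_{i}/F}\times \Sigma_{E_{i}}]_{0,0} \hookrightarrow (\frac{1}{n_{i}}\mathbb{Z}/\mathbb{Z})[\Gamma_{E_{i}/F}\times \Sigma'_{E_{i}}]_{0,0}$. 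There is also an inclusion of \v{C}ech covers: $O_{\Sigma}^{\tn{perf}}/O_{F,\Sigma}$ maps into $O_{\Sigma'}^{\tn{perf}}/O_{F,\Sigma'}$, since $F_{\Sigma}\subseteq F_{\Sigma'}$ and $O_{F,\Sigma}\subseteq O_{F,\Sigma'}$. Together these give a restriction-and-pushforward map
\begin{equation*}
\check{H}^{2}(O_{\Sigma'}^{\tn{perf}}/O_{F,\Sigma'}, P_{\dot{\Sigma}'}) \to \check{H}^{2}(O_{\Sigma}^{\tn{perf}}/O_{F,\Sigma'}, P_{\dot{\Sigma}}),
\end{equation*}
where the target uses the base change of the cover $O_{\Sigma}^{\tn{perf}}/O_{F,\Sigma}$ to $O_{F,\Sigma'}$. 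This map has a cocycle-level lift.

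First step: make the choice of representatives $[\dot{\xi}_{\Sigma}]$ compatible at the cochain level. Proposition \ref{prop:H2invlim} identifies $H^{2}_{\tn{fppf}}(F,P)$ with $\varprojlim_{\Sigma} \check{H}^{2}(O_{\Sigma}^{\tn{perf}}/O_{F,\Sigma},P_{\dot{\Sigma}})$. I will choose $\dot{\xi}$ as a \v{C}ech $2$-cocycle for the cover $\varinjlim_{\Sigma} O_{\Sigma}^{\tn{perf}} = (F^{s})^{\tn{perf}}$ over $F$ valued in $P=\varprojlim P_{\dot{\Sigma}}$. Its image $\dot{\xi}_{\Sigma}$ is then a genuine cocycle in $P_{\dot{\Sigma}}((O_{\Sigma}^{\tn{perf}})^{\otimes 3})$, and pushforward along $P_{\dot{\Sigma}'}\to P_{\dot{\Sigma}}$ sends $\dot{\xi}_{\Sigma'}$ to the restriction of $\dot{\xi}_{\Sigma}$ on the nose.

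The main obstacle is exactly here: a representative over $F$ need not have coefficients integral over $O_{F,\Sigma}$. The way around it is to build the representative as a limit over $\Sigma$, not restrict from $F$. Choose cocycles $c_{\Sigma}$ representing $[\dot{\xi}_{\Sigma}]$ along a cofinal chain $\Sigma_{1}\subseteq \Sigma_{2}\subseteq\cdots$ of finite sets of places. Correct them inductively: the image of $c_{\Sigma_{k+1}}$ and the cocycle $c_{\Sigma_{k}}$ differ by a coboundary $d b_{k}$, and we replace $c_{\Sigma_{k+1}}$ by $c_{\Sigma_{k+1}}\cdot d\tilde{b}_{k}^{-1}$. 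The lift $\tilde{b}_{k}$ of $b_{k}$ to $P_{\dot{\Sigma}_{k+1}}$-valued cochains exists because $P_{\dot{\Sigma}_{k+1}}\to P_{\dot{\Sigma}_{k}}$ is a surjection of pro-finite multiplicative groups with split character inclusion, so it admits cochain-level lifts after passing to $O^{\tn{perf}}$. Because the chain is countable, the corrected family assembles into a compatible system, and the system is independent of the chain up to the $\varprojlim^{1}$ ambiguity. That ambiguity vanishes by the last clause of Proposition \ref{prop:H2invlim}, so any two such systems are isomorphic via compatible $1$-cochains.

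Second step: given compatible cocycles, the gerbe $\tn{Kal}_{F,\Sigma}$ is the $P_{\dot{\Sigma}}$-gerbe of $\dot{\xi}_{\Sigma}$-twisted torsors for the cover $O_{\Sigma}^{\tn{perf}}/O_{F,\Sigma}$, as in \cite[\S 2]{Dillery23b}. The map $\tn{Kal}_{F,\Sigma'}\to\tn{Kal}_{F,\Sigma}$ over $\tn{Spec}(O_{F,\Sigma'})\to\tn{Spec}(O_{F,\Sigma})$ is defined by pushing out along $P_{\dot{\Sigma}'}\to P_{\dot{\Sigma}}$ and base-changing the cover. Compatibility for triples $\Sigma\subseteq\Sigma'\subseteq\Sigma''$ is then literal functoriality of pushout and restriction, because the cocycles are strictly compatible.

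Third step: pullback on cohomology. Pulling back along the map of second step gives $H^{1}_{\tn{\'et}}(\tn{Kal}_{F,\Sigma},G)\to H^{1}_{\tn{\'et}}(\tn{Kal}_{F,\Sigma'},G)$; the central-band condition is preserved since the band map is $P_{\dot{\Sigma}'}\to P_{\dot{\Sigma}}$. For canonicity, another choice of compatible cocycles changes the maps by a system of $1$-cochains $a_{\Sigma}$. The induced automorphisms of $\tn{Kal}_{F,\Sigma}$ act on torsor classes through the translation action of $Z^{1}$ of the band, which acts trivially on isomorphism classes for $P$-valued cochains, by the argument of \cite[Proposition 4.8]{Dillery23b}. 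Hence the induced maps on $H^{1}$ are independent of all choices.
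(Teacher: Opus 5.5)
Your inductive correction gives strictly compatible cocycles only along the cofinal chain $\Sigma_{1}\subseteq\Sigma_{2}\subseteq\cdots$ you fix. The statement, however, asks for a compatible system over \emph{all} pairs $(\Sigma,\dot{\Sigma})\subseteq(\Sigma',\dot{\Sigma}')$, and the paper later uses this for arbitrary $U'\subseteq U$ and for the maps $\tn{Kal}_{F,\Sigma}\to\tn{Kal}_{F,\{v\}}$.

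For $\Sigma$ off the chain you have no cocycle at all. You cannot get one from $c_{\Sigma_{k}}$ with $\Sigma\subseteq\Sigma_{k}$, because that would mean descending from the open $\tn{Spec}(O_{F,\Sigma_{k}})$ to $\tn{Spec}(O_{F,\Sigma})$, and from the cover $O_{\Sigma_{k}}^{\tn{perf}}$ to the coarser $O_{\Sigma}^{\tn{perf}}$. That is exactly the descent problem you flagged as the main obstacle.

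Choosing $c_{\Sigma}$ afresh and correcting it does not fix this. Take $\Sigma_{a},\Sigma_{b}$ incomparable. The cocycle for $\Sigma_{a}\cup\Sigma_{b}$ must match both at once, i.e.\ it must push forward to a prescribed cocycle of $P_{\dot{\Sigma}_{a}}\times_{P_{\dot{\Sigma}_{a}\cap\dot{\Sigma}_{b}}}P_{\dot{\Sigma}_{b}}$. Knowing the classes in $H^{2}$ of $P_{\dot{\Sigma}_{a}}$ and of $P_{\dot{\Sigma}_{b}}$ separately fixes the class of this fibre product only modulo the image of an $H^{1}$ with coefficients in $P_{\dot{\Sigma}_{a}\cap\dot{\Sigma}_{b}}$. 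A chain-indexed $\varprojlim^{1}$-vanishing like the last clause of Proposition~\ref{prop:H2invlim} does not see this obstruction. So your claim in the second step that compatibility for triples is ``literal functoriality'' holds only for triples inside the chain.

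The paper's route is different. It fixes one representative $\dot{\xi}$ of the canonical class in $H^{2}_{\tn{fppf}}(F,P)$ and reads it, via Proposition~\ref{prop:H2invlim}, as a projective system $(\dot{\xi}_{\Sigma})$ indexed by all $(\Sigma,\dot{\Sigma})$ at once. Every transition map is then pushforward along $P_{\dot{\Sigma}'}\to P_{\dot{\Sigma}}$ of one and the same cocycle, so compatibility over the whole directed set is automatic. Your doubt that an arbitrary representative over $F$ descends to every $O_{\Sigma}^{\tn{perf}}/O_{F,\Sigma}$ is reasonable. But if you reject that step, you need a substitute that works over the full directed set. One option is pairwise transition maps compatible only up to $2$-isomorphism, plus a canonicity statement on $H^{1}$; that needs the input discussed next.

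Two further points need fixing.

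First, the cochain-level map in your opening paragraph goes the wrong way. A cocycle for the finer cover $O_{\Sigma'}^{\tn{perf}}/O_{F,\Sigma'}$ cannot in general be restricted to the coarser cover $O_{\Sigma}^{\tn{perf}}\otimes_{O_{F,\Sigma}}O_{F,\Sigma'}$, since its values need not lie in the coarser tensor powers. Instead, inflate $c_{\Sigma}$ to the finer cover and compare it there with the pushforward of $c_{\Sigma'}$. That comparison is also all you need to define the gerbe morphisms.

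Second, your canonicity step relies on the argument from \cite{Dillery23b}, and that argument rests on the vanishing of $H^{1}_{\tn{fppf}}(F,P)$. Changing the correcting cochains by a $1$-cocycle $z$ of the band twists each pulled-back torsor by $[z]$ through the inertial action. For a $\tn{Kal}$-basic torsor with central homomorphism $\lambda$, the twist is by $\lambda_{*}[z]\in H^{1}(O_{F,\Sigma'},Z(G))$. Such a twist is invisible on isomorphism classes only when the class is trivial. So you need the analogue $H^{1}(O_{F,\Sigma'},P_{\dot{\Sigma}})=0$, or at least the vanishing of these twists. That is a different statement from the $\varprojlim^{1}$-vanishing you invoke, and your argument does not supply it.
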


Recall that, for technical reasons (so that Lemma \ref{divlemma} holds), we only defined the gerbes $\tn{Kal}_{F,\Sigma}$ for $|\Sigma| \geq 2$; in fact, we can glue these gerbes to define $\tn{Kal}_{F,\{v\}}$ for a place $v$. Namely, set $\Sigma = \{v\} \cup \{w\}$ and $\Sigma' = \{v\} \cup \{w'\}$ for $w \neq w'$ both different from $v$. Observe that the proof of Theorem \ref{canonclassthm} works if $P_{\dot{\Sigma}}$ is replaced with $P_{\dot{\{v\}}}$ (keeping $O_{F,\Sigma}$ or $O_{F,\Sigma'}$ as the base-rings), replacing $u_{v}^{\Sigma}$ with $u_{v}^{\{\dot{v}\}}$ ---in particular, we obtain classes $[\dot{\xi}_{\{v\}}^{(\Sigma)}] \in \check{H}^{2}(O_{\Sigma}^{\tn{perf}}/O_{F,\Sigma}, P_{\dot{\{v\}}})$ and $[\dot{\xi}_{\{v\}}^{(\Sigma')}] \in \check{H}^{2}(O_{\Sigma'}^{\tn{perf}}/O_{F,\Sigma'}, P_{\dot{\{v\}}})$. The key result is then:

\begin{Lemma} 
The image of $[\dot{\xi}_{\{v\}}^{(\Sigma)}]$ and $[\dot{\xi}_{\{v\}}^{(\Sigma')}]$ in $\check{H}^{2}(O_{\{v,w,w'\}}^{\tn{perf}}/O_{F,\{v,w,w'\}}, P_{\dot{\{v\}}})$ coincide. In particular, by Mayer-Vietoris there is a canonical class 
\begin{equation*}
[\dot{\xi}_{\{v\}}] \in \check{H}^{2}(O_{\{v\}}^{\tn{perf}}/O_{F,\{v\}}, P_{\dot{\{v\}}}).
\end{equation*}
\end{Lemma}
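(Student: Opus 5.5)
The plan is to use the uniqueness part of Theorem \ref{canonclassthm}, applied over the common base $O_{F,\Sigma''}$ with $\Sigma'' := \{v,w,w'\}$. The argument of that theorem, run with $P_{\dot{\{v\}}}$ in place of $P_{\dot{\Sigma}}$ but with base ring $O_{F,\Sigma''}$, singles out a unique class $[\dot{\xi}^{(\Sigma'')}_{\{v\}}] \in \check{H}^{2}(O_{\Sigma''}^{\tn{perf}}/O_{F,\Sigma''}, P_{\dot{\{v\}}})$. It is characterized by two conditions:
\begin{enumerate}
\item it lifts the projective system of finite-level classes;
\item its image in $H^{2}(\A_{\Sigma''}/\A_{F,\Sigma''}, T \to U)$ equals the image of the adelic class $x$ built from the local classes $\xi_{v}^{\{\dot v\}}$ at $v$ (and the trivial class at $w,w'$).
\end{enumerate}
Since $|\Sigma''| \geq 2$, Lemma \ref{divlemma} holds, so the required vanishing inputs are available. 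I will show that both restrictions, of $[\dot{\xi}_{\{v\}}^{(\Sigma)}]$ and of $[\dot{\xi}_{\{v\}}^{(\Sigma')}]$, satisfy (1) and (2). Uniqueness then forces them to agree.

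For condition (1), I use that the finite-level classes are defined through the maps \eqref{JIMJ8}. These are functorial in the set of places, exactly as in Lemma \ref{gerbeoverlap} and the transition Lemma. So the restriction from $O_{F,\Sigma}$ (resp. $O_{F,\Sigma'}$) to $O_{F,\Sigma''}$ of the level-$k$ class is the level-$k$ class defined directly over $O_{F,\Sigma''}$. Passing to the inverse limit gives (1).

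For condition (2), I compare the adelic decompositions of Proposition \ref{prop:resprod}. Restriction from $\A_{F,\Sigma}$ to $\A_{F,\Sigma''}$ sends the factor at $v$ to the factor at $v$. It also sends the unramified factor $H^{2}(O_{w'}^{\tn{nr}}/O_{w'}, -)$ into the $w'$-factor, and that unramified $H^{2}$ vanishes because $\dot{\{v\}}$-supported characters are unramified at $w'$. The class $x$ is defined purely from the local class at $v$ via $u_{v}^{\{\dot v\}} \to (P_{\dot{\{v\}}})_{F_v}$. That local class does not see $\Sigma$ versus $\Sigma'$, since $u_{v}^{\{\dot v\}}$ is defined using $(F_{\{v\}})_{\dot v}$, which is independent of the auxiliary place. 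Hence both adelic classes restrict to the same $x_{\Sigma''}$. By naturality of $P \to (T\to U)$, the images in $H^{2}(\A_{\Sigma''}/\A_{F,\Sigma''},T\to U)$ also agree.

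The main obstacle is a mismatch of covers. The classes over $O_{F,\Sigma}$ are \v{C}ech classes for the cover $O_{\Sigma}^{\tn{perf}}/O_{F,\Sigma}$. Over $O_{F,\Sigma''}$ the natural cover is $O_{\Sigma''}^{\tn{perf}}$, which contains $O_{\Sigma}^{\tn{perf}}\otimes O_{F,\Sigma''}$ but is larger. One therefore needs the restriction (inflation) maps on $\check H^2$ and on the adelic hypercohomology to be compatible with the identifications with fppf cohomology. I would handle this using the canonical isomorphisms $\check{H}^{i}\cong H^{i}_{\tn{fppf}}$ from the conventions, which are compatible with refinement. The uniqueness input, namely injectivity of $\varprojlim^{(1)}\check H^1 \to H^{2}(\A_{\Sigma''}/\A_{F,\Sigma''},T\to U)$, is the one already proved inside Theorem \ref{canonclassthm}.

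Finally, the two restricted classes agree on $\tn{Spec}(O_{F,\Sigma}) \cap \tn{Spec}(O_{F,\Sigma'}) = \tn{Spec}(O_{F,\Sigma''})$. Mayer--Vietoris \cite[0A50]{stacks-project} for the open cover of $\tn{Spec}(O_{F,\{v\}})$ by these two opens then glues the gerbes, as in the earlier Corollary. Canonicity of the glued class follows from Proposition \ref{prop:H2invlim}: the $\varprojlim^1\check H^1$ term vanishes, so gluing isomorphisms are unique up to a term that does not affect the class.
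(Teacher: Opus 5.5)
Your argument for the coincidence is the paper's. Both restricted classes satisfy the two characterizing conditions of Theorem \ref{canonclassthm} over the base $O_{F,\{v,w,w'\}}$, so uniqueness forces them to agree. The lifting condition follows from functoriality of \eqref{JIMJ8}; the adelic condition the paper simply calls clear, and you spell it out.

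One caveat concerns your final sentence on canonicity. Proposition \ref{prop:H2invlim} is about the inverse system over varying $\Sigma$. It does not control the ambiguity of a Mayer--Vietoris gluing at a fixed level.

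That ambiguity is the image of the connecting map $H^{1}(O_{F,\{v,w,w'\}}, P_{\dot{\{v\}}}) \to H^{2}(O_{F,\{v\}}, P_{\dot{\{v\}}})$. Equivalently, it is the cokernel of $H^{1}(O_{F,\Sigma}, P_{\dot{\{v\}}}) \oplus H^{1}(O_{F,\Sigma'}, P_{\dot{\{v\}}}) \to H^{1}(O_{F,\{v,w,w'\}}, P_{\dot{\{v\}}})$. Changing the gluing isomorphism by an element of $H^{1}$ on the overlap shifts the glued class by its image under the connecting map, and nothing you cite forces that image to vanish.

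The paper itself only asserts canonicity ``by Mayer--Vietoris''. So either drop your justification or replace it with a genuine one, e.g.\ vanishing of that cokernel, or a canonical choice of gluing datum.
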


\begin{proof}
It suffices to show that the images of both classes satisfy the hypotheses of Theorem \ref{canonclassthm} for the base ring $O_{F,\{v,w,w'\}}$. The fact that both classes lift $\varprojlim \xi_{\{v\},k} \in H_{\tn{fppf}}^{2}(O_{F,\{v,w,w'\}}, P_{\{\dot{v}\},k})$ is a consequence of the functoriality of the map \eqref{JIMJ8}, and the other condition is clear.
\end{proof}

We can thus define the gerbe $\tn{Kal}_{F,\{v\}}$ as well, and for any $v \in \Sigma$ we have compatible maps $\tn{Kal}_{F,\Sigma} \to \tn{Kal}_{F,\{v\}}$ inducing canonical pullback maps on $H^{1}$ (as above).

\begin{Remark}
The natural definition of $\tn{Kal}_{F,\emptyset}$ is just $C$ itself, since $P_{\emptyset}$ is trivial.
\end{Remark}

One can also define an analogue of $\tn{Kal}_{F,\Sigma}$ for the Kottwitz gerbe as follows. For a fixed finite set of places $\Sigma$, we can repeat the above construction with $\mathbb{D}_{\Sigma}$, the group scheme Cartier dual to $\varinjlim_{F \subseteq E \subseteq F_{\Sigma}} \Z[\Sigma_{E}]$, in place of $P_{\dot{\Sigma}}$ to define, for a fixed $E$, canonical classes $\xi_{E,1}$ and $\xi_{E,2}$ in $H^{2}(U_{E,i}, \mathbb{D}_{\Sigma})$, where $U = U_{E,1} \cup U_{E,2}$ with $\tn{Pic}(U_{E,i}) = \{1\}$ (again using \cite[Lemma 1.12]{DLH26}), which agree on the overlap, and thus give a canonical class in $H^{2}(O_{F,\Sigma}, \mathbb{D}_{\Sigma})$ whose projective limit over all $E \subseteq F_{\Sigma}$ defines the desired canonical class in $H^{2}(O_{F,\Sigma}, \mathbb{D}_{\Sigma})$. We leave the justification of these facts to the reader, since it is nearly identical to the same arguments for $\tn{Kal}_{F,\Sigma}$, but in this case does not require a localization argument as in Theorem \ref{canonclassthm}. We thus obtain:

\begin{Proposition}\label{SigmaKott}
There is a canonical class $[\tn{Kott}_{F,\Sigma}] \in H^{2}(O_{F,\Sigma}, \mathbb{D}_{\Sigma})$ such that 
\begin{equation*}
\varprojlim_{\Sigma} [\tn{Kott}_{F,\Sigma}] = [\tn{Kott}_{F}].
\end{equation*}
\end{Proposition}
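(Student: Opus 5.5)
We follow the construction sketched just before the statement, in three stages: a finite-level class on a Zariski cover, a projective limit over $E$, and finally compatibility with $[\tn{Kott}_{F}]$ in the limit over $\Sigma$.

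\emph{Finite level.} Fix a finite Galois $E/F$ inside $F_{\Sigma}$, and write $\mathbb{D}_{E,\Sigma}$ for the torus over $O_{F,\Sigma}$ with character group $\Z[\Sigma_{E}]$ (or its degree-zero part, matching the $T_{3}$ convention). By \cite[Lemma 1.12]{DLH26} we choose a cover $U=\tn{Spec}(O_{F,\Sigma}) = \bar{U}_{E,1}\cup \bar{U}_{E,2}$ whose preimages in $\tn{Spec}(O_{E,\Sigma_{E}})$ have trivial Picard group. On each piece the enlarged set of places $\Sigma_{E,j}$ satisfies Kottwitz's hypotheses (class number one over $E$, containing ramification). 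Hence \cite{Kot14} gives a canonical class in $H^{2}(\Gamma_{E/F}, T_{3,E,\Sigma_{E,j}}(O_{E,\Sigma_{E,j}}))$. We push it forward along the map dual to the inclusion $\Z[\Sigma_{E}]_{0}\hookrightarrow \Z[(\Sigma_{E,j})_{E}]_{0}$ and inflate to fppf cohomology, producing $\xi_{E,j}\in H^{2}(\bar{U}_{E,j},\mathbb{D}_{E,\Sigma})$.

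Kottwitz's classes are functorial in the set of places: enlarging $\Sigma$ maps canonical class to canonical class. This is the analogue of the functoriality of \eqref{JIMJ8} used in Lemma \ref{gerbeoverlap}. It shows that $\xi_{E,1}$ and $\xi_{E,2}$ agree on $\bar{U}_{E,1}\cap\bar{U}_{E,2}$, so Mayer--Vietoris (\cite[0A50]{stacks-project}, fppf version) glues them to a class $\xi_{E}\in H^{2}(O_{F,\Sigma},\mathbb{D}_{E,\Sigma})$. The same functoriality gives independence of the chosen cover: given two covers, compare both on a common refinement. It also gives compatibility with the transition maps $\mathbb{D}_{K,\Sigma}\to\mathbb{D}_{E,\Sigma}$ for $E\subseteq K$, since Kottwitz's classes are compatible under inflation and these transition maps.

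\emph{Passing to the limit over $E$.} This is the step I expect to be the main obstacle. We must lift $\varprojlim_{E}\xi_{E}$ to $H^{2}(O_{F,\Sigma},\mathbb{D}_{\Sigma})$ with $\mathbb{D}_{\Sigma}=\varprojlim_{E}\mathbb{D}_{E,\Sigma}$, and the lift is unique only up to $\varprojlim^{1}H^{1}(O_{F,\Sigma},\mathbb{D}_{E,\Sigma})$. Here, unlike in Theorem \ref{canonclassthm}, the $\mathbb{D}_{E,\Sigma}$ are tori, not finite groups, so a different argument is needed. I would first try to show that the $H^{1}(O_{F,\Sigma},\mathbb{D}_{E,\Sigma})$ are finite, or at least satisfy Mittag--Leffler, so that the $\varprojlim^{1}$ term vanishes; this would make $H^{2}(O_{F,\Sigma},\mathbb{D}_{\Sigma})\to\varprojlim_{E}H^{2}$ injective. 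These groups are class-group and Tate--Shafarevich type objects of the tori; via Proposition \ref{prop:cokdual} in the case $U=1$ they are dual to finite Galois cohomology of $\Z[\Sigma_{E}]$, so finiteness should follow. Should that fail, I would instead mimic Kottwitz's own argument in \cite{Kot14}, which realizes the limit class directly via $\Sigma$-idèle class groups: replace $C_{F}$ with $C_{\Sigma}$ and use Lemma \ref{keytechlem} to reduce to his setting.

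\emph{Compatibility over $\Sigma$.} For $\Sigma\subseteq\Sigma'$, the restriction of $[\tn{Kott}_{F,\Sigma}]$ to $O_{F,\Sigma'}$, pushed along $\mathbb{D}_{\Sigma'}\to\mathbb{D}_{\Sigma}$, agrees with the image of $[\tn{Kott}_{F,\Sigma'}]$. This holds at each finite level by functoriality, and passes to the limit by the injectivity established in the previous step. Finally, $\varinjlim_{\Sigma}O_{F,\Sigma}=F$ and $\varprojlim_{\Sigma}\mathbb{D}_{\Sigma}=\mathbb{D}$. The image of the finite-level classes in $H^{2}(\Gamma_{E/F},T_{3,E,\Sigma^{(E)}}(E))$ is by construction Kottwitz's class $[\alpha^{\tn{Kott}}_{E/F,\Sigma^{(E)}}]$, so the limit class coincides with $[\tn{Kott}_{F}]$ via Kottwitz's canonical characterization of the limit.
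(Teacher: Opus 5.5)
Your proposal follows the same route as the paper's (only sketched) argument. You take Tate--Kottwitz classes for enlarged sets of places on a two-piece cover of $\tn{Spec}(O_{F,\Sigma})$ with trivial Picard groups from \cite[Lemma 1.12]{DLH26}, glue them by Mayer--Vietoris via functoriality in the set of places, and then pass to the limit over $E \subseteq F_{\Sigma}$ with no localization argument of the kind needed in Theorem \ref{canonclassthm}.

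Your handling of the limit step is the right one. The finiteness of $H^{1}(O_{F,\Sigma},\mathbb{D}_{E,\Sigma})$ that kills $\varprojlim^{1}$ comes most directly from three facts: Hochschild--Serre for the finite \'{e}tale Galois cover $O_{E,\Sigma}/O_{F,\Sigma}$, Dirichlet's $\Sigma$-unit theorem, and the finiteness of $\tn{Pic}(O_{E,\Sigma})$. Proposition \ref{prop:cokdual} is not the right tool here, since by itself it only controls $\tn{ker}^{1}$ and $\tn{cok}^{1}$.
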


We denote the gerbe from the preceding Proposition by $\tn{Kott}_{F,\Sigma}$.

\subsection{A new gerbe}\hfill\\
Here is the key definition of this paper, which is a global analogue of \cite[D\'{e}finition 4.6]{Fargues}:

\begin{Definition}
Define the gerbe $\tn{Kott}_{F}^{1/\infty} := \tn{Kott}_{F} \times_{F} \tn{Kal}_{F}$.
\end{Definition}

We now prove some new results about this gerbe, following \cite{Fargues} in the local case; let $Z$ be a finite multiplicative group scheme over $?$, where $? = F$ or $F_{v}$ for a fixed place $v$ of $F$.

\begin{Lemma}
\begin{enumerate}
\item{For every class $c \in H_{\tn{fppf}}^{2}(F, Z)$, there exists $\lambda \colon P \to Z$ (defined over $F$) such that $\lambda_{*}\xi = c$.}
\item{For every class $c_{v} \in H_{\tn{fppf}}^{2}(F_{v}, Z)$, there exists $\lambda \colon P_{F_{v}} \to Z$ such that $\lambda_{*}\xi = c_{v}$.}
\end{enumerate}
\end{Lemma}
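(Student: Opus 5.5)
Both parts are instances of the universality of the canonical classes: Kaletha's surjectivity statement \cite[Proposition 3.6.4/Corollary 3.7.?]{Kaletha18} in the number field case, and its function field analogue \cite[Proposition 3.?]{Dillery23b} / \cite[\S 3]{Dil23a}. The plan is to reduce to finite level and then use the explicit description of the map \eqref{JIMJ8}, namely $\Hom(P_{E,\dot{S},n},Z)\to H^{2}$.

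\textbf{Finite level.} Since $Z$ is finite multiplicative, it is split by some finite Galois $E/F$ and killed by some $n$. Enlarging, we may assume $(E,n)=(E_k,n_k)$ for some $k$ in the cofinal system. Choose $\Sigma^{(E_k)}$ large enough (containing the ramification of $E_k/F$ and satisfying \cite[Conditions 3.1]{Dillery23b}) so that the following hold:
\begin{enumerate}[(i)]
\item $Z$ extends to a finite multiplicative group scheme over $O_{F,\Sigma^{(E_k)}}$ split over $O_{E_k,\Sigma^{(E_k)}}$;
\item $c$ is the image of a class in $\check{H}^{2}(O^{\tn{perf}}_{\Sigma}/O_{F,\Sigma^{(E_k)}},Z)$ (spreading out);
\item the Tate–Nakayama-type description applies: $\check{H}^{2}(O_{F,S},Z)$ is dual to $H^{1}(\Gamma_{E/F},X^{*}(Z)\otimes \Z[S_{E}]_{0}\ldots)$, equivalently the map \eqref{JIMJ8} $\Hom(\dot{P}_{k},Z)\to\check{H}^{2}(\cdots,Z)$ is surjective.
\end{enumerate}

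\textbf{Surjectivity at level $k$.} The key input is that \eqref{JIMJ8} is surjective onto $\check{H}^{2}$. This is proved in \cite[\S 3]{Dillery23b} by identifying $\Hom(\dot{P}_k,Z)$ with $\Hom_{\Gamma}(X^{*}(Z),\ldots)$ and using global Tate–Nakayama for the complex $T\to U$ with kernel $Z$ (Poitou–Tate / \cite[Theorem C.1.B]{KS99}). This gives $\lambda_k\colon\dot{P}_k\to Z$ with $(\lambda_k)_*\xi_{k}=c$. Setting $\lambda=\lambda_k\circ(P\to\dot{P}_k)$ and using that $\xi$ maps to $\xi_k$ in $\varprojlim_k H^{2}(F,\dot{P}_k)$ gives (1).

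\textbf{The local statement (2).} The same strategy applies with the local gerbe. Locally, every class in $H^{2}(F_v,Z)$ is of the form $\mu_*\xi_v$ for some $\mu\colon u_v\to Z$, by \cite[Proposition 3.?]{Dil23a} (local Tate duality $H^{2}(F_v,Z)\cong \Hom(X^{*}(Z)^{\Gamma},\Q/\Z)$ together with the description of $\Hom(u_v,Z)$). It then remains to factor $\mu$ through the localization map $u_v\to P_{F_v}$ compatibly with classes, i.e. $\tn{loc}_v^*\xi=\xi_v$ up to the canonical identification. I would argue that $\Hom(P_{F_v},Z)\to\Hom(u_v,Z)$ is surjective at a sufficiently deep finite level. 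The reason is that $X^{*}(Z)\to X^{*}(\dot{P}_k)_{F_v}$ factors through the $\dot v$-coordinate inclusion $(\frac{1}{n}\Z/\Z)[\Gamma_{E_v/F_v}]_0\hookrightarrow(\frac1n\Z/\Z)[\Gamma_{E/F}\times\dot S]_{0,0}$, which is $\Gamma_{F_v}$-equivariantly split once $\dot S$ contains a second place to absorb the augmentation. So $\mu$ lifts to some $\lambda\colon P_{F_v}\to Z$, and $\lambda_*\xi=\lambda_*\tn{loc}_{v,*}\xi_v=\mu_*\xi_v=c_v$.

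\textbf{Main obstacle.} The main difficulty is the global surjectivity of \eqref{JIMJ8} when $Z$ is nonsplit. There I would follow Kaletha's argument via the exact sequence \eqref{JIMJeq1} and the vanishing of the relevant $\ker^{1}$/$H^{1}$ terms in the limit, as in the proof of Theorem \ref{canonclassthm}.
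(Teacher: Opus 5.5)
\textbf{Part (1).} This is the paper's argument. The paper only cites the remark preceding (9) of \cite{Dillery23b}: every class in $H^{2}_{\tn{fppf}}(F,Z)$ is $(\lambda_k)_*\xi_k$ for some sufficiently large finite level. One then precomposes with $P\to\dot P_k$, exactly as you do.

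Your condition (iii) should be weakened. At a fixed level, \eqref{JIMJ8} need not be onto $\check H^{2}(O^{\tn{perf}}_{\Sigma}/O_{F,\Sigma},Z)$. For $Z=\mu_n$, a $\Gamma$-invariant character of $\dot P_k$ is determined by a tuple $(\phi_x)_{x\in\Sigma}$ with $\sum_x\phi_x=0$. So $\Hom(\dot P_k,\mu_n)$ has order at most $n^{|\Sigma|-1}=|\mathrm{Br}(O_{F,\Sigma})[n]|$, whereas $H^2(O_{F,\Sigma},\mu_n)$ also contains $\tn{Pic}(O_{F,\Sigma})/n$. You only need the given $c$ to be hit at some level.

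Also, the $\tn{ker}^1$/$\varprojlim^{1}$ arguments from Theorem \ref{canonclassthm}, which you list as the main obstacle, concern existence and uniqueness of the lift $\xi$. They are not what gives the surjectivity of $\lambda\mapsto\lambda_*\xi$.

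\textbf{Part (2).} The paper leaves this as an exercise. Your route has a genuine gap at the step lifting $\mu\colon u_v\to Z$ through $\tn{loc}_v\colon u_v\to P_{F_v}$.

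On characters, $\tn{loc}_v$ is $r\colon f\mapsto (f(\sigma,\dot v))_{\sigma\in\Gamma_{\dot v}}$, and elements of $X^*(\dot P_k)$ must be killed by both augmentations. Suppose you absorb the row sums by placing $-g(\sigma)$ at $(\sigma,\sigma\dot u)$ for a second place $u$. The column condition at $w=\rho\dot u$ then reads $\sum_{\sigma\in\Gamma_{\dot v}\cap\rho\Gamma_{\dot u}}g(\sigma)=0$. This holds for all $g\in(\frac1n\Z/\Z)[\Gamma_{\dot v}]_0$ only if $\Gamma_{\dot v}\subseteq\Gamma_{\dot u}$.

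Nothing puts such a $u$ in $\dot\Sigma^{(E_k)}$; if $v$ is ramified with non-cyclic decomposition group there may be none. An equivariant section can then fail to exist. Take $n_k$ even, $\Gamma:=\Gamma_{E_k/F}=\Gamma_{\dot v}\cong(\Z/2)^2$, and all other places of $\Sigma$ with cyclic decomposition groups. On $2$-torsion, a section would make the indecomposable $3$-dimensional module $I_\Gamma\otimes\mathbb{F}_2$ a direct summand of a sum of the $2$-dimensional indecomposables $\mathbb{F}_2[\Gamma/\Gamma_{\dot u}]$ with $|\Gamma_{\dot u}|=2$. This contradicts Krull--Schmidt. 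Going to deeper levels also changes $\Gamma_{\dot v}$, so ``sufficiently deep'' does not help without a new argument.

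\textbf{A fix.} You never need to lift an arbitrary $\mu$, only to hit $c_v$, and (2) follows from (1).
\begin{enumerate}
\item Choose, by Krasner, a finite Galois $E/F$ whose completion $E_{\dot v}$ splits $Z$.
\item Let $Z'$ be the finite multiplicative $F$-group with $X^*(Z')=\operatorname{Ind}_{\Gamma_{E/F,\dot v}}^{\Gamma_{E/F}}X^*(Z)$. By Mackey, $Z$ is a direct factor of $Z'_{F_v}$.
\item By Poitou--Tate for $Z'$ (fppf, so the $p$-part is included), a family of local classes comes from $H^2(F,Z')$ if and only if it pairs to zero with $X^*(Z')^{\Gamma_F}$.
\item Put $(c_v,0)$ at $v$ and a compensating class at one auxiliary place $u$. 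This is possible because $(X^*(Z')^{\Gamma_{F_u}})^{\vee}\to(X^*(Z')^{\Gamma_F})^{\vee}$ is onto. You get $c\in H^2(F,Z')$ with $c|_{F_v}=(c_v,0)$.
\item Part (1) gives $\lambda'\colon P\to Z'$ with $\lambda'_*\xi=c$. Then $\lambda:=\mathrm{pr}_Z\circ\lambda'_{F_v}$ satisfies $\lambda_*\xi=\mathrm{pr}_{Z,*}(c|_{F_v})=c_v$.
\end{enumerate}
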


\begin{proof}
The first statement comes from the remark proceeding \cite[(9)]{Dillery23b}; we leave the second as an exercise.
\end{proof}

For $\mathcal{E}$ a gerbe over $\mathrm{Spec}(F)_{\tn{fppf}}$, we denote by $H^{2}(\mathcal{E}, Z)$ the isomorphism classes of $Z$-gerbes over $\mathcal{E}$, where $\mathcal{E}$ is given the fpqc topology inherited from $\tn{Spec}(F)$. Recall that a gerbe over a site $\mathcal{C}$ is a category $\mathcal{E} \to \mathcal{C}$ fibered in groupoids such that for all objects $x \in \tn{obj}(\mathcal{C})$ there is a covering $\{U_{i}\}_{i \in I}$ of $x$ such that $\mathcal{E}(U_{i}) \neq \emptyset$ for all $i$ and any two objects in $\mathcal{E}$ lying over the same object in $\mathcal{C}$ are locally isomorphic. Picking a lift $\tilde{U}$ of $U \in \text{obj}(\mathcal{C})$ defines a functor on $\mathcal{C}/U$ via $(V \xrightarrow{f} U) \mapsto \mathrm{Isom}(f^{*}\tilde{U}, f^{*}\tilde{U})$---this glues to define a sheaf of groups on $\mathcal{C}$, and if it's isomorphic to $Z$ then we say that $\mathcal{E}$ is a $Z$-gerbe over $\mathcal{C}$. 

The rest of this subsection is essentially a re-phrasing of \cite[\S \S 8.1-8.3]{Fargues} in the global setting. We first collect two results from \cite{Fargues} (Lemme 8.3 and Lemma 8.4 loc. cit.) which hold for us (with identical proof):
\begin{Lemma}
\begin{enumerate}
\item{The map $H_{\tn{fppf}}^{2}(F, Z) \to H^{2}(\tn{Kal}_{F}, Z)$ is trivial.}
\item{If $Z \to Z'$ is a surjection of finite multiplicative $F$-groups, then the map $H_{\tn{fpqc}}^{1}(\tn{Kal}_{F}, Z) \to H_{\tn{fpqc}}^{1}(\tn{Kal}_{F}, Z')$ is surjective.} 
\end{enumerate}
\end{Lemma}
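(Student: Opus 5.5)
Both parts follow the pattern of \cite[Lemme 8.3, Lemme 8.4]{Fargues}, with the previous Lemma (every class $c \in H^{2}_{\tn{fppf}}(F,Z)$ is of the form $\lambda_{*}\xi$) as the global input. Throughout, $\pi\colon \tn{Kal}_{F} \to \tn{Spec}(F)$ is the structure map and $\xi$ is represented by the gerbe $\tn{Kal}_{F}$ itself.

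For (1), fix $c \in H^{2}_{\tn{fppf}}(F,Z)$ and choose $\lambda\colon P \to Z$ with $\lambda_{*}\xi = c$. The $Z$-gerbe representing $c$ is then the pushforward $\lambda_{*}\tn{Kal}_{F}$, so it suffices to show that $\pi^{*}(\lambda_{*}\tn{Kal}_{F})$ is neutral over $\tn{Kal}_{F}$. Since pushforward commutes with pullback, $\pi^{*}(\lambda_{*}\tn{Kal}_{F}) = \lambda_{*}(\pi^{*}\tn{Kal}_{F})$. Moreover, $\pi^{*}\tn{Kal}_{F} = \tn{Kal}_{F} \times_{F} \tn{Kal}_{F} \to \tn{Kal}_{F}$ has a tautological section, namely the diagonal. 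Hence $\pi^{*}\xi$ is trivial in $H^{2}(\tn{Kal}_{F}, P)$, and therefore so is $\pi^{*}c = \lambda_{*}\pi^{*}\xi$.

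The one point needing care is that $P$ is profinite rather than finite type. I would handle this by working with the $P$-gerbe in the fpqc topology, where $H^{2}(\tn{Kal}_{F},-)$ is defined by gerbes, so pushforward along $\lambda$ and pullback along $\pi$ make sense at the level of gerbes and no cohomological comparison is required.

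For (2), fix a surjection $Z \to Z'$ with kernel $K$, a finite multiplicative group. I would use the long exact sequence of fpqc cohomology on $\tn{Kal}_{F}$,
\begin{equation*}
H^{1}(\tn{Kal}_{F}, Z) \to H^{1}(\tn{Kal}_{F}, Z') \xrightarrow{\delta} H^{2}(\tn{Kal}_{F}, K),
\end{equation*}
which makes sense because $Z$, $Z'$ and $K$ are commutative. Surjectivity therefore reduces to showing $\delta = 0$, and I would do this by a Hochschild--Serre type analysis on the gerbe. For each $x \in H^{1}(\tn{Kal}_{F}, Z')$, restricting to the band gives a homomorphism $\phi_{x}\colon P \to Z'$, since $Z'$ is commutative and central. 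In the \v{C}ech description via $O_{F}^{\tn{perf}}$-type covers, the torsor $x$ is a pair $(\phi_{x}, z')$ in which $z'$ is a $1$-cochain satisfying $dz' = \phi_{x}(\dot{\xi})$.

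\textbf{First}, I would lift $\phi_{x}$ to some $\phi\colon P \to Z$. The obstruction is the class of $\phi_{x}$ in $\tn{Ext}^{1}(P,K)$, i.e.\ in $\tn{Ext}^{1}$ of character modules, and it vanishes after passing far enough in the projective system defining $P$. This is where I would use that $X^{*}(P) = \varinjlim X^{*}(P_{\dot{\Sigma},i})$ is built so that maps out of finite modules extend; this is the same mechanism behind the surjectivity statement of the previous Lemma.

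\textbf{Second}, the remaining discrepancy is a class in $H^{2}_{\tn{fppf}}(F, K)$, coming from the difference between $\phi(\dot{\xi})$ and a lift of $z'$. By part (1), this class dies on $\tn{Kal}_{F}$, so it can be absorbed by adjusting $\phi$ by a homomorphism $P \to K$ realizing it; that such a homomorphism exists is again the previous Lemma.

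The main obstacle is the first step, the lifting of homomorphisms $P \to Z'$ through $Z \to Z'$ together with the compatibility bookkeeping. I would settle it by reducing to finite level $P_{\dot{\Sigma},i}$, exactly as in \cite[Lemme 8.4]{Fargues}.
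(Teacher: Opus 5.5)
Your proposal is correct and follows the same route as the paper, which simply transports the proofs of \cite[Lemme 8.3, Lemme 8.4]{Fargues}: for (1) you write $c=\lambda_{*}\xi$ and use the diagonal section of $\tn{Kal}_{F}\times_{F}\tn{Kal}_{F}\to\tn{Kal}_{F}$, and for (2) you lift the band homomorphism and then kill the resulting class in $H^{2}_{\tn{fppf}}(F,K)$ with a homomorphism $P\to K$ from the previous Lemma. The one input you only sketch, surjectivity of $\Hom_{F}(P,Z)\to\Hom_{F}(P,Z')$, amounts to the vanishing of $\varinjlim_{k}\tn{Ext}^{1}_{\Gamma_{F^{s}/F}}(X^{*}(K),X^{*}(\dot{P}_{k}))$; this is a property of the Galois module $X^{*}(P)$ alone, proved like the $\varinjlim H^{1}$-vanishing in Theorem \ref{canonclassthm} (induced middle term and eventually zero transition maps in a sequence like \eqref{JIMJeq1}), and not, as you suggest, a consequence of the surjectivity onto $H^{2}_{\tn{fppf}}(F,Z)$.
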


The main statement we want to show is:

\begin{Proposition}\label{vanprop1}
If $Z$ is a finite multiplicative group scheme over $F$, we have $H^{2}(\tn{Kal}_{F}, Z) = \{*\}$.
\end{Proposition}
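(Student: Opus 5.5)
We follow the local argument of \cite[\S 8]{Fargues}, using the Hochschild--Serre type exact sequence for the gerbe $\pi \colon \tn{Kal}_{F} \to \tn{Spec}(F)$, which is banded by the (pro-finite multiplicative) group $P$. For a finite multiplicative $F$-group $Z$, the Leray spectral sequence for $\pi$ (fpqc topology) has $E_{2}^{a,b} = H^{a}_{\tn{fppf}}(F, R^{b}\pi_{*}Z)$. We have $\pi_{*}Z = Z$, and $R^{1}\pi_{*}Z$ is the sheaf $\underline{\Hom}(P, Z)$ of homomorphisms from the band. Since $P = \varprojlim P_{k}$ with each $P_{k}$ finite multiplicative, this sheaf is the ind-finite \'{e}tale group $\varinjlim_{k} \underline{\Hom}(P_{k}, Z)$. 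The relevant low-degree part of the resulting exact sequence reads
\begin{equation*}
H^{1}(\tn{Kal}_{F}, Z) \to \Hom_{F}(P, Z) \xrightarrow{\lambda \mapsto \lambda_{*}\xi} H_{\tn{fppf}}^{2}(F, Z) \to \ker\bigl[H^{2}(\tn{Kal}_{F}, Z) \to H^{0}(F, R^{2}\pi_{*}Z)\bigr] \to H^{1}(F, \underline{\Hom}(P,Z)),
\end{equation*}
where the transgression $d_{2}$ is $\lambda \mapsto \lambda_{*}\xi$.

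The proof then has three steps.

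\textbf{Step 1: the stalks of $R^{2}\pi_{*}Z$ vanish.} Over $\ov{F}$ the gerbe trivializes, so these stalks are $\varinjlim_{k} H^{2}(B P_{k,\ov{F}}, Z)$. For finite diagonalizable groups this cohomology is computed by the group cohomology of the character groups. The classes die in the colimit because the transition maps $X^{*}(P_{k}) \to X^{*}(P_{\ell})$ are divisible enough: they are built from multiplication by $n_{\ell}/n_{k}$, which is cofinal multiplicatively. This is the global analogue of \cite[Lemme 8.3]{Fargues}, and I expect the computation to carry over verbatim.

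\textbf{Step 2: $H^{1}(F, \underline{\Hom}(P,Z)) = 0$.} This group equals $\varinjlim_{k} H^{1}(F, \underline{\Hom}(P_{k}, Z))$. I would argue exactly as in the proof of Theorem \ref{canonclassthm}, where $\varinjlim H^{1}(\Gamma, X^{*}(P_{\dot{\Sigma},i})) = 0$ was shown using the sequence \eqref{JIMJeq1} and \cite[Lemma 3.4.4]{Kaletha18}, now with $\Hom(X^{*}(Z), -)$ applied. Alternatively, one can use part (2) of the preceding Lemma (surjectivity of $H^{1}(\tn{Kal}_{F},Z)\to H^{1}(\tn{Kal}_{F},Z')$) to kill the relevant edge map. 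A uniform way to organize both steps is to embed $Z \hookrightarrow T$ into an induced torus and compare long exact sequences.

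\textbf{Step 3: conclusion.} With Steps 1 and 2, $H^{2}(\tn{Kal}_{F}, Z)$ is the cokernel of $\Hom_{F}(P,Z) \to H^{2}_{\tn{fppf}}(F,Z)$. This map is surjective by part (1) of the first Lemma above. Note that this is consistent with part (1) of the second Lemma, which says the pullback $H^{2}_{\tn{fppf}}(F,Z) \to H^{2}(\tn{Kal}_{F},Z)$ is trivial. Hence $H^{2}(\tn{Kal}_{F}, Z) = \{*\}$.

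The main obstacle is Step 2, the vanishing of $\varinjlim_{k} H^{1}(F, \underline{\Hom}(P_{k}, Z))$ (equivalently, the $E_{2}^{1,1}$ term). This is where the global arithmetic enters, through the choices of $E_{k}$, $n_{k}$ and $\dot{V}$, rather than just local divisibility. I would first try to reduce it to the $\varinjlim H^{1}$ vanishing already established for the character modules of the $P_{k}$ in \cite{Dillery23b}.
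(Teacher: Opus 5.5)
\paragraph{Where the proof breaks.} Your three-step outline is the same reduction the paper takes from Fargues: the Leray spectral sequence for $\pi\colon \tn{Kal}_F\to\Spec(F)$, with $E_2^{2,0}$ killed by surjectivity of $\lambda\mapsto\lambda_*\xi$ and the geometric term $E_2^{0,2}$ killed by divisibility. However, Step~1 is false as stated, and the argument breaks there rather than in Step~2.

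Divisibility of $X^*(P)$ only disposes of commutative extensions, i.e.\ of $\text{Ext}^1_{\Z}(X^*(Z),X^*(P))$; for $Z=\mu_n$ this is $X^*(P)/n=0$. For $Z=\mu_\ell$ with $\ell\neq p$, the group $H^2([\Spec(\ov{F})/P_{\ov{F}}],\mu_\ell)$ also contains cup products of characters. Concretely, take independent $\chi_1,\chi_2\in\Hom(P_{\ov{F}},\mu_\ell)=X^*(P)[\ell]$. Then $(\chi_1,\chi_2)\colon P_{\ov{F}}\to\mu_\ell^2$ is surjective. Pulling back a Heisenberg extension $1\to\mu_\ell\to\mathcal{H}\to\mu_\ell^2\to 1$ along it gives a non-commutative, hence non-split, central extension of $P_{\ov{F}}$ by $\mu_\ell$. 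This is a nontrivial $\mu_\ell$-gerbe on $[\Spec(\ov{F})/P_{\ov{F}}]$, and it stays nontrivial at every finite level.

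Divisibility kills $X^*(P)/\ell$, not $X^*(P)[\ell]$, and the latter is infinite. So $E_2^{0,2}\neq 0$. Step~2, which you single out as the main obstacle, is the harmless one: for $Z=\mu_n$ it follows from $H^1(\Gamma_{F^s/F},X^*(P))=0$ together with the divisibility of $X^*(P)^{\Gamma_{F^s/F}}$.

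\paragraph{Relation to the paper's proof.} The paper's proof has the same gap. It reduces to $H^2([\Spec(\ov{F})/P_{\ov{F}}],Z_{\ov{F}})=\{*\}$. Its Giraud/character-module computation is correct for the $p$-power part, because every commutator pairing $P_{\ov{F}}\times P_{\ov{F}}\to\mu_{p^m}$ vanishes (the $p$-part of $P$ is infinitesimal and $\underline{\Hom}(P,\mu_{p^m})$ is \'etale). So your divisibility mechanism is exactly right there. The prime-to-$p$ part, however, is deferred to Fargues and fails for the reason above.

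\paragraph{The differentials do not rescue it.} You would need $d_2\colon E_2^{0,2}\to H^2(F,\underline{\Hom}(P,\mu_\ell))$ to be injective. Suppose $\mu_\ell\subset F$ and $a,b\in\Hom_F(P,\mu_\ell)$ are independent; such $a,b$ exist since $\Hom_F(P,\mathbb{G}_m)\simeq\Q/\Z[\dot V]_0$. The Leibniz rule gives $d_2(a\cup b)=a_*\xi\cdot b-b_*\xi\cdot a$, and each term vanishes, for three reasons:
\begin{itemize}
\item $\tfrac{1}{\ell}\Z/\Z\xrightarrow{b}X^*(P)$ extends to $\Q/\Z\to X^*(P)^{\Gamma_{F^s/F}}$, since the invariants are divisible;
\item $H^2(\Gamma_{F^s/F},\Q/\Z)=0$;
\item $H^2(\Gamma_{F^s/F},X^*(P)[\ell])\to H^2(\Gamma_{F^s/F},X^*(P))$ is injective, since $H^1(\Gamma_{F^s/F},X^*(P))=0$.
\end{itemize}
Since also $H^3(F,\mu_\ell)=0$, the class $a\cup b$ survives to a nonzero element of $H^2(\tn{Kal}_F,\mu_\ell)$.

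So your argument, like the paper's, goes through for $Z$ of $p$-power order. For prime-to-$p$ $Z$ (e.g.\ $Z=\mu_\ell$ with $\mu_\ell\subset F$) the vanishing itself appears to fail, and no refinement of Step~1 can repair that.
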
 

\begin{proof}
The same argument as in \cite[Th\'{e}or\`{e}me 8.2]{Fargues} reduces to showing that
\begin{equation*}
H^{2}([\mathrm{Spec}(\ov{F})/P_{\ov{F}}], Z_{\ov{F}}) = \{*\}.
\end{equation*}
Breaking up $Z_{\ov{F}} = \prod_{(n_{i},p)=1} \mu_{n_{i}} \times \prod_{m_{i}} \mu_{p^{m_{i}}}$ reduces to doing this computation for the prime-to-$p$ and $p$-power cases separately. The prime-to-$p$ case is the same as the analogous argument in the proof of \cite[Th\'{e}or\`{e}m 8.2]{Fargues} using \'{e}tale cohomology, so it suffices to handle the $p$-power case. For this case, we observe by \cite{Gir} that $H^{2}([\mathrm{Spec}(\ov{F})/P_{\ov{F}}], Z_{\ov{F}})$ classifies isomorphism classes of extensions of $P_{\ov{F}}$ by $Z_{\ov{F}}$. By passing to character modules and taking direct sums, this reduces to showing that the group $\tn{Ext}^{1}_{\mathbb{Z}}(\frac{1}{p}\Z/\Z, (\Q/\Z)[p^{\infty}]) \xrightarrow{\sim} ((\Q/\Z)[p^{\infty}])/p( (\Q/\Z)[p^{\infty}])$ vanishes, which is true.
\end{proof}

In a similar vein, we also have:

\begin{Proposition}\label{prop:H2torus}
For $T$ an $F$-torus, we have $H^{2}(\tn{Kott}_{F}, T) = \{*\}$
\end{Proposition}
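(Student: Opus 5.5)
Following the pattern of Proposition \ref{vanprop1} (and the argument of \cite[Th\'{e}or\`{e}me 8.2]{Fargues}), the plan is first to descend the problem to the geometric fiber. The gerbe $\tn{Kott}_{F}$ is banded by the pro-torus $\mathbb{D}$, which is split over $\ov{F}$, and over $\ov{F}$ it becomes the neutral gerbe $[\Spec(\ov{F})/\mathbb{D}_{\ov{F}}]$. A $T$-gerbe on $\tn{Kott}_{F}$ is then analyzed via the Hochschild--Serre/Leray type filtration coming from $\tn{Kott}_{F} \to \Spec(F)$. The graded pieces involve $H^{i}(F, H^{j}(B\mathbb{D}, T))$ for $i+j=2$. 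The reduction step consists of two parts. First, show that any $T$-gerbe on $\tn{Kott}_{F}$ whose pullback to $[\Spec(\ov{F})/\mathbb{D}_{\ov{F}}]$ is trivial comes from $H^{2}(F,T)$ modulo the image of $H^{1}$-type terms. Second, show that the map $H^{2}_{\tn{fppf}}(F,T) \to H^{2}(\tn{Kott}_{F},T)$ is trivial.

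For the second part, I would copy the argument of part (1) of the preceding Lemma (\cite[Lemme 8.3]{Fargues}) with $Z$ replaced by $T$. The key input is global Tate--Nakayama for the Kottwitz gerbe. Every class $c\in H^{2}(F,T)$ should be of the form $\lambda_{*}[\tn{Kott}_{F}]$ for some $\lambda\colon \mathbb{D}\to T$. This is the surjectivity statement in Kottwitz's construction \cite{Kot14}: $\Hom(\mathbb{D},T)$ is identified with $\varinjlim \Hom_{\Gamma}(X^{*}(T),\Z[\Sigma_{E}]_{0})$, which surjects onto $H^{2}$ via cup product with the canonical class. Granting this, the pullback of $\lambda_{*}[\tn{Kott}_{F}]$ to $\tn{Kott}_{F}$ is tautologically trivialized, exactly as in the finite case.

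For the first part, I compute over $\ov{F}$: $H^{1}(B\mathbb{D}_{\ov{F}},T)=\Hom(\mathbb{D},T)$, and $H^{2}(B\mathbb{D}_{\ov{F}},T_{\ov{F}})$ classifies (by \cite{Gir}) central extensions of $\mathbb{D}_{\ov{F}}$ by $T_{\ov{F}}$. Since both are of multiplicative type over an algebraically closed field, these correspond to extensions of character modules $0\to \varinjlim\Z[\Sigma_{E}]_{0}\to ?\to X^{*}(T)\to 0$. These split because $X^{*}(T)$ is free, so the $j=2$ term vanishes. The remaining term with $i=1$, $j=1$ is $H^{1}(F,\Hom(\mathbb{D},T))$, and it must be killed by showing that the relevant differential or the comparison with the first part absorbs it. Alternatively, one can argue that the transgression $\Hom(\mathbb{D},T)^{\Gamma}\to H^{2}(F,T)$ is surjective, by Tate--Nakayama again, which controls the edge.

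The main obstacle I expect is the term coming from $H^{1}(F,\Hom(\mathbb{D}_{\ov{F}},T_{\ov{F}}))$, together with the $\varprojlim^{1}$ issues from $\mathbb{D}$ being only a pro-torus. I would first try to show that $\Hom(\mathbb{D},T)=\varinjlim_{E}\Hom(X^{*}(T),\Z[\Sigma_{E}]_{0})$ is an induced-type Galois module in the limit. In that case its $H^{1}$ vanishes, by the same transition-map argument used in the proof of Theorem \ref{canonclassthm} (the analogue of \cite[Lemma 3.4.4]{Kaletha18}). The $\varprojlim^{1}$ issue is to be handled by working at finite level with $T_{3,E,\Sigma^{(E)}}$ and passing to the limit.
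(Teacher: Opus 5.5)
Your outline, the Leray/Hochschild--Serre filtration for $\tn{Kott}_{F}\to\Spec(F)$, is the natural transplant of Fargues' local proof of Th\'{e}or\`{e}me 8.6, which is all the paper's proof invokes. Two of your three graded pieces are handled correctly. The $E^{2,0}$ piece dies because $\lambda\mapsto\lambda_{*}[\tn{Kott}_{F}]$ surjects onto $H^{2}(F,T)$ by Tate--Nakayama at finite level. The $E^{0,2}$ piece vanishes because extensions of $\mathbb{D}_{\ov{F}}$ by $T_{\ov{F}}$ split, since $X^{*}(T)$ is free.

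The gap is $E_{2}^{1,1}=H^{1}(F,X_{*}(T)\otimes X^{*}(\mathbb{D}))$, which you leave open, and your suggested escapes do not work. Nothing maps into $E_{2}^{1,1}$, and $d_{2}$ out of it lands in $H^{3}(F,T)=0$ (a global function field has no real places), so the term survives to $E_{\infty}$ unchanged. Your alternative, surjectivity of the transgression, only concerns $E^{2,0}$. So you really must show $H^{1}(F,X_{*}(T)\otimes X^{*}(\mathbb{D}))=0$, and trying to make the module ``induced in the limit'' is the wrong target.

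The missing observation is that $X^{*}(\mathbb{D})=\varinjlim\Z[\Sigma^{(E)}_{E}]_{0}$ is a $\Q$-vector space, exactly as $X^{*}(\mathbb{D})=\Q$ in the local case.
\begin{enumerate}
\item The transition maps are $w\mapsto\sum_{u\mid w}[K_{u}:E_{w}]\,u$. This is what makes Kottwitz's classes compatible, since inflation multiplies local fundamental classes by local degrees.
\item These maps are injective, so the colimit is torsion-free.
\item Since you may enlarge the constant field, for any $x$ and $n\geq 1$ there is $K$ with $n\mid[K_{u}:E_{w}]$ for all $w$ in the support of $x$. Then the image of $x$ is $n$ times an element of degree $0$, so the colimit is divisible.
\end{enumerate}
Hence $X_{*}(T)\otimes X^{*}(\mathbb{D})$ is uniquely divisible, and its Galois $H^{1}$, being torsion, vanishes. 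This is the one global input that lets the local argument run verbatim. Your Shapiro/transition-map route could be pushed through, but it would only be a longer proof of this same divisibility.

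Finally, your $\varprojlim^{1}$ worry is moot. $T$ is of finite presentation, so $T$-valued cohomology of $B\mathbb{D}_{\ov{F}}$ and of $\tn{Kott}_{F}$ is the filtered colimit of the finite-level groups.
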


\begin{proof}
The same argument as in proof of this result's analogue in the local case (\cite[Th\'{e}or\`{e}me 8.6]{Fargues}) works here.
\end{proof}

The key consequence of the above two cohomological vanishing results is:

\begin{Proposition}\label{prop:multvanish}
If $D$ is any multiplicative group scheme over $F$, then 
\begin{equation*}
H^{2}(\tn{Kott}^{1/\infty}_{F}, D) = \{*\}.
\end{equation*}
\end{Proposition}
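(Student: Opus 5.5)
We follow the local argument of \cite[\S 8.3]{Fargues}. The plan is to dévissage $D$ into a torus and a finite multiplicative group, and to treat each piece with one of the two vanishing results: Proposition \ref{vanprop1} for the finite part and Proposition \ref{prop:H2torus} for the torus part. Since $D$ is of finite type over $F$ (or a limit of such, in which case we pass to the limit using that $H^{1}$-terms are finite), we write it as an extension
\begin{equation*}
1 \to T \to D \to Z \to 1,
\end{equation*}
with $T = (D^{\circ})_{\tn{red}}$-type torus part, namely the maximal subtorus, and $Z$ finite multiplicative. Dually, this comes from the torsion subgroup of $X^{*}(D)$. Because the groups involved are abelian, $H^{2}(\tn{Kott}^{1/\infty}_{F}, -)$ has a genuine long exact sequence in the fpqc topology on the gerbe. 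It therefore suffices to show that $H^{2}(\tn{Kott}^{1/\infty}_{F}, T)$ and $H^{2}(\tn{Kott}^{1/\infty}_{F}, Z)$ both vanish. Actually the exact sequence gives $H^{2}(T) \to H^{2}(D) \to H^{2}(Z)$, which suffices.

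\emph{Finite part.} The projection $\pi\colon \tn{Kott}^{1/\infty}_{F} \to \tn{Kal}_{F}$ is a $\mathbb{D}$-gerbe. We use the Leray-type (Hochschild--Serre for gerbes) spectral sequence along $\pi$, exactly as in \cite[Th\'eor\`eme 8.7]{Fargues}. A class in $H^{2}(\tn{Kott}^{1/\infty}_{F}, Z)$ restricts to $\tn{Kal}_{F}$-fibres, i.e.\ to $H^{2}([\Spec(\ov F)/\mathbb{D}_{\ov F}], Z)$. By \cite{Gir} these fibre classes are extensions of $\mathbb{D}_{\ov F}$ by $Z_{\ov F}$, and on characters they correspond to $\tn{Ext}^{1}(X^{*}(Z), \varinjlim \Z[\Sigma_{E}]_{0})$ contributions. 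One then checks that these are killed as in the proof of Proposition \ref{vanprop1}, reducing to $H^{2}(\tn{Kal}_{F}, Z)$ and the group $H^{1}(\tn{Kal}_{F}, \underline{\Hom}(\mathbb{D}, Z))$. The former vanishes by Proposition \ref{vanprop1}. An alternative route, which may be cleaner, is to compose the other way: first pull back to $\tn{Kott}_{F}$ and use that every $Z$-gerbe on $\tn{Kott}^{1/\infty}_{F}$ becomes, after embedding $Z \hookrightarrow T'$ into a torus, a $T'$-gerbe that is trivial over $\tn{Kott}_{F}$-fibres by Proposition \ref{prop:H2torus}. One then uses the surjectivity on $H^{1}$ from part (2) of the preceding lemma to lift the resulting $T'/Z$-torsor.

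\emph{Torus part.} Pull back along $\tn{Kott}^{1/\infty}_{F} \to \tn{Kott}_{F}$, which is a $P$-gerbe. The same spectral sequence gives terms $H^{2}(\tn{Kott}_{F}, T)$, which is trivial by Proposition \ref{prop:H2torus}, together with a fibre term $H^{2}([\Spec \ov F/P_{\ov F}], T_{\ov F})$ and a term $H^{1}(\tn{Kott}_{F}, \underline{\Hom}(P,T))$. Since $P$ is profinite and $T$ is a torus, $\underline{\Hom}(P,T)$ is the colimit of finite groups $\underline{\Hom}(P_{k}, T)$. The fibre term reduces, via $1 \to T[n] \to T \xrightarrow{n} T \to 1$ and divisibility, to the vanishing of $\tn{Ext}^{1}$ of profinite character groups into a divisible module, which holds. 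The differential $d_{2}$ on the $H^{1}(\underline{\Hom}(P,T))$ term sends a homomorphism $\lambda$ to $\lambda_{*}\xi$. Surjectivity of $\lambda \mapsto \lambda_{*}\xi$ (the first lemma of this subsection) is what kills the obstruction.

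The main obstacle is bookkeeping the spectral sequence for a gerbe banded by a pro-group in the fppf/fpqc setting, especially the $p$-power part, where étale arguments fail. I would handle the $p$-part by the Giraud extension description, exactly as in the proof of Proposition \ref{vanprop1}, and otherwise copy \cite[\S 8.3]{Fargues} verbatim.
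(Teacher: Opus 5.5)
Your overall architecture is the kind of argument the paper delegates to \cite[Th\'eor\`eme 8.9]{Fargues}: the d\'evissage $1\to T\to D\to Z\to 1$, the finite part via the $\mathbb{D}$-gerbe $\tn{Kott}^{1/\infty}_F\to\tn{Kal}_F$ and Proposition \ref{vanprop1}, and the torus part via the $P$-gerbe $\tn{Kott}^{1/\infty}_F\to\tn{Kott}_F$ and Proposition \ref{prop:H2torus}. Your finite part is essentially sound. The term $E_2^{1,1}=H^1(\tn{Kal}_F,\underline{\Hom}(\mathbb{D},Z))$ vanishes simply because $\underline{\Hom}(\mathbb{D},Z)=0$, and you should say so. Since $\mathbb{D}_{\ov F}$ is connected, every central extension of it by $Z_{\ov F}$ is commutative. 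So the fibre term really is $\tn{Ext}^1(X^*(Z),X^*(\mathbb{D}))$, which vanishes by divisibility of $X^*(\mathbb{D})$.

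The torus part, however, fails as written. The fibre $H^2([\Spec(\ov F)/P_{\ov F}],T_{\ov F})$ classifies \emph{all} central extensions of $P_{\ov F}$ by $T_{\ov F}$, not only the commutative ones. The commutative ones are indeed trivial, for the simpler reason that $X^*(T)$ is free, so the sequence of character groups splits. But $P$ is not connected. Fix a prime $\ell\neq p$ and a surjection $P_{\ov F}\to(\Z/\ell\Z)^2$, which exists since $X^*(P)[\ell]$ is infinite. Pulling back a Heisenberg extension of $(\Z/\ell\Z)^2$ by $\mu_\ell\subset T_{\ov F}$ along it gives a non-commutative, hence non-trivial, class. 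So the fibre does not vanish. You would need to show that the Galois-invariant alternating pairings on $P_{\ov F}$ are killed by differentials, and you do not address this.

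Your differential bookkeeping is also off. The map $\lambda\mapsto\lambda_*\xi$ is $d_2\colon E_2^{0,1}\to E_2^{2,0}$, which is moot here since $E_2^{2,0}=H^2(\tn{Kott}_F,T)=0$. It does nothing to $E_2^{1,1}=H^1(\tn{Kott}_F,\underline{\Hom}(P,T))$, and you never establish that this term vanishes.

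The way around this is not to compute the fibre at all:
\begin{enumerate}
\item In the spectral sequence for $\tn{Kott}^{1/\infty}_F\to\tn{Kott}_F$, the term $E_2^{2,0}$ vanishes by Proposition \ref{prop:H2torus}, and all terms with $q\geq 1$ are cohomology of torsion sheaves. Hence $H^2(\tn{Kott}^{1/\infty}_F,T)$ is a torsion group.
\item The Kummer sequence $1\to T[n]\to T\xrightarrow{n}T\to 1$, together with your finite case applied to $Z=T[n]$, shows that multiplication by $n$ is injective on $H^2(\tn{Kott}^{1/\infty}_F,T)$.
\item Hence $H^2(\tn{Kott}^{1/\infty}_F,T)=0$, and the d\'evissage finishes the proof.
\end{enumerate}

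Two smaller points. First, the $H^1$-terms are not finite; for example $B_e(\mathbb{G}_m)\cong\Q[V_F]_0$. So for $D$ not of finite type, the $\varprojlim^1$-term must instead be killed by surjectivity of the transition maps on $H^1$, which follows from the finite-type case applied to their kernels. Second, your ``alternative route'' applies part (2) of the preceding lemma to the torus $T'/Z$, but that lemma concerns surjections of \emph{finite} groups.
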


\begin{proof}
As was the case for the previous two vanishing results, the argument is identical to the proof of its local analogue, which is \cite[Th\'{e}or\`{e}me 8.9]{Fargues}, after using the proof of Proposition \ref{vanprop1} in place of the continuous group cohomology arguments loc. cit.
\end{proof}

One has an analogue of all of these results for the gerbes $\tn{Kott}_{F}$ and $\tn{Kal}_{F}$ replaced by $\tn{Kott}_{F,\Sigma}$ and $\tn{Kal}_{F,\Sigma}$:
\begin{Proposition}\label{Sigmavan}
\begin{enumerate}
\item{For $Z$ a finite multiplicative group scheme over $O_{F,\Sigma}$ and split over $O_{\Sigma}$, we have $H^{2}(\tn{Kal}_{F,\Sigma},Z) = \{\ast\}$.}
\item{For $T$ a torus defined over $O_{F,\Sigma}$ and split over $O_{\Sigma}$, we have $H^{2}(\tn{Kott}_{F,\Sigma},T) = \{\ast\}$.}
\item{For $D$ any multiplicative group scheme defined over $O_{F,\Sigma}$ and split over $O_{\Sigma}$, we have $H^{2}(\tn{Kott}_{F,\Sigma} \times_{O_{F,\Sigma}} \tn{Kal}_{F,\Sigma},D) = \{\ast\}$.}
\end{enumerate}
\end{Proposition}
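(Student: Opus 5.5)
The strategy is to copy the proofs of Propositions \ref{vanprop1}, \ref{prop:H2torus} and \ref{prop:multvanish} almost word for word, replacing the field $F$ by $O_{F,\Sigma}$, the algebraic closure $\ov{F}$ by $O_{\Sigma}^{\tn{perf}}$, and $\Gamma_{\ov{F}/F}$ by $\Gamma_{F_{\Sigma}/F}$. The common reduction, as in \cite[Th\'{e}or\`{e}me 8.2]{Fargues}, goes as follows. A $Z$-gerbe over $\tn{Kal}_{F,\Sigma}$ is pulled back along the cover $\tn{Spec}(O_{\Sigma}^{\tn{perf}}) \to \tn{Spec}(O_{F,\Sigma})$. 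Over this cover $\tn{Kal}_{F,\Sigma}$ becomes the neutral gerbe $[\tn{Spec}(O_{\Sigma}^{\tn{perf}})/P_{\dot{\Sigma}}]$, since its class dies in \v{C}ech cohomology of that cover. One then uses the Hochschild--Serre/Leray spectral sequence for $\tn{Kal}_{F,\Sigma} \to \tn{Spec}(O_{F,\Sigma})$, whose fibre is $BP_{\dot{\Sigma}}$. With this, (1) reduces to three inputs: the vanishing of $H^{2}$ on the geometric fibre $[\tn{Spec}(O_{\Sigma}^{\tn{perf}})/P_{\dot{\Sigma}}]$; the surjectivity statement on $H^{1}$, analogous to the Lemma above; and the triviality of $H^{2}_{\tn{fppf}}(O_{F,\Sigma},Z) \to H^{2}(\tn{Kal}_{F,\Sigma},Z)$.

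For the base-class triviality, I first want the analogue of the earlier Lemma: every $c \in H^{2}_{\tn{fppf}}(O_{F,\Sigma},Z)$ has the form $\lambda_{*}[\dot{\xi}_{\Sigma}]$ for some $\lambda\colon P_{\dot{\Sigma}} \to Z$. I would prove this from Proposition \ref{dualitypropS} and the Pontryagin-duality description of $H^{2}$ of finite multiplicative groups split over $O_{\Sigma}$. The key inputs are the surjectivity used in the proof of Theorem \ref{canonclassthm} and the local divisibility of Lemma \ref{divlemma}. The divisibility gives $H^{1}_{\tn{fppf}}((F_{\Sigma})_{\dot{v}}^{\tn{perf}},\mu_{n})=0$, so the local and global classes are controlled exactly as over $F$. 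Once every class is of the form $\lambda_{*}[\dot{\xi}_{\Sigma}]$, its pullback to $\tn{Kal}_{F,\Sigma}$ is tautologically trivial, as in \cite[Lemme 8.3]{Fargues}.

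For the fibre computation, I split $Z$ over $O_{\Sigma}$ into its prime-to-$p$ part and its $p$-power part, using that $Z$ is split there. For the prime-to-$p$ part I use \'{e}tale cohomology, as in \cite{Fargues}; the relevant input is that $O_{\Sigma}^{\tn{perf}}$ has trivial Picard group and trivial Brauer group after passing to the limit. For the $p$-power part I use Giraud's description of $H^{2}$ of a classifying stack as extensions of $P_{\dot{\Sigma}}$ by $Z$, plus the $H^{2}$ of the base. The extension term reduces on character groups to $\tn{Ext}^{1}_{\Z}(\tfrac{1}{p}\Z/\Z,(\Q/\Z)[p^{\infty}])=0$, exactly as in Proposition \ref{vanprop1}.

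Part (2) follows \cite[Th\'{e}or\`{e}me 8.6]{Fargues}. It uses that $\mathbb{D}_{\Sigma}$ has character group $\varinjlim \Z[\Sigma_{E}]$ and that every class in $H^{2}(O_{F,\Sigma},T)$ is hit by $[\tn{Kott}_{F,\Sigma}]$ via a morphism $\mathbb{D}_{\Sigma}\to T$. This is Tate--Nakayama for $\Sigma$-units, which is where the hypotheses $\tn{Cl}(O_{E,\Sigma})=1$ and Proposition \ref{SigmaKott} enter. Part (3) then follows formally from (1) and (2), exactly as in \cite[Th\'{e}or\`{e}me 8.9]{Fargues}. Write $D$ as an extension of a torus by a finite multiplicative group scheme, pull back to the product gerbe, and use the long exact sequence.

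The main obstacle is the base-class step, namely that every class in $H^{2}(O_{F,\Sigma},Z)$ is of the form $\lambda_{*}[\dot{\xi}_{\Sigma}]$. Over $O_{F,\Sigma}$, $H^{2}$ is not simply a sum of local invariants, since the Picard group and the kernel terms $\tn{ker}^{2}$ appear. I would handle this by the exact sequence from the proof of Proposition \ref{prop:cokdual}, together with the vanishing of $\varinjlim H^{1}(\Gamma_{F_{\Sigma}/F},X^{*}(P_{\dot{\Sigma},i}))$ established in the proof of Theorem \ref{canonclassthm}.
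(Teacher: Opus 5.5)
You are proposing the same reduction the paper intends. Its proof simply says that the arguments behind Propositions \ref{vanprop1}, \ref{prop:H2torus} and \ref{prop:multvanish} (i.e.\ \cite[\S 8]{Fargues}) go through with $\ov{F}$ replaced by $O_{\Sigma}^{\tn{perf}}$, using \cite[Lemma 2.11]{Dillery23b} for the Brauer vanishing on the geometric fibre.

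The trouble is the step you single out as the main obstacle. The claim that every $c \in H^{2}_{\tn{fppf}}(O_{F,\Sigma},Z)$ equals $\lambda_{*}[\dot{\xi}_{\Sigma}]$ for some $\lambda \colon P_{\dot{\Sigma}} \to Z$ is false in general, and the Picard group you mention is exactly what breaks it. Take $Z=\mu_{n}$. The Kummer sequence gives
\[
0 \to \tn{Pic}(O_{F,\Sigma})/n \to H^{2}_{\tn{fppf}}(O_{F,\Sigma},\mu_{n}) \to \tn{Br}(O_{F,\Sigma})[n] \to 0, \qquad |\tn{Br}(O_{F,\Sigma})[n]| = n^{|\Sigma|-1}.
\]
On the other hand $\Hom(P_{\dot{\Sigma}},\mu_{n}) = \varinjlim_{i} X^{*}(P_{\dot{\Sigma},i})^{\Gamma}[n]$. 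A $\Gamma$-invariant character $f$ of $P_{\dot{\Sigma},i}$ satisfies $f(\sigma,w)=g(\sigma^{-1}w)$ with $g=f(1,-)$ supported on $\dot{\Sigma}$ and of coefficient sum zero. Hence $|\Hom(P_{\dot{\Sigma}},\mu_{n})| \leq n^{|\Sigma|-1}$.

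So $\lambda \mapsto \lambda_{*}[\dot{\xi}_{\Sigma}]$ is not surjective as soon as $\tn{Pic}(O_{F,\Sigma})/n \neq 0$. This happens, for example, whenever $n$ divides $\deg v$ for every $v \in \Sigma$, since the degree map gives $\tn{Pic}(O_{F,\Sigma}) \twoheadrightarrow \Z/\gcd_{v\in\Sigma}\deg v$. The tools you propose cannot help here. Proposition \ref{prop:cokdual} and the vanishing of $\varinjlim H^{1}(\Gamma_{F_{\Sigma}/F},X^{*}(P_{\dot{\Sigma},i}))$ control $\tn{cok}^{1}$ and the $\varprojlim^{1}$-term in Theorem \ref{canonclassthm}. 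They say nothing about the everywhere-locally-trivial classes coming from $\tn{Pic}(O_{F,\Sigma})/n$.

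This is not just a defect of the method. A $Z$-gerbe $\mathcal{G}$ on $\tn{Spec}(O_{F,\Sigma})$ becomes trivial on $\tn{Kal}_{F,\Sigma}$ if and only if there is a morphism of gerbes $\tn{Kal}_{F,\Sigma} \to \mathcal{G}$ over $O_{F,\Sigma}$, i.e.\ if and only if $[\mathcal{G}] = \lambda_{*}[\dot{\xi}_{\Sigma}]$ for some $\lambda$. This is the exactness of $\Hom(P_{\dot{\Sigma}},Z) \to H^{2}(O_{F,\Sigma},Z) \to H^{2}(\tn{Kal}_{F,\Sigma},Z)$ in the five-term sequence you are using. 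So in the situation above, the nonzero classes of $\tn{Pic}(O_{F,\Sigma})/n$ survive in $H^{2}(\tn{Kal}_{F,\Sigma},\mu_{n})$, and part (1) fails as stated.

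Part (3) fails likewise for $D=\mu_{n}$. Since $X^{*}(\mathbb{D}_{\Sigma})$ is torsion-free, $\Hom(\mathbb{D}_{\Sigma},\mu_{n})=0$, so the Kottwitz factor does not enlarge the image. Your ``(3) follows formally from (1) and (2)'' therefore cannot rescue it.

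The paper's one-line proof passes over exactly this point. Over $F$ the analogue of \cite[Lemme 8.3]{Fargues} works because $\tn{Pic}(F)=0$, so $H^{2}(F,\mu_{n})=\tn{Br}(F)[n]$ is detected by local invariants. That is no longer true over $O_{F,\Sigma}$. A correct statement needs an extra hypothesis, such as $\tn{Pic}(O_{F,\Sigma})$ having order prime to the exponent of $Z$. Alternatively, allow $\Sigma$ to be enlarged: these classes die in $\varinjlim_{\Sigma}\tn{Pic}(O_{F,\Sigma})=\tn{Pic}(F)=0$.
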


\begin{proof}
The same arguments for the above $F$-analogues work, replacing $\ov{F}$ with $O_{\Sigma}^{\tn{perf}}$ and using \cite[Lemma 2.11]{Dillery23b} for the required vanishing of Brauer groups used in the arguments of \cite{Fargues} quoted in the proofs of Proposition \ref{vanprop1} and Proposition \ref{prop:H2torus}.
\end{proof}

\section{The cohomology of $\tn{Kott}_{F}^{1/\infty}$}\label{sec:H1}
We now turn our focus to the study of $H_{\tn{\'{e}t}}^{1}(\tn{Kott}_{F}^{1/\infty}, G)$ for a connected reductive group $G$.

\subsection{A Tannakian description}\hfill\\
We give a Tannakian category corresponding to finite-level gerbes which approximate the gerbe $\tn{Kott}^{1/\infty}_{F}$ in a sense that will be explained shortly; this is inspired by the ideas of \cite[\S 6]{Fargues}---analogous constructions can be done for the gerbes $\tn{Kott}_{F,\Sigma} \times_{O_{F.\Sigma}} \tn{Kal}_{F,\Sigma}$ as well but we omit them here.

Recall that a \textit{Drinfeld isoshtuka} is a pair $(V, \varphi)$ consisting of a finite-dimensional $\breve{F} := F \otimes_{\mathbb{F}_{q}} \ov{\mathbb{F}_{q}}$-vector space $V$ along with a Frobenius-semilinear endomorphism $\varphi$. The category of Drinfeld isoshtukas is a Tannakian category, and it is shown in \cite{Iak22} that the corresponding gerbe over $F$ is $\tn{Kott}_{F}$. From this, one can deduce easily that the category of Drinfeld isoshtukas supported on $U = C \setminus |\Sigma|$ (that is, all $(V,\varphi)$ such that $V$ and $\varphi$ spread out to an $O_{F,\Sigma}$ module equipped with a Frobenius-semilinear automorphism) corresponds to the gerbe $\tn{Kott}_{F,\Sigma}$ from Proposition \ref{SigmaKott}. Moreover, it follows from \cite[\S 2.2]{HK21} that the category of Drinfeld isoshtukas is equivalent to the Tannakian category of finite-dimensional $F^{s}$-vector spaces, equipped with a discrete, semilinear $W_{F}$-action, where $W_{F}$ denotes the Weil group of $F$. We will use this latter interpretation of isoshtukas in the following.

We obtain from \cite[Proposition 3.1]{HK21} that $H_{\tn{\'{e}t}}^{1}(\tn{Kott}_{F}, \mathbb{G}_{m})$ is canonically identified with $\mathbb{Z}[V_{F}]_{0}$. Fix a set of lifts of $V_{F}$ in $V_{\ov{F}}$, denoted by $\dot{V}$. Fix also a finite Galois extension $E/F$ along with a finite set of places $\Sigma \subseteq V$ with lifts $\dot{\Sigma}^{(E)} \subseteq V_{E}$ satisfying the conditions as in the construction of $\tn{Kal}_{F}$ from \S \ref{Kalgerbedef} with respect to $E$, and consider the finite group scheme $P^{E}$ over $O_{F,\Sigma}$ (we omit the ``$\Sigma$'' from the notation because it is fixed throughout this discussion) with character group the subgroup of $\mathbb{Q}/\mathbb{Z}[\Gamma_{E/F} \times \Sigma_{E}]_{0,0}$ such that the coefficient of $[(\sigma, w)]$ is zero whenever $\sigma^{-1}w \notin \dot{\Sigma}^{(E)}$. It follows from the compatibility properties of the finite-level classes introduced \S \ref{Kalgerbedef} that there is a canonical class $\varprojlim_{n} \xi_{E,\dot{\Sigma}^{(E)}} \in \check{H}^{2}(O_{E,\Sigma^{(E)}}^{\tn{perf}}/O_{F,(V^{(E)})_{F}}, P^{E})$; we denote the gerbe corresponding to a choice of cocycle representative by $\tn{Kal}_{F,\Sigma}^{E}$, and then set $\tn{Kott}_{F,\Sigma}^{1/\infty,E} := \tn{Kott}_{F} \times_{F} \tn{Kal}_{F,\Sigma}^{E}$, denoted by just $\tn{Kott}_{F}^{1/\infty,E}$ if $\Sigma$ is understood.

Suppose we are given $c \in \mathbb{Z}[\Sigma]_{0}$; fix $x_{c,E} \in \breve{E}$ such that the image of $\tn{div}(x_{c,E}) \in \mathbb{Z}[V_{\breve{E}}]$ in $\mathbb{Z}[V_{E}]_{0}$ is $\dot{c}_{E} \in \mathbb{Z}[\dot{\Sigma}^{(E)}]_{0}$---this determines a rank-$1$ Drinfeld isoshtuka over $E$ and therefore a line bundle $\mathbb{L}_{E,c}$ on $\tn{Kott}_{E}$. For our fixed $E$ and $\Sigma$, we can and do choose the $x_{c,E}$ so that $x_{c+d,E} = x_{c,E} \cdot x_{d,E}$.

Now, given a function $\Gamma_{E/F} \xrightarrow{\alpha} \mathbb{Z}[\Sigma]_{0}$, one can associate a line bundle on $\tn{Spec}(E^{s})$ given by
\begin{equation*}
L_{\infty}^{\alpha} := x_{E}^{*}(\bigotimes_{\tau \in \Gamma_{E/F}} \tau^{*}(\mathbb{L}_{E,\alpha(\tau)})),
\end{equation*}
where $\tau$ is viewed as an automorphism of the covering $\tn{Kott}_{E} \to \tn{Kott}_{F}$ and $x_{E} \colon \tn{Spec}(E^{s}) \to \tn{Kott}_{E}$ is the standard fiber functor. Note that, by construction, $L_{\infty}^{\alpha}$ is a $1$-dimensional discrete, semilinear $W_{E}$-module over $F^{s}$.

For $\sigma \in W_{F}$, we set $(L_{\infty}^{\alpha})^{\sigma} := L_{\infty}^{\alpha} \otimes_{F^{s},\sigma} F^{s}$ and observe that the $W_{E}$-module $(L_{\infty}^{\alpha})^{\sigma}$ is naturally isomorphic to $L_{\infty}^{\prescript{\sigma}{}\alpha}$.

Recall from \cite[\S 2]{KT} that the character module of $P^{E}$ fits into a short exact sequence
\begin{equation}\label{tannakianSES}
0 \to \mathcal{C}(\Gamma_{E/F}, \mathbb{Z}[\Sigma]_{0}) \to X^{*}(\mathbb{T}^{E,\tn{mid}}) \to X^{*}(P^{E}) \to 0,
\end{equation}
where $X^{*}(\mathbb{T}^{E,\tn{mid}})$ is the subgroup of $\mathbb{Q}[\Gamma_{E/F} \times \Sigma_{E}]$ of elements which are killed by the $\Sigma_{E}$-augmentation map, land in $\mathbb{Z}[\Sigma_{E}]$ under the $\Gamma_{E/F}$-augmentation map, and such that the coefficient of a given $[(\sigma,w)]$ is zero unless $\sigma^{-1}w \in \dot{\Sigma}^{(E)}$. 

Fix a section $s \colon X^{*}(P^{E}) \to  X^{*}(\mathbb{T}^{E,\tn{mid}})$, determining a $2$-cocycle 
\begin{equation*}
c \in Z^{1}(X^{*}(P^{E}),  \mathcal{C}(\Gamma_{E/F}, \mathbb{Z}[\Sigma]_{0})).
\end{equation*}
We can now define the Tannakian category over $F$ corresponding to $\tn{Kott}_{F}^{1/\infty,E}$:

It has objects given by $X^{*}(P^{E})$-graded finite-dimensional $F^{s}$-vector spaces 
\begin{equation*}
D = \bigoplus_{\alpha \in X^{*}(P^{E})} D_{\alpha}
\end{equation*}
equipped with a discrete, semilinear $W_{F}$-action such that acting by $\tau \in W_{F}$ is an isomorphism from $D_{\alpha}$ to $D_{\prescript{\tau}{}\alpha}$, with tensor product given by 
\begin{equation*}
(D \otimes D')_{\alpha} = \bigoplus_{\beta + \gamma = \alpha} D_{\beta} \otimes_{F^{s}} D'_{\gamma} \otimes_{F^{s}} L_{\infty}^{c(\beta,\gamma)},
\end{equation*}
where $\tau \in W_{F}$ acts on $L_{\infty}^{c(\beta,\gamma)}$ by mapping it to $(L_{\infty}^{c(\beta,\gamma)})^{\tau} \simeq L_{\infty}^{\prescript{\tau}{}c(\beta,\gamma)}$ via the map $ L_{\infty}^{c(\beta,\gamma)} \to L_{\infty}^{c(\beta,\gamma)} \otimes_{F^{s},\tau} F^{s}$, $x \mapsto x \otimes 1$. Denote this category by $\tn{Isoc}_{F}^{E,e}$.

The key Tannakian result is then:

\begin{Proposition}\label{prop:tannakian}
The $\otimes$-category $\tn{Isoc}_{F}^{E,e}$ is a Tannakian category over $F$ banded by $\mathbb{D} \times P^{E}$ and corresponds to the isomorphism class of $\tn{Kott}_{F}^{1/\infty,E}$.
\end{Proposition}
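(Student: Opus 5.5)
We would follow the strategy of \cite[\S 6]{Fargues}. The plan has two parts. First, identify $\tn{Isoc}_{F}^{E,e}$ as a Tannakian category and compute its band. Second, identify the class of the corresponding gerbe in $H^{2}_{\tn{fppf}}(F, \mathbb{D} \times P^{E})$ with the class of $\tn{Kott}_{F} \times_{F} \tn{Kal}^{E}_{F,\Sigma}$. The first component of that class is $[\tn{Kott}_{F}]$ and the second is the finite-level canonical class $\varprojlim_{n}\xi_{E,\dot{\Sigma}^{(E)}}$.

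\textbf{Step 1: Tannakian structure.} We first check that the twisted tensor product is associative and commutative. Associativity amounts to the identity $c(\beta,\gamma)+c(\beta+\gamma,\delta) = c(\gamma,\delta)+c(\beta,\gamma+\delta)$, which holds because $c$ is the $2$-cocycle attached to the section $s$. We also use the chosen multiplicativity $x_{c+d,E}=x_{c,E}x_{d,E}$ to get canonical isomorphisms $L_{\infty}^{\alpha}\otimes L_{\infty}^{\alpha'}\simeq L_{\infty}^{\alpha+\alpha'}$. These isomorphisms are compatible with the identifications $(L_{\infty}^{\alpha})^{\sigma}\simeq L_{\infty}^{{}^{\sigma}\alpha}$, so the associativity constraint respects the $W_{F}$-action. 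The unit object is $F^{s}$ placed in degree $0$. Rigidity is shown by taking $D^{\vee}_{-\alpha} := D_{\alpha}^{\vee}\otimes L_{\infty}^{-c(\alpha,-\alpha)}$.

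A fiber functor over $E^{s}$ (or $\ov{F}$) is obtained by forgetting the grading and the twists after trivializing the finitely many lines $L_{\infty}^{\alpha}$ over $E^{s}$. This shows the category is Tannakian over $F$; its endomorphism ring of $\mathbf{1}$ is $(F^{s})^{W_{F}}=F$. For the band, the automorphisms of this fiber functor that are compatible with the grading give the factor $P^{E}$, since the grading is by $X^{*}(P^{E})$. The degree-$0$ part is exactly the category of discrete semilinear $W_{F}$-modules, which is Drinfeld isoshtukas by \cite{HK21}, and by \cite{Iak22} this contributes $\mathbb{D}$. Hence the band is $\mathbb{D}\times P^{E}$.

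\textbf{Step 2: identifying the class.} The degree-$0$ subcategory is Drinfeld isoshtukas, and the category of $X^{*}(P^{E})$-graded objects is the quotient. It follows that the pushforward of our class to $H^{2}(F,\mathbb{D})$ is $[\tn{Kott}_{F}]$. The same reasoning with the forgetful map gives a splitting, so it remains to compute the $P^{E}$-component. To do this, we compare the twisting data with the short exact sequence \eqref{tannakianSES}. The composite $\mathbb{T}^{E,\tn{mid}} \to P^{E}$ and the boundary map of \eqref{tannakianSES} describe the finite-level class $\xi_{E,\dot{\Sigma}^{(E)}}$ of \cite{Dillery23b} in terms of the divisors of the elements $x_{c,E}$, via the \cite{KT} construction. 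Concretely, we would write down an explicit $2$-cocycle of the gerbe of fiber functors by choosing bases of the $D_{\alpha}$. This cocycle is the cup product of $c$ with the Kottwitz cocycle evaluated on $\mathbb{L}_{E,\alpha(\tau)}$. We then match it with the cocycle defining $\xi_{E,\dot{\Sigma}^{(E)}}$ from \cite[\S 3]{Dillery23b}. Independence of the choices of $s$ and $x_{c,E}$ up to isomorphism follows because changing them alters $c$ by a coboundary.

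\textbf{Main obstacle.} We expect the hardest step to be the cocycle comparison in Step 2. It requires showing that the pushout of $[\tn{Kott}_{F}]$ along the map determined by \eqref{tannakianSES} agrees with the Kaletha finite-level class. We would first try to reduce this to the corresponding statement in \cite[\S 2]{KT} relating $B(G)$ and $H^{1}(\tn{Kal}_{F}, G)$ at finite level, where the canonical classes are compared via $\mathbb{T}^{E,\tn{mid}}$. The vanishing results of Proposition \ref{prop:multvanish} would be used to fix the remaining ambiguity. That ambiguity lies in $\tn{Hom}(\mathbb{D},P^{E})$-type cross terms, which would otherwise spoil the product decomposition.
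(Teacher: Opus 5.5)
\textbf{The band.} Your Step 1 shows only that the band $G$ is a central extension $1\to P^{E}\to G\to\mathbb{D}\to 1$. The grading gives $P^{E}\to G$, and the degree-$0$ subcategory gives $G/P^{E}\cong\mathbb{D}$. It does not show that this extension splits, and for twisted graded categories of this kind it need not. For example, take the $\mathbb{Z}/n$-graded objects of $\mathrm{Rep}(\mathbb{G}_{m})$, with tensor product twisted by powers of the weight-one character along the carry cocycle of $0\to\mathbb{Z}\xrightarrow{n}\mathbb{Z}\to\mathbb{Z}/n\to 0$. Sending $\chi^{nq+r}$ to $\chi^{q}$ in degree $r$ shows this category is equivalent to $\mathrm{Rep}(\mathbb{G}_{m})$, so its band is the non-split extension of $\mathbb{G}_{m}$ by $\mu_{n}$. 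In $\tn{Isoc}^{E,e}_{F}$ the twisting lines $L^{c(\beta,\gamma)}_{\infty}$ also have nonzero slopes. Let $T'$ be the torus with $X^{*}(T')=\mathcal{C}(\Gamma_{E/F},\mathbb{Z}[\Sigma]_{0})$ and $\nu\colon\mathbb{D}\to T'$ the slope map of the torsor $\alpha\mapsto L^{\alpha}_{\infty}$. Then $G$ is the pullback along $\nu$ of the extension $1\to P^{E}\to\mathbb{T}^{E,\mathrm{mid}}\to T'\to 1$ dual to \eqref{tannakianSES}. So you must show that $\nu$ lifts to some $\tilde{\nu}\colon\mathbb{D}\to\mathbb{T}^{E,\mathrm{mid}}$; it does, e.g.\ because $X^{*}(\mathbb{D})$ is uniquely divisible, but this has to be argued. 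By contrast, the ambiguity you plan to remove with Proposition \ref{prop:multvanish} is not there. Since $X^{*}(\mathbb{D})$ is torsion-free and $X^{*}(P^{E})$ is torsion, $\mathrm{Hom}(\mathbb{D},P^{E})=0$, so $\tilde{\nu}$ and the splitting are unique and that vanishing result plays no role.

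\textbf{The class.} Matching explicit cocycles, obtained by choosing bases of the $D_{\alpha}$ over $F^{s}$, against the cocycle of \cite{Dillery23b} cannot work when $p\mid n$. The $p$-primary part of $P^{E}$ is infinitesimal, so it has only trivial points on the reduced rings $F^{s}\otimes_{F}F^{s}\otimes_{F}F^{s}$. Galois cocycles therefore do not see that part of $H^{2}_{\tn{fppf}}(F,P^{E})$, which is why \cite{Dillery23b} works with perfect covers. The fix is to detect the $P^{E}$-component inside a torus. The map $H^{2}_{\tn{fppf}}(F,P^{E})\to H^{2}(F,\mathbb{T}^{E,\mathrm{mid}})$ is injective, because its kernel is the image of $H^{1}(F,T')=0$ ($T'$ is induced). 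Next, identify the gerbe of fiber functors with the gerbe over $\tn{Kott}_{F}$ of lifts to $\mathbb{T}^{E,\mathrm{mid}}$ of the $T'$-torsor $\alpha\mapsto\bigotimes_{\tau}\tau^{*}\mathbb{L}_{E,\alpha(\tau)}$. Once that is done, its $P^{E}$-component maps to $\pm\tilde{\nu}_{*}[\tn{Kott}_{F}]$. The proposition is therefore equivalent to the identity $\iota_{*}[\tn{Kal}^{E}_{F,\Sigma}]=\pm\tilde{\nu}_{*}[\tn{Kott}_{F}]$ in $H^{2}(F,\mathbb{T}^{E,\mathrm{mid}})$. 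This is a Kaletha--Ta\"{i}bi-type comparison of canonical classes in the cohomology of a torus, where Galois cocycles do suffice. Your ``main obstacle'' paragraph gestures at it, but you supply neither $\tilde{\nu}$, nor the injectivity that makes the comparison sufficient, nor the comparison itself.

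For orientation, the paper runs in the opposite direction. After reducing to finite band, it obtains the twisted $X^{*}(P^{E})$-graded description of $\mathrm{Rep}(\tn{Kott}^{1/\infty,E}_{F}\times_{F}F^{s})$ directly from \cite[Proposition 3.4]{Fargues}, using that the first two terms of \eqref{tannakianSES} are character groups of tori. This is in effect the lifting-gerbe description above, and it is followed by Galois descent. Going that way, Tannakian-ness and the band $\mathbb{D}\times P^{E}$ come for free, and the non-smooth group $P^{E}$ is only ever handled through tori.
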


\begin{proof}
The argument is essentially the same as its local analogue \cite[Th\'{e}or\`{e}me 6.4]{Fargues} which we summarize here. First, since this gerbe is by construction an inverse limit of gerbes with a finite band, it suffices to prove the analogue in that case (this amounts to replacing $\mathbb{Q}/\mathbb{Z}$ with $\frac{1}{n}\mathbb{Z}/\mathbb{Z}$ for some $n$). The fact that the Tannakian category $\tn{Rep}(\tn{Kott}_{F}^{1/\infty,E} \times_{F} F^{s})$ is isomorphic to $X^{*}(P^{E})$-graded modules with the above tensor structure is then a formal consequence of \cite[Proposition 3.4]{Fargues} (using that the first and second groups in the sequence \eqref{tannakianSES} are the character groups of tori), and then giving the descent datum from $F^{s}$ to $F$ amounts precisely to a family of semilinear isomorphisms from each $D_{\alpha}$ to $D_{\prescript{\tau}{}\alpha}$ for $\tau \in \Gamma_{F^{s}/F}$ and $\alpha \in X^{*}(P^{E})$.
\end{proof}

One consequence of this result is:

\begin{Corollary}\label{Cor:fiberfunctor}
The Tannakian category $\tn{Isoc}_{F}^{E,e}$ has a fiber functor over $F^{s}$.
\end{Corollary}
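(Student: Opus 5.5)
By Proposition \ref{prop:tannakian}, $\tn{Isoc}_{F}^{E,e}$ is equivalent as a $\otimes$-category to $\tn{Rep}(\tn{Kott}_{F}^{1/\infty,E})$, the category of finite-dimensional representations (vector bundles) of the gerbe $\tn{Kott}_{F}^{1/\infty,E}$ over $F$. Giving a fiber functor on this category over a field extension $K/F$ is then the same as giving an object of the gerbe $\tn{Kott}_{F}^{1/\infty,E}$ over $\tn{Spec}(K)$. So I would reduce the corollary to showing that the gerbe $\tn{Kott}_{F} \times_{F} \tn{Kal}_{F,\Sigma}^{E}$ admits a section over $\tn{Spec}(F^{s})$. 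Equivalently, its class should die in $H^{2}$ over $F^{s}$, and I would check this separately for each factor.

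For the $\tn{Kott}_{F}$ factor, the plan is to use the concrete description recalled above: Drinfeld isoshtukas are equivalent to finite-dimensional $F^{s}$-vector spaces with a discrete semilinear $W_{F}$-action. The forgetful functor to $F^{s}$-vector spaces is then an exact faithful $\otimes$-functor, so $\tn{Kott}_{F}$ is already neutralized over $F^{s}$. This is also visible from the construction, since $\tn{Kott}_{F}$ is an extension of $\Gamma_{\ov{F}/F}$ by $\mathbb{D}(\ov{F})$ and hence trivialized after base change to $\ov{F}$.

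For the $\tn{Kal}_{F,\Sigma}^{E}$ factor, which is banded by the finite multiplicative group $P^{E}$, the class is represented by the \v{C}ech cocycle $\xi_{E,\dot{\Sigma}^{(E)}}$ relative to the cover $O_{E,\Sigma}^{\tn{perf}}/O_{F,\Sigma}$. After base change to $F^{s}$ (or to $\ov{F}$, which is a perfect field), $P^{E}$ splits as a product of $\mu_{n}$'s. By Kummer theory and Hilbert 90 we have $H^{2}_{\tn{fppf}}(\ov{F}, \mu_{n}) = \tn{Br}(\ov{F})[n] = 0$, so the class becomes trivial. The same triviality follows from the construction via $O_{E,\Sigma}^{\tn{perf}}$, since the \v{C}ech cover trivializes over $\ov{F} \supseteq E^{\tn{perf}}$.

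The main obstacle is working over $F^{s}$ rather than $\ov{F}$, because fppf $\mu_{p}$-gerbes need not split over a separable closure. I would handle this in one of two ways. The first is to observe that $H^{2}_{\tn{fppf}}(F^{s}, \mu_{p^{m}})$ is computed by the Kummer sequence as $\tn{Br}(F^{s})[p^{m}] \oplus \tn{Pic}(F^{s})/p^{m}$, and both pieces vanish since $\tn{Br}(F^{s})=0$ and $\tn{Pic}(F^{s})=0$. The second is to note that the Tannakian description in Proposition \ref{prop:tannakian} is written over $F^{s}$ with grading data only. The first route is the cleaner one. Alternatively, I would read off directly from the proof of Proposition \ref{prop:tannakian} that the functor $D \mapsto \bigoplus_{\alpha} D_{\alpha}$ yields a fiber functor over $F^{s}$. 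Here the twisting lines $L_{\infty}^{c(\beta,\gamma)}$ are $1$-dimensional $F^{s}$-spaces, trivialized compatibly via the multiplicativity $x_{c+d,E} = x_{c,E}x_{d,E}$ chosen earlier, which makes the resulting tensor constraint coherent.
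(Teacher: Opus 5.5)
Your argument is correct, but your main route differs from the paper's. The paper never uses the gerbe of fiber functors. It builds the functor by hand as the composite $\tn{Isoc}_{F}^{E,e} \to \tn{Isoc}_{E}^{(1)} \to \tn{Vect}_{F^{s}}$, where $\tn{Isoc}_{E}^{(1)}$ is obtained by forgetting the $W_{F}$-action and the grading, and by identifying objects that differ by a twist by a rank-$1$ isoshtuka (that is, by neutralizing the lines $L_{\infty}^{c(\beta,\gamma)}$). This is essentially your last alternative. The detail the paper leaves to the reader is exactly your point: the trivializations of the $L_{\infty}^{\alpha}$ must be chosen multiplicatively in $\alpha$, which is possible since $\mathcal{C}(\Gamma_{E/F},\mathbb{Z}[\Sigma]_{0})$ is free, so that the cocycle $c$ drops out of the tensor constraints.

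Your primary route instead uses Proposition \ref{prop:tannakian} and Tannakian duality to turn the question into neutrality of $\tn{Kott}_{F} \times_{F} \tn{Kal}_{F,\Sigma}^{E}$ over $F^{s}$. You then prove neutrality factor by factor, using $\tn{Pic}(F^{s}) = \tn{Br}(F^{s}) = 0$ and the fppf Kummer sequence for the $\tn{Kal}$ factor. This buys a conceptual explanation of why $F^{s}$ suffices even though the \v{C}ech cover $O_{E,\Sigma}^{\tn{perf}}/O_{F,\Sigma}$ defining $\tn{Kal}_{F,\Sigma}^{E}$ is inseparable, and it works for any separably closed extension of $F$. The price is that it is non-constructive and relies on the full identification of the gerbe in Proposition \ref{prop:tannakian}. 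The paper's functor only uses the explicit tensor structure of $\tn{Isoc}_{F}^{E,e}$.

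Two small fixes are needed.
\begin{enumerate}
\item Your sentence that fppf $\mu_{p}$-gerbes ``need not split over a separable closure'' is wrong, as your own computation $H^{2}_{\tn{fppf}}(F^{s},\mu_{p^{m}})=0$ shows. What is nonzero is $H^{1}_{\tn{fppf}}(F^{s},\mu_{p^{m}}) = (F^{s})^{\times}/((F^{s})^{\times})^{p^{m}}$.
\item That nonvanishing matters here. $X^{*}(P^{E})$ has $\mathbb{Q}/\mathbb{Z}$-coefficients, so $P^{E}$ is only pro-finite despite being called finite. Vanishing of $H^{2}$ at each finite level therefore does not by itself give $H^{2}(F^{s},P^{E})=0$, because a $\varprojlim^{1}$ of these nonzero $H^{1}$'s could contribute. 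It does vanish: the transition maps on $H^{1}$ are surjective, since their cokernels inject into $H^{2}_{\tn{fppf}}(F^{s},K)=0$ for the finite multiplicative kernels $K$. Equivalently, lift an $F^{s}$-point of the finite-level gerbes one level at a time.
\end{enumerate}
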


\begin{proof}
Let $\text{Isoc}_{E}^{(1)}$ be the quotient tensor category of $\text{Isoc}_{F}^{E,e}$ obtained by forgetting the $W_{F}$-action, the grading, and identifying all objects that differ by a twist by a rank-$1$ $E$-isocrystal (we leave it to the reader to check that this is a well-defined tensor category). The functor $\text{Isoc}_{F}^{E,e} \to \text{Isoc}_{E}^{(1)} \to \text{Vect}_{F^{s}}$ is an exact tensor functor, and is thus the desired fiber functor.
\end{proof}

If we have a tower $K/E/F$ of finite Galois extensions such that $(\Sigma^{(E)}, \dot{\Sigma}^{(E)}) \subseteq (\Sigma^{(K)}, \dot{\Sigma}^{(K)})$ in the obvious sense, then a choice of $x_{c,K} \in \breve{K}$ for all $c \in \mathbb{Z}[\Sigma^{(K)}]_{0}$ as above determines, via $x_{c,K} \mapsto N_{\breve{K}/\breve{E}}(x_{c,K})$, such a system for $E$ and $\Sigma^{(E)}$. Moreover, if we use these systems for both $K$ and $E$ to define $\tn{Isoc}^{E,e}$ and $\tn{Isoc}^{K,e}$ as above, then we have an inclusion of tensor categories $\tn{Isoc}_{F}^{E,e} \hookrightarrow \tn{Isoc}_{F}^{K,e}$ determined by the inclusion $X^{*}(P^{E}) \to X^{*}(P^{K})$, which corresponds to a map of gerbes $\tn{Kott}_{F}^{1/\infty,K} \to \tn{Kott}_{F}^{1/\infty,E}$.

According to \cite[\S 4.1]{Dillery23b}, due to the vanishing of $H_{\tn{fppf}}^{1}(F, \mathbb{D} \times P)$, for any fixed $E$ and finite-type group scheme $G$, the pullback $H_{\tn{\'{e}t}}^{1}(\tn{Kott}_{F}^{1/\infty,E}, G) \to H_{\tn{\'{e}t}}^{1}(\tn{Kott}_{F}^{1/\infty}, G)$ is canonical, factors through any of the finite-level pullbacks $H_{\tn{\'{e}t}}^{1}(\tn{Kott}_{F}^{1/\infty,E},G) \to H_{\tn{\'{e}t}}^{1}(\tn{Kott}_{F}^{1/\infty,K},G)$ described in the previous paragraph, and we have 
\begin{equation*}
H_{\tn{\'{e}t}}^{1}(\tn{Kott}_{F}^{1/\infty}, G) = \varinjlim_{(E, \Sigma_{E})} H_{\tn{\'{e}t}}^{1}(\tn{Kott}_{F}^{1/\infty,E}, G).
\end{equation*}

\subsection{The global extended Kottwitz set}\hfill\\
Recall that, for a connected reductive group $G$ over $F$, $B(G)$ is defined as $H^{1}_{\tn{\'{e}t}}(\tn{Kott}_{F}, G)$.
\begin{Definition}
We define the \textit{extended Kottwitz set} to be the subset of classes $[x] \in H^{1}_{\tn{\'{e}t}}(\tn{Kott}^{1/\infty}_{F}, G)$, denoted by $B_{e}(G)$, such that for any (equivalently, one) representative $x$ the associated homomorphism
\begin{equation*}
x|_{P_{\ov{F}}} \colon P_{\ov{F}} \to G_{\ov{F}}
\end{equation*}
factors through $Z(G)$; note that it is then automatically defined over $F$. We call an element $[x] \in B_{e}(F)$ \textit{basic} if $x|_{\mathbb{D}_{\ov{F}} \times P_{\ov{F}}}$ factors through $Z(G)$, and denote the corresponding subset by $B_{e}(G)_{\tn{basic}}$.
\end{Definition}

\begin{Remark}
We expect that the entire cohomology set $H^{1}_{\tn{\'{e}t}}(\tn{Kott}^{1/\infty}_{F}, G)$ also has applications in the Langlands correspondence and it will be studied in future work.
\end{Remark}

\begin{Lemma}
We have an exact sequence of pointed sets
\begin{equation*}
1 \to B(G) \to B_{e}(G) \to \mathrm{Hom}_{F}(P, Z(G)) \to 1,
\end{equation*}
where the first map is pullback from $\tn{Kott}_{F}$ to $\tn{Kott}_{F}^{1/\infty}$ and the second map is restriction to $P$.
\end{Lemma}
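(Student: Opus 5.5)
We prove the three assertions of exactness separately: that restriction to $P$ lands in $\mathrm{Hom}_{F}(P,Z(G))$, that it is surjective, and that the fiber over the trivial homomorphism is exactly the image of $B(G)$, into which $B(G)$ injects. The model is Fargues' local argument for $\tn{Kott}_{v}^{1/\infty}$. The basic tool is the standard description of torsors on a gerbe: if $\mathcal{E}$ is a gerbe banded by a commutative group $A$ and $\mathscr{T}$ is a $G$-torsor on $\mathcal{E}$, the band acts on $\mathscr{T}$ through a homomorphism $A_{\ov{F}} \to G_{\ov{F}}$, well defined up to conjugacy.

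\emph{Well-definedness.} Take $x$ with $x|_{P_{\ov{F}}}$ central. Any other representative, and any change of trivialization, conjugates $x|_{P_{\ov{F}}}$ by an element of $G(\ov{F})$, and this conjugation fixes a central homomorphism. The restriction is also $\Gamma$-equivariant, since the gerbe is defined over $F$. So we get a well-defined map $B_{e}(G) \to \mathrm{Hom}_{F}(P, Z(G))$.

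\emph{Surjectivity.} Fix $\lambda \in \mathrm{Hom}_{F}(P,Z(G))$.
1. Push the $P$-gerbe $\tn{Kal}_{F}$ along $\lambda$. The class $\lambda_{*}\xi \in H^{2}_{\tn{fppf}}(F, Z(G))$ has a standard interpretation: it is the obstruction to finding a $Z(G)$-torsor (hence a $G$-torsor) on $\tn{Kal}_{F}$ on which the band acts through $\lambda$.
2. This obstruction vanishes after pulling back to the gerbe itself. Indeed, Proposition \ref{vanprop1} and the preceding Lemma (triviality of $H^{2}_{\tn{fppf}}(F,Z) \to H^{2}(\tn{Kal}_{F},Z)$) show that the pullback of $\lambda_{*}\xi$ to $\tn{Kal}_{F}$ is trivial.
3. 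Concretely, the trivialization is a $Z(G)$-torsor $\mathscr{Z}$ on $\tn{Kal}_{F}$ whose band action is $\lambda$. Here $Z(G)$ is only multiplicative, not finite, so Proposition \ref{vanprop1} does not apply verbatim. To handle this, use the limit description $P=\varprojlim P_{k}$: $\lambda$ factors through some finite quotient, hence through a finite subgroup $Z' \subseteq Z(G)$, and we work with $Z'$.
4. Pull $\mathscr{Z}$ back along the projection $\tn{Kott}^{1/\infty}_{F} \to \tn{Kal}_{F}$ and push out along $Z(G) \hookrightarrow G$. This gives a class in $B_{e}(G)$ mapping to $\lambda$, and in fact one lying in the image of $H^{1}(\tn{Kal}_{F},G)$.

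\emph{Exactness in the middle and injectivity.} Suppose $x|_{P}$ is trivial. Then $x$ is a $G$-torsor on $\tn{Kott}_{F}\times_{F}\tn{Kal}_{F}$ on which the band $P$ of the second factor acts trivially. By the standard descent for gerbes (a torsor with trivial band action descends uniquely along the gerbe map, cf. \cite[\S 4.1]{Dillery23b}), $x$ descends along the projection $\tn{Kott}^{1/\infty}_{F} \to \tn{Kott}_{F}$, which is a $P$-gerbe. Hence $[x]$ lies in the image of $B(G)$.

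For injectivity of $B(G) \to B_{e}(G)$, compare automorphisms. An isomorphism between pullbacks of two $G$-torsors from $\tn{Kott}_{F}$ is an isomorphism of $G$-torsors on the $P$-gerbe $\tn{Kott}^{1/\infty}_{F}\to\tn{Kott}_{F}$ whose band actions are both trivial. Such isomorphisms descend, because Isom-sheaves of objects with trivial band action are pulled back from the base. So pullback is fully faithful on groupoids of torsors with trivial band action, which gives injectivity. It also shows that the kernel of the second map is exactly the image of the first.

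\emph{Main obstacle.} The delicate step is surjectivity when $Z(G)$ is not finite. There are two routes:
1. reduce to a finite subgroup $Z'$ through which $\lambda$ factors, as in step 3 above;
2. alternatively, invoke Proposition \ref{prop:multvanish} for $D=Z(G)$ on $\tn{Kott}^{1/\infty}_{F}$, which kills the pulled-back class directly.

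The second route is cleaner, and I would use it. In either case one must check that the trivializing torsor really has band action equal to $\lambda$ and not a conjugate of it. This is automatic here because $Z(G)$ is commutative.
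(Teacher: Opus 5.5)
Your well-definedness, middle-exactness and injectivity arguments are fine. They amount to the paper's inflation--restriction argument for the $P$-gerbe $\tn{Kott}^{1/\infty}_{F} \to \tn{Kott}_{F}$. The gap is in surjectivity.

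In step 1 you correctly identify the obstruction to a $Z(G)$-torsor on $\tn{Kal}_{F}$ with band action $\lambda$: it is $\lambda_{*}\xi \in H^{2}_{\tn{fppf}}(F, Z(G))$. Step 2 then replaces this class by its pullback to $\tn{Kal}_{F}$. For any gerbe $\mathcal{E} \to X$ banded by $A$ and any $\lambda \colon A \to Z$, the obstruction $\lambda_{*}[\mathcal{E}]$ lives on the base $X$. Its pullback to $\mathcal{E}$ vanishes tautologically, since the pulled-back gerbe has the section induced by $\mathcal{E} \to \lambda_{*}\mathcal{E}$. So that vanishing carries no information, and the resulting ``trivialization'' is not a torsor with band action $\lambda$.

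In fact $\lambda_{*}\xi$ is usually nonzero. By the Lemma preceding Proposition \ref{vanprop1}, every class of $H^{2}_{\tn{fppf}}(F,\mu_{n}) \cong \mathrm{Br}(F)[n] \neq 0$ has this form. Consequently:
\begin{itemize}
\item step 3 is false;
\item replacing $Z(G)$ by a finite $Z'$ does not help, because the obstruction lies in $H^{2}_{\tn{fppf}}(F,Z')$, not in $H^{2}(\tn{Kal}_{F},Z')$;
\item the claim in step 4 that the lift comes from $H^{1}(\tn{Kal}_{F},G)$ fails. For $G = \mathbb{G}_{m}$, Corollary \ref{TNcompat} gives $H^{1}(\tn{Kal}_{F},\mathbb{G}_{m}) \cong \Q[V_{F}]_{0}[\tn{tor}] = 0$, whereas $\mathrm{Hom}_{F}(P,\mathbb{G}_{m}) \cong \Q/\Z[\dot{V}]_{0} \neq 0$.
\end{itemize}
The $\tn{Kott}_{F}$ factor is indispensable for surjectivity.

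Your preferred route 2 has the same defect. Proposition \ref{prop:multvanish} concerns $H^{2}$ of the total space $\tn{Kott}^{1/\infty}_{F}$, where the pullback of $\lambda_{*}\xi$ is already trivial because it factors through $\tn{Kal}_{F}$. The relevant obstruction comes from regarding $\tn{Kott}^{1/\infty}_{F} \to \tn{Kott}_{F}$ as a $P$-gerbe: it is the image of $\lambda_{*}\xi$ in $H^{2}(\tn{Kott}_{F}, -)$. With coefficients in $Z(G)$ this need not vanish. Since $\mathrm{Hom}(\mathbb{D},\mu_{n}) = 0$, the map $H^{2}_{\tn{fppf}}(F,\mu_{n}) \to H^{2}(\tn{Kott}_{F},\mu_{n})$ is injective. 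Hence for $G = \mathrm{SL}_{n}$ no $Z(G)$-torsor on $\tn{Kott}^{1/\infty}_{F}$ realizes a $\lambda$ with $\lambda_{*}\xi \neq 0$.

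The missing idea is to leave $Z(G)$ for a torus, where $\tn{Kott}_{F}$ can absorb the obstruction. The paper does this as follows:
\begin{itemize}
\item compose $\lambda$ with $Z(G) \hookrightarrow T$ for a maximal torus $T$;
\item lift $P \to T$ to $B_{e}(T)$, using $H^{2}(\tn{Kott}_{F},T) = \{*\}$ (Proposition \ref{prop:H2torus}) in the inflation--restriction sequence;
\item push the result into $B_{e}(G)$.
\end{itemize}
Equivalently, one picks $\mu \colon \mathbb{D} \to T$ with $\mu_{*}[\tn{Kott}_{F}] + \lambda_{*}\xi = 0$ in $H^{2}_{\tn{fppf}}(F,T)$. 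The resulting $\mathbb{D}$-action lies in $T$ rather than in $Z(G)$. This is allowed in $B_{e}(G)$, because only the restriction to $P$ must be central.
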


\begin{proof}
The map $\tn{Kott}_{F}^{1/\infty} \to \tn{Kott}_{F}$ gives $\tn{Kott}_{F}^{1/\infty}$ the structure of a gerbe, banded by $P_{\tn{Kott}_{F}}$, and inflation-restriction gives exactness at all parts of the sequence except the right-most term. For any maximal torus $T$ of $G$ we have $B_{e}(T) \twoheadrightarrow \tn{Hom}_{F}(P, T)$ again by inflation-restriction and the vanishing of $H^{2}(\tn{Kott}_{F}, T)$ by Proposition \ref{prop:H2torus}. In particular, we can find a class in $B_{e}(T)$ whose restriction to $P$ the composition $P \to Z(G) \to T$ and then take the image of this class in $B_{e}(G)$ (it maps into this subset because, by construction, the group $P$ maps into $Z(G)$) to get the desired result.
\end{proof}

On the other hand, the restriction of $[x] \in B_{e}(G)$ to $\mathbb{D}_{\ov{F}}$ gives a $G(\ov{F})$-conjugacy class in $\tn{Hom}(\mathbb{D}_{\ov{F}}, G_{\ov{F}})$, which we call the \textit{Newton point} of $[x]$. 

Recall that Kottwitz gave a canonical, functorial ``Tate-Nakayama duality'' isomorphism 
\begin{equation*}
B(T) \xrightarrow{\sim} (\mathbb{Z}[V_{F^{s}}]_{0} \otimes_{\mathbb{Z}} X_{*}(T))_{\Gamma_{F^{s}/F}}.
\end{equation*}
We now extend this to a ``Tate-Nakayama duality'' isomorphism for the cohomology of the gerbe $\tn{Kott}_{F}^{1/\infty}$. First, recall that \cite[Theorem 4.4]{Dillery23b} gives an analogous duality result for $H_{\tn{\'{e}t}}^{1}(\tn{Kal}_{F}, T)$, constructing a functorial isomorphism between this object and $(X_{*}(T)_{\mathbb{Q}})[V_{F^{s}}, \dot{V}]_{0,+,\tn{tor}}$ where $(X_{*}(T)_{\mathbb{Q}})[V_{F^{s}}, \dot{V}]_{0}$ denotes the subgroup of $(X_{*}(T)_{\mathbb{Q}})[V_{F^{s}}]_{0}$ which maps into $(X_{*}(T)_{\mathbb{Q}/\mathbb{Z}})[\dot{V}]_{0}$ via the natural map and 
\begin{equation*}
(X_{*}(T)_{\mathbb{Q}})[V_{F^{s}}, \dot{V}]_{0,+,\tn{tor}} := \frac{(X_{*}(T)_{\mathbb{Q}})[V_{F^{s}}, \dot{V}]_{0}}{I_{\Gamma_{F^{s}/F}} \cdot X_{*}(T)[V_{F^{s}}]_{0}}[\tn{tor}],
\end{equation*}
where $I_{\Gamma_{F^{s}/F}}$ is the augmentation ideal.

\begin{Theorem}\label{TNtorus}
There is a canonical, functorial isomorphism
\begin{equation*}
B_{e}(T) \xrightarrow{\sim} \frac{(X_{*}(T)_{\mathbb{Q}})[V_{F^{s}}, \dot{V}]_{0}}{I_{\Gamma_{F^{s}/F}} \cdot X_{*}(T)[V_{F^{s}}]_{0}}
\end{equation*}
which makes the following diagram commute
\[
\begin{tikzcd}
1 \arrow{r} & B(T) \arrow["\sim"]{d} \arrow{r} & B_{e}(T) \arrow{r} \arrow{d} & \tn{Hom}_{F}(P, T) \arrow["\sim"]{d} \arrow{r} & 1 \\
1 \arrow{r} & \frac{X_{*}(T)[V_{F^{s}}]_{0}}{I_{\Gamma_{F^{s}/F}} \cdot X_{*}(T)[V_{F^{s}}]_{0}} \arrow{r} & \frac{(X_{*}(T)_{\mathbb{Q}})[V_{F^{s}}, \dot{V}]_{0}}{I_{\Gamma_{F^{s}/F}} \cdot X_{*}(T)[V_{F^{s}}]_{0}} \arrow{r} & (X_{*}(T)_{\mathbb{Q}/\mathbb{Z}})[\dot{V}]_{0} \arrow{r} & 1,
\end{tikzcd}
\]
where the first isomorphism is as above and the third follows from \cite[Lemma 3.10]{Dillery23b}.
\end{Theorem}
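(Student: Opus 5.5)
The plan is to build the map at finite level using the Tannakian description (Proposition \ref{prop:tannakian}), check compatibility with the transition maps, and then pass to the colimit $B_{e}(T) = \varinjlim_{(E,\Sigma_{E})} H^{1}_{\tn{\'{e}t}}(\tn{Kott}_{F}^{1/\infty,E}, T)$. Since $T$ is commutative, $H^{1}_{\tn{\'{e}t}}(\tn{Kott}_{F}^{1/\infty,E},T)$ is a group, and every class lies in $B_{e}(T)$. The top row of the diagram is exact: this is the Lemma preceding the definition of the Newton point, which uses Proposition \ref{prop:H2torus}. The bottom row is exact by a direct check. Its kernel is $X_{*}(T)[V_{F^{s}}]_{0}$ modulo the same group. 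Surjectivity onto $(X_{*}(T)_{\mathbb{Q}/\mathbb{Z}})[\dot{V}]_{0}$ holds because any element of that target lifts to $X_{*}(T)_{\mathbb{Q}}[\dot{V}]_{0} \subseteq (X_{*}(T)_{\mathbb{Q}})[V_{F^{s}},\dot{V}]_{0}$.

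\emph{Construction of the map.} Choose $E$ splitting $T$ and large enough. A class in $H^{1}(\tn{Kott}_{F}^{1/\infty,E},T)$ is the same as an exact $\otimes$-functor $X^{*}(T)\text{-graded lines} \to \tn{Isoc}_{F}^{E,e}$. Concretely, for each $\chi \in X^{*}(T)$ it gives a rank-one object $D^{\chi}$, concentrated in a single $X^{*}(P^{E})$-degree $\lambda(\chi)$, where $\lambda\colon P^{E}\to T$ is the restriction. Using the sequence \eqref{tannakianSES} and the section $s$, I would attach to $D^{\chi}$ an element of $X^{*}(\mathbb{T}^{E,\tn{mid}})\otimes$(divisors). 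The recipe is as follows.
\begin{enumerate}
\item Lift the degree $\lambda(\chi)$ via $s$.
\item Untwist $D^{\chi}$ by $L_{\infty}^{\alpha}$ to obtain an honest rank-one discrete semilinear $W_{F}$-module twisted along $\Gamma_{E/F}$.
\item Take the Kottwitz/Hartl--Kim invariant \cite[Proposition 3.1]{HK21} of the result in $\mathbb{Z}[V_{F}]_{0}$, and combine it with the rational divisor $s(\lambda(\chi))$ pushed along $\Gamma_{E/F}\times\Sigma_{E}\to V_{F^{s}}$, $(\sigma,w)\mapsto\sigma^{-1}w$.
\end{enumerate}
Dualizing in $\chi$ produces an element of $(X_{*}(T)_{\mathbb{Q}})[V_{F^{s}}]_{0}$. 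Its image mod $\mathbb{Z}$ is supported on $\dot{\Sigma}\subseteq\dot{V}$, because of the support condition in $X^{*}(\mathbb{T}^{E,\tn{mid}})$, so it lands in $(X_{*}(T)_{\mathbb{Q}})[V_{F^{s}},\dot{V}]_{0}$. Changing $s$ or the choices $x_{c,E}$, or changing the cocycle by a coboundary, alters the output by an element of $I_{\Gamma}\cdot X_{*}(T)[V_{F^{s}}]_{0}$. This is where the multiplicativity normalization $x_{c+d,E}=x_{c,E}x_{d,E}$ is used, so the map is well defined on the quotient. It is a homomorphism because the tensor product in $\tn{Isoc}_{F}^{E,e}$ adds degrees and adds the $L_{\infty}^{c(\beta,\gamma)}$ twists. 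Compatibility with the $E\subseteq K$ inclusions follows from the norm compatibility $x_{c,K}\mapsto N(x_{c,K})$.

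\emph{Commutativity and isomorphism.} On classes inflated from $\tn{Kott}_{F}$ we have $\lambda=0$ and no twists, so the map reduces to Kottwitz's isomorphism. This gives the left square. For the right square, the reduction mod $\mathbb{Z}$ of the output is exactly the image of $\lambda$ under the identification $\tn{Hom}_{F}(P,T)\cong (X_{*}(T)_{\mathbb{Q}/\mathbb{Z}})[\dot{V}]_{0}$ of \cite[Lemma 3.10]{Dillery23b}. Since the outer vertical maps are isomorphisms and both rows are short exact sequences of abelian groups, the five lemma shows the middle map is an isomorphism. Functoriality in $T$ is clear from the construction.

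\emph{Main obstacle.} I expect the hardest step to be the right-square compatibility together with well-definedness modulo $I_{\Gamma}\cdot X_{*}(T)[V_{F^{s}}]_{0}$. This requires matching the $2$-cocycle $c$ coming from \eqref{tannakianSES} with the explicit description in \cite[Lemma 3.10]{Dillery23b}, and checking that the Kaletha-type localization conventions ($\sigma^{-1}w\in\dot{\Sigma}$) agree. If the direct cocycle bookkeeping becomes unwieldy, the fallback is to reduce to induced tori $\tn{Res}_{E/F}\mathbb{G}_{m}$ by functoriality. Every torus is a quotient of such tori, and both sides are right exact in $T$, using the $H^{2}$-vanishing of Proposition \ref{prop:multvanish}. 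For $\mathbb{G}_{m}$ over $E$ the computation becomes the explicit rank-one computation with $\mathbb{L}_{E,c}$.
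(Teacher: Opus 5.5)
Your five-lemma endgame is sound: both rows are short exact sequences of abelian groups and the outer vertical maps are isomorphisms. So everything rests on producing a homomorphism $B_{e}(T)\to \frac{(X_{*}(T)_{\mathbb{Q}})[V_{F^{s}},\dot{V}]_{0}}{I_{\Gamma_{F^{s}/F}}\cdot X_{*}(T)[V_{F^{s}}]_{0}}$ with commuting squares. Your direct construction of that map has a genuine gap as soon as $T$ is not split over $F$.

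A $T$-torsor on the gerbe is an exact $\otimes$-functor out of $\tn{Rep}_{F}(T)$, and this category is not ``$X^{*}(T)$-graded lines'' unless $T$ is split. The rank-one pieces $D^{\chi}$ exist only over the field of definition $F_{\chi}$ of $\chi$, so their Hartl--Kim invariants live over $F_{\chi}$, not in $\mathbb{Z}[V_{F}]_{0}$. Recording them amounts to pushing the torsor forward along the maps $T\to \tn{Res}_{F_{\chi}/F}(\mathbb{G}_{m})$ given by characters. Such data assembles into a $\Gamma$-equivariant function of $\chi$, i.e.\ an element of an \emph{invariant} group, whereas the target is a \emph{coinvariant} group, and information is genuinely lost.

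For example, let $T$ be the norm-one torus of a quadratic extension $E/F$. Then $H^{1}(F,T)=F^{\times}/N_{E/F}(E^{\times})\neq 0$ injects into $B(T)$. But every character map factors through $T\hookrightarrow \tn{Res}_{E/F}(\mathbb{G}_{m})$, and $H^{1}(F,\tn{Res}_{E/F}(\mathbb{G}_{m}))=H^{1}(E,\mathbb{G}_{m})=0$, so all the $D^{\chi}$ are trivial on this subgroup. Hence no recipe built from the $D^{\chi}$ (invariants plus $s(\lambda(\chi))$, then ``dualizing in $\chi$'') can make the left square commute with Kottwitz's isomorphism. For $T$ split over $F$ your recipe is a hands-on version of the paper's $\mathbb{G}_{m}$ step, provided you check that it matches the Kaletha--Ta\"ibi identification.

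The missing idea is to work covariantly in cocharacters. The paper proceeds in three steps.
\begin{enumerate}
\item It computes $B_{e}(\mathbb{G}_{m})$ from the Tannakian description: rank-one objects of $\tn{Isoc}^{E,e}_{F}$ correspond to $X^{*}(\mathbb{T}^{E,\tn{mid}})^{\Gamma_{E/F}}$, and Kaletha--Ta\"ibi identify the colimit with $\mathbb{Q}[V_{F}]_{0}$.
\item It handles $\tn{Res}_{E/F}(\mathbb{G}_{m})$ by pulling back the extension along $N_{E/F}^{*}$ on $\tn{Hom}_{F}(P,-)$.
\item For general $T$ it defines a map in the \emph{opposite} direction, from $(X_{*}(T)_{\mathbb{Q}})[V_{F^{s}},\dot{V}]_{0}$ to $B_{e}(T)$. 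Each cocharacter $x_{w}$ is sent to the homomorphism $\nu\colon\tn{Res}_{E/F}(\mathbb{G}_{m})\to T$ it determines, and the induced case is applied, as in \eqref{TNweilres}.
\end{enumerate}
In the third step, vanishing on $I_{\Gamma_{F^{s}/F}}\cdot X_{*}(T)[V_{F^{s}}]_{0}$ follows from $B_{e}(\prescript{\sigma}{}x)(\bar{z})=B_{e}(x)(r_{[\sigma]}\bar{z})$. This is exactly what makes the map factor through coinvariants.

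Your fallback does not supply this. Knowing that every torus is a quotient $S\to T$ of induced tori, and that $B_{e}(S)\to B_{e}(T)$ is surjective by Proposition \ref{prop:multvanish}, does not define a map on $B_{e}(T)$. You would also need the induced-torus maps to be functorial for all homomorphisms between induced tori, to kill the image of $B_{e}(\ker(S\to T))$, and to be independent of the presentation. None of this is addressed. The ``main obstacle'' is therefore not the cocycle bookkeeping you identify, but the passage from invariants to coinvariants for non-split $T$.
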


\begin{proof}
The key step is proving this result for $T = \mathrm{Res}_{E/F}(\mathbb{G}_{m})$ an induced torus, where $E/F$ is a finite Galois extension. When $T = \mathbb{G}_{m}$, we seek an isomorphism 
\begin{equation*}in
B_{e}(\mathbb{G}_{m}) \xrightarrow{\sim}  \frac{\mathbb{Q}[V_{F^{s}}, \dot{V}]_{0}}{I_{\Gamma_{F^{s}/F}} \cdot \mathbb{Z}[V_{F^{s}}]_{0}} \xrightarrow{\sim} \mathbb{Q}[V_{F}]_{0},
\end{equation*}
where the second isomorphism comes from the surjective map $\mathbb{Q}[V_{F^{s}}, \dot{V}]_{0} \to  \mathbb{Q}[V_{F}]_{0}$ sending $w$ to $w|_{F}$; the kernel is evidently the same as the kernel of the analogous map $\mathbb{Z}[V_{F^{s}}]_{0} \to  \mathbb{Z}[V_{F}]_{0}$, which is $I_{\Gamma_{F^{s}/F}} \cdot \mathbb{Z}[V_{F^{s}}]_{0}$ (for this last claim, take the long exact sequence coming from taking $\Gamma_{F^{s}/F}$-coinvariants of the short exact sequence $0 \to \Z[V_{F^{s}}]_{0} \to \Z[V_{F^{s}}] \to \Z \to 0$ and use that $H_{1}(\Gamma_{F^{s}/F}, \Z[V_{F^{s}}]) \to H_{1}(\Gamma_{F^{s}/F}, \Z)$ is surjective).

It then follows from taking the $\Gamma_{E/F}$-invariants of the short exact sequence 
\begin{equation}\label{tannakianSES}
0 \to \mathcal{C}(\Gamma_{E/F}, \mathbb{Z}[\Sigma^{(E)}]_{0}) \to X^{*}(\mathbb{T}_{\Sigma^{(E)}}^{E,\tn{mid}}) \to X^{*}(P_{\Sigma^{(E)}}^{E}) \to 0
\end{equation}
(which is exact, since the kernel is induced---we are also now denoting the set of places to avoid confusion) and Proposition \ref{prop:tannakian} that each $H^{1}_{\tn{\'{e}t}}(\tn{Kott}^{1/\infty,E}_{F,\Sigma^{(E)}}, \mathbb{G}_{m})$ is isomorphic to $X^{*}(\mathbb{T}_{\Sigma^{(E)}}^{E,\tn{mid}})^{\Gamma_{E/F}}$, and so from the description of $H^{1}_{\tn{\'{e}t}}(\tn{Kott}^{1/\infty}_{F}, \mathbb{G}_{m}) = \varinjlim_{E} H^{1}_{\tn{\'{e}t}}(\tn{Kott}^{1/\infty,E}_{F,\Sigma^{(E)}}, \mathbb{G}_{m})$ we obtain an identification of this set with $X^{*}(\mathbb{T}^{\tn{mid}})^{\Gamma_{F^{s}/F}}$ (as defined in \cite[\S 2.4]{KT}), which is canonically isomorphic (by the proof of Proposition 3.4.2 in \cite{KT}) to $\varinjlim_{E} \mathbb{Q}[V_{E}]_{0}^{\Gamma_{E/F}} \xrightarrow{\sim} \mathbb{Q}[V_{F}]_{0}$ in a manner that makes the following diagram commute:
\begin{equation}
\begin{tikzcd}\label{Gmdiagram}
1 \arrow{r} & B(\mathbb{G}_{m}) \arrow["\sim"]{d} \arrow{r} & B_{e}(\mathbb{G}_{m}) \arrow{r} \arrow["\sim"]{d} & \tn{Hom}_{F}(P, \mathbb{G}_{m}) \arrow["\sim"]{d} \arrow{r} & 1 \\
1 \arrow{r} & \mathbb{Z}[V_{F}]_{0} \arrow{r} & \mathbb{Q}[V_{F}]_{0} \arrow{r} & \mathbb{Q}/\mathbb{Z}[\dot{V}]_{0} \arrow{r} & 1.
\end{tikzcd}
\end{equation}

We may then deduce, for any $E/F$, (cf. the proof of \cite[Proposition 9.7]{Fargues}) that the short exact sequence
\begin{equation*}
1 \to B(\tn{Res}_{E/F}(\mathbb{G}_{m})) \to B_{e}(\tn{Res}_{E/F}(\mathbb{G}_{m})) \to \tn{Hom}_{F}(P, \tn{Res}_{E/F}(\mathbb{G}_{m})) \to 1
\end{equation*} 
is the pull-back of the first line of \eqref{Gmdiagram} by the morphism 
\begin{equation*}
\tn{Hom}_{F}(P,  \tn{Res}_{E/F}(\mathbb{G}_{m})) \xrightarrow{N_{E/F}^{*}} \tn{Hom}_{F}(P,  \mathbb{G}_{m}) = \mathbb{Q}/\mathbb{Z}[\dot{V}]_{0}.
\end{equation*}
We therefore have a commutative diagram
\begin{equation*}
\begin{tikzcd}
1 \arrow{r} & B(\tn{Res}_{E/F}(\mathbb{G}_{m})) \arrow["\sim"]{d} \arrow{r} & B_{e}(\tn{Res}_{E/F}(\mathbb{G}_{m})) \arrow{r} \arrow["\sim"]{d} & \tn{Hom}_{F}(P,  \tn{Res}_{E/F}(\mathbb{G}_{m}))\arrow["\sim"]{d} \arrow{r} & 1 \\
1 \arrow{r} & \mathbb{Z}[V_{F}]_{0} \arrow{r} \arrow["\id"]{d} &\frac{\mathbb{Q}[\Gamma_{E/F} \times V_{F}]_{0}}{\mathbb{Z}[\Gamma_{E/F} \times V_{F}]_{0,0}}  \arrow["\aug"]{d} \arrow{r} & \mathbb{Q}/\mathbb{Z}[\Gamma_{E/F} \times \dot{V}]_{0} \arrow{r} \arrow["\aug"]{d} & 1 \\
1 \arrow{r} & \mathbb{Z}[V_{F}]_{0} \arrow{r} & \mathbb{Q}[V_{F}]_{0} \arrow{r} & \mathbb{Q}/\mathbb{Z}[\dot{V}]_{0} \arrow{r} & 1,
\end{tikzcd}
\end{equation*}
where the subscript ``$0,0$'' means those elements killed by both augmentation maps and the first map in the middle row sends $\sum_{v \in V_{F}} c_{v}[v]$ to $\sum_{v \in V_{F}} \sum_{\gamma \in \Gamma_{E/F}} c_{v}[\gamma \times v]$. This gives the desired result for $T = \tn{Res}_{E/F}(\mathbb{G}_{m})$ after making the identification 
\begin{equation}\label{inducedisomeq}
\frac{\mathbb{Q}[\Gamma_{E/F}][V_{F^{s}}, \dot{V}]_{0}}{I_{\Gamma_{F^{s}/F}} \cdot \mathbb{Z}[\Gamma_{E/F}][V_{F^{s}}]_{0}} \xrightarrow{\sim} \frac{\mathbb{Q}[\Gamma_{E/F} \times V_{F}]_{0}}{\mathbb{Z}[\Gamma_{E/F} \times V_{F}]_{0,0}} 
\end{equation}
induced by the map $\mathbb{Q}[\Gamma_{E/F}][V_{F^{s}}, \dot{V}]_{0} \to \mathbb{Q}[\Gamma_{E/F} \times V_{F}]_{0}$ sending $w$ to $w|_{F}$. 

We now deduce the result for general $T$. A given $x \in X_{*}(T)[V_{F_{s}}]$ yields a $V_{F_{s}}$-tuple of morphisms $\mathbb{G}_{m,E} \to T_{E}$ for some sufficiently large finite Galois $E$, giving rise to an analogous $V_{F_{s}}$-tuple $(\nu_{x,w})_{w \in V_{F^{s}}}$ of morphisms $\tn{Res}_{E/F}(\mathbb{G}_{m}) \to T$ (note that all but finitely-many of these morphisms are trivial).

We thus obtain from $x$, by the induced case (using the left-hand side of the isomorphism \eqref{inducedisomeq}), a map
 \begin{equation*}
[ \frac{\mathbb{Q}[\Gamma_{E/F}][V_{F^{s}}, \dot{V}]_{0}}{I_{\Gamma_{F^{s}/F}} \cdot \mathbb{Z}[\Gamma_{E/F}][V_{F^{s}}]_{0}}]^{V_{F^{s}}} \to B_{e}(\tn{Res}_{E/F}(\mathbb{G}_{m}))^{V_{F^{s}}} \xrightarrow{\prod_{w \in V_{F^{s}}} \nu_{x,w}} B_{e}(T),
 \end{equation*}
 denoted by $B_{e}(x)$.
 
Fix an auxiliary place $v \in V_{F}$ (the constructions will not depend on this choice). For an element $y \in X_{*}(T)_{\Q}[V_{F^{s}}, \dot{V}]_{0}$, write $y = \sum_{w \in V_{F^{s}}} (\lambda_{w} \otimes x_{w})[w]$ for $x_{w} \in  X_{*}(T)$ and $\lambda_{w} \in \Q$, where $\lambda_{w} \in \Z$ if $w \notin \dot{V}$. We define the duality map by sending $y$ to
\begin{equation} \label{TNweilres}
B_{e}(\sum_{w \in V_{F^{s}}} x_{w}[w])[(\ov{\lambda_{w} [1 \times w]- \lambda_{w} [1 \times \dot{v}]})_{w \in V_{F^{s}}}] \in B_{e}(T),
\end{equation}
noting that each $\lambda_{w} [1 \times w]- \lambda_{w} [1 \times \dot{v}]$ lies in $\mathbb{Q}[\Gamma_{E/F}][V_{F^{s}}, \dot{V}]_{0}$, where $1 := 1_{\Gamma_{E/F}}$ and we denote by $\ov{\lambda_{w} [1 \times w] - \lambda_{w} [1 \times \dot{v}]}$ the image of $\lambda_{w} [1 \times w] - \lambda_{w} [1 \times \dot{v}]$ in $\frac{\mathbb{Q}[\Gamma_{E/F}][V_{F^{s}}, \dot{V}]_{0}}{I_{\Gamma_{F^{s}/F}} \cdot \mathbb{Z}[\Gamma_{E/F}][V_{F^{s}}]_{0}}$. It is straightforward to check that the above formula does not depend on the choice of $v$ nor the choice of the $x_{w}$, $\lambda_{w}$ and that the resulting map $X_{*}(T)_{\Q}[V_{F^{s}}, \dot{V}]_{0} \to B_{e}(T)$ is a group homomorphism which is trivial on $I_{\Gamma_{F^{s}/F}} \cdot X_{*}(T)[V_{F^{s}}]_{0}$ (for this last fact, one uses that $B_{e}(\prescript{\sigma}{}x)(\bar{z}) = B_{e}(x)(r_{[\sigma]}\bar{z})$ for $x \colon \tn{Res}_{E/F}(\mathbb{G}_{m}) \to T$, $\bar{z} \in \frac{\mathbb{Q}[\Gamma_{E/F}][V_{F^{s}}, \dot{V}]_{0}}{I_{\Gamma_{F^{s}/F}} \cdot \mathbb{Z}[\Gamma_{E/F}][V_{F^{s}}]_{0}}$, and $r_{[\sigma]}$ the automorphism of  $\frac{\mathbb{Q}[\Gamma_{E/F}][V_{F^{s}}, \dot{V}]_{0}}{I_{\Gamma_{F^{s}/F}} \cdot \mathbb{Z}[\Gamma_{E/F}][V_{F^{s}}]_{0}}$ given by right-translation by $\sigma$ on the $\Gamma_{E/F}$-coefficients). This map is an isomorphism because we may resolve $T$ by induced tori and we already proved the result in the induced case.
\end{proof}

We record the following consequence of Theorem \ref{TNtorus}, which gives an alternative (substantially shorter) proof the main cohomological result (Theorem 4.4) in \cite{Dillery23b}:

\begin{Corollary}\label{TNcompat}
The composition
\begin{equation*}
H^{1}_{\tn{\'{e}t}}(\tn{Kal}_{F}, T) \to B_{e}(T) \to \frac{(X_{*}(T)_{\mathbb{Q}})[V_{F^{s}}, \dot{V}]_{0}}{I_{\Gamma_{F^{s}/F}} \cdot X_{*}(T)[V_{F^{s}}]_{0}}
\end{equation*}
induces a map 
\begin{equation}\label{TNKal}
H^{1}_{\tn{\'{e}t}}(\tn{Kal}_{F}, T) \to (X_{*}(T)_{\mathbb{Q}})[V_{F^{s}}, \dot{V}]_{0,+,\tn{tor}} 
\end{equation}
which is the duality isomorphism from \cite[Theorem 4.4]{Dillery23b}.
\end{Corollary}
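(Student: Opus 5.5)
We need to show that the composite of pullback along $\tn{Kott}_F^{1/\infty}\to\tn{Kal}_F$ with the isomorphism of Theorem \ref{TNtorus} lands in the torsion subgroup, that it is injective onto that subgroup, and that it agrees with the map of \cite[Theorem 4.4]{Dillery23b}.

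The plan has three steps.

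First, we place $H^{1}_{\tn{\'{e}t}}(\tn{Kal}_{F}, T)$ inside $B_e(T)$ compatibly with the exact sequence of Theorem \ref{TNtorus}. Inflation–restriction for $\tn{Kal}_F\to\tn{Spec}(F)$ gives
\[
1 \to H^1(F,T) \to H^{1}_{\tn{\'{e}t}}(\tn{Kal}_{F}, T) \to \tn{Hom}_F(P,T),
\]
which is compatible with the top row of the diagram in Theorem \ref{TNtorus} via $H^1(F,T)\to B(T)$.
\begin{itemize}
\item[(a)] \emph{Injectivity.} Pulling back along $\tn{Kott}_F\to\tn{Spec}(F)$ is injective on $H^1(F,T)$, because $H^1(F,T)\to B(T)$ has a retraction coming from the section $\tn{Spec}(\ov F)\to\tn{Kott}_{\ov F}$ and the identification $B(T)_{\tn{basic}}$. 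Alternatively, we can check directly on the duality side: $H^1(F,T)$ corresponds to the torsion of $X_*(T)[V_{F^s}]_0/I\cdot X_*(T)[V_{F^s}]_0$. A five-lemma argument then makes $H^{1}_{\tn{\'{e}t}}(\tn{Kal}_{F}, T)\to B_e(T)$ injective.
\item[(b)] \emph{Image.} The image is exactly the set of classes $b\in B_e(T)$ whose restriction to the band $\mathbb{D}$ (the Newton point) is trivial. Since $B_e(T)$ is a group and the Newton map $B_e(T)\to X_*(T)_{\mathbb{Q}}^{\Gamma}$ is a homomorphism, this is the kernel of the Newton map.
\end{itemize}

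Second, we identify this kernel on the duality side. For $\mathbb{G}_m$ and induced tori, the Newton point of $y\in \mathbb{Q}[V_F]_0$ should be the degree-type "average", which vanishes exactly when $y$ is torsion modulo the integral part. Granting that, a class in $\frac{(X_{*}(T)_{\mathbb{Q}})[V_{F^{s}}, \dot{V}]_{0}}{I\cdot X_{*}(T)[V_{F^{s}}]_{0}}$ is in the image of $H^{1}(\tn{Kal}_F,T)$ if and only if it is torsion. The argument runs as follows.
\begin{itemize}
\item \emph{Torsion classes have trivial Newton point.} Newton points are $\mathbb{Q}$-valued cocharacters, so a torsion class has a torsion, hence trivial, Newton point.
\item \emph{Trivial Newton point gives a torsion class.} Suppose $b$ has trivial Newton point. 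Then $b$ comes from $H^1(\tn{Kal}_F,T)$. The image of $b$ in $\tn{Hom}_F(P,T)$ is torsion since $P$ is profinite, so some multiple $nb$ comes from $H^1(F,T)$, which is torsion. Hence $b$ is torsion.
\item \emph{Torsion classes lie in the image.} Let $b$ be torsion. Its Newton point is then a torsion element of the $\mathbb{Q}$-vector space $X_*(T)_{\mathbb{Q}}$, hence zero, so $b$ lies in the image from the first step.
\end{itemize}
This gives the bijection \eqref{TNKal} onto $(X_{*}(T)_{\mathbb{Q}})[V_{F^{s}}, \dot{V}]_{0,+,\tn{tor}}$.

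Third, we show the resulting isomorphism agrees with \cite[Theorem 4.4]{Dillery23b}. Both maps are functorial in $T$ and compatible with the respective exact sequences: the $H^1(F,T)$ part is the classical Tate–Nakayama map, and the $\tn{Hom}_F(P,T)$ part is \cite[Lemma 3.10]{Dillery23b}. Their difference therefore induces a functorial homomorphism
\[
\tn{Hom}_F(P,T)\longrightarrow \tn{(torsion of classical group)} .
\]
We aim to show this difference vanishes by reducing to induced tori: resolve $T$ by induced tori, and further reduce to $\mathbb{G}_m$ via the norm diagram in the proof of Theorem \ref{TNtorus}. For $\mathbb{G}_m$ and induced tori, \cite[Theorem 4.4]{Dillery23b} is likewise characterized by a Kottwitz–Taïbi type comparison with $X^*(\mathbb{T}^{\tn{mid}})^{\Gamma}$ (\cite[Proposition 3.4.2]{KT}), which is the same identification used here. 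So it remains to check that the Tannakian description of Proposition \ref{prop:tannakian}, restricted to $\tn{Kal}_F$, matches the canonical class used in \cite{Dillery23b}.

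I expect this final compatibility, namely matching the sign and normalization of the canonical classes, to be the main obstacle. I would attack it by comparing the localizations at each place $v$ using Fargues' local compatibility \cite{Fargues} and the Hasse principle for $P$-classes established in Theorem \ref{canonclassthm}.
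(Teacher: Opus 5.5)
Your first two steps are correct, and they give a cleaner bijectivity argument than the paper's diagram chase. Inflation--restriction for the $\mathbb{D}$-gerbe $\tn{Kott}_F^{1/\infty}\to\tn{Kal}_F$ shows that $H^{1}_{\tn{\'{e}t}}(\tn{Kal}_{F},T)$ embeds in $B_e(T)$ as the kernel of the Newton map. This is the right justification for (a); the ``retraction'' coming from $\tn{Spec}(\ov{F})\to\tn{Kott}_{\ov F}$ does not make sense as stated. Since $H^{1}_{\tn{\'{e}t}}(\tn{Kal}_{F},T)$ is torsion and $\tn{Hom}(\mathbb{D}_{\ov F},T_{\ov F})$ is torsion-free, that kernel is exactly the torsion subgroup, which Theorem \ref{TNtorus} carries onto $(X_{*}(T)_{\mathbb{Q}})[V_{F^{s}}, \dot{V}]_{0,+,\tn{tor}}$.

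The gap is the third step, which is the actual content of the corollary: you never prove that this isomorphism coincides with the one from \cite[Theorem 4.4]{Dillery23b}. You reduce to showing that a functorial difference map vanishes. That map is defined on the image of $H^{1}_{\tn{\'{e}t}}(\tn{Kal}_{F},T)$ in $\tn{Hom}_F(P,T)$, which globally is not all of $\tn{Hom}_F(P,T)$. You then leave its vanishing as the ``main obstacle'', to be handled by an unspecified comparison of canonical classes.

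The suggested reduction to induced tori also does not run as in Theorem \ref{TNtorus}. There, resolving $T$ by induced tori is effective for $B_e(-)$ because $H^{2}(\tn{Kott}_F^{1/\infty},-)$ vanishes on multiplicative groups (Proposition \ref{prop:multvanish}). No analogous vanishing on tori is available for $\tn{Kal}_F$ alone, so you cannot lift classes along a resolution. Moreover, on an induced torus your difference vanishes for the trivial reason $H^{1}(F,\tn{Res}_{E/F}\mathbb{G}_m)=0$, so all the content sits in the passage to general $T$, which you cannot perform.

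The missing idea is the uniqueness clause of \cite[Theorem 4.4]{Dillery23b}. That duality isomorphism is characterized as the unique map making the ladder commute, with the classical Tate--Nakayama isomorphism on $H^{1}(F,T)$ on the left and the identification $\tn{Hom}_F(P,T)\cong (X_{*}(T)_{\mathbb{Q}/\mathbb{Z}})[\dot V]_0$ of \cite[Lemma 3.10]{Dillery23b} on the right. The vanishing of your difference map is exactly this uniqueness statement, so you should quote it rather than try to reprove it.

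Your map already satisfies both compatibilities, by the diagram in Theorem \ref{TNtorus} restricted along $H^{1}(F,T)\to B(T)$, where Kottwitz's isomorphism restricts to Tate--Nakayama. Uniqueness then finishes the proof at once. This is precisely the paper's argument, and it needs no comparison of Tannakian descriptions, signs, or local canonical classes.
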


\begin{proof}
The full statement of \cite[Theorem 4.4]{Dillery23b} is that the duality isomorphism loc. cit. is uniquely characterized by making the diagram
\[
\begin{tikzcd}
1 \arrow{r} & H^{1}(F, T) \arrow{r} \arrow["\sim"]{d}& H^{1}_{\tn{\'{e}t}}(\tn{Kal}_{F}, T) \arrow{r} \arrow{d} &  \tn{Hom}_{F}(P, T) \arrow["\sim"]{d} \\
1 \arrow{r} & (X_{*}(T)_{\mathbb{Q}})[V_{F^{s}}]_{0,\Gamma_{F^{s}/F}}[\tn{tor}] \arrow{r} &  (X_{*}(T)_{\mathbb{Q}})[V_{F^{s}}, \dot{V}]_{0,+,\tn{tor}} \arrow{r} &  (X_{*}(T)_{\mathbb{Q}/\mathbb{Z}})[\dot{V}]_{0}
\end{tikzcd}
\]
commute. Theorem \ref{TNtorus} says that this diagram commutes for the map \eqref{TNKal}.

Moreover, Theorem \ref{TNtorus} also implies that the image of the composition
\begin{equation*}
H^{1}_{\tn{\'{e}t}}(\tn{Kal}_{F}, T) \to B_{e}(T) \xrightarrow{\tn{Theorem \ref{TNtorus}}}  \frac{(X_{*}(T)_{\mathbb{Q}})[V_{F^{s}}, \dot{V}]_{0}}{I_{\Gamma_{F^{s}/F}} \cdot X_{*}(T)[V_{F^{s}}]_{0}} \to  (X_{*}(T)_{\mathbb{Q}/\mathbb{Z}})[\dot{V}]_{0}
\end{equation*}
coincides with the image of the composition
\begin{equation}\label{Compateq1}
H^{1}_{\tn{\'{e}t}}(\tn{Kal}_{F}, T) \to \tn{Hom}_{F}(P, T)  \xrightarrow{\sim}  (X_{*}(T)_{\mathbb{Q}/\mathbb{Z}})[\dot{V}]_{0}.
\end{equation}
By \cite[Theorem 4.4]{Dillery23b}, the image of \eqref{Compateq1} is the same as the image of the map
\begin{equation*}
 (X_{*}(T)_{\mathbb{Q}})[V_{F^{s}}, \dot{V}]_{0,+,\tn{tor}}\to  (X_{*}(T)_{\mathbb{Q}/\mathbb{Z}})[\dot{V}]_{0}.
\end{equation*}
From this observation, a diagram chase shows that \eqref{TNKal} is an isomorphism, so we deduce the result by the uniqueness part of \cite[Theorem 4.4]{Dillery23b}
\end{proof}

We conclude this subsection by discussing some localization properties of the above isomorphism. Recall that Fargues in \cite[Proposition 9.7]{Fargues} proves an analogous result for the local gerbe $\tn{Kott}_{v}^{1/\infty}:= \tn{Kott}_{v} \times_{F_{v}} \tn{Kal}_{v}$, which is a functorial isomorphism
\begin{equation}\label{locTN}
B_{e}(T_{F_{v}}) \xrightarrow{\sim} \frac{X_{*}(T)_{\mathbb{Q}}}{I_{\Gamma_{F_{v}^{s}/F_{v}}}X_{*}(T)}.
\end{equation}

\begin{Remark}\label{FarguesRem}
Strictly speaking, Fargues defines the isomorphism \eqref{locTN} for the analogues of these gerbes over $p$-adic local fields. Nevertheless, a similar argument holds in the local function field case---the only difference is that one needs to adapt certain cohomological-vanishing arguments to deal with the non-smoothness of $u_{v}$, as in the proof of Proposition \ref{vanprop1}. 
\end{Remark}

Observe that there is a canonical map
\begin{equation*}
\frac{(X_{*}(T)_{\mathbb{Q}})[V_{F^{s}}, \dot{V}]_{0}}{I_{\Gamma_{F^{s}/F}} \cdot X_{*}(T)[V_{F^{s}}]_{0}} \xrightarrow{(l_{v})_{v}} \bigoplus_{v \in V_{F}} \frac{X_{*}(T)_{\mathbb{Q}}}{I_{\Gamma_{F_{v}^{s}/F_{v}}}X_{*}(T)}
\end{equation*}
given by the formula in \cite[(15)]{Dillery23b}. One then has:

\begin{Proposition}\label{localtoglobal2}
For $T$ a torus over $F$, the following localization results hold:
\begin{enumerate}
\item{We have the following commutative diagram with exact bottom row
\[
\begin{tikzcd}
B_{e}(T) \arrow["(\tn{loc}_{v})_{v}"]{r} & \bigoplus_{v \in V_{F}} B_{e}(T_{F_{v}})  \\
 \frac{(X_{*}(T)_{\mathbb{Q}})[V_{F^{s}}, \dot{V}]_{0}}{I_{\Gamma_{F^{s}/F}} \cdot X_{*}(T)[V_{F^{s}}]_{0}} \arrow["\tn{Theorem \ref{TNtorus}}"]{u} \arrow["(l_{v})_{v}"]{r} & \bigoplus_{v \in V_{F}} \frac{X_{*}(T)_{\mathbb{Q}}}{I_{\Gamma_{F_{v}^{s}/F_{v}}}X_{*}(T)}
 \arrow["\tn{\cite[Prop. 9.7]{Fargues}}"]{u} 
\end{tikzcd}
\]}
\item{For $G$ a connected reductive group and $x \in B_{e}(T)$, $\tn{loc}_{v}(x)$ is the neutral element in $B_{e}(T_{F_{v}})$ for almost all $v \in V_{F}$.}
\end{enumerate}
\end{Proposition}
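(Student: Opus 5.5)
The plan is to reduce both parts to the induced case $T = \tn{Res}_{E/F}(\mathbb{G}_{m})$, and ultimately to $T = \mathbb{G}_{m}$, mirroring the structure of the proof of Theorem \ref{TNtorus}. The first step is to make the localization map $\tn{loc}_{v} \colon B_{e}(T) \to B_{e}(T_{F_{v}})$ precise. We use the non-canonical localization $\tn{Kal}_{v} \to (\tn{Kal}_{F})_{F_{v}}$ together with the analogous localization $\tn{Kott}_{v} \to (\tn{Kott}_{F})_{F_{v}}$ (coming from the map of isoshtukas to local isocrystals, i.e. restriction of the $W_{F}$-action to $W_{F_{v}}$ in the Tannakian picture). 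Taking the product gives $\tn{Kott}_{v}^{1/\infty} \to (\tn{Kott}_{F}^{1/\infty})_{F_{v}}$. Canonicity of the induced map on $B_{e}(T)$ follows as in \cite[\S 4.1]{Dillery23b}, since $H^{1}$ of the local bands vanishes. By construction, $\tn{loc}_{v}$ is compatible with the two short exact sequences $1 \to B(\cdot) \to B_{e}(\cdot) \to \tn{Hom}(P,\cdot) \to 1$: on $B(T)$ it is Kottwitz's localization, and on $\tn{Hom}_{F}(P,T)$ it is pullback along $u_{v} \to P_{F_{v}}$.

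For (1), both compositions around the square are functorial in $T$. Theorem \ref{TNtorus} was proved by resolving $T$ by induced tori and using the maps $B_{e}(x)$ built from morphisms $\tn{Res}_{E/F}(\mathbb{G}_{m}) \to T$. Fargues' isomorphism \eqref{locTN} and the maps $l_{v}$ (given by \cite[(15)]{Dillery23b}) are likewise functorial. It therefore suffices to check commutativity for $T = \tn{Res}_{E/F}(\mathbb{G}_{m})$. Via the norm pullback description in the proof of Theorem \ref{TNtorus}, this reduces further to $T = \mathbb{G}_{m}$, where we must check that the diagram
\begin{equation*}
B_{e}(\mathbb{G}_{m}) \cong \mathbb{Q}[V_{F}]_{0} \longrightarrow \bigoplus_{v} B_{e}(\mathbb{G}_{m,F_{v}}) \cong \bigoplus_{v}\mathbb{Q}
\end{equation*}
is given by the coordinate projections. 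To prove this, we compare the extensions $1 \to B \to B_{e} \to \tn{Hom}(P,-) \to 1$. On the subgroup $\mathbb{Z}[V_{F}]_{0}$ this is Kottwitz's global-to-local compatibility for $B(\mathbb{G}_{m})$. On the quotient $\mathbb{Q}/\mathbb{Z}[\dot{V}]_{0}$ it is the compatibility of \cite[Lemma 3.10]{Dillery23b} with $u_{v} \to P_{F_{v}}$ (the $\dot{v}$-coordinate). Since $\mathbb{Q}[V_{F}]_{0}$ is torsion-free and divisible modulo $\mathbb{Z}[V_{F}]_{0}$, and both vertical maps are homomorphisms agreeing on the subgroup and the quotient, their difference factors through a homomorphism $\mathbb{Q}/\mathbb{Z}[\dot{V}]_{0} \to \bigoplus_{v}\mathbb{Z}$. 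Any such homomorphism vanishes, because $\mathbb{Q}/\mathbb{Z}$ is torsion and $\mathbb{Z}$ is torsion-free. This rigidity argument avoids any explicit cocycle comparison. Exactness of the bottom row, meaning the kernel of $(l_{v})_{v}$ is as described in \cite[(15)]{Dillery23b}, follows from the global sum formula (elements have degree zero) and the same reduction to induced tori. I expect the main obstacle to be verifying that the Tannakian $\mathbb{G}_{m}$-computation (via $X^{*}(\mathbb{T}^{\tn{mid}})^{\Gamma}$ and \cite{KT}) matches Fargues' local normalization, so that no stray sign or factor appears. The rigidity argument above is what I would try first to sidestep this.

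For (2), note first that $G$ plays no role, since $x \in B_{e}(T)$. By (1), $\tn{loc}_{v}(x)$ is computed by $l_{v}$ applied to a representative $y \in (X_{*}(T)_{\mathbb{Q}})[V_{F^{s}},\dot{V}]_{0}$. Such a $y$ has finite support. By the formula \cite[(15)]{Dillery23b}, $l_{v}(y)$ involves only the coefficients of $y$ at places above $v$, so $l_{v}(y) = 0$ for $v$ outside the finite set of places lying under the support of $y$. Fargues' isomorphism then shows that $\tn{loc}_{v}(x)$ is neutral for all such $v$. This also confirms that the map in (1) lands in the direct sum rather than the product.
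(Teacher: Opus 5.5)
Your proof is correct in its main idea, but it takes a different route from the paper's. It also contains one reduction step that fails; that step can simply be removed.

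\textbf{How the two routes differ.} The paper proves (1) by direct computation. It writes down $l_v$ at finite level and observes that localizing the explicit global formula from the proof of Theorem \ref{TNtorus}, then applying Fargues' local version of that formula, yields exactly $l_v$. Part (2) is then read off from (1), as you also do.

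You argue by rigidity instead:
\begin{itemize}
\item Both composites around the square are homomorphisms respecting the extensions $B(\cdot)\hookrightarrow B_e(\cdot)\twoheadrightarrow\Hom(P,\cdot)$ and $\Hom(u_v,\cdot)$.
\item They agree on the subgroup by Kottwitz's localization compatibility for $B(T)$.
\item They agree on the quotient by the localization compatibility for $\Hom_F(P,T)$ from \cite{Dillery23b}.
\item So their difference factors through the torsion group $X_*(T)_{\mathbb{Q}/\mathbb{Z}}[\dot V]_0$ and lands in $\prod_v B(T_{F_v})=\prod_v X_*(T)_{\Gamma_v}$. It therefore vanishes as soon as each $X_*(T)_{\Gamma_v}$ is torsion-free.
\end{itemize}
This avoids any sign or normalization check at the $\mathbb{Q}$-level, which was your stated worry. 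The cost is importing the two endpoint compatibilities as black boxes. The paper's formula comparison treats every $T$ uniformly, but leaves the actual verification implicit.

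\textbf{The reduction to induced tori is essential.} For a general torus, $X_*(T)_{\Gamma_v}$ can have torsion (for example a norm-one torus), and then the rigidity argument proves nothing. The reduction itself is sound. Take a surjection $R\twoheadrightarrow T$ with $R$ a product of tori $\tn{Res}_{E/F}(\mathbb{G}_m)$. The induced map on the groups $X_*(\cdot)_{\mathbb{Q}}[V_{F^s},\dot V]_0$ is surjective: lift the coefficients, keeping them integral off $\dot V$, and correct the total sum at one place of $\dot V$. All four maps in the square are functorial.

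\textbf{The step that fails.} The further reduction from $\tn{Res}_{E/F}(\mathbb{G}_m)$ to $\mathbb{G}_m$ ``via the norm pullback description'' does not work. Functoriality along $N_{E/F}$ only shows that the difference of the two composites lands in $\prod_v\ker\bigl(N_*\colon B(R_{F_v})\to B(\mathbb{G}_{m,F_v})\bigr)$. For $R=\tn{Res}_{E/F}(\mathbb{G}_m)$, this $N_*$ is the sum map $\bigoplus_{w\mid v}\mathbb{Z}\to\mathbb{Z}$. That map is not injective once $v$ has at least two places above it in $E$. Globally, too, the norm does not identify $B(R)\cong\mathbb{Z}[V_E]_0$ with $B(\mathbb{G}_m)\cong\mathbb{Z}[V_F]_0$.

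The fix is to drop this step and run your rigidity argument directly for $R$. There $B(R_{F_v})\cong\bigoplus_{w\mid v}\mathbb{Z}$ is free, which is all you need. You also need the two endpoint compatibilities for $R$: the restriction of $l_v$ to $X_*(R)[V_{F^s}]_0$, and $l_v$ taken modulo $X_*(R)$.

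Your sentence on ``exactness'' of the bottom row is unsupported. However, nothing beyond commutativity is actually at stake there, and the paper's proof addresses only commutativity as well.
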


\begin{proof}
It suffices to prove the first statement. The localization map $l_{v}$ is defined at ``level $k$'' as follows: choose a representative $\dot{\tau} \in \Gamma_{E_{k}/F}$ for each right coset $\tau \in \Gamma_{E_{k}/F,\dot{v}} \backslash \Gamma_{E_{k}/F}$ such that $\dot{\tau} = 1$ for the trivial coset; for $f= \sum_{w \in (\Sigma^{(E_{k})})_{E_{k}}} c_{w}[w] \in X_{*}(T)_{\Q}[(\Sigma^{(E_{k})})_{E_{k}}, \dot{\Sigma}^{(E_{k})}]_{0}$, set 
\begin{equation*}
l^{k}_{v}(f) = \sum_{\tau \in \Gamma_{E_{k}/F,\dot{v}} \backslash \Gamma_{E_{k}/F}} \prescript{\dot{\tau}}{}c_{\prescript{\tau^{-1}}{}(\dot{v})} \in X_{*}(T)_{\Q} 
\end{equation*}
and then take the image in $\frac{X_{*}(T)_{\mathbb{Q}}}{I_{\Gamma_{F_{v}^{s}/F_{v}}}X_{*}(T)}$ we obtain the result by observing that our global duality formula \eqref{TNweilres} in the proof of Theorem \ref{TNtorus} yields the above formula for $(l_{v})_{v}$ after applying the local analogue of \eqref{TNweilres} on \cite[p. 31]{Fargues} at each $v$.
\end{proof}

\subsection{The extended Kottwitz map for reductive groups}\label{sec:Kottmap} \hfill\\
The goal is to extend the duality result for $B_{e}(T)$ given above to $B_{e}(G)_{\tn{basic}}$ for general connected reductive $G$. The next result helps us reduce to the case where $G$ is quasi-split:

\begin{Lemma}\label{lem:qsplit}
If $G^{*}$ is a quasi-split connected reductive group over $F$ and $b \in B_{e}(G^{*})_{\tn{basic}}$ with associated inner form $G^{*}_{b}$, then there is a bijection $B_{e}(G^{*}) \xrightarrow{f_{b}} B_{e}(G^{*}_{b})$ sending $b$ to $1$.
\end{Lemma}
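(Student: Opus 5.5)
The plan is to use the standard twisting bijection for torsors on a gerbe, as in \cite[\S 7]{Fargues} for the local case and the analogous twisting arguments for $\tn{Kal}_{F}$ in \cite{Dillery23b}. Fix a cocycle representative $b \in Z^{1}_{\tn{\'{e}t}}(\tn{Kott}^{1/\infty}_{F}, G^{*})$, i.e.\ a $G^{*}$-torsor $\mathscr{T}_{b}$ on $\tn{Kott}^{1/\infty}_{F}$. The inner form $G^{*}_{b}$ is the descent to $F$ of the pure inner twist $\tn{Aut}_{G^{*}}(\mathscr{T}_{b})$. This descent exists because $b$ is basic: the restriction of $b$ to the band $\mathbb{D} \times P$ factors through $Z(G^{*})$. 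Hence the band acts trivially on $\tn{Aut}_{G^{*}}(\mathscr{T}_{b})$, which is a group over $\tn{Kott}^{1/\infty}_{F}$, and so this group descends along $\tn{Kott}^{1/\infty}_{F} \to \tn{Spec}(F)$ to an inner form of $G^{*}$.

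Next I define $f_{b}$ on torsors by $\mathscr{T} \mapsto \sheafhom_{G^{*}}(\mathscr{T}_{b}, \mathscr{T})$, or equivalently by the twisting map $x \mapsto x \cdot b^{-1}$ on cocycles. The result is a torsor under $\tn{Aut}(\mathscr{T}_{b}) = (G^{*}_{b})_{\tn{Kott}^{1/\infty}_{F}}$, and it sends $\mathscr{T}_{b}$ to the trivial torsor. By the standard twisting formalism (Giraud), $H^{1}(\tn{Kott}^{1/\infty}_{F}, G^{*}) \to H^{1}(\tn{Kott}^{1/\infty}_{F}, G^{*}_{b})$ is a bijection, with inverse $\mathscr{T}' \mapsto \mathscr{T}' \times^{G^{*}_{b}} \mathscr{T}_{b}$.

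It then remains to check that this bijection preserves the subsets $B_{e}(-)$, i.e.\ the centrality condition on $P$. On $\ov{F}$-points the band action on $f_{b}(\mathscr{T})$ is given by $z \mapsto x|_{P}(z)\, b|_{P}(z)^{-1}$. This uses that $b|_{P}$ is central, so it commutes with $x|_{P}$ and the product is again a homomorphism $P_{\ov{F}} \to G^{*}_{\ov{F}}$; under the identification $G^{*}_{b,\ov{F}} = G^{*}_{\ov{F}}$ it is what one reads off. Since $Z(G^{*}_{b}) = Z(G^{*})$ canonically (inner forms), $x|_{P}$ factors through the center if and only if $x|_{P} \cdot b|_{P}^{-1}$ does. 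So $f_{b}$ restricts to a bijection $B_{e}(G^{*}) \to B_{e}(G^{*}_{b})$, and the same reasoning with $\mathbb{D} \times P$ shows it also matches basic loci.

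The main obstacle I expect is making the descent and the band-compatibility precise for a gerbe banded by the non-smooth pro-group $\mathbb{D} \times P$. In particular, one must check that $\tn{Aut}(\mathscr{T}_{b})$ genuinely descends to an $F$-group, and that the band computation on $\ov{F}$-points is independent of the cocycle representative. To handle this I would work with the explicit presentation $\tn{Kott}^{1/\infty}_{F} \times_{F} \ov{F} \simeq [\tn{Spec}(\ov{F})/(\mathbb{D}\times P)_{\ov{F}}]$ (e.g.\ at finite level via Proposition \ref{prop:tannakian}). There torsors become pairs consisting of a Čech cocycle together with its band homomorphism, and centrality makes the descent a routine verification as in \cite[\S 3]{Dillery23b}.
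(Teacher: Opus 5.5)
Your argument is correct, but it is not the route the paper records. The paper's proof is the single sentence that the lemma is immediate from Proposition \ref{prop:multvanish}, the vanishing of $H^{2}(\tn{Kott}^{1/\infty}_{F}, D)$ for multiplicative $D$. Your proof never uses that vanishing.

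Instead you twist directly by $\mathscr{T}_{b}$:
\begin{itemize}
\item basicness alone makes the inertia act trivially on $\underline{\mathrm{Aut}}_{G^{*}}(\mathscr{T}_{b})$, so this group descends to $G^{*}_{b}$;
\item Giraud's twisting formalism then gives a bijection on all of $H^{1}$;
\item your computation that the band homomorphism of $f_{b}(\mathscr{T}_{x})$ is $x|_{\mathbb{D} \times P} \cdot b|_{\mathbb{D} \times P}^{-1}$ shows that both $B_{e}$ and the basic loci are preserved in both directions.
\end{itemize}
Read that last computation as an identity of morphisms of $\ov{F}$-group schemes, functorially in $\ov{F}$-algebras, not literally on $\ov{F}$-points: $P_{\ov{F}}$ is non-reduced and is not detected by its $\ov{F}$-points.

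Your route gives a self-contained proof that needs only the basicness of the given $b$. It also describes explicitly how $f_{b}$ shifts band homomorphisms, which the subsequent extension $\kappa_{G} := \kappa_{G_{b}} \circ f_{b} - \kappa_{G_{b}}(f_{b}(1))$ implicitly relies on.

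What the $H^{2}$-vanishing naturally supplies is an existence statement that your argument does not address, and the lemma as stated does not need. Let $G'$ be an arbitrary inner form of $G^{*}$. The obstruction to lifting its class in $H^{1}(F, G^{*}_{\tn{ad}})$ to $H^{1}(\tn{Kott}^{1/\infty}_{F}, G^{*})$ lies in $H^{2}(\tn{Kott}^{1/\infty}_{F}, Z(G^{*}))$. Equivalently, this is the obstruction to finding a $(G^{*}, G')$-bitorsor on $\tn{Kott}^{1/\infty}_{F}$ realizing the inner twist. Any such lift is automatically basic, because its band homomorphism becomes trivial in $G^{*}_{\tn{ad}}$. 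Hence every inner form is of the form $G^{*}_{b}$ for some basic $b$. That is what is needed to choose $b$ with $G_{b}$ quasi-split when extending $\kappa_{G}$ to arbitrary $G$, and for part (1) of Theorem \ref{introthm1}.
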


\begin{proof}
This is immediate from Proposition \ref{prop:multvanish}.
\end{proof}

Denote by $\pi_{1}(G)$ the algebraic fundamental group of $G$. The global duality result proved by Kottwitz in \cite{Kot14} gives a map
\begin{equation*}
B(G) \xrightarrow{\kappa_{G}} (\Z[V_{F^{s}}]_{0} \otimes \pi_{1}(G))_{\Gamma_{F^{s}/F}}
\end{equation*}
restricting to an isomorphism on $B(F,G)_{\text{basic}}$, which we will generalize.

The desired linear algebraic object is defined as follows: Fix a maximal torus $T$ of $G$ with absolute root system $\Phi$ (with coroots $\check{\Phi}$) and set $\langle \Phi \rangle^{*} := \{\nu \in X_{*}(T)_{\Q} | \langle \alpha, \nu \rangle \in \Z \hspace{1mm} \forall \alpha \in \Phi\}$. Our desired object is then 
\begin{equation*}
\pi_{1}(G)^{e}_{\Gamma_{F^{s}/F}} := \frac{\langle \Phi \rangle^{*}/\langle \check{\Phi} \rangle[V_{F^{s}},\dot{V}]_{0}}{I_{\Gamma_{F^{s}/F}} \cdot \pi_{1}(G)[V_{F^{s}}]_{0}}.
\end{equation*}
Some remarks are in order: First, we observe that canonically $\langle \Phi \rangle^{*}/\langle \check{\Phi} \rangle \xrightarrow{\sim} \pi_{1}(G_{\tn{ad}}) \oplus X_{*}(Z(G))_{\Q}$. In a manner analogous to the torus case, the subgroup $\langle \Phi \rangle^{*}/\langle \check{\Phi} \rangle[V_{F^{s}},\dot{V}]$ denotes those elements $\sum_{w} c_{w}[w]$ of $\langle \Phi \rangle^{*}/\langle \check{\Phi} \rangle[V_{F^{s}},\dot{V}]_{0}$ such that $c_{w} \in \pi_{1}(G)[V_{F^{s}}]$ if $w \notin \dot{V}$. Finally, as argued in \cite[\S 9.5]{Fargues}, the (absolute) Weyl group $W$ of $T$ in $G$ acts trivially on $\langle \Phi \rangle^{*}/\langle \check{\Phi} \rangle$, and consequently on $\pi_{1}(G)^{e}_{\Gamma_{F^{s}/F}}$, which means that this definition is independent of the choice of $T$ up to canonical isomorphism.

\begin{Lemma}\label{toruslift}
Any two elements $b_{1}, b_{2} \in B_{e}(G)_{\text{basic}}$ lie in the image of $B_{e}(T)$ for some maximal torus $T$ of $G$. In particular, any element $b \in B_{e}(G)$ is in the image of $B_{e}(T)$ for such a $T$.
\end{Lemma}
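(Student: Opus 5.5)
We follow the strategy of the local analogue in \cite[\S 9]{Fargues} and the classical argument of Kottwitz for $B(G)_{\tn{basic}}$. We first reduce to $G$ quasi-split. Given $b_{1}, b_{2}$, choose a quasi-split inner form $G^{*}$ together with an element of $B_{e}(G^{*})_{\tn{basic}}$ realizing $G$; this uses part (1) of Theorem \ref{introthm1}, or directly the surjectivity argument proving the exact sequence $1 \to B(G) \to B_{e}(G) \to \tn{Hom}_{F}(P,Z(G)) \to 1$, combined with Kottwitz's lifting of inner forms to $B(G^{*})_{\tn{basic}}$. The twisting bijection $f_{b}$ of Lemma \ref{lem:qsplit} is compatible with maximal tori: a maximal torus $T \subset G$ transfers to $G^{*}$ when $b$ comes from $B_{e}(T)$ with $T$ embedded in both. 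This lets us pass between $G$ and $G^{*}$. Since the twist by a basic class changes the ambient group but not the centre, it should suffice to treat $G^{*}$, with the twisted $b_{i}$ again basic. Here one must check that the relevant torus can be chosen elliptic, so that it transfers to every inner form. We plan to arrange this by choosing $T$ elliptic at some auxiliary place.

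For quasi-split $G$, we write each $b_{i}$ in terms of its two components. The restriction $b_{i}|_{P}$ lands in $Z(G) \subset T$ for every maximal torus $T$. The Newton point $\nu_{i} \colon \mathbb{D} \to Z(G)$ also lands in every $T$, because $b_{i}$ is basic. By the exact sequence above, $b_{i}$ is determined up to $B(G)$-twisting by $\lambda_{i} := b_{i}|_{P} \in \tn{Hom}_{F}(P,Z(G))$. Using the surjectivity $B_{e}(Z(G)) \to \tn{Hom}_{F}(P,Z(G))$, obtained from Proposition \ref{prop:H2torus} applied to the maximal torus of $Z(G)$ and Proposition \ref{prop:multvanish} for the finite part, we lift $\lambda_{i}$ to a class $z_{i} \in B_{e}(Z(G))$. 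Then $b_{i}$ and the image of $z_{i}$ differ by an element of the fibre over the trivial homomorphism. After twisting by $z_{i}$ (again by Proposition \ref{prop:multvanish}), this fibre is identified with $B(G)$, and the element lies in $B(G)_{\tn{basic}}$.

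This reduces the problem to Kottwitz's theorem \cite{Kot14}. Any two elements of $B(G)_{\tn{basic}}$, for quasi-split $G$, come from a common elliptic maximal torus $T$. The proof constructs $T$ elliptic at an auxiliary place and uses the surjectivity of $B(T)_{\tn{basic}} \to B(G)_{\tn{basic}}$ via $\pi_{1}$ and Tate--Nakayama, which here is Theorem \ref{TNtorus} restricted to $B(T)$. The main obstacle is that the twist by $z_{i}$ has to be performed with the same torus for both $i$. Since $z_{i}$ is central, it lies in $B_{e}(T)$ for every $T$, and its action on $B_{e}(T)$ is just translation, by commutativity. Hence $b_{i} = z_{i} \cdot c_{i}$ with $c_{i} \in B(T) \to B(G)$, and a single $T$ works once it handles $c_{1}$ and $c_{2}$ simultaneously.

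The final assertion follows by taking $b_{1} = b_{2}$ for basic $b$. For non-basic $b$, we replace $G$ by the centralizer $M$ of the Newton point. This $M$ is a Levi subgroup defined over $F$, and $b$ is basic in $M$ by the usual argument, since the Newton point is central in its own centralizer. We then apply the basic case to $M$, whose maximal tori are maximal tori of $G$.
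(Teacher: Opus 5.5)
\textbf{There is a genuine gap in your second step.} The map $B_{e}(Z(G)) = H^{1}(\tn{Kott}_{F}^{1/\infty}, Z(G)) \to \tn{Hom}_{F}(P, Z(G))$ is not surjective when $Z(G)$ is disconnected, and your argument depends on it.

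Regard $\tn{Kott}_{F}^{1/\infty}$ as a $P$-gerbe over $\tn{Kott}_{F}$; it is the pullback of $\tn{Kal}_{F}$. The obstruction to lifting $\lambda$ is then the image of $\lambda_{*}\xi \in H^{2}_{\tn{fppf}}(F, Z(G))$ in $H^{2}(\tn{Kott}_{F}, Z(G))$.
\begin{itemize}
\item For finite $Z$, the map $H^{2}_{\tn{fppf}}(F, Z) \to H^{2}(\tn{Kott}_{F}, Z)$ is injective. Its kernel is the transgression of $\tn{Hom}(\mathbb{D}_{\ov{F}}, Z_{\ov{F}})$, which is $0$ because $\mathbb{D}$ is connected.
\item By the first lemma of the subsection introducing $\tn{Kott}_{F}^{1/\infty}$, every class of $H^{2}_{\tn{fppf}}(F,Z)$ has the form $\lambda_{*}\xi$. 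So the obstruction can be nonzero.
\item Proposition \ref{prop:multvanish} kills $H^{2}$ of the total gerbe only, and says nothing about this transgression. Proposition \ref{prop:H2torus} covers only tori.
\end{itemize}

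Here is a concrete counterexample. Take $G = \mathrm{SL}_{n}$. Since $\pi_{1}(G) = 0$, we have $B(G)_{\tn{basic}} = 0$. Your decomposition $b = z \cdot c$ would then make every $b \in B_{e}(G)_{\tn{basic}}$ the image of a class in $H^{1}(\tn{Kott}_{F}^{1/\infty}, \mu_{n})$. Such a class has trivial image in $H^{1}(\tn{Kott}_{F}^{1/\infty}, \mathrm{PGL}_{n})$, hence gives the trivial inner form. But by Theorem \ref{introthm1}(1), $B_{e}(\mathrm{SL}_{n})_{\tn{basic}}$ realizes $\mathrm{SL}_{1}(D)$ for a central division algebra $D$ of degree $n$. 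The $P$-component of a basic class carries a genuinely non-central inner twist, so there is no reduction to $B(G)_{\tn{basic}}$ and Kottwitz's theorem.

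Your preliminary reduction to the quasi-split case has separate problems. Transferring $T$ requires the class relating $G$ and $G^{*}$ to come from $B_{e}(T)$, which is what you are trying to prove. Also, ellipticity at a single auxiliary place does not make a global maximal torus transfer to a given inner form.

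\textbf{What is missing is to pass to $G_{\tn{ad}}$ rather than to $Z(G)$.} This is what the paper does, following \cite[Lemme 9.11]{Fargues}.
\begin{itemize}
\item Since $b_{i}$ is basic, its image in $H^{1}(\tn{Kott}_{F}^{1/\infty}, G_{\tn{ad}})$ is trivial on the band. It therefore comes from some $\bar{b}_{i} \in H^{1}(F, G_{\tn{ad}})$.
\item Fargues' local input, surjectivity of $H^{1}(F_{v}, T_{\tn{ad}}) \to H^{1}(F_{v}, G_{\tn{ad}})$, is replaced by a global fact. Any two classes of $H^{1}(F, G_{\tn{ad}})$ lie in the image of $H^{1}(F, T_{\tn{ad}})$ for a single maximal torus $T$ of $G$. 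This comes from Kaletha's argument in \cite[Theorem 3.8.1]{Kaletha18}, as used in the proof of \cite[Theorem 4.11]{Dillery23b}. It holds for arbitrary $G$, so no quasi-split reduction is needed.
\item One then lifts to $B_{e}(T)$ using $H^{2}(\tn{Kott}_{F}^{1/\infty}, Z(G)) = 0$. This is the legitimate use of Proposition \ref{prop:multvanish}.
\item Finally one corrects by an element of $H^{1}(\tn{Kott}_{F}^{1/\infty}, Z(G))$. Its orbits are the fibres of $B_{e}(G) \to H^{1}(\tn{Kott}_{F}^{1/\infty}, G_{\tn{ad}})$, and it acts compatibly on $B_{e}(T)$ because $Z(G) \subseteq T$.
\end{itemize}
Your treatment of the non-basic case via the Levi subgroup $Z_{G}(\nu)$ agrees with the paper.
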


\begin{proof}
Apply the proof of \cite[Lemme 9.11]{Fargues}, replacing the surjectivity of 
\begin{equation*}
H^{1}(F_{v}, T_{\tn{ad}}) \to H^{1}(F_{v}, G_{\tn{ad}})
\end{equation*}
in the local case (which does not hold globally) with the fact that any two elements of $H^{1}(F, G_{\tn{ad}})$ lie in the image $H^1(F, T_{\tn{ad}})$ for some maximal torus $T$. This claim follows from the proof of \cite[Theorem 4.11]{Dillery23b}, specifically the observation after equation (18) applied to $Z = \{\text{id}\}$ (this argument itself is essentially the same as the one from the proof of \cite[Theorem 3.8.1]{Kaletha18}).

The second statement comes from the fact that $b \in B_{e}(G)$ is in the image of $B_{e}(Z_{G}(\nu_{x}))_{\text{basic}}$, where $Z_{G}(\nu_{x})$ is a Levi subgroup of $G$, combined with the first part.
\end{proof}

Assume for the moment that $G$ is quasi-split. We can now define the ``extended Kottwitz map'' for $G$, written as
\begin{equation*}
B_{e}(G) \xrightarrow{\kappa_{G}} \pi_{1}(G)^{e}_{\Gamma_{F^{s}/F}} 
\end{equation*}
as follows. Given $b \in B_{e}(G)$, we may use Lemma \ref{toruslift} to lift it to $b' \in B_{e}(T)$ for some maximal torus $T$ and then use the isomorphism from Theorem \ref{TNtorus} to obtain an element $x_{b'} \in \frac{(X_{*}(T)_{\mathbb{Q}})[V_{F^{s}}, \dot{V}]_{0}}{I_{\Gamma_{F^{s}/F}} \cdot X_{*}(T)[V_{F^{s}}]_{0}}$. The fact that $b|_{u_{\ov{F}}}$ factors through $Z(G)$ implies that the image of $x_{b'}$ in $(X_{*}(T)_{\mathbb{Q}/\Z})[\dot{V}]_{0}$ pairs to zero with the subgroup $\langle \Phi \rangle_{\Q/\Z}[\dot{V}] \subseteq X^{*}(T)_{\Q/\Z}$, and therefore $x_{b'}$ lies in the subgroup $\frac{\langle \Phi \rangle^{*}[V_{F^{s}}, \dot{V}]_{0}}{I_{\Gamma_{F^{s}/F}} \cdot X_{*}(T)[V_{F^{s}}]_{0}}$. It then makes sense to take the image of $x_{b'}$ in $\pi_{1}(G)^{e}_{\Gamma_{F^{s}/F}}$, which we define to be $\kappa_{G}(b)$. After observing that any two elements in the same fiber of $B_{e}(T) \to B_{e}(G)$ are $W$-conjugate (by twisting, this reduces this to the case of $H^{1}(F, T) \to H^{1}(F, G)$), we deduce from the triviality of the action of $W$ on $\langle \Phi \rangle^{*}/\langle \check{\Phi} \rangle$ that $\kappa_{G}(b)$ is independent of both $T$ and $b'$.

As in \cite[\S 9.3.5]{Fargues}, this can be extended canonically to arbitrary $G$ by picking $b \in B_{e}(G)_{\text{basic}}$ such that $G_{b}$ is quasi-split, so that (by Lemma \ref{lem:qsplit}) there is a bijection $B_{e}(G) \xrightarrow{f_{b}} B_{e}(G_{b})$ and then setting $\kappa_{G} := \kappa_{G_{b}} \circ f_{b} - \kappa_{G_{b}}(f_{b}(1))$. As argued loc. cit., this map is independent of the choice of element $b$. 

Observe that, by construction, the map $\kappa_{G}$ fits into the commutative diagram
\[
\begin{tikzcd}
1 \arrow{r} & B(G) \arrow["\kappa_{G}"]{d} \arrow{r} & B_{e}(G) \arrow{r} \arrow["\kappa_{G}"]{d} & \tn{Hom}_{F}(P, Z(G)) \arrow["\sim"]{d} \arrow{r} & 1 \\
1 \arrow{r} & (\pi_{1}(G)[V_{F^{s}}]_{0})_{\Gamma_{F^{s}/F}} \arrow{r} & \pi_{1}(G)^{e}_{\Gamma_{F^{s}/F}} \arrow{r} & (X_{*}(Z(G))_{\mathbb{Q}/\mathbb{Z}})[\dot{V}]_{0} \arrow{r} & 1,
\end{tikzcd}
\]
where $\kappa_{G}$ is the global Kottwitz map for $B(F,G)$ and again the right-most isomorphism is from \cite[Lemma 3.10]{Dillery23b}. The final main result in this direction is:

\begin{Theorem}\label{TNgp}
The map $\kappa_{G}$ restricts to a bijection $B_{e}(G)_{\tn{basic}} \xrightarrow{\sim} \pi_{1}(G)^{e}_{\Gamma_{F^{s}/F}}$.
\end{Theorem}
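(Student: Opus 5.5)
The plan is to imitate Fargues' local argument \cite[\S 9.5]{Fargues} and reduce everything to the two short exact sequences in the commutative diagram just before the statement. First, the general case reduces to the quasi-split case. Indeed, $\kappa_{G} = \kappa_{G_{b}} \circ f_{b} - \kappa_{G_{b}}(f_{b}(1))$ with $f_{b}$ the bijection of Lemma \ref{lem:qsplit}. This $f_{b}$ preserves basicness: twisting by a basic class changes the $\mathbb{D} \times P$-restriction only by a central cocharacter. It also identifies $\pi_{1}(G)^{e}$ with $\pi_{1}(G_{b})^{e}$, since inner forms have canonically the same $\langle \Phi \rangle^{*}/\langle \check{\Phi} \rangle$ with the same Galois action. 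So we assume $G$ quasi-split.

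\textbf{Step 1 (restriction of the diagram to basic elements).} Intersect the top row of the diagram with $B_{e}(G)_{\tn{basic}}$. A class in $B_{e}(G)$ is basic exactly when its Newton point is central. Given the sequence $1 \to B(G) \to B_{e}(G) \to \tn{Hom}_{F}(P, Z(G)) \to 1$, I expect exactness of
\begin{equation*}
1 \to B(G)_{\tn{basic}} \to B_{e}(G)_{\tn{basic}} \to \tn{Hom}_{F}(P,Z(G)) \to 1 .
\end{equation*}
\emph{Surjectivity}: the lift in the proof of that Lemma is constructed through a maximal torus $T$ with $P \to Z(G) \to T$. I would instead build it through $Z(G)^{\circ}$, or through a maximal torus and then check the Newton point is central. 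For this I use Theorem \ref{TNtorus}: choose the preimage in $X_{*}(T)_{\Q}[V_{F^{s}},\dot V]_{0}$ whose coefficients lie in $X_{*}(Z(G))_{\Q}$, which is possible since $\tn{Hom}_{F}(P,Z(G)) \cong X_{*}(Z(G))_{\Q/\Z}[\dot V]_{0}$.

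\emph{Injectivity on fibres}: this is not automatic for pointed sets. I would use the twisting bijections of Lemma \ref{lem:qsplit}, which map fibres of the restriction map to $B(G_{b})_{\tn{basic}}$, to convert the question into the group-like statement for each fibre.

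\textbf{Step 2 (five-lemma argument).} The diagram restricted to basic elements compares with the bottom row $(\pi_{1}(G)[V_{F^{s}}]_{0})_{\Gamma} \to \pi_{1}(G)^{e}_{\Gamma} \to X_{*}(Z(G))_{\Q/\Z}[\dot V]_{0}$. On the left, Kottwitz's theorem \cite{Kot14} gives that $\kappa_{G}\colon B(G)_{\tn{basic}} \to (\pi_{1}(G)[V_{F^{s}}]_{0})_{\Gamma}$ is bijective. On the right, the vertical map is an isomorphism by \cite[Lemma 3.10]{Dillery23b}.

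\emph{Surjectivity of the middle map}: given $y \in \pi_{1}(G)^{e}_{\Gamma}$, map it to $X_{*}(Z(G))_{\Q/\Z}[\dot V]_{0}$ and lift to a basic $b_{0}$ by Step 1. Then $y - \kappa_{G}(b_{0})$ lies in the $B(G)$-part. Twist by $b_{0}$ to $G_{b_{0}}$ and use Kottwitz's surjectivity for $G_{b_{0}}$, together with the compatibility $\kappa_{G_{b_{0}}} \circ f_{b_{0}} = \kappa_{G} - \kappa_{G}(b_{0})$. This compatibility follows from the definition and from additivity of the torus isomorphism of Theorem \ref{TNtorus}, applied via Lemma \ref{toruslift}, which puts both classes in a common $B_{e}(T)$.

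\emph{Injectivity}: suppose $\kappa_{G}(b_{1}) = \kappa_{G}(b_{2})$ for basic $b_{1}, b_{2}$. Lift both to a common $B_{e}(T)$ by the first part of Lemma \ref{toruslift}. Twisting by $b_{1}$ moves $b_{2}$ into $B(G_{b_{1}})_{\tn{basic}}$ with trivial Kottwitz invariant, so Kottwitz's injectivity for $G_{b_{1}}$ gives $b_{2} = b_{1}$.

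\textbf{Main obstacle.} The delicate point is the additivity/compatibility identity $\kappa_{G_{b}}(f_{b}(x)) = \kappa_{G}(x) - \kappa_{G}(b)$ for basic $b$. It must hold in the extended setting, and it requires the twisted torus $T_{b}$ inside $G_{b}$ to carry the same Tate--Nakayama isomorphism as $T$. I would prove it first for $T$ (where $B_{e}(T)$ is a group and Theorem \ref{TNtorus} is a homomorphism), and then transport it using the $W$-invariance of $\langle \Phi\rangle^{*}/\langle\check\Phi\rangle$, as in \cite[\S 9.3.5]{Fargues}.
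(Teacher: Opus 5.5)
Your route differs from the paper's. The paper never uses Kottwitz's bijectivity on $B(G)_{\tn{basic}}$. It proves surjectivity by lifting directly through a maximal torus, using $\langle\Phi\rangle^{*}=(\langle\Phi\rangle^{*}\cap\langle\check\Phi\rangle_{\Q})\oplus\langle\Phi\rangle_{\Q}^{\perp}$. It proves injectivity by Fargues' reduction to lifting elements of $\ker[\pi_{1}(G)^{e}_{\Gamma_{F^{s}/F}}\to\pi_{1}(G_{\tn{ad}})^{e}_{\Gamma_{F^{s}/F}}]$ to $\ker[B_{e}(T)\to B_{e}(T_{\tn{ad}})]$. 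Your injectivity argument is sound, granted the additivity identity you flag: you twist by $b_{1}$, land in $B(G_{b_{1}})_{\tn{basic}}$ with trivial invariant, and apply Kottwitz. That identity is the same input as the paper's claim that $\kappa_{G}$ is independent of the twisting element.

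The gap is the surjectivity half of Step 1, and it is not peripheral. For simply connected $G$ one has $B(G)_{\tn{basic}}=1$ and $\pi_{1}(G)^{e}_{\Gamma_{F^{s}/F}}\cong\tn{Hom}_{F}(P,Z(G))$. So in that case the theorem is exactly the claim that every $\lambda$ has a unique basic lift.

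Lifting with coefficients in $X_{*}(Z(G))_{\Q}=X_{*}(Z(G)^{\circ})_{\Q}$ only reaches those $\lambda$ coming from the central torus. For connected center this is fine: $B_{e}(Z(G))\to\tn{Hom}_{F}(P,Z(G))$ is onto by Proposition \ref{prop:H2torus}, and its image is automatically basic. But for $G=\mathrm{SL}_{2}$ one has $X_{*}(Z(G))_{\Q}=0$ while $\tn{Hom}_{F}(P,\mu_{2})\neq 0$. Lifts of such $\lambda$ have coefficients in $\langle\Phi\rangle^{*}$, which are not central, and then the Newton point is generally not central.

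Concretely, take $T$ split in split $\mathrm{SL}_{2}$. Then $B_{e}(T)\cong X_{*}(T)\otimes\Q[V_{F}]_{0}$, and the localization at $v$ (Proposition \ref{localtoglobal2}) is the $v$-coefficient, which is its own local Newton point. Hence no nontrivial class of $B_{e}(T)$ becomes basic in $\mathrm{SL}_{2}$, for instance $\tfrac12\check\alpha[\dot v_{1}]-\tfrac12\check\alpha[\dot v_{2}]$. So ``check that the Newton point is central'' is a constraint on the choice of $T$, not a verification.

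The missing idea runs as follows:
\begin{enumerate}
\item Choose $T$ elliptic at the finitely many places where $\lambda$ is supported, and at the auxiliary place.
\item Lift $\lambda$ with $\langle\Phi\rangle^{*}$-coefficients supported on the corresponding $\dot v\in\dot V$.
\item Use that the global Newton point is determined by the local ones. Each local one is a $\Gamma_{v}$-average lying in $X_{*}(T)_{\Q}^{\Gamma_{v}}=X_{*}(Z(G)^{\circ})_{\Q}^{\Gamma_{v}}$ by ellipticity, hence central.
\end{enumerate}
With that in place, your Step 2 goes through. Note that the paper's surjectivity step also asserts that the image of any torus lift with $\langle\Phi\rangle^{*}$-coefficients is basic, without specifying $T$. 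So following the paper instead would not avoid this point; it tacitly needs the same choice.
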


\begin{proof}
For surjectivity, we first note that there is a canonical direct sum decomposition 
\begin{equation}\label{fundirectsum}
\langle \Phi \rangle^{*} = (\langle \Phi \rangle^{*} \cap \langle \check{\Phi} \rangle_{\Q}) \oplus \langle \Phi \rangle_{\Q}^{\perp},
\end{equation}
and observe that $\langle \check{\Phi} \rangle$ lies only in the left-hand summand. Now we claim that the map
\begin{equation*}
\langle \Phi \rangle^{*}[V_{F^{s}}, \dot{V}]_{0} \to \langle \Phi \rangle^{*}/ \langle \check{\Phi} \rangle[V_{F^{s}}, \dot{V}]_{0} = [\frac{\langle \Phi \rangle^{*} \cap \langle \check{\Phi} \rangle_{\Q}}{\langle \check{\Phi} \rangle} \oplus \langle  \Phi \rangle_{\Q}^{\perp}][V_{F^{s}}, \dot{V}]_{0}
\end{equation*}
is surjective, where the left-hand term is the subgroup of elements of $\langle \Phi \rangle^{*}[V_{F^{s}}]_{0}$ with $x_{w} \in X_{*}(T_{\text{der}})$ if $w \notin \dot{V}$. Let $\sum_{i} (\ov{a_{i}} \oplus c_{i})[w_{i}]$ be an element of the right-hand side, with lift $\sum_{i} (a_{i} \oplus c_{i})[w_{i}]$ in the left-hand side---observe that $\sum_{i} a_{i} \in \langle \check{\Phi} \rangle$, $c_{i} = 0$ if $w_{i} \not\in \dot{V}$, $\sum_{i} c_{i} = 0$, and, since $\bar{a_{i}}$ is in the image of $\pi_{1}(G)$ if $w_{i} \notin \dot{V}$, we may take $a_{i} \in X_{*}(T_{\text{der}})$ for such $w_{i}$. We then pick one auxiliary place $\dot{v} \in \dot{V}$ and our desired lift is given by $\sum_{i} (a_{i} \oplus c_{i})[w_{i}] - (\sum_{i} a_{i} \oplus 0)[\dot{v}] \in \langle \Phi \rangle^{*}[V_{F^{s}}, \dot{V}]_{0}$.

This claim shows that we may lift any given element $x$ of $\pi_{1}(G)^{e}_{\Gamma_{F^{s}/F}}$ to an element $x'$ of $\pi_{1}(T)^{e}_{\Gamma_{F^{s}/F}}$ represented by an element in $\langle \Phi \rangle^{*}[V_{F^{s}}, \dot{V}]_{0}$. Since $x'$ is of this form, its preimage under the isomorphism $B_{e}(T) \to \pi_{1}(T)^{e}_{\Gamma_{F^{s}/F}}$ from Theorem \ref{TNtorus} has image in $B_{e}(G)$ lying in $B_{e}(G)_{\text{basic}}$, and we denote this image by $b_{x}$. By the functoriality of $\kappa$ we see that $\kappa_{G}(b_{x}) = x$, giving the surjectivity of $\kappa_{G}$.

Since by Lemma \ref{toruslift} any two $b, b' \in B_{e}(G)_{\text{basic}}$ lie in the image of $B_{e}(T)$ for some maximal torus $T$ of $G$, the argument for injectivity in the proof of (\cite[Theor\`{e}me 9.12]{Fargues} reduces us to proving the surjectivity of 
\begin{equation*}
\text{ker}[B_{e}(T) \to B_{e}(T_{\tn{ad}})] \to \text{ker}[\pi_{1}(G)^{e}_{\Gamma_{F^{s}/F}} \to \pi_{1}(G_{\tn{ad}})^{e}_{\Gamma_{F^{s}/F}} = (\pi_{1}(G_{\tn{ad}})[V_{F^{s}}]_{0})_{\Gamma_{F^{s}/F}}].
\end{equation*}
Using the decomposition \eqref{fundirectsum}, this means that we need to lift any given element of $I_{\Gamma_{F^{s}/F}} \cdot \pi_{1}(G_{\tn{ad}})[V_{F^{s}}]_{0}$ lying in $\pi_{1}(G_{\tn{ad}})[V_{F^{s}}, \dot{V}]_{0}$ to an element of $X_{*}(T_{\tn{ad}})[V_{F^{s}}, \dot{V}]_{0}$ which lies in $I_{\Gamma_{F^{s}/F}} \cdot X_{*}(T_{\tn{ad}})[V_{F^{s}}]_{0}$. We can find such a lift $x \in X_{*}(T_{\tn{ad}})[V_{F^{s}}]_{0}$, and since the image of $x$ in $\pi_{1}(G_{\tn{ad}})[V_{F^{s}}]$ lands in $\pi_{1}(G_{\tn{ad}})[V_{F^{s}}, \dot{V}]_{0}$, in fact already $x \in X_{*}(T_{\tn{ad}})[V_{F^{s}},\dot{V}]_{0}$, giving the result.
\end{proof}

\section{Geometric considerations}\label{sec:geom}
We now geometrize the above framework using the constructions in the main body of the paper.

\subsection{Adic gerbes and bundles} \hfill\\
Pick a \v{C}ech $2$-cocycle $\dot{\xi}$ representing the canonical class of $\check{H}^{2}(\ov{F}/F, P) = H_{\tn{fppf}}^{2}(F,P)$ corresponding to $\tn{Kal}_{F}$---recall from the comments immediately following the proof of Proposition \ref{prop:H2invlim} that this determines a projective system of cocycles $(\dot{\xi}_{\Sigma})_{\Sigma}$ and a corresponding projective system of gerbes $(\tn{Kal}_{F,\Sigma})_{\Sigma}$ as $\Sigma$ ranges over all finite subsets of closed points of $C$---fix such a set of places $(\Sigma, \dot{\Sigma})$ with $U = \tn{Spec}(O_{F,\Sigma})$. 

The group scheme $P_{\dot{\Sigma}} \to \tn{Spec}(O_{F,\Sigma})$ gives a morphism of adic spaces $P_{\dot{\Sigma}}^{\tn{ad}} \to \tn{Spa}(O_{F,\Sigma})$, and $P_{\dot{\Sigma}}^{\tn{ad}}$ is a group sheaf on the v-site over $\tn{Spa}(O_{F,\Sigma})$, which we will henceforth denote by $P_{\dot{\Sigma}}$. Since we have $\tn{Spa}(O_{\Sigma}^{\tn{perf}} \otimes_{O_{F,\Sigma}} O_{\Sigma}^{\tn{perf}}) = \tn{Spa}(O_{\Sigma}^{\tn{perf}}) \times_{\tn{Spa}(O_{F,\Sigma})}  \tn{Spa}(O_{\Sigma}^{\tn{perf}})$ (and the same holds for fibered products with more terms), the cocycle $\dot{\xi}_{\Sigma} \in P(O_{\Sigma}^{\tn{perf}} \otimes_{O_{F,\Sigma}} O_{\Sigma}^{\tn{perf}} \otimes_{O_{F,\Sigma}} O_{\Sigma}^{\tn{perf}})$ gives an element 
\begin{equation*}
\xi_{\Sigma}^{\tn{ad}} \in P_{\dot{\Sigma}}(\tn{Spa}(O_{\Sigma}^{\tn{perf}}) \times_{\tn{Spa}(O_{F,\Sigma})}  \tn{Spa}(O_{\Sigma}^{\tn{perf}}) \times_{\tn{Spa}(O_{F,\Sigma})} \tn{Spa}(O_{\Sigma}^{\tn{perf}}))
\end{equation*}
which is a \v{C}ech $2$-cocycle with respect to the v-cover of adic spaces $\tn{Spa}(O_{\Sigma}^{\tn{perf}}) \to \tn{Spa}(O_{F,\Sigma})$.

\begin{Definition} Define $\tn{Kal}_{F,\Sigma}^{\tn{ad}}$ as the fibered category over $\tn{Spa}(O_{F,\Sigma})$ such that, for an adic space $S$ over $\tn{Spa}(O_{F,\Sigma})$, we have that $\tn{Kal}_{F,\Sigma}^{\tn{ad}}(S)$ is the category of $\xi^{\tn{ad}}_{\Sigma}$-twisted $P_{\dot{\Sigma}}^{\tn{ad}} \times_{\tn{Spa}(O_{F,\Sigma})} S$-torsors over $S$ in the v-topology (cf. \cite[Definition 2.35]{Dil23a}).
\end{Definition}

\begin{Lemma}
The fibered category $\tn{Kal}_{F,\Sigma}^{\tn{ad}} \to (\tn{Spa}(O_{F,\Sigma}))_{\tn{v}}$ is a $P_{\dot{\Sigma}}$-gerbe, split over $\tn{Spa}(O_{\Sigma}^{\tn{perf}})$.
\end{Lemma}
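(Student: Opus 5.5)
We check the gerbe axioms directly from the definition of $\xi^{\tn{ad}}_{\Sigma}$-twisted torsors, following the scheme-theoretic case (\cite[\S 2]{Dil23a}, where the fppf gerbe attached to a \v{C}ech $2$-cocycle is shown to be a $P$-gerbe). The plan has four steps. First, $\tn{Kal}_{F,\Sigma}^{\tn{ad}}$ is a stack for the v-topology. Second, its objects exist v-locally, and in fact over $\tn{Spa}(O_{\Sigma}^{\tn{perf}})$. Third, any two objects are v-locally isomorphic. Fourth, the automorphism sheaf of an object is canonically $P_{\dot{\Sigma}}$, compatibly with pullback, so the band is $P_{\dot{\Sigma}}$.

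For the stack property, a twisted torsor over $S$ is by definition a $P_{\dot{\Sigma}}\times S$-torsor $\mathcal{T}$ over $S\times_{\tn{Spa}(O_{F,\Sigma})}\tn{Spa}(O_{\Sigma}^{\tn{perf}})$ together with an isomorphism $p_{1}^{*}\mathcal{T}\xrightarrow{\sim}p_{2}^{*}\mathcal{T}$ over the double fiber product. This isomorphism satisfies the cocycle condition only up to multiplication by $\xi^{\tn{ad}}_{\Sigma}$. Torsors under a v-sheaf of groups, and isomorphisms between them, satisfy v-descent, because v-sheaves form a topos. The twisted gluing data are morphisms of such torsors subject to an equation, so they descend as well. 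This gives the stack property.

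For the splitting, we work over $S=\tn{Spa}(O_{\Sigma}^{\tn{perf}})$. Pulled back there, $\xi^{\tn{ad}}_{\Sigma}$ becomes a coboundary. Indeed, the pulled-back cover $S\times\tn{Spa}(O_{\Sigma}^{\tn{perf}})\to S$ has a section, namely the diagonal, and any \v{C}ech cocycle for a cover with a section is a coboundary by the standard contracting homotopy. An explicit primitive $\eta$ with $d\eta=\xi^{\tn{ad}}_{\Sigma}|_{S}$ then gives an object: the trivial torsor with gluing map multiplication by $\eta^{-1}$. Since $\tn{Spa}(O_{\Sigma}^{\tn{perf}})\to\tn{Spa}(O_{F,\Sigma})$ is a v-cover (it is the image of a pro-fppf cover), objects exist v-locally.

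Next, the Hom sheaf between two twisted torsors $\mathcal{T},\mathcal{T}'$ is the sheaf of torsor maps compatible with the gluing. Because the twists cancel in this compatibility, this sheaf is a $P_{\dot{\Sigma}}$-torsor (pseudo-torsor, then torsor after refining). It is therefore v-locally nonempty, and so any two objects are locally isomorphic. The automorphism sheaf of any object is $P_{\dot{\Sigma}}$ acting by translation. Since $P_{\dot{\Sigma}}$ is commutative, this identification is canonical and independent of the object, so the gerbe is banded by $P_{\dot{\Sigma}}$.

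The main obstacle is foundational rather than computational. We need the v-sheafified adic fiber products of $\tn{Spa}(O_{\Sigma}^{\tn{perf}})$ to agree with the $\tn{Spa}$ of the tensor products used to define $\xi^{\tn{ad}}_{\Sigma}$; this is exactly the identification asserted just before the definition. We also need torsors under the non-smooth profinite group $P_{\dot{\Sigma}}$ to be well behaved in the v-topology. We would handle the latter by writing $P_{\dot{\Sigma}}=\varprojlim P_{\dot{\Sigma},i}$ with finite flat $P_{\dot{\Sigma},i}$. Each $P_{\dot{\Sigma},i}^{\tn{ad}}$ is representable, and limits of representable sheaves remain v-sheaves, so the descent arguments above apply verbatim.
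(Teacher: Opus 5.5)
Your proposal is correct and is essentially the paper's argument. The paper simply invokes the general site-theoretic fact \cite[Proposition 2.36]{Dil23a}: for a commutative group sheaf $A$ and a \v{C}ech $2$-cocycle on a covering $X' \to X$ of the final object, the fibered category of twisted $A$-torsors is an $A$-gerbe trivialized over $X'$; it applies this on the v-site with $X' = \tn{Spa}(O_{\Sigma}^{\tn{perf}})$, and your four steps (v-descent, splitting via the diagonal section, the Isom sheaf descending to an honest $P_{\dot{\Sigma}}$-torsor because the twists cancel, and the band computation) are that general proof unpacked in this setting.
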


\begin{proof}
Given a commutative group sheaf $A$ on a site $\mathcal{C}$ with final object $X$ and a \v{C}ech $2$-cocycle $a \in \check{Z}^{2}(X'/X, A)$ for some covering $X' \to X$ in $\mathcal{C}$, the fibered category of $a$-twisted $A$-torsors over $X$ is always an $A$-gerbe over $\mathcal{C}$ which is trivialized over $X'$ (cf. \cite[Proposition 2.36]{Dil23a}).
\end{proof}

Given $S = \tn{Spa}(R,R^{+})$ an affinoid, perfectoid space over $\mathbb{F}_{q}$, we obtain an adic gerbe
\begin{equation*}
\mathfrak{U}_{S}:= U_{S}^{\tn{an}} \times_{\tn{Spa}(O_{F,\Sigma})} \tn{Kal}_{F,\Sigma}^{\tn{ad}} \to U_{S}^{\tn{an}},
\end{equation*}
where, as in \cite[\S 1.4-1.5]{DLH26}, $U_{S}^{\tn{an}}$ is defined as $\bigcup_{\varpi} U_{S,\varpi}$, where the union runs over all pseudouniformizers $\varpi$ of $R$ and, for a fixed $\varpi$, if $U \xrightarrow{\sim} \tn{Spec}(\mathbb{F}_{q}[T_{1}, \dots, T_{d}]/I)$, then we define $U_{S,\varpi}$ as $\tn{Spa}(B_{S,\varpi} := R\langle \varpi T_{1}, \dots \varpi T_{d} \rangle/I, B_{S,\varpi}^{+})$, where $B_{S,\varpi}^{+}$ is the integral closure of the image of $R^{+}\langle  \varpi T_{1}, \dots \varpi T_{d} \rangle$.

The gerbe $\mathfrak{U}_{S}\to U_{S}^{\tn{an}}$ is banded by $(P_{\dot{\Sigma}})_{S}$, and we have a canonical isomorphism of group sheaves on the v-site of $U_{S}^{\tn{an}}$ (cf. \cite[Lemma 2.27]{Dillery23b})
\begin{equation}\label{inertialaction}
(P_{\dot{\Sigma}})_{S} \xrightarrow{\sim} \text{Band}(\mathfrak{U}_{S}),
\end{equation}
where $\text{Band}(\mathfrak{U}_{S})$ is the sheaf on the v-site of $U_{S}^{\tn{an}}$ determined by the condition that for an object $X \in \mathfrak{U}_{S}(S')$ we have an isomorphism $\text{Band}(\mathfrak{U}_{S})(S') \xrightarrow{\sim} \text{Aut}_{S'}(X)$ compatible with morphisms in $\mathfrak{U}_{S}$ as $X$ and $S'$ vary.

Let $G$ be a parahoric group scheme over $C$, as defined in \cite[Definition 2.18]{Ric16}. Note that for a $G_{S}$-torsor $\mathscr{T}$ on $\mathfrak{U}_{S}$ there is a canonical morphism of group sheaves over $\mathfrak{U}_{S}$
\begin{equation}\label{centralhom}
(Z(G)_{S})_{\mathfrak{U}_{S}} \to  \underline{\text{Aut}}(\mathscr{T})
\end{equation}
given by the structure of $\mathscr{T}$ as a twisted $G_{S'}$-torsor, as in \cite[Definition 2.35]{Dillery23b}. There is also a morphism of group sheaves over $\mathfrak{U}_{S}$
\begin{equation}\label{intertialactionbis}
(P_{\dot{\Sigma}})_{S} \ \to  \underline{\text{Aut}}(\mathscr{T})
\end{equation}
induced by the isomorphism \eqref{inertialaction} and the action of $\text{Band}(\mathfrak{U}_{S})$ on $\mathscr{T}$ by automorphisms of objects of $\mathfrak{U}_{S}$.

\begin{Definition}\label{kalbasicdef}
We say that $\mathscr{T} \in Z_{\tn{\'{e}t}}^{1}(\mathfrak{U}_{S}, G)$ is $\text{Kal}_{F}$\textit{-basic} if the homomorphism \eqref{intertialactionbis} is obtained by pre-composing \eqref{centralhom} with a uniquely-determined morphism $(P_{\dot{\Sigma}})_{S} \to Z(G)_{S}$ over $U_{S}^{\tn{an}}$---note that this only depends on the isomorphism class of $\mathscr{T}$. Denote the set of all such isomorphism classes by $H^{1}(\mathfrak{U}_{S}, G_{S})_{\tn{Kal}_{F}\tn{-basic}}$, and the set of all such torsors by $Z^{1}(\mathfrak{U}_{S}, G)_{\tn{Kal}_{F}\tn{-basic}}$ (we omit the ``\'{e}t'' notation when writing down these $\tn{Kal}_{F}$-basic subsets to lighten the notational load).
\end{Definition}

Corollary \ref{cor:transitions} implies that for $U' \subseteq U$ we have a compatible family of transition maps
\begin{equation*}
\mathfrak{U}'_{S} \to \mathfrak{U}_{S},
\end{equation*}
which induce injective pullback maps $H^{1}_{\tn{\'{e}t}}(\mathfrak{U}_{S}, G_{S}) \to H^{1}_{\tn{\'{e}t}}(\mathfrak{U}'_{S}, G_{S})$ preserving $\Kal$-basic subsets. 

\begin{Definition}
We set $H^{1}(\mathfrak{X}_{S}, G_{S})_{\tn{$\Kal$-basic}} := \varinjlim_{U} H^{1}(\mathfrak{U}_{S}, G_{S})_{\tn{$\Kal$-basic}}$, where the limit is over all dense open $U \subseteq C$.
\end{Definition}

There is an auxiliary variant of $H^{1}_{\tn{\'{e}t}}(\mathfrak{U}_{S}, G_{S})$ which is needed for some proofs. For a purely inseparable extension $E/F$ of a finite Galois extension $E'/F$ inside $F_{\Sigma}$ and corresponding sets of places $(\Sigma, \dot{\Sigma}^{(E)})$, we can repeat the above construction for the image of $\dot{\xi}_{\Sigma}$ in $\check{Z}^{2}(O_{E,\dot{\Sigma}^{(E)}}/O_{F,\Sigma}, P_{E,\dot{\Sigma}^{(E)},n})$ for some $n \in \N$ to obtain an adic gerbe $\tn{Kal}_{F,E,\dot{\Sigma}^{(E)},n}^{\tn{ad}}$ over $\tn{Spa}(O_{F,\Sigma})$. Now we can form the adic gerbe
\begin{equation*}
U_{S}^{\tn{an}} \times_{\tn{Spa}(O_{F,\Sigma})} \tn{Kal}_{F,E,\dot{\Sigma}^{(E)},n}^{\tn{ad}} \to U_{S}^{\tn{an}},
\end{equation*}
which is banded by $(P_{E,\dot{\Sigma}^{(E)},n})_{S}$. We can thus write $H^{1}_{\tn{\'{e}t}}(U_{S}^{\tn{an}} \times_{\tn{Spa}(O_{F,\Sigma})} \tn{Kal}_{F,E,\dot{\Sigma}^{(E)},n}^{\tn{ad}}, G_{S})$  (and $Z^{1}_{\tn{\'{e}t}}(U_{S}^{\tn{an}} \times_{\tn{Spa}(O_{F,\Sigma})} \tn{Kal}_{F,E,\dot{\Sigma}^{(E)},n}^{\tn{ad}}, G_{S})$) for the parahoric group scheme $G$, and also the ``$\Kal$-basic'' variants (we don't use better notation because these objects aren't used enough to justify it). These definitions are useful in view of:

\begin{Lemma}\label{keyunionprop}
Pullback induces canonical identifications
\begin{enumerate}
\item{\begin{equation*}
H^{1}(\mathfrak{U}_{S}, G_{S})_{\tn{$\Kal$-basic}} = \varinjlim_{E,n} H^{1}(U_{S}^{\tn{an}} \times_{\tn{Spa}(O_{F,\Sigma})} \tn{Kal}_{F,E,\dot{\Sigma}^{(E)},n}^{\tn{ad}})_{\tn{$\Kal$-basic};}
\end{equation*}}
\item{\begin{equation*}
Z^{1}(\mathfrak{U}_{S}, G_{S})_{\tn{$\Kal$-basic}} = \varinjlim_{E,n} Z^{1}(U_{S}^{\tn{an}} \times_{\tn{Spa}(O_{F,\Sigma})} \tn{Kal}_{F,E,\dot{\Sigma}^{(E)},n}^{\tn{ad}})_{\tn{$\Kal$-basic}}.
\end{equation*}}
\end{enumerate}
Moreover, any isomorphism of $G$-torsors on $\mathfrak{U}_{S}$ is pulled back from an isomorphism of $G$-torsors on some $U_{S}^{\tn{an}} \times_{\tn{Spa}(O_{F,\Sigma})} \tn{Kal}_{F,E,\dot{\Sigma}^{(E)},n}^{\tn{ad}}$.
\end{Lemma}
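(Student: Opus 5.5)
The plan is to reduce everything to a compactness/finite-presentation statement for torsors on gerbes banded by a pro-finite multiplicative group. Write $P_{\dot{\Sigma}} = \varprojlim_{E,n} P_{E,\dot{\Sigma}^{(E)},n}$. Then $\mathfrak{U}_{S}$ is the limit of the finite-level gerbes $\mathfrak{U}_{S}^{E,n} := U_{S}^{\tn{an}} \times_{\tn{Spa}(O_{F,\Sigma})} \tn{Kal}_{F,E,\dot{\Sigma}^{(E)},n}^{\tn{ad}}$, with transition maps induced by the surjections $P_{\dot{\Sigma}} \to P_{E,\dot{\Sigma}^{(E)},n}$. The cocycle defining each finite-level gerbe is the image of $\dot{\xi}_{\Sigma}$, so these maps are strictly compatible at the level of cocycles. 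This is what makes the pullback maps on $Z^{1}$ (and hence on $H^{1}$) canonical, not merely canonical up to the $H^{1}(P)$-ambiguity discussed after Corollary \ref{Cor:fiberfunctor}.

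\textbf{Step 1: describe torsors as twisted torsors.} Following \cite[Proposition 2.36]{Dil23a} and \cite[\S 2]{Dillery23b}, identify a $G_{S}$-torsor $\mathscr{T}$ on $\mathfrak{U}_{S}$, trivialized over the v-cover $U_{S}^{\tn{an}} \times \tn{Spa}(O_{\Sigma}^{\tn{perf}})$, with a pair $(\phi, \mathscr{T}_{0})$. Here $\phi \colon (P_{\dot{\Sigma}})_{S} \to G_{S}$ is the homomorphism \eqref{intertialactionbis}, and $\mathscr{T}_{0}$ is a $\phi_{*}\xi^{\tn{ad}}_{\Sigma}$-twisted $G_{S}$-torsor on $U_{S}^{\tn{an}}$. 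Isomorphisms correspond to isomorphisms of twisted torsors.

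In the $\Kal$-basic case, $\phi$ factors through $Z(G)_{S}$. Because $Z(G)$ is of finite type and $P_{\dot{\Sigma}}$ is pro-finite multiplicative, the homomorphisms are described by
\begin{equation*}
\tn{Hom}(P_{\dot{\Sigma}}, Z(G)) = \varinjlim \tn{Hom}(P_{E,\dot{\Sigma}^{(E)},n}, Z(G)),
\end{equation*}
computed via character groups, using that $X^{*}(P_{\dot{\Sigma}})$ is the colimit of the $X^{*}(P_{E,\dot{\Sigma}^{(E)},n})$ and $X^{*}(Z(G))$ is finitely generated. Hence $\phi$ factors through some finite level $P_{\dot{\Sigma}} \to P_{E,\dot{\Sigma}^{(E)},n} \xrightarrow{\phi'} Z(G)$. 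Then $\phi_{*}\xi^{\tn{ad}}_{\Sigma} = \phi'_{*}(\text{image of }\xi^{\tn{ad}}_{\Sigma})$ as cocycles, so $\mathscr{T}_{0}$ is literally a twisted torsor for the finite-level cocycle. Reassembling gives a $\Kal$-basic torsor on $\mathfrak{U}_{S}^{E,n}$ whose pullback is $\mathscr{T}$. This proves essential surjectivity in (2), and it also identifies morphisms, since these are morphisms of the same twisted torsors $\mathscr{T}_{0}$.

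\textbf{Step 2: injectivity and isomorphisms.} Suppose two finite-level torsors become isomorphic on $\mathfrak{U}_{S}$. Their homomorphisms to $Z(G)$ then agree after pullback to $P_{\dot{\Sigma}}$, so they agree at some finite level, because $X^{*}(P_{\dot{\Sigma}})$ is the filtered colimit. At that level, the isomorphism of the underlying twisted $U_{S}^{\tn{an}}$-torsors is by definition the same datum at both levels. This gives the last assertion of the statement, and passing to isomorphism classes deduces (1) from (2).

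\textbf{Main obstacle.} The main obstacle is Step 1's dictionary. One must check that for the v-topology on $U_{S}^{\tn{an}}$ the gerbe pulled back from $\tn{Spa}(O_{F,\Sigma})$ is still trivialized by the fibre product with $\tn{Spa}(O_{\Sigma}^{\tn{perf}})$. One must also check that the description of torsors on a cocycle-defined gerbe as twisted torsors carries over verbatim from the fpqc setting of \cite{Dil23a} to v-sheaves.

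I would handle this by working purely formally on the site, as in the proof of the preceding Lemma: \cite[Proposition 2.36]{Dil23a} only uses a site with a covering and a commutative group sheaf. The descent datum defining $\mathscr{T}$ involves only sections of $G$ over the fibre products of the cover, so nothing about the finite level $(E,n)$ enters except through $\phi$. For non-basic torsors this would fail, since $\phi$ need not factor through a finite quotient into $Z(G)$. That is exactly why the statement is restricted to the $\Kal$-basic subsets.
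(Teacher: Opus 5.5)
Your argument is sound for the $\tn{Kal}_{F}$-basic part of the statement, but it takes a genuinely different route from the paper.

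The paper's proof is a one-line limit argument. $B(G_{S})$ is an Artin v-stack over $U_{S}^{\tn{an}}$, and $\mathfrak{U}_{S}$ is the inverse limit of the finite-level gerbes $U_{S}^{\tn{an}} \times_{\tn{Spa}(O_{F,\Sigma})} \tn{Kal}^{\tn{ad}}_{F,E,\dot{\Sigma}^{(E)},n}$. Hence every morphism $\mathfrak{U}_{S} \to B(G_{S})$, and every $2$-morphism between two such, factors through a finite stage. Neither twisted torsors nor centrality of the band action enter.

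You instead use the twisted-torsor dictionary to reduce everything to one finiteness fact: the central homomorphism $\phi \colon (P_{\dot{\Sigma}})_{S} \to Z(G)_{S}$ factors through some $P_{E,\dot{\Sigma}^{(E)},n}$. After that, the $\phi_{*}\xi^{\tn{ad}}_{\Sigma}$-twisted torsor and its isomorphisms are literally the same data at both levels. Your route isolates the finiteness input (finite generation of $X^{*}(Z(G))$) and makes the cocycle-level canonicity visible. The cost is verifying the dictionary for v-sheaves and comparing the finite-level cocycle on $\tn{Spa}(O_{E,\dot{\Sigma}^{(E)}})$ with its inflation to $\tn{Spa}(O_{\Sigma}^{\tn{perf}})$. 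The paper's route is shorter and uniform in $\mathscr{T}$, but it leaves implicit the limit-preservation property of $B(G_{S})$ it invokes. That property needs some care, since $U_{S}^{\tn{an}}$ is not quasi-compact.

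There are two points to tighten.

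\emph{Factoring $\phi$ over $U_{S}^{\tn{an}}$.} The identity $\Hom(P_{\dot{\Sigma}}, Z(G)) = \varinjlim \Hom(P_{E,\dot{\Sigma}^{(E)},n}, Z(G))$ ``via character groups'' is only immediate over a connected base on which both groups are split. Your $\phi$ lives over $U_{S}^{\tn{an}}$, which is neither. You should add two facts:
\begin{enumerate}
\item the locus where $\phi$ factors through a given finite level is open and closed;
\item $\pi_{0}(|U_{S}^{\tn{an}}|) = \pi_{0}(|S|)$ is compact.
\end{enumerate}
Together these show finitely many levels suffice. This is the same reasoning as in the proof of Proposition \ref{Nclopenprop}.

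\emph{Scope of the ``Moreover'' clause.} That clause is stated, and proved in the paper, for arbitrary $G$-torsors on $\mathfrak{U}_{S}$. Your argument only covers isomorphisms between $\tn{Kal}_{F}$-basic torsors, which is all that Lemma \ref{smallvstacklem} needs. Your closing claim that the non-basic case ``would fail'' is not justified. The same reduction goes through for a non-central band action $\phi$ defined on the cover, once one knows it factors through a finite quotient. A homomorphism from the pro-finite band into a group of finite presentation does so, and this is exactly the finiteness that the paper's appeal to $B(G_{S})$ being an Artin v-stack encodes. The restriction to basic classes in (1) and (2) reflects their later use in $\tn{Bun}^{e}_{G,U}$, not an obstruction.
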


\begin{proof}
Because the classifying stack $B(G_{S})$ is an Artin v-stack over $U_{S}^{\text{an}}$, any morphism $\mathfrak{U}_{S} \to B(G_{S})$ factors through one of the projection maps $\mathfrak{U}_{S} \to U_{S}^{\tn{an}} \times_{\tn{Spa}(O_{F,\Sigma})} \tn{Kal}_{F,E,\dot{\Sigma}^{(E)},n}^{\tn{ad}}$. Any isomorphism of $G_{S}$-torsors corresponds to a similar factorization.
\end{proof}

We can now make the main definition(s):

\begin{Definition}\label{maindefapp}
This is an analogue of the functors defined in \cite[\S 1.8]{DLH26}:
\begin{enumerate}
\item{Given $U$ a dense affine open of $C$, define a functor $\tn{Bun}_{G,U}^{e}$ from affinoid perfectoid spaces over $\mathbb{F}_{q}$ to groupoids by
\begin{equation*}
S \mapsto \{(\mathscr{T}, \phi) | \mathscr{T} \in Z^{1}(\mathfrak{U}_{S}, G_{S})_{\tn{$\Kal$-basic}}, \hspace{.2cm} \phi \colon \mathscr{T} \xrightarrow{\sim} (\tn{Frob}_{S} \times \tn{id})^{*}\mathscr{T}\};
\end{equation*}}
\item{Define a functor $\tn{Bun}_{G,F}^{e}$ from affinoid perfectoid spaces over $\mathbb{F}_{q}$ to groupoids by 
\begin{equation*}
S \mapsto \varinjlim_{U} \tn{Bun}_{G,U}^{e}(S);
\end{equation*}}
\item{Define a functor $\tn{Bun}^{e}_{G,F_{v}}$ from affinoid perfectoid spaces over $\mathbb{F}_{q}$ to groupoids by $S \mapsto$
\begin{equation*}
\{(\mathscr{T}, \phi) | \mathscr{T} \in Z^{1}((\tn{Spa}(F_{v}) \times S) \times_{\tn{Spa}(F_{v})} \tn{Kal}^{\tn{ad}}_{v}, G_{S})_{\tn{$\Kal$-basic}}, \hspace{.2cm} \phi \colon \mathscr{T} \xrightarrow{\sim} (\tn{Frob}_{S} \times \tn{id})^{*}\mathscr{T}\},
\end{equation*}
where recall that $\tn{Kal}_{v}$ is the local Kaletha gerbe at $v$ with adic analogue defined in \cite[\S 12.3]{Fargues}.\footnote{As pointed out in Remark \ref{FarguesRem}, \cite{Fargues} works with $\tn{Kal}_{v}$ over a $p$-adic local field, but the identical constructions carry over in the function field case.}
 }
 \item{Define the functor $\tn{Bun}^{e}_{G,O_{v}}$ as just $\tn{Bun}_{G,O_{v}}$; because there is no difference in this case, we exclusively use the notation  $\tn{Bun}_{G,O_{v}}$.
 }
 \end{enumerate}
\end{Definition}

\begin{Lemma}\label{smallvstacklem}
All of the above functors are small v-stacks over $\mathbb{F}_{q}$.
\end{Lemma}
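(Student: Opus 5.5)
We reduce each of the four functors to objects already known to be small v-stacks, using the standard stability properties (limits, filtered colimits along closed/open immersions, and stacks of torsors). We treat $\tn{Bun}_{G,O_{v}}$ first: it is literally the functor from \cite{DLH26}, where it is shown to be a small v-stack, so there is nothing to prove. For $\tn{Bun}^{e}_{G,F_{v}}$ we invoke \cite[\S 12.3]{Fargues}. By Remark \ref{FarguesRem}, the arguments there transfer verbatim to the local function field case. The only input used there is that the adic gerbe $\tn{Kal}^{\tn{ad}}_{v}$ is a v-gerbe banded by a pro-finite multiplicative group, hence a small v-stack over $\tn{Spd}(F_{v})$.

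For $\tn{Bun}^{e}_{G,U}$ we argue in three steps.

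\emph{Step 1: descent.} The assignment $S \mapsto \{\text{$G_{S}$-torsors on } \mathfrak{U}_{S}\}$ satisfies v-descent in $S$. A torsor on $\mathfrak{U}_{S}$ is a morphism $\mathfrak{U}_{S} \to B(G)$. Moreover, $S \mapsto U_{S}^{\tn{an}}$ is compatible with fibre products and v-covers, as in \cite[\S 1.4--1.5]{DLH26}, and so is its base change along the gerbe $\tn{Kal}^{\tn{ad}}_{F,\Sigma}$. The $\Kal$-basic condition of Definition \ref{kalbasicdef} is a condition on a homomorphism of group sheaves, and such conditions are v-local, so it descends as well. The datum $\phi$ is an isomorphism between two objects of a v-stack, hence it also descends. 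Together these show that $\tn{Bun}^{e}_{G,U}$ is a v-stack.

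\emph{Step 2: smallness.} Here we use Lemma \ref{keyunionprop}. Each object and each isomorphism is pulled back from a finite level $U_{S}^{\tn{an}} \times \tn{Kal}^{\tn{ad}}_{F,E,\dot{\Sigma}^{(E)},n}$. At finite level the gerbe is split over a finite cover, so it becomes $[\,\cdot/P_{E,\dot{\Sigma}^{(E)},n}]$ after a finite (perfect) extension $O_{E,\Sigma}^{\tn{perf}}$. A $\Kal$-basic torsor on such a quotient is a $G$-torsor on $U^{\tn{an}}_{S'}$ together with descent data twisted by the cocycle and by the fixed central homomorphism $P \to Z(G)$. We therefore obtain a presentation of the finite-level stack as a finite disjoint union, indexed by $\tn{Hom}(P_{E,n}, Z(G))$, of twisted forms of $\tn{Bun}_{G,U}$ with extra descent data. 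Each such twisted form is a closed substack of a stack built from $\tn{Bun}_{G,U}$ (base-changed along a finite étale cover) via finite limits. Hence each finite level is small. Then $\tn{Bun}^{e}_{G,U}$ is a countable filtered colimit of small v-stacks along the pullback transition maps, and is therefore small.

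\emph{Step 3: $\tn{Bun}^{e}_{G,F}$.} This stack is the filtered colimit over dense affine $U$, with the injective transitions provided by Corollary \ref{cor:transitions}. There are only countably many such $U$, so smallness persists. We must also check that v-sheafification does not change the colimit: on quasi-compact $S$, a descent datum for the colimit involves finitely many objects, all of which come from a single $U$.

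The main obstacle is Step 2. We must make precise that the twisted descent along $O_{\Sigma}^{\tn{perf}}/O_{F,\Sigma}$ (a perfection of a pro-finite cover) produces something small. We will first try to replace $O^{\tn{perf}}_{E,\Sigma}$ by $O_{E,\Sigma}$ itself, as in \cite[Lemma A.6]{Dillery23b}, since perfection does not change the v-site. After that replacement, everything is a finite étale-type construction over $\tn{Bun}_{G_{E},U_{E}}$.
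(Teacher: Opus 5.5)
Your overall architecture matches the paper's proof. You reduce to $\tn{Bun}^{e}_{G,U}$, use Lemma \ref{keyunionprop} to pass to a finite-level gerbe $\tn{Kal}_{F,E,\dot{\Sigma}^{(E)},n}$, present that gerbe by a cover so that torsors on it become bundles on the cover with descent data, and conclude from \cite[Theorem 1.6]{DLH26}. Your Step 3 is in fact more careful than the paper, which simply asserts the reduction to $U$.

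\textbf{The gap.} The step you flag as the main obstacle is resolved incorrectly, and it is where the real content of the proof lies. The finite-level gerbe is split over $O_{E}$, where $E$ is a finite purely inseparable extension of a Galois $E'$. It is not split over $O^{\tn{perf}}_{E,\Sigma}$ viewed as a ``finite'' extension, and in general not over any separable cover. When $p \mid n$, the band $P_{E,\dot{\Sigma}^{(E)},n}$ has a $p$-primary part. For example, \v{C}ech cohomology of $\mu_{p}$ relative to $O_{\Sigma}/O_{F,\Sigma}$ vanishes, because the relevant tensor products are reduced, while $H^{2}_{\tn{fppf}}(O_{F,\Sigma},\mu_{p})$ need not vanish. \cite[Lemma A.6]{Dillery23b} concerns tori, i.e.\ smooth groups, so it cannot justify replacing $O^{\tn{perf}}_{E,\Sigma}$ by $O_{E,\Sigma}$ here.

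Consequently the presentation is not finite \'{e}tale. The paper takes a point $\tn{Spec}(E'') \to \tn{Kal}_{F,E,\dot{\Sigma}^{(E)},n}$ supplied by Corollary \ref{Cor:fiberfunctor}. Its self-fibre product $K_{1}$ has components lying over $U$ of the form $\tn{Spec}(B_{i}^{1/p^{m}} \otimes_{B_{i}} B_{i}^{1/p^{m}})$. These are finite flat but non-reduced, so no ``$\tn{Bun}_{G,U}$ base-changed along a cover'' is available off the shelf.

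\textbf{The missing idea.} First reduce to $G=\mathrm{GL}_{m}$ via \cite[\S 1.8]{DLH26}. Then push vector bundles on $(U_{E''})^{\tn{an}}_{S}$ and $(U_{K_{1}})^{\tn{an}}_{S}$ forward along the finite flat maps to $U^{\tn{an}}_{S}$, so that \cite[Theorem 1.6]{DLH26} applies. The finite-level stack then maps, representably in v-sheaves, into $\tn{Bun}_{G_{E''}} \times_{\tn{Bun}_{G_{K_{1}}} \times \tn{Bun}_{G_{K_{1}}}, \Delta} \tn{Bun}_{G_{K_{1}}}$.

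\textbf{Step 1.} Your Step 1 hides the real input. That $G$-torsors on $U^{\tn{an}}_{S}$ satisfy v-descent in $S$ is the content of \cite[Theorem 1.6]{DLH26}, not a formal consequence of $S \mapsto U^{\tn{an}}_{S}$ commuting with fibre products. For torsors on the gerbe $\mathfrak{U}_{S}$, the paper obtains descent only through the presentation above. Once Step 2 is repaired as described, it gives the v-stack property and smallness at the same time, so Step 1 can be dropped.

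\textbf{The local functor.} The same applies to $\tn{Bun}^{e}_{G,F_{v}}$. The argument of \cite[Proposition 12.5]{Fargues} uses this kind of finite flat presentation. It does not follow merely from $\tn{Kal}^{\tn{ad}}_{v}$ being a small v-stack.
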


\begin{proof}
For the first two functors it suffices to show this for $\tn{Bun}_{G,U}^{e}$. For this statement, we use Lemma \ref{keyunionprop} to deduce that any descent datum is pulled back from a finite level, and is therefore effective if we can show that the analogue of $\tn{Bun}_{G,U}^{e}$ with $\tn{Kal}_{F,\Sigma}$ replaced by $\tn{Kal}_{F,E,\dot{\Sigma}^{(E)},n}$, denoted $\tn{Bun}_{G,U,E,n}^{e}$, is a small v-stack over $\mathbb{F}_{q}$. First, by the argument in \cite[\S 1.8]{DLH26} we can reduce to the case of $G = \mathrm{GL}_{m}$. Then we may, by Corollary \ref{Cor:fiberfunctor}, pick a finite Galois extension $E''/F$ such that we have a presentation
\begin{equation*}
K_{1} := \mathrm{Spec}(E'') \times_{ \tn{Kal}_{F,E,\dot{\Sigma}^{(E)},n}}  \mathrm{Spec}(E'') \to  \mathrm{Spec}(E'') \to \tn{Kal}_{F,E,\dot{\Sigma}^{(E)},n},
 \end{equation*}
 where each projection $K_{1} \to \mathrm{Spec}(E'')$ is an $A$-torsor with $A/E''$ finite flat. Write $K_{1}$ as $\coprod_{i} K_{1,i}$, where $E_{i}''$ is a finite Galois extension of $E''$ corresponding to the covering $C_{E_{i}''} \to C$ of curves, and $K_{1,i} = \tn{Spec}(\tilde{E}_{i}'' \otimes_{E_{i}''} \tilde{E}_{i}'')$ for a purely-inseparable extension $\tilde{E}_{i}''/E_{i}''$. 
 
Observe that each map $U_{K_{1},i} \to U$ is finite and flat---here we write $U_{E''_{i}} = \mathrm{Spec}(B_{i})$ for the preimage in $C_{E_{i}''}$ of the fixed affine open $U$ of $C$ and then $U_{K_{1},i} := \mathrm{Spec}(B_{i}^{(1/p^{m})} \otimes_{B_{i}} B_{i}^{(1/p^{m})})$ for $m$ the degree of $\tilde{E}_{i}''/E_{i}''$---as is $U_{E''} \to U$. We now have, as in the proof of \cite[Proposition 12.5]{Fargues} a morphism
 \begin{equation*}
\tn{Bun}_{G,U,E,n}^{e} \hookrightarrow \tn{Bun}_{G_{E''}} \times_{\tn{Bun}_{G_{K_{1}}} \times \tn{Bun}_{G_{K_{1}}}, \Delta} \tn{Bun}_{G_{K_{1}}}
 \end{equation*}
which is representable by a v-sheaf. We then obtain the result from the fact that both $\tn{Bun}_{G_{E''}}$ and $\tn{Bun}_{G_{K_{1}}}$ are v-stacks, by \cite[Theorem 1.6]{DLH26}, using that $U_{K_{1}}$ and $U_{E''}$ are finite and flat over $U$, as are $(U_{E''})_{S}^{\text{an}}$ and $(U_{K_{1}})_{S}^{\text{an}}$ over $U_{S}^{\tn{an}}$, and so vector bundles over these objects yield vector bundles over $U_{S}^{\tn{an}}$. The same reasoning shows that it is a small v-stack over $\mathbb{F}_{q}$.

The argument for $\tn{Bun}^{e}_{G,F_{v}}$ is completely analogous.
\end{proof}

\subsection{The structure of $\tn{Bun}_{G,F}^{e}$}\label{subsec:Bunstruc} \hfill\\
We temporarily return to the algebro-geometric setting. Observe that there is a morphism of stacks $U/\tn{Frob}^{\Z} \to \tn{Kott}_{F,\Sigma}$ induced by the tensor functor from Drinfeld isoshtukas supported on $U$ to vector bundles on $U$ equipped with a Frobenius-semilinear isomorphism. We thus obtain a map
\begin{equation}\label{vbequation}
Z^{1}_{\tn{\'{e}t}}(\tn{Kott}_{F,\Sigma} \times_{O_{F,\Sigma}} \tn{Kal}_{F,\Sigma}, G)\to \tn{Bun}_{G,U}^{e,\tn{alg}}(\ov{\mathbb{F}_{q}})
 \end{equation}
 via pull-back along $(U/\tn{Frob}^{\Z}) \times_{U} \tn{Kal}_{F,\Sigma} \to \tn{Kott}_{F,\Sigma} \times_{U} \tn{Kal}_{F,\Sigma}$, where $\tn{Bun}_{G,U}^{e,\tn{alg}}$ denotes the algebraic analogue of Definition \ref{maindefapp}. Observe that the above map is compatible with pull-back along the inclusion $U' \subseteq U$ (replacing $\Sigma$ with $\Sigma'$ such that $U' = \mathrm{Spec}(O_{F,\Sigma'})$).
 
 \begin{Proposition}\label{AlgBij}
The above map \eqref{vbequation} is a bijection on isomorphism classes. Moreover, taking its direct limit over all $U$ induces a bijection $B_{e}(G) \to \tn{Bun}_{G,F}^{e,\tn{alg}}(\ov{\mathbb{F}_{q}})$.
 \end{Proposition}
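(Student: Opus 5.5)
We reduce to the known statement for $\tn{Kott}_{F,\Sigma}$ alone, namely that pullback along $U/\tn{Frob}^{\Z} \to \tn{Kott}_{F,\Sigma}$ identifies $G$-torsors on $\tn{Kott}_{F,\Sigma}$ with $G$-bundles on $U_{\ov{\mathbb{F}_{q}}}$ equipped with a Frobenius-semilinear isomorphism. This is the Tannakian statement that Drinfeld isoshtukas supported on $U$ form the representation category of $\tn{Kott}_{F,\Sigma}$, as recalled before Proposition \ref{prop:tannakian}, combined with the $G$-torsor version obtained by the Tannakian formalism (exact tensor functors $\tn{Rep}(G) \to$ isoshtukas). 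We then bootstrap to the product gerbe by viewing both sides as gerbes over the $\tn{Kal}_{F,\Sigma}$-factor.

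\textbf{Step 1: compare Newton-type data.} On both sides, restriction to the band $P_{\dot{\Sigma}}$ gives a central homomorphism $P_{\dot{\Sigma}} \to Z(G)$. On the left this is the defining condition of $B_{e}$, and on the right it is the $\tn{Kal}_{F}$-basic condition. By construction the map \eqref{vbequation} is compatible with these restrictions. So we fix $\lambda \colon P_{\dot{\Sigma}} \to Z(G)$ and compare fibers.

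\textbf{Step 2: twist to the $\lambda = 1$ fiber.} Using the vanishing $H^{2}(\tn{Kott}_{F,\Sigma}, T)=\{*\}$ from Proposition \ref{Sigmavan}(2), applied to a maximal torus $T \supseteq Z(G)$ as in the proof of the exact sequence $1 \to B(G) \to B_{e}(G) \to \Hom_{F}(P,Z(G)) \to 1$, we pick a torsor $\mathscr{T}_{\lambda}$ on the product gerbe with band-restriction $\lambda$. Twisting by $\mathscr{T}_{\lambda}$ identifies the $\lambda$-fiber on each side with the trivial-restriction fiber for the inner form $G_{\lambda}$. This is possible because twisting by a central-type torsor commutes with pullback, via the same argument as Lemma \ref{lem:qsplit}, using Proposition \ref{Sigmavan}(3). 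The $\lambda=1$ fiber consists of torsors inflated from $\tn{Kott}_{F,\Sigma}$, respectively Frobenius-equivariant $G$-bundles on $U$ itself, by inflation-restriction: the gerbe $\tn{Kal}_{F,\Sigma}$ is banded by $P_{\dot{\Sigma}}$, and a torsor with trivial band action descends. Compatibility with automorphisms (descent of isomorphisms as well as objects) makes this a bijection on isomorphism classes. The base case then gives bijectivity in each fiber, and hence bijectivity on isomorphism classes overall.

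\textbf{Step 3: pass to the limit.} The maps are compatible with shrinking $U$, as observed before the proposition. We have $B_{e}(G) = \varinjlim_{\Sigma} H^{1}(\tn{Kott}_{F,\Sigma}\times \tn{Kal}_{F,\Sigma}, G)_{e}$ for the following reasons. Any torsor on $\tn{Kott}^{1/\infty}_{F}$ comes from a finite level $\tn{Kott}^{1/\infty,E}_{F}$ (the displayed colimit description). The finite-level Tannakian data (Proposition \ref{prop:tannakian}) spread out over some $O_{F,\Sigma}$. Isomorphisms likewise spread out, and the uniqueness in Proposition \ref{prop:H2invlim} ensures the gerbes themselves are compatible. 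Taking the direct limit of the bijections from Step 2 then gives the second claim.

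\textbf{Main obstacle.} The main obstacle is Step 2's descent claim in the non-smooth (p-power) part of $P_{\dot{\Sigma}}$: showing that a torsor on $U/\tn{Frob}^{\Z}\times_U \tn{Kal}_{F,\Sigma}$ with trivial band action descends, and that twisting is compatible on both sides. I would handle this fppf-locally via the \v{C}ech description of the gerbe over $O_{\Sigma}^{\tn{perf}}$, following \cite[\S 2]{Dil23a}.
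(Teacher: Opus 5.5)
Your skeleton matches the Fargues-style argument that the paper points to via \cite[Th\'eor\`eme 11.4]{Fargues}: split by $\lambda$, twist by a basic torsor to reach $\lambda=0$, descend through the $P_{\dot{\Sigma}}$-gerbe, invoke the non-extended statement, then take the colimit. However, two steps fail as written.

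\textbf{The twisting torsor is not basic.} Step 2 needs $\mathscr{T}_\lambda$ to be \emph{basic}. Lifting $\lambda$ through $B_e(T)$ via $H^2(\tn{Kott}_{F,\Sigma},T)=\{*\}$ only controls the $P_{\dot{\Sigma}}$-restriction. Suppose $\lambda_*[\dot{\xi}_\Sigma]\neq 0$ in $H^2(O_{F,\Sigma},T)$; for example, $G=\mathrm{SL}_2$, $T$ split, and $\lambda$ giving a non-split quaternion form. Then every lift has non-zero $\mathbb{D}_\Sigma$-component, and this cannot factor through the finite group $Z(G)$ because $\mathbb{D}_\Sigma$ is connected. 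Consequently $\underline{\tn{Aut}}(\mathscr{T}_\lambda)$ descends only to a form of $G$ over $\tn{Kott}_{F,\Sigma}$, not to an inner form over $U$, and your base case does not apply to it. You need a basic lift of $\lambda$. The paper takes one from Proposition \ref{Sigmavan} in the proof of Proposition \ref{Nclopenprop}. Over $F$ it also follows from Theorem \ref{TNgp} together with the surjection $\pi_{1}(G)^{e}_{\Gamma_{F^{s}/F}}\to(X_{*}(Z(G))_{\Q/\Z})[\dot{V}]_{0}$.

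\textbf{The base case is not formal, and fails at fixed $U$.} Knowing that $\tn{Rep}(\tn{Kott}_{F,\Sigma})$ is the category of isoshtukas supported on $U$ says nothing about whether pullback to Frobenius-equivariant bundles on $U_{\ov{\mathbb{F}_q}}$ is bijective on isomorphism classes. In Fargues' setting the corresponding input is the classification theorem $B(G)\simeq H^1(X,G)$, not Tannakian formalism. Here is a counterexample at fixed $U$.

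Take $C=\mathbb{P}^1$, $\Sigma=\{0,\infty\}$, $R=\ov{\mathbb{F}_q}[t^{\pm 1}]$, and $E=Re_1\oplus Re_2$ with $\varphi(e_1)=te_1$ and $\varphi(e_2)=e_1+e_2$. In $\breve{F}$, the equations $a-t\sigma(a)=1$ and $\sigma(f)-tf=1$ both have the unique solution $(1-t)^{-1}$, which is not in $R$. Three consequences follow.
\begin{enumerate}
\item $E$ is a non-split extension of $(R,\sigma)$ by $(R,t\sigma)$, but it splits over $\breve{F}$.
\item Its $\varphi$-invariants are $O_{F,\Sigma}\cdot(-e_1+(t-1)e_2)$. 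These span a summand with no $\varphi$-stable complement, so $E\not\cong(R,\sigma)\oplus(R,t\sigma)$.
\item Bundles on a gerbe banded by a group of multiplicative type split into weight spaces, and $\tn{Pic}(U)=0$. So the only bundle on $\tn{Kott}_{F,\Sigma}$ with generic fibre $\mathbf{1}\oplus(\breve{F},t\sigma)$ is $\mathbf{1}\oplus L$. Its pullback is split, hence by determinants would have to be $(R,\sigma)\oplus(R,t\sigma)$.
\end{enumerate}
Therefore $E$ is not in the image.

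\textbf{How to repair it.} The defect disappears over $R[(1-t)^{-1}]$, which is why the argument works only after the colimit. There, $\varinjlim_U$ of Frobenius-equivariant bundles on $U_{\ov{\mathbb{F}_q}}$ is the category of Drinfeld isoshtukas, and the $\lambda=0$ case is exactly the Tannakian identification of $G_b$-isoshtukas with $B(G_b)$ (\cite{Iak22}, \cite{HK21}). So run Steps 1--2 over $F$, after passing to the limit. This proves the second assertion. The finite-level bijectivity is not reachable this way, and the example shows it cannot hold as stated. By comparison, the non-smooth descent you single out as the main obstacle is routine.
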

 
 \begin{proof}
 The identical argument from the proof of \cite[Th\'{e}or\`{e}me 11.4]{Fargues} applies here.
 \end{proof}

The above algebraic result lets us define substacks of the analytic object constructed above.
\begin{Definition}
\begin{enumerate}
\item{Define $\tn{Bun}^{e,1}_{G,F}$ to be the substack characterized by 
\begin{equation*}
\tn{Bun}^{e,1}_{G,F}(S) = \{x \in \tn{Bun}^{e}_{G,F}(S) | \bar{x} = \tn{triv} \in  \tn{Bun}_{G,F}(\ov{s})\},
\end{equation*}
where $\bar{s}$ runs over all geometric points of $S$ and $\bar{x}$ denotes the image of $x$ at $\bar{s}$.}
\item{Given $b$ an element of $B_{e}(G)_{\tn{basic}}$, define $\tn{Bun}^{e,b}_{G,F}$  as the substack characterized by $\tn{Bun}^{e,b}_{G,F}(S) = \{x \in \tn{Bun}^{e}_{G,F}(S) | \bar{x} = b \in  \tn{Bun}_{G,F}(\ov{s})\}$, where $\bar{s}$ runs over all geometric points of $S$ and $\bar{x}$ denotes the image of $x$ at $\bar{s}$.} 
\end{enumerate}
\end{Definition}

The next result comes from combining the above definition with the proof of \cite[Proposition 2.9]{DLH26}:
\begin{Proposition}\label{prop:bundletwist}
For $b \in B_{e}(G)_{\tn{basic}}$ pulled back from $\tn{Kott}_{F,\Sigma} \times_{O_{F,\Sigma}} \tn{Kal}_{F,\Sigma}$, there is a natural isomorphism $\tn{Bun}^{e}_{G,U} \xrightarrow{\sim} \tn{Bun}^{e}_{G_{b},U}$ sending $\tn{Bun}^{e,b}_{G,U}$ to $\tn{Bun}^{e,1}_{G_{b},U}$.
\end{Proposition}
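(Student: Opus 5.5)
The plan is to construct the isomorphism by twisting with a fixed representative of $b$, as in the proof of \cite[Proposition 2.9]{DLH26} and the local analogue in \cite{Fargues}.

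\emph{Choosing the twisting object.} Choose a cocycle $\mathscr{T}_{b} \in Z^{1}_{\tn{\'{e}t}}(\tn{Kott}_{F,\Sigma} \times_{O_{F,\Sigma}} \tn{Kal}_{F,\Sigma}, G)$ representing $b$. Using \eqref{vbequation} and Proposition \ref{AlgBij}, pass to its image $(\mathscr{T}_{b}^{\tn{alg}}, \phi_{b}) \in \tn{Bun}^{e,\tn{alg}}_{G,U}(\ov{\mathbb{F}_{q}})$. For each affinoid perfectoid $S$, analytify and base change this pair to obtain $(\mathscr{T}_{b,S}, \phi_{b,S})$, a $\Kal$-basic $G_{S}$-torsor on $\mathfrak{U}_{S}$ with Frobenius structure. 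The pair is basic, so the restriction of $\mathscr{T}_{b}$ to the full band $\mathbb{D}_{\Sigma} \times P_{\dot{\Sigma}}$ is central. Consequently $\underline{\tn{Aut}}(\mathscr{T}_{b,S}, \phi_{b,S})$ is the inner form $G_{b}$. More precisely, $\underline{\tn{Aut}}(\mathscr{T}_{b})$ descends from the gerbe to a group scheme $G_{b}$ over $O_{F,\Sigma}$ (or a parahoric model thereof), and its Frobenius-twisted automorphisms recover $G_{b,S}$.

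\emph{Defining the map.} Send $(\mathscr{T}, \phi) \in \tn{Bun}^{e}_{G,U}(S)$ to
\[
\bigl(\underline{\tn{Isom}}(\mathscr{T}_{b,S}, \mathscr{T}), \ \phi \circ (-) \circ \phi_{b,S}^{-1}\bigr),
\]
which is a torsor under $\underline{\tn{Aut}}(\mathscr{T}_{b,S}) = (G_{b})_{S}$ on $\mathfrak{U}_{S}$ with induced Frobenius structure. There are three things to verify.

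1. The new torsor is again $\Kal$-basic. The $P_{\dot{\Sigma}}$-actions on $\mathscr{T}$ and $\mathscr{T}_{b,S}$ are both given by central homomorphisms $P_{\dot{\Sigma}} \to Z(G)$. On the Isom-torsor, the inertial action \eqref{intertialactionbis} is then the difference of these two homomorphisms, which again factors through $Z(G) = Z(G_{b})$.

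2. The construction is functorial in $S$, hence gives a morphism of v-stacks. The inverse is $\mathscr{T}' \mapsto \mathscr{T}' \times^{G_{b}} \mathscr{T}_{b,S}$. That these are mutually inverse is the usual torsor-twisting yoga, carried out on the v-site of $\mathfrak{U}_{S}$. To make the twisting rigorous, use Lemma \ref{keyunionprop} to reduce to a finite-level gerbe $U_{S}^{\tn{an}} \times \tn{Kal}^{\tn{ad}}_{F,E,\dot{\Sigma}^{(E)},n}$.

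3. On geometric points the map sends $b$ to the trivial class. By construction $\mathscr{T}_{b}$ goes to $\underline{\tn{Isom}}(\mathscr{T}_{b}, \mathscr{T}_{b})$, the trivial $G_{b}$-torsor. Under Proposition \ref{AlgBij}, twisting corresponds to the bijection $f_{b}$ of Lemma \ref{lem:qsplit}. Hence the fiberwise condition "$\bar{x} = b$" is carried to "$\bar{x} = 1$", so $\tn{Bun}^{e,b}_{G,U}$ maps isomorphically onto $\tn{Bun}^{e,1}_{G_{b},U}$. This step needs the compatibility of \eqref{vbequation} with base change to geometric points $\bar{s}$; that compatibility follows from the analytification being functorial.

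The main obstacle is the first step: identifying $\underline{\tn{Aut}}(\mathscr{T}_{b,S}, \phi_{b,S})$ with $(G_{b})_{S}$ as a group over $U_{S}^{\tn{an}}$ rather than over the gerbe. Equivalently, one must show that $\underline{\tn{Aut}}(\mathscr{T}_{b})$ descends along $\tn{Kott}_{F,\Sigma} \times \tn{Kal}_{F,\Sigma} \to U$. The approach is to use that the band acts through the center, so conjugation by the band is trivial and the automorphism sheaf descends. Its Frobenius-invariant part is then a form of $G$ over $O_{F,\Sigma}$, which one identifies with $G_{b}$. Parahoric level structure at points of $U$ is preserved because $\mathscr{T}_{b}$ comes from an $O_{F,\Sigma}$-integral object, so the descended group is an integral model over $U$.
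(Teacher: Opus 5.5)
Your proposal is correct and is essentially the paper's argument. The paper only points to the twisting construction in the proof of \cite[Proposition 2.9]{DLH26}, which is your map $(\mathscr{T},\phi)\mapsto(\underline{\tn{Isom}}(\mathscr{T}_{b,S},\mathscr{T}),\phi\circ(-)\circ\phi_{b,S}^{-1})$, with basicness used to identify $\underline{\tn{Aut}}(\mathscr{T}_{b,S})$, together with its descent datum $\tn{Ad}(\phi_{b,S})$, with $(G_{b})_{S}$ carrying the trivial Frobenius structure. Your step 1 observation that the $\tn{Kal}_{F}$-basic homomorphism shifts from $\lambda$ to $\lambda-\lambda_{b}$ is exactly the form in which the paper later uses this result, in Proposition \ref{Nclopenprop}.
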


For an object $x \in \tn{Bun}_{G,U}^{e}(S)$ we have by Definition \ref{kalbasicdef} a $\tn{Frob}_{S}$-equivariant morphism $(P_{\dot{\Sigma}})_{S} \to Z(G)_{S}$ over $U_{S}^{\tn{an}}$, which we will denote by $\lambda_{x}$. We now introduce a substack which is similar to $\tn{Bun}_{G,e}^{1}$:
\begin{Definition}
\begin{enumerate}
\item{Define $\tn{Bun}^{e,0_{P}}_{G,U}$ to be the substack characterized by 
\begin{equation*}
\tn{Bun}^{e,0_{P}}_{G,U}(S) = \{x \in \tn{Bun}^{e}_{G,U}(S) | \lambda_{\bar{x}} = 0 \in \Hom_{U_{\bar{s}}^{\tn{an}}}((P_{\dot{\Sigma}})_{\bar{s}}, Z(G)_{\bar{s}})\}
\end{equation*}
where $\bar{s}$ runs over all geometric points of $S$ and $\bar{x}$ denotes the image of $x$ at $\bar{s}$.}
\item{Observe that we have a canonical isomorphism
\begin{equation*}
\Hom_{U}(P_{\dot{\Sigma}}, Z(G)) \xrightarrow{\sim} \Hom_{U_{\bar{s}}^{\tn{an}}}((P_{\dot{\Sigma}})_{\bar{s}}, Z(G)_{\bar{s}})^{\tn{Frob}_{\ov{s}}}.
\end{equation*}
In view of this, given $\lambda \in \Hom(P_{\dot{\Sigma}}, Z(G))$, define $\tn{Bun}^{e,\lambda}_{G,U}$  as the substack characterized by 
\begin{equation*}
\tn{Bun}^{e,\lambda}_{G,U}(S) = \{x \in \tn{Bun}^{e}_{G,U}(S) | \lambda_{\bar{x}} = \lambda \in  \Hom_{U}(P_{\dot{\Sigma}}, Z(G))\},
\end{equation*}
where $\bar{s}$ runs over all geometric points of $S$ and $\bar{x}$ denotes the image of $x$ at $\bar{s}$.}
\end{enumerate}
\end{Definition}

A key structural result is:

\begin{Proposition}\label{Nclopenprop} 
\begin{enumerate}
\item{The substack $\tn{Bun}^{e,0_{P}}_{G,U} \hookrightarrow \tn{Bun}_{G,U}^{e}$ is open and closed.}
\item{The substack $\tn{Bun}^{e,\lambda}_{G,U} \hookrightarrow \tn{Bun}_{G,U}^{e}$ is open and closed for all $\lambda \in \Hom(P_{\dot{\Sigma}}, Z(G))$.}
\end{enumerate}
\end{Proposition}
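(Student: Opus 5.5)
Part (2) reduces to part (1) via a twisting argument, and part (1) reduces to showing that the map $x \mapsto \lambda_{x}$ is locally constant on $\tn{Bun}^{e}_{G,U}$.

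\emph{Local constancy.} For an object $x \in \tn{Bun}^{e}_{G,U}(S)$, Definition \ref{kalbasicdef} gives a morphism $\lambda_{x} \colon (P_{\dot{\Sigma}})_{S} \to Z(G)_{S}$ over $U_{S}^{\tn{an}}$. By Lemma \ref{keyunionprop}, after replacing $S$ by a v-cover (or working locally on $S$), $x$ is pulled back from a torsor on $U_{S}^{\tn{an}} \times_{\tn{Spa}(O_{F,\Sigma})} \tn{Kal}^{\tn{ad}}_{F,E,\dot{\Sigma}^{(E)},n}$. So $\lambda_{x}$ factors through the finite multiplicative group $P_{E,\dot{\Sigma}^{(E)},n}$, which is split over the finite étale cover $U_{E}$ of $U$ and killed by $n$. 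Hence $\lambda_{x}$ is a section of the sheaf $\underline{\Hom}(P_{E,\dot{\Sigma}^{(E)},n}, Z(G))$. This sheaf is representable by a finite étale (hence locally constant) scheme over $U$: its sections are determined by homomorphisms of character groups $X^{*}(Z(G))/n \to X^{*}(P_{E,\dot{\Sigma}^{(E)},n})$, which form a finite set with a continuous $\Gamma$-action. The same holds on the connected-component-wise base change to $U_{S}^{\tn{an}}$.

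\emph{Part (1).} A section of a finite étale sheaf over $U_{S}^{\tn{an}}$ is locally constant in $|U_{S}^{\tn{an}}|$. Moreover, the map $|U_{S}^{\tn{an}}| \to |S|$ is open and has connected fibers over geometric points (as in \cite[\S 1.4]{DLH26}, $U^{\tn{an}}_{\bar s}$ is connected since $U$ is geometrically connected). So for each fixed $\mu$ the locus of $s \in |S|$ where $\lambda_{\bar{x}} = \mu$ is open, and these loci partition $|S|$. Taking $\mu = 0$ shows that $\tn{Bun}^{e,0_{P}}_{G,U}$ is open and closed. Frobenius-equivariance of $\phi$ also forces $\lambda_{\bar{x}}$ to be $\tn{Frob}$-invariant. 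Together with the displayed isomorphism $\Hom_{U}(P_{\dot{\Sigma}}, Z(G)) \simeq \Hom((P_{\dot{\Sigma}})_{\bar{s}}, Z(G)_{\bar{s}})^{\tn{Frob}_{\bar{s}}}$, this identifies the fiberwise value with an element of $\Hom_{U}(P_{\dot{\Sigma}}, Z(G))$. The same argument therefore gives part (2) directly.

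\emph{Alternative route to part (2).} One can instead reduce to part (1). Given $\lambda$, the analogue of Lemma \ref{lem:qsplit} over $U$ uses Proposition \ref{Sigmavan}(3) and the surjectivity $B_{e}(Z(G)) \to \Hom(P,Z(G))$ (proved as in the exact-sequence lemma). Together these produce a torsor $\mathscr{T}_{\lambda}$ for the central group $Z(G)$ on $\tn{Kott}_{F,\Sigma} \times \tn{Kal}_{F,\Sigma}$, possibly after shrinking $U$, with restriction $\lambda$ to $P_{\dot\Sigma}$. Twisting by $\mathscr{T}_{\lambda}$ via the commutative structure of $Z(G)$ gives an automorphism of $\tn{Bun}^{e}_{G,U}$ carrying $\tn{Bun}^{e,\lambda}_{G,U}$ to $\tn{Bun}^{e,0_P}_{G,U}$, since $\lambda_{x\otimes\mathscr{T}} = \lambda_{x} - \lambda$.

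\emph{Main obstacle.} The hardest step is justifying that $\lambda_{x}$ varies locally constantly in families. This requires controlling the connectedness of the fibers of $U^{\tn{an}}_{S} \to S$ and the finite-level reduction via Lemma \ref{keyunionprop} uniformly on a quasicompact $S$. I would first establish it for $S$ strictly totally disconnected, where $\lambda_{x}$ is a continuous function $\pi_{0}(S) \to \Hom$ into a discrete set, and then descend along v-covers.
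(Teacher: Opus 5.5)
Your proof is correct. For part (1) it follows the paper's argument. You reduce via Lemma \ref{keyunionprop} to a finite level $A = P_{E,\dot{\Sigma}^{(E)},n}$, regard $\lambda_x$ as a section of the locally constant sheaf $\underline{\Hom}_U(A, Z(G))$ on $U_S^{\tn{an}}$, and use connectedness of $U^{\tn{an}}_{\bar{s}}$ to pass from $|U^{\tn{an}}_S|$ to $|S|$. You make that last step via openness of $|U^{\tn{an}}_S| \to |S|$; the paper does it via $\pi_0(|U^{\tn{an}}_S|) = \pi_0(|S|)$, after a finite \'etale base change that makes the Hom-sheaf constant.

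For part (2) your main route genuinely differs from the paper's. You run the same argument on $\lambda_x - \lambda_S$, after choosing a finite level through which both factor (possible since $Z(G)$ is of finite type). You then use $\tn{Frob}$-invariance and the displayed identification to see that the loci indexed by $\lambda \in \Hom_U(P_{\dot{\Sigma}}, Z(G))$ partition $|S|$, so each is open and closed. The paper instead lifts $\lambda$ to a basic class $b$ on $\tn{Kott}_{F,\Sigma} \times_{O_{F,\Sigma}} \tn{Kal}_{F,\Sigma}$. It then transports along $\tn{Bun}^{e}_{G,U} \xrightarrow{\sim} \tn{Bun}^{e}_{G_b,U}$ from Proposition \ref{prop:bundletwist}, which carries $\tn{Bun}^{e,\lambda}_{G,U}$ to $\tn{Bun}^{e,0_P}_{G_b,U}$. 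Your version needs no global input: no lift of $\lambda$ and no $H^2$-vanishing over $O_{F,\Sigma}$. The paper's version additionally produces the identification with $\tn{Bun}^{e,0_P}_{G_b,U}$, which is then used in Corollary \ref{prop:newtondecomp}.

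Your ``alternative route'', however, fails as written, because a $Z(G)$-torsor on $\tn{Kott}_{F,\Sigma} \times \tn{Kal}_{F,\Sigma}$ restricting to $\lambda$ need not exist. View this product as a $P_{\dot{\Sigma}}$-gerbe over $\tn{Kott}_{F,\Sigma}$. The obstruction to lifting $\lambda$ is then the pullback of $\lambda_{*}\xi$ to $H^{2}(\tn{Kott}_{F,\Sigma}, Z(G))$. Proposition \ref{Sigmavan}(2) kills this only when $Z(G)$ is a torus. Proposition \ref{Sigmavan}(3) concerns $H^2$ of the product gerbe itself and gives no such lift.

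Concretely, suppose $Z(G)$ is finite (e.g.\ $G = \mathrm{SL}_{n}$). Then $\Hom(\mathbb{D}, Z(G)) = 0$, since $\mathbb{D}$ is a pro-torus. So already over $F$ the map $H^{2}_{\tn{fppf}}(F, Z(G)) \to H^{2}(\tn{Kott}_{F}, Z(G))$ is injective. Hence no $\lambda$ with $\lambda_{*}\xi \neq 0$ lifts, however much you shrink $U$, and such $\lambda$ exist because every class in $H^{2}_{\tn{fppf}}(F, Z(G))$ has the form $\lambda_{*}\xi$.

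The surjectivity in the exact-sequence lemma goes through a maximal torus $T \supseteq Z(G)$ and yields a $G$-torsor, not a $Z(G)$-torsor. That is exactly why the paper twists by a $G$-valued $b$ and passes to the inner form $G_b$, rather than producing an automorphism of $\tn{Bun}^{e}_{G,U}$. Since your direct argument already proves (2), you should simply drop this alternative.
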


\begin{proof}
Lemma \ref{keyunionprop} allows us to replace $P_{\dot{\Sigma}}$ with $A:= P_{E,\dot{\Sigma}^{(E)}, n}$ for some $E$ and $n$. Then the inertial action gives a canonical morphism of stacks
\begin{equation}
\tn{Bun}_{G,U}^{e} \xrightarrow{\Lambda} \underline{\tn{Hom}}_{U}(A, Z(G)),
\end{equation}
where $\underline{\tn{Hom}}_{U}(A, Z(G))$ is the functor sending $S$ to $\tn{Hom}_{U_{S}^{\tn{an}}}(A_{S}, Z(G_{S}))$. It has the closed subfunctor $S \mapsto 0 \in \tn{Hom}_{U_{S}^{\tn{an}}}(A_{S}, Z(G_{S}))$. We claim that a morphism $f \in \tn{Hom}_{U_{S}^{\tn{an}}}(A_{S}, Z(G_{S}))$ is zero in $\Hom_{U_{\bar{s}}^{\tn{an}}}(A_{\bar{s}}, Z(G)_{\bar{s}})$ for every geometric point $\bar{s}$ of $S$ if and only if it is zero. Indeed, since $U^{\tn{an}}_{\bar{s}}$ is connected, every connected component $V$ of $U_{S}^{\tn{an}}$ contains some $U^{\tn{an}}_{\bar{s}}$, which forces $f|_{V} = 0$, by the diagonalizability of the two group schemes. This claim shows that $\tn{Bun}^{e,0_{P}}_{G,U}$ is precisely the preimage of this closed substack under $\Lambda$.

We claim that, in fact, the above $0$-subfunctor is open as well. This may be checked after replacing $U$ by a finite \'{e}tale cover, which lets assume that $A$ is a product of $\mu_{n}$'s and $Z(G)$ is a product of $\mu_{n}$'s and $\mathbb{G}_{m}$'s, and thus that $\underline{\tn{Hom}}_{U}(A, Z(G))$ is a finite product of the functors sending $S$ to $\underline{\Z/n\Z}(U_{S}^{\tn{an}})$ for some $n$, for which it suffices to show that the fiber over the trivial element in $\underline{\Z/n\Z}(U_{S}^{\tn{an}})$ in $|S|$ is open. This map is constant on connected components of $|U_{S}^{\tn{an}}|$, and since $\pi_{0}(|U_{S}^{\tn{an}}|) = \pi_{0}(|S|)$ it is also constant on connected components of $|S|$ by \cite[Tag 0378]{stacks-project}, as desired. 

To obtain the second statement from the first, we begin by lifting the element $\lambda \in \underline{\tn{Hom}}_{U}(P_{\dot{\Sigma}}, Z(G))$ to $b \in H^{1}(\text{Kott}_{F,\Sigma} \times \text{Kal}_{F,\Sigma})_{\text{basic}}$ (as we can by Proposition \ref{Sigmavan}). Then Proposition \ref{prop:bundletwist} gives an isomorphism $\tn{Bun}_{G,U}^{e} \xrightarrow{\sim} \tn{Bun}_{G_{b},U}^{e}$ and identifying $\tn{Bun}_{G,U}^{e,\lambda}$ with $\tn{Bun}_{G_{b},U}^{e,0_{P}}$, giving the claim.

\end{proof}

Observe that the natural map $\tn{Bun}_{G,U}^{e} \xrightarrow{\Lambda} \underline{\tn{Hom}}_{U}(P_{\dot{\Sigma}}, Z(G))$ from the previous proof is compatible with pullback to smaller $U'$, thus giving a map $\tn{Bun}_{G,F}^{e} \xrightarrow{\Lambda} \underline{\tn{Hom}}_{F}(P, Z(G))$. The most important result for understanding the geometry of $\tn{Bun}_{G,U}^{e}$ and $\tn{Bun}_{G,F}^{e}$ is:

\begin{Corollary}\label{prop:newtondecomp}
\begin{enumerate}
\item{For any $b \in H^{1}_{\tn{\'{e}t}}(\tn{Kott}_{F,\Sigma} \times \tn{Kal}_{F,\Sigma}, G)$ lifting $\lambda$, we have an identification of stacks
\begin{equation*}
 \tn{Bun}_{G,U}^{e,\lambda} \xrightarrow{\sim} \tn{Bun}_{G_{b},U}.
\end{equation*}}
\item{The map $\Lambda$ induces a decomposition (as a $v$-stack)
\begin{equation*}
\tn{Bun}_{G,U}^{e} \xrightarrow{\sim} \bigsqcup_{\lambda \in \tn{Hom}_{U}(P_{\dot{\Sigma}}, Z(G))}  \tn{Bun}_{G,U}^{e,\lambda}.
\end{equation*}}
\end{enumerate}
The above two results also hold for $\tn{Bun}_{G,F}^{e}$.
\end{Corollary}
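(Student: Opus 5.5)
For (1), I will reduce to the case $\lambda = 0$ by the twisting isomorphism already used in the proof of Proposition \ref{Nclopenprop}. Given $b$ lifting $\lambda$, Proposition \ref{prop:bundletwist} gives an isomorphism $\tn{Bun}^{e}_{G,U} \xrightarrow{\sim} \tn{Bun}^{e}_{G_{b},U}$. I first check that this twisting isomorphism subtracts $\lambda$ from the inertial character: on $S$-points it sends $\mathscr{T}$ to the torsor of isomorphisms from the pullback of $b$ to $\mathscr{T}$, and the band acts on that torsor through the difference $\lambda_{x} - \lambda$. This is valued in $Z(G) = Z(G_{b})$, and the identification is compatible with $\Lambda$, so $\tn{Bun}^{e,\lambda}_{G,U}$ goes to $\tn{Bun}^{e,0_{P}}_{G_{b},U}$.

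It then suffices to identify $\tn{Bun}^{e,0_{P}}_{G',U} \xrightarrow{\sim} \tn{Bun}_{G',U}$ for any $G'$. The map $\tn{Bun}_{G',U} \to \tn{Bun}^{e,0_{P}}_{G',U}$ is pullback along the projection $\mathfrak{U}_{S} \to U_{S}^{\tn{an}}$; it is Frobenius-equivariant and lands in the locus with trivial inertial action. For the inverse, take a $\Kal$-basic torsor on $\mathfrak{U}_{S}$ whose homomorphism \eqref{intertialactionbis} is trivial. Then it descends uniquely along the $(P_{\dot{\Sigma}})_{S}$-gerbe $\mathfrak{U}_{S} \to U_{S}^{\tn{an}}$, by the standard fact that objects of a stack over a gerbe on which the band acts trivially descend to the base, as in \cite[Proposition 3.4]{Fargues} and \cite[Lemma 2.27]{Dillery23b}. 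Morphisms descend by the same fact, so the functor is fully faithful, and the descended torsor inherits $\phi$ by uniqueness of descent. By Lemma \ref{keyunionprop} it is enough to carry this out at finite level $\tn{Kal}_{F,E,\dot{\Sigma}^{(E)},n}^{\tn{ad}}$, where the band is a finite multiplicative group, and there the descent statement is a v-local computation on a splitting cover.

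For (2), the substacks $\tn{Bun}^{e,\lambda}_{G,U}$ are open and closed by Proposition \ref{Nclopenprop}. They are pairwise disjoint because $\lambda_{\bar{x}}$ is well defined at each geometric point. They cover $\tn{Bun}^{e}_{G,U}$: for any $x \in \tn{Bun}^{e}_{G,U}(S)$, the map $\Lambda(x)$ is locally constant on $|S|$ by the argument in the proof of Proposition \ref{Nclopenprop}, since $\pi_{0}(|U_{S}^{\tn{an}}|) = \pi_{0}(|S|)$. Its values at geometric points are Frobenius-invariant, so they lie in $\Hom_{U}(P_{\dot{\Sigma}}, Z(G))$ by the identification preceding the definition. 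Hence $|S|$ is a disjoint union of open and closed pieces on each of which $x$ lies in a single $\tn{Bun}^{e,\lambda}_{G,U}$. A v-stack that is covered by disjoint clopen substacks is their disjoint union, which gives the decomposition.

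For $\tn{Bun}^{e}_{G,F}$, I will pass to the direct limit over $U$, using that $\Lambda$ and the twisting isomorphisms are compatible with shrinking $U$. Here $\Hom_{F}(P, Z(G)) = \varinjlim_{\Sigma} \Hom_{U}(P_{\dot{\Sigma}}, Z(G))$, and the $\tn{Bun}_{G_{b},U}$ assemble to $\tn{Bun}_{G_{b},F}$. I expect the main obstacle to be the descent step in (1) in the adic setting: checking that trivial inertial action really yields effective descent along $\mathfrak{U}_{S} \to U_{S}^{\tn{an}}$ in the v-topology, uniformly in $S$. I would handle this by reducing, via Lemma \ref{keyunionprop} and the presentation used in Lemma \ref{smallvstacklem}, to explicit twisted-torsor cocycles over a finite flat cover, where the twist by $\dot{\xi}_{\Sigma}$ acts trivially.
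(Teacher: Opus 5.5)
Your proposal is correct and follows essentially the route the paper has in mind. The paper gives no separate proof, deducing the corollary from Proposition \ref{Nclopenprop} and the twisting isomorphism of Proposition \ref{prop:bundletwist} (which carries $\tn{Bun}^{e,\lambda}_{G,U}$ to $\tn{Bun}^{e,0_{P}}_{G_{b},U}$); your descent argument identifying $\tn{Bun}^{e,0_{P}}_{G',U}$ with $\tn{Bun}_{G',U}$ makes explicit a step the paper leaves implicit, but you should take $b$ basic, as the paper does when lifting $\lambda$ via Proposition \ref{Sigmavan}, since Proposition \ref{prop:bundletwist} and the inner form $G_{b}$ over $U$ require it.
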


\noindent In particular, the above result in conjunction with \cite[Proposition 4.5]{DLH26} shows that each $\tn{Bun}_{G,U}^{e}$ is an Artin v-stack over $\mathbb{F}_{q}$. We now have the analogue of \cite[Theorem 2.5]{DLH26}, which is \cite[Theorem C.8.(1)]{DLH26}: 

\begin{Proposition}
The substack $\tn{Bun}^{e,1}_{G,U} \subseteq \tn{Bun}^{e}_{G,U}$ is open, and is isomorphic to $*/\underline{G(O_{F,\Sigma})}$. 
\end{Proposition}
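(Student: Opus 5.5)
The plan is to reduce to the corresponding statement for Li-Huerta's stack, namely \cite[Theorem 2.5]{DLH26}: the substack $\tn{Bun}^{1}_{G,U} \subseteq \tn{Bun}_{G,U}$ is open and isomorphic to $*/\underline{G(O_{F,\Sigma})}$. We get there using the decomposition of Corollary \ref{prop:newtondecomp}.

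\textbf{Step 1: locate $\tn{Bun}^{e,1}_{G,U}$ in the zero component.} If $x \in \tn{Bun}^{e,1}_{G,U}(S)$, then at every geometric point $\bar{s}$ the image $\bar{x}$ is the trivial class. Its inertial homomorphism $\lambda_{\bar{x}}$ is therefore $0$. Hence $\tn{Bun}^{e,1}_{G,U} \subseteq \tn{Bun}^{e,0_{P}}_{G,U}$. By Proposition \ref{Nclopenprop}(1), this component is open and closed in $\tn{Bun}^{e}_{G,U}$.

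\textbf{Step 2: identify the zero component with $\tn{Bun}_{G,U}$.} Apply Corollary \ref{prop:newtondecomp}(1) with $\lambda = 0$ and $b = 1$. This gives $\tn{Bun}^{e,0_{P}}_{G,U} \xrightarrow{\sim} \tn{Bun}_{G,U}$. Concretely, a $\Kal$-basic torsor with trivial inertial action descends uniquely along the gerbe $\mathfrak{U}_{S} \to U_{S}^{\tn{an}}$ to a $G$-torsor on $U_{S}^{\tn{an}}$, and the descent is compatible with the Frobenius structure $\phi$. One must check that this identification matches geometric-point fibers. Under the bijection of Proposition \ref{AlgBij}, the trivial element of $B_{e}(G)$ is the pullback of the trivial element of $B(G)$ via $B(G) \hookrightarrow B_{e}(G)$. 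So the isomorphism carries $\tn{Bun}^{e,1}_{G,U}$ onto the locus $\tn{Bun}^{1}_{G,U}$ of $G$-bundles that are trivial at all geometric points.

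\textbf{Step 3: conclude.} By \cite[Theorem 2.5]{DLH26}, $\tn{Bun}^{1}_{G,U}$ is open in $\tn{Bun}_{G,U}$ and isomorphic to $*/\underline{G(O_{F,\Sigma})}$. Openness then transfers: $\tn{Bun}^{e,1}_{G,U}$ is open in the open and closed substack $\tn{Bun}^{e,0_{P}}_{G,U}$, hence open in $\tn{Bun}^{e}_{G,U}$. The isomorphism with $*/\underline{G(O_{F,\Sigma})}$ is transported along Step 2.

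The main obstacle is Step 2, specifically checking that descent along the gerbe works in families over arbitrary affinoid perfectoid $S$ and not only pointwise. Here I would use Lemma \ref{keyunionprop} to pass to a finite level $A = P_{E,\dot{\Sigma}^{(E)},n}$. There, trivial action of the band $A_{S}$ on $\mathscr{T}$ means that $\mathscr{T}$ is inflated from $U_{S}^{\tn{an}}$, by the standard description of torsors on a banded gerbe with trivial band action. Full faithfulness on isomorphisms follows because the band acts trivially on morphisms as well.
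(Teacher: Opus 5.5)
Your proposal is correct and follows essentially the same route as the paper, which records this result as \cite[Theorem C.8.(1)]{DLH26}. That result is obtained by using Corollary \ref{prop:newtondecomp} to identify the open and closed component $\tn{Bun}^{e,0_{P}}_{G,U}$ with $\tn{Bun}_{G,U}$, compatibly with the trivial points, and then applying \cite[Theorem 2.5]{DLH26}. Your gerbe-descent discussion in the last paragraph only re-derives the $\lambda = 0$ case of Corollary \ref{prop:newtondecomp}(1), so you can simply cite that corollary.
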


We can also deduce the following analogue of \cite[Theorem 2.11]{DLH26}, which identifies the set $\tn{Bun}_{G,U}^{e}(\tn{Spd}(\ov{ \mathbb{F}_{q}}))$ with $B_{e}(G)$. Since it is a straightforward consequence of the result loc. cit., it is proved in \cite[Theorem C.6]{DLH26}:

\begin{Corollary}\label{cor:spd}
  The perfect v-stack $\tn{Bun}_{G,U}^{e,\tn{red}}$ over $\mathbb{F}_q$ is the v-sheafification of
  \begin{align*}
 \Spec{B}\mapsto\left\{
    \begin{tabular}{c}
      $G$-torsors $\mathscr{T}$ on $U_B \times_{U} \tn{Kal}_{F,\Sigma}$ equipped with\\
      an isomorphism $\phi:\mathscr{T} \xrightarrow{\sim} (\tn{Frob}_B \times \tn{id})^*\mathscr{T}$
    \end{tabular}
    \right\}.
  \end{align*}
Moreover, when each connected component of $\Spec{B}$ is a valuation ring, no sheafification is needed.
\end{Corollary}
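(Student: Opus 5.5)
Corollary \ref{cor:spd} will be deduced from \cite[Theorem 2.11]{DLH26}, which gives the analogous description of $\tn{Bun}_{G',U}^{\tn{red}}$ for any parahoric $G'$ over $U$, together with the decomposition of Corollary \ref{prop:newtondecomp}. The plan is to work one component at a time and then reassemble the pieces.

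\emph{Step 1: decompose both sides.} Since $(-)^{\tn{red}}$ commutes with disjoint unions, Corollary \ref{prop:newtondecomp}(2) gives
\begin{equation*}
\tn{Bun}_{G,U}^{e,\tn{red}} = \bigsqcup_{\lambda} \tn{Bun}_{G,U}^{e,\lambda,\tn{red}}.
\end{equation*}
The algebraic functor on the right-hand side of the corollary decomposes the same way. For a $G$-torsor $\mathscr{T}$ on $U_{B} \times_{U} \tn{Kal}_{F,\Sigma}$, the band acts through a map $P_{\dot{\Sigma}} \to Z(G)$ over $U_B$. The map $\phi$ forces this to be Frobenius-invariant. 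Arguing as in the proof of Proposition \ref{Nclopenprop}, such a map is locally constant on $\Spec B$. After v-sheafification (or already on products of valuation rings) the algebraic functor therefore also splits as a disjoint union over $\lambda \in \Hom_U(P_{\dot{\Sigma}}, Z(G))$. It thus suffices to treat a single $\lambda$.

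\emph{Step 2: reduce each component to an untwisted group.} Fix $\lambda$ and lift it to $b \in H^{1}_{\tn{\'{e}t}}(\tn{Kott}_{F,\Sigma} \times \tn{Kal}_{F,\Sigma}, G)_{\tn{basic}}$, which is possible by Proposition \ref{Sigmavan}. On the analytic side, Corollary \ref{prop:newtondecomp}(1) gives $\tn{Bun}_{G,U}^{e,\lambda} \simeq \tn{Bun}_{G_b,U}$. On the algebraic side I would twist by the pullback of $b$, exactly as in Proposition \ref{prop:bundletwist} but for $\Spec B$ in place of a perfectoid $S$. This identifies pairs $(\mathscr{T},\phi)$ of type $\lambda$ with $G_b$-torsors on $U_B \times_U \tn{Kal}_{F,\Sigma}$ on which the band acts trivially. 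By Proposition \ref{Sigmavan}(1) and inflation--restriction, these are the same as $G_b$-torsors on $U_B$ together with a Frobenius structure, i.e.\ $\tn{Bun}_{G_b,U}^{\tn{alg}}(B)$.

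\emph{Step 3: apply the known result.} I would then invoke \cite[Theorem 2.11]{DLH26} for $G_b$, which gives $\tn{Bun}_{G_b,U}^{\tn{red}}$ as the v-sheafification of exactly this functor, with no sheafification needed when each connected component of $\Spec B$ is a valuation ring. The valuation-ring clause then also passes to our setting through the component decomposition.

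The main obstacle is compatibility. The analytic isomorphism of Corollary \ref{prop:newtondecomp}(1) and the algebraic twist of Step 2 must match under the comparison map from algebraic objects to their analytifications, i.e.\ the comparison underlying \cite[Theorem 2.11]{DLH26}. To check this, I would note that both isomorphisms are defined as twisting by the same cocycle pulled back from $\tn{Kott}_{F,\Sigma} \times \tn{Kal}_{F,\Sigma}$. Twisting commutes with pullback along $U_S^{\tn{an}} \to U_B$, by the same pullback compatibility as in \eqref{vbequation}. Lemma \ref{keyunionprop} lets me work at a finite level $\tn{Kal}_{F,E,\dot{\Sigma}^{(E)},n}$, where all the gerbes are algebraic and the comparison is transparent.
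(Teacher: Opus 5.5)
Your proposal is correct and is essentially the argument the paper has in mind. The paper records this only as a straightforward consequence of \cite[Theorem 2.11]{DLH26}, with details in \cite[Theorem C.6]{DLH26}, obtained exactly as you do: split along $\lambda$ via Corollary \ref{prop:newtondecomp}, twist each piece to $\tn{Bun}_{G_b,U}$, and apply Li-Huerta's theorem to $G_b$.

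Two small remarks. First, descending band-trivial torsors (with their Frobenius structures) from $U_B \times_U \tn{Kal}_{F,\Sigma}$ to $U_B$ is formal for any gerbe and needs no input from Proposition \ref{Sigmavan}(1). Second, just as for Proposition \ref{prop:bundletwist}, pulling $b$ back to $U_B$ with its Frobenius structure goes through the isoshtuka description of $\tn{Kott}_{F,\Sigma}$ over $\breve{F}$, so your twisting step really takes place over $\ov{\mathbb{F}_{q}}$-algebras.
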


Define the functor $\tn{Bun}^{e}_{G,\mathbb{A}}$ by 
\begin{equation*}
S \mapsto \varinjlim_{\Sigma} \prod_{v \notin \Sigma} \tn{Bun}_{G,O_{v}}(S) \times \prod_{v \in \Sigma} \tn{Bun}^{e}_{G,F_{v}}(S),
\end{equation*}
where the limit is over all finite subsets of places $\Sigma$. This defines a small v-stack over $\mathbb{F}_{q}$, and we have a morphism
\begin{equation*}
\tn{Bun}^{e}_{G,F} \to \tn{Bun}^{e}_{G,\mathbb{A}}
\end{equation*}
induced by a localization map $\tn{Kal}_{v} \to \tn{Kal}_{F}$ at each place as discussed in \S \ref{Kalgerbedef}.

We can now define a variant of part (3) of Definition \ref{maindefapp} that will be used in the following. Define the functor $\tn{Bun}^{e,(\Sigma)}_{G,F_{v}}$ from affinoid perfectoid spaces over $\mathbb{F}_{q}$ to groupoids by $S \mapsto$
\begin{equation*}
\{(\mathscr{T}, \phi) | \mathscr{T} \in Z^{1}((\tn{Spa}(F_{v}) \times S) \times_{\tn{Spa}(F_{v})} (\tn{Kal}^{(\Sigma)}_{v})^{\tn{ad}}, G_{S})_{\tn{$\Kal$-basic}}, \hspace{.2cm} \phi \colon \mathscr{T} \xrightarrow{\sim} (\tn{Frob}_{S} \times \tn{id})^{*}\mathscr{T}\},
\end{equation*}
which is also a small v-stack over $\mathbb{F}_{q}$. Note that there is a canonical inclusion $\tn{Bun}^{e,(\Sigma)}_{G,F_{v}} \to \tn{Bun}^{e}_{G,F_{v}}$.

\begin{Proposition}
There is a morphism 
\begin{equation*}
\tn{Bun}^{e}_{G,U} \to \prod_{v \notin \Sigma} \tn{Bun}_{G,O_{v}} \times \prod_{v \in \Sigma} \tn{Bun}^{e,(\Sigma)}_{G,F_{v}} \subseteq \prod_{v \notin \Sigma} \tn{Bun}_{G,O_{v}} \times \prod_{v \in \Sigma} \tn{Bun}^{e}_{G,F_{v}}
\end{equation*}
which makes the square
\[
\begin{tikzcd}
\tn{Bun}^{e}_{G,U} \arrow{r} \arrow{d} &  \prod_{v \notin \Sigma} \tn{Bun}_{G,O_{v}} \times \prod_{v \in \Sigma} \tn{Bun}^{e,(\Sigma)}_{G,F_{v}}  \arrow{d} \\
\tn{Bun}^{e}_{G,F} \arrow{r} &  \tn{Bun}^{e}_{G,\mathbb{A}}
\end{tikzcd}
\]
Cartesian.
\end{Proposition}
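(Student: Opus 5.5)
We first construct the morphism and then reduce the Cartesian property to the corresponding statement of \cite{DLH26} for ordinary $\tn{Bun}_{G,U}$, using the decomposition of Corollary \ref{prop:newtondecomp}.

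\emph{Construction of the top map.} At a place $v \notin \Sigma$, restrict $(\mathscr{T},\phi)$ along $\tn{Spa}(O_{v}) \times S \to U_{S}^{\tn{an}}$. The gerbe $\tn{Kal}_{F,\Sigma}$ is banded by $P_{\dot{\Sigma}}$, which is split over $O_{\Sigma}$ and unramified at $v$. After base change to $O_{v}$ the class $[\dot{\xi}_{\Sigma}]$ dies, because $H^{2}_{\tn{fppf}}(O_{v}, P_{\dot{\Sigma}}) = 0$ for a henselian local ring with finite residue field and a multiplicative group split over an unramified extension. So the gerbe splits over $O_{v}$. The $\Kal$-basic condition lets us twist by the central character $\lambda_{x}$, and this yields a $G$-bundle on $\tn{Spa}(O_{v}) \times S$, i.e.\ a point of $\tn{Bun}_{G,O_{v}}$. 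The choice of splitting changes the result only by a canonical isomorphism, because $H^{1}(O_{v}, P_{\dot{\Sigma}})$ maps trivially after this twist; this point needs checking.

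At $v \in \Sigma$, base change along $F_{v}$ and compose with the localization map. The composite $u_{v} \to P_{F_{v}} \to (P_{\dot{\Sigma}})_{F_{v}}$ factors through $u_{v}^{\Sigma}$, and Theorem \ref{canonclassthm} shows that $[\dot{\xi}_{\Sigma}]$ restricts at $v$ to $\xi_{v}^{\Sigma,\tn{mid}}$. Hence there is a map of gerbes $\tn{Kal}_{v}^{(\Sigma)} \to (\tn{Kal}_{F,\Sigma})_{F_{v}}$, and pulling back along it lands in $\tn{Bun}^{e,(\Sigma)}_{G,F_{v}}$. Commutativity of the square follows because both composites are pullbacks along compatible maps of gerbes, namely $\tn{Kal}_{v} \to \tn{Kal}_{v}^{(\Sigma)}$ and the transition maps of Corollary \ref{cor:transitions}.

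\emph{Cartesianness.} We decompose both sides according to the central character. By Corollary \ref{prop:newtondecomp}, $\tn{Bun}^{e}_{G,U} = \bigsqcup_{\lambda} \tn{Bun}^{e,\lambda}_{G,U}$ with $\lambda \in \Hom_{U}(P_{\dot{\Sigma}}, Z(G))$, and the same holds for $\tn{Bun}^{e}_{G,F}$ with $\lambda \in \Hom_{F}(P,Z(G))$. The local factors decompose similarly. The key input here is that a global $\lambda \colon P \to Z(G)$ factors through $P_{\dot{\Sigma}}$ exactly when all its localizations at $v \in \Sigma$ factor through $u_{v}^{\Sigma}$ and it is trivial at $v \notin \Sigma$. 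This is a statement about character modules: an element of $X_{*}(Z(G))_{\Q/\Z}[\dot{V}]_{0}$ supported on $\dot{\Sigma}$ with the appropriate $\Sigma$-unramified coefficients. It follows from \cite[Lemma 3.10]{Dillery23b} applied to $P_{\dot{\Sigma}}$.

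Fix $\lambda$ and lift it to $b \in B_{e}(G)_{\tn{basic}}$ coming from $\tn{Kott}_{F,\Sigma} \times \tn{Kal}_{F,\Sigma}$, as Proposition \ref{Sigmavan} allows. Proposition \ref{prop:bundletwist} and its local analogue from \cite{Fargues} identify each $\lambda$-component with the corresponding plain stack for the inner form $G_{b}$: we get $\tn{Bun}_{G_{b},U}$, $\tn{Bun}_{G_{b},F}$, and $\tn{Bun}_{G_{b},F_{v}}$ or $\tn{Bun}_{G_{b},O_{v}}$. These identifications are compatible with localization because $b$ localizes compatibly. The $\lambda$-component of the square then becomes the square of \cite[Proposition 1.9/Theorem]{DLH26}, the Beauville--Laszlo type Cartesian square for $G_{b}$, which holds there. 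Taking the disjoint union over $\lambda$ gives the claim.

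The main obstacle is this matching of components. We must show that a point of $\tn{Bun}^{e}_{G,F}$ whose local images lie in the $\Sigma$-level stacks has character $\lambda$ that already descends to $P_{\dot{\Sigma}}$. We must also check that the identifications via $b$ commute with localization up to canonical 2-isomorphism. For the first, we would use the injectivity in Proposition \ref{prop:H2invlim} together with the explicit description of $X^{*}(P_{\dot{\Sigma}})$.
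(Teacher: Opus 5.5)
Your argument works, at the level of detail the paper itself uses, but it takes a different route from the paper's proof.

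\textbf{The paper's route.} It never splits the square by central character. It first reduces, as in \cite[Lemma 2.7]{DLH26}, to replacing $\tn{Spec}(F)$ by an affine open $U' \subseteq U$. It then introduces auxiliary local stacks $\widetilde{\tn{Bun}}^{e,(\Sigma)}_{G,A_{v}}$, with $A_{v} = F_{v}$ or $O_{v}$. These parametrize $\tn{Kal}_{F}$-basic $G$-torsors with Frobenius on $(\tn{Spa}(A_{v}) \times S) \times_{\tn{Spa}(A_{v})} \tn{Kal}_{F,\Sigma}^{\tn{ad}}$, i.e.\ on the base change of the \emph{global} gerbe. Because the gerbe is simply carried along, Li-Huerta's gluing argument directly gives a Cartesian square with these auxiliary stacks. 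The result then follows by pasting with a second square that compares $\widetilde{\tn{Bun}}^{e,(\Sigma)}_{G,A_{v}}$ with $\tn{Bun}^{e,(\Sigma)}_{G,F_{v}}$ or $\tn{Bun}_{G,O_{v}}$ via pullback along $\tn{loc}_{v}$.

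\textbf{Your route.} You use Corollary \ref{prop:newtondecomp} and its local analogues to decompose every corner of the square into central-character components. You twist each component, by a lift $b$ of $\lambda$ at level $U$, into a plain stack for $G_{b}$. You then quote Li-Huerta's square for $G_{b}$ as a black box.

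\textbf{Comparison.} The paper's argument is uniform in $\lambda$. It needs neither the lifts $b$ nor a check that the twisting equivalences are coherent with all the localization maps. Yours avoids redoing the gluing for torsors on gerbes. It also isolates the one genuinely new input: a global $\lambda \in \Hom_{F}(P, Z(G))$ whose localizations are trivial outside $\Sigma$ and of level $u_{v}^{\Sigma}$ at $v \in \Sigma$ comes from a unique element of $\Hom_{U}(P_{\dot{\Sigma}}, Z(G))$. You correctly reduce this to the explicit description of these Hom groups in \cite[Lemma 3.10]{Dillery23b} and its level-$\Sigma$ analogue. In the paper, this matching of characters is buried in the claim that the comparison square is Cartesian. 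There the auxiliary local stacks carry characters in $\Hom_{A_{v}}(P_{\dot{\Sigma}}, Z(G))$, which are constrained only globally. So your bookkeeping is arguably the more transparent place to verify it.

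\textbf{The canonicity question at $v \notin \Sigma$.} You do not need to resolve the issue you flag there. The top map only has to be built from the same choices used to define the bottom map $\tn{Bun}^{e}_{G,F} \to \tn{Bun}^{e}_{G,\mathbb{A}}$: splittings over $O_{v}$ for $v \notin \Sigma$, and localization maps for $v \in \Sigma$. This matches how the paper fixes its localization maps once and for all.
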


\begin{proof}
As in the proof of \cite[Lemma 2.7]{DLH26}, it suffices to prove the result with $\tn{Spec}(F)$ replaced by an affine open $U' \subseteq U$. The argument proceeds by constructing an intermediate cartesian square. Namely, for $A_{v} = F_{v}$ or $O_{v}$ and a fixed $U = C \setminus |\Sigma|$, define the functor $\widetilde{\tn{Bun}}^{e,(\Sigma)}_{G,A_{v}}$ from affinoid perfectoid spaces over $\mathbb{F}_{q}$ to groupoids by $S \mapsto$
\begin{equation*}
\{(\mathscr{T}, \phi) | \mathscr{T} \in Z^{1}((\tn{Spa}(A_{v}) \times S) \times_{\tn{Spa}(A_{v})} \tn{Kal}_{F,\Sigma}^{\tn{ad}}, G_{S})_{\tn{$\Kal$-basic}}, \hspace{.2cm} \phi \colon \mathscr{T} \xrightarrow{\sim} (\tn{Frob}_{S} \times \tn{id})^{*}\mathscr{T}\},
\end{equation*}
and the functor $\widetilde{\tn{Bun}}^{e,(\Sigma)}_{G,\mathbb{A}}(S) :=  \prod_{v \notin \Sigma} \widetilde{\tn{Bun}}^{e,(\Sigma)}_{G,O_{v}}(S) \times \prod_{v \in \Sigma} \widetilde{\tn{Bun}}^{e,(\Sigma)}_{G,F_{v}}(S)$. The proof of \cite[Lemma 2.7]{DLH26} shows that we have a Cartesian diagram
\[
\begin{tikzcd}
\tn{Bun}^{e}_{G,U} \arrow{r} \arrow{d} &  \prod_{v \notin \Sigma} \widetilde{\tn{Bun}}^{e,(\Sigma)}_{G,O_{v}} \times \prod_{v \in \Sigma} \widetilde{\tn{Bun}}^{e,(\Sigma)}_{G,F_{v}}  \arrow{d} \\
\tn{Bun}^{e}_{G,U'} \arrow{r} & \prod_{v \notin \Sigma'} \widetilde{\tn{Bun}}^{e,(\Sigma')}_{G,O_{v}} \times \prod_{v \in \Sigma'} \widetilde{\tn{Bun}}^{e,(\Sigma')}_{G,F_{v}},
\end{tikzcd}
\]
and then we deduce the result from the fact that the diagram
\[
\begin{tikzcd}
\prod_{v \notin \Sigma} \widetilde{\tn{Bun}}^{e,(\Sigma)}_{G,O_{v}} \times \prod_{v \in \Sigma} \widetilde{\tn{Bun}}^{e,(\Sigma)}_{G,F_{v}}  \arrow{r} \arrow{d} & \prod_{v \notin \Sigma} \tn{Bun}_{G,O_{v}} \times \prod_{v \in \Sigma} \tn{Bun}^{e,(\Sigma)}_{G,F_{v}}  \arrow{d} \\
\prod_{v \notin \Sigma'} \widetilde{\tn{Bun}}^{e,(\Sigma')}_{G,O_{v}} \times \prod_{v \in \Sigma'} \widetilde{\tn{Bun}}^{e,(\Sigma')}_{G,F_{v}} \arrow{r} &  \prod_{v \notin \Sigma'} \tn{Bun}_{G,O_{v}} \times \prod_{v \in \Sigma'} \tn{Bun}^{e,(\Sigma')}_{G,F_{v}},
\end{tikzcd}
\]
is Cartesian, where each of the maps $\widetilde{\tn{Bun}}^{e,(\Sigma)}_{G,A_{v}} \to \tn{Bun}^{e,(\Sigma)}_{G,F_{v}}$ or $\tn{Bun}_{G,O_{v}}$ is obtained by pulling back along the morphism of gerbes $\tn{Kal}^{(\Sigma)}_{v} \xrightarrow{\tn{loc}_{v}} \tn{Kal}_{F,\Sigma}$.
\end{proof}

We can also define an analogue of the semi-stable locus:

\begin{Definition}
Set $\tn{Bun}_{G,F}^{e,\tn{ss}}$ to be the preimage of 
\begin{equation*}
\varinjlim \prod_{v \notin \Sigma} \tn{Bun}_{G,O_{v}} \times \prod_{v \in \Sigma} \tn{Bun}^{e,\tn{ss}}_{G,F_{v}}
\end{equation*}
in $\tn{Bun}_{F,G}^{e}$, where $\tn{Bun}^{e,\tn{ss}}_{G,F_{v}}$ is the local semistable locus as defined in \cite[\S 11]{Fargues}. This does not depend on the choice of localization maps, is a small v-stack over $\mathbb{F}_{q}$, and is an open substack of $\tn{Bun}_{G,F}^{e,\tn{ss}}$.
\end{Definition}

By combining \cite[Theorem 4.11]{DLH26} with Corollary \ref{prop:newtondecomp} we can deduce the following result, which is \cite[Theorem C.8]{DLH26}:

\begin{Proposition}\label{prop:ss}
For $b \in B_{e}(G)_{\tn{basic}}$, the substack $\tn{Bun}_{G,F}^{e,b} \subseteq \tn{Bun}_{G,F}^{e}$ is open and isomorphic to $\ast/\underline{G_{b}(F)}$. Moreover, $\tn{Bun}_{G,F}^{e,\tn{ss}} = \bigsqcup_{b \in B_{e}(G)_{\tn{basic}}} \tn{Bun}_{G,F}^{e,b}$.
\end{Proposition}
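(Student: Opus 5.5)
The plan is to reduce everything to the corresponding statements for ordinary $\tn{Bun}_{G',F}$ in \cite{DLH26}, using the decomposition of Corollary \ref{prop:newtondecomp} and the twisting isomorphism of Proposition \ref{prop:bundletwist}.

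\emph{Step 1: reduce to $b=1$ and identify the stratum.} Fix $b \in B_{e}(G)_{\tn{basic}}$ and choose $\Sigma$ large enough that $b$ is pulled back from $\tn{Kott}_{F,\Sigma} \times_{O_{F,\Sigma}} \tn{Kal}_{F,\Sigma}$. Proposition \ref{prop:bundletwist} gives an isomorphism $\tn{Bun}^{e}_{G,U} \xrightarrow{\sim} \tn{Bun}^{e}_{G_{b},U}$ sending $\tn{Bun}^{e,b}_{G,U}$ to $\tn{Bun}^{e,1}_{G_{b},U}$. These isomorphisms are compatible with shrinking $U$, since the twisting is by a class pulled back from the fixed $\Sigma$. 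Passing to the colimit therefore reduces the first claim to showing that $\tn{Bun}^{e,1}_{G_{b},F}$ is open and isomorphic to $\ast/\underline{G_{b}(F)}$.

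For this, note that the trivial point has $\lambda = 0$. Hence $\tn{Bun}^{e,1}_{G_{b},F}$ lies in the clopen piece $\tn{Bun}^{e,0_{P}}_{G_{b},F}$. By Corollary \ref{prop:newtondecomp}(1) applied with $b=1$, this piece is identified with the ordinary stack $\tn{Bun}_{G_{b},F}$ of \cite{DLH26}, and the identification matches the neutral points. The result of \cite[Theorem 4.11]{DLH26} (or the $U$-level statement, $\ast/\underline{G(O_{F,\Sigma})}$, followed by the colimit over $U$) then says that the trivial locus is open and equal to $\ast/\underline{G_{b}(F)}$. Since $\tn{Bun}^{e,0_P}$ is clopen, this locus is also open in $\tn{Bun}^{e}_{G_b,F}$.

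\emph{Step 2: the semistable locus.} Decompose $\tn{Bun}^{e}_{G,F} = \bigsqcup_{\lambda} \tn{Bun}^{e,\lambda}_{G,F}$ using Corollary \ref{prop:newtondecomp}(2). The semistable locus is defined by a local condition at all places, so it decomposes compatibly. For each $\lambda$, choose a basic lift $b_{\lambda}$, which exists by the exact sequence $B(G) \to B_{e}(G) \to \Hom_{F}(P,Z(G))$ together with Theorem \ref{TNgp}. The identification $\tn{Bun}^{e,\lambda}_{G,F} \simeq \tn{Bun}_{G_{b_\lambda},F}$ should carry $\tn{Bun}^{e,\textrm{ss}}$ to the semistable locus of \cite{DLH26}. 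The reason is that the local twisting isomorphisms of \cite{Fargues} identify $\tn{Bun}^{e,\textrm{ss}}_{G,F_v}$ on the $\lambda_v$-component with $\tn{Bun}^{\textrm{ss}}_{G_{b_\lambda},F_v}$, and the localization square from the preceding Proposition is compatible with twisting.

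\cite[Theorem 4.11]{DLH26} then writes the semistable locus of $\tn{Bun}_{G_{b_\lambda},F}$ as $\bigsqcup_{b' \in B(G_{b_\lambda})_{\textrm{basic}}} \tn{Bun}^{b'}$. Under the twisting bijection $f_{b_\lambda}$ of Lemma \ref{lem:qsplit}, the set $B(G_{b_\lambda})_{\textrm{basic}}$ corresponds exactly to the elements of $B_{e}(G)_{\tn{basic}}$ with $P$-restriction $\lambda$. Taking the union over $\lambda$ gives $\tn{Bun}^{e,\textrm{ss}}_{G,F} = \bigsqcup_{b} \tn{Bun}^{e,b}_{G,F}$.

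The main obstacle is the compatibility claim in Step 2: that the global twisting isomorphism is compatible with the local ones under localization, so that semistability is preserved. I would check it by working at finite level $E,n$, using Lemma \ref{keyunionprop}, where twisting is an explicit operation on torsors over the gerbe. At that level, compatibility reduces to the localization map $\tn{Kal}_v^{(\Sigma)} \to \tn{Kal}_{F,\Sigma}$ intertwining the bands, as used in the proof of the Cartesian square.
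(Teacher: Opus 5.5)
Your proposal is correct and follows the paper's route. The paper simply combines the Newton decomposition of Corollary \ref{prop:newtondecomp}, in which each $\lambda$-piece is identified with $\tn{Bun}_{G_{b},F}$ via the twisting of Proposition \ref{prop:bundletwist}, with \cite[Theorem 4.11]{DLH26}, and defers the details to \cite[Theorem C.8]{DLH26}. The compatibility of the global twist with the local twists under localization, which you flag as the main obstacle, is left implicit there; your plan to check it at finite level via Lemma \ref{keyunionprop} is the natural way to fill it in.
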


\section{A global multiplicity formula}\label{sec:mult}
We conclude this article by giving a conjectural formula for the multiplicity of an automorphic representation of $G(\A_{F})$ in the discrete automorphic spectrum using the (local and global) set $B_{e}(G)_{\tn{basic}}$. As before, $G$ is a connected reductive group over a global function field $F$.

\subsection{The $B_{e}(G)_{\text{basic}}$-local Langlands correspondence} \hfill\\
One first needs a version of the refined local correspondence in which the underlying inner forms are parametrized by the cohomology set $B_{e}(G_{F_{v}})$ for each place $v$. Recall from \cite{Fargues} that the set $B_{e}(G_{F_{v}})$ is defined as those elements of $Z_{\tn{\'{e}t}}^{1}(\tn{Kott}^{1/\infty}_{v}, G)$ whose restriction to $u_{v}$ factors through $Z(G)$. This parametrization is inspired by an earlier version of \cite{Fargues}. We set $\Gamma_{v} := \Gamma_{F_{v}^{s}/F_{v}}$.

Let $\phi_{v} \colon W_{F_{v}} \times \mathrm{SL}_{2}(\mathbb{C}) \to \prescript{L}{}G_{v}$ be an $L$-parameter. Set $S_{\phi_{v}} = Z_{\widehat{G}}(\phi_{v})$, $S_{\phi_{v}}^{+}$ its preimage in $\widehat{\bar{G}}$, where
\begin{equation*}
\widehat{\bar{G}} := \widehat{G}_{\tn{sc}} \times \varprojlim_{z \mapsto z^{n},n \in \mathbb{N}} Z(\widehat{G})^{\circ}.
\end{equation*}
The group that parametrizes the fibers of the refined $B_{e}(G_{F_{v}})_{\tn{basic}}$-local correspondence will be
\begin{equation*}
S_{\phi_{v}}^{+,\natural} := \frac{S_{\phi_{v}}^{+}}{(S_{\phi_{v}}^{+} \cap \widehat{G}_{\text{sc}})^{\circ}}.
\end{equation*}

The version of this refined local Langlands correspondence is as follows. First, we need to define the notion of a representation of a $B_{e}(G_{F_{v}})_{\tn{basic}}$-enhanced inner form of $G$, following \cite[\S 7.1]{Dil23a}. A $B_{e}(G_{F_{v}})_{\tn{basic}}$-enhanced inner form of $G$ is a tuple $(G', \psi,\mathscr{T},h)$ consisting of an inner form $(G',\psi)$ of $G$, a basic $G$-torsor $\mathscr{T}$ on $\tn{Kott}_{v}^{1/\infty}$, and an isomorphism $h$ from the $G_{\tn{ad}}$-torsor $\bar{\mathscr{T}} := \mathscr{T} \times^{G} G_{\tn{ad}}$ to the $G_{\tn{ad}}$ torsor given by the fiber over $\psi \in (\underline{\tn{Isom}}(G,G')/G_{\tn{ad}})(F)$ in $\underline{\tn{Isom}}(G,G')$. We say that two such enhanced inner forms are isomorphic if there is an isomorphism between the two $G$-torsors on $\tn{Kott}_{v}^{1/\infty}$---such an isomorphism induces an $F$-rational isomorphism between the underlying inner forms. A representation of $(G', \psi,z,h)$ is a tuple $(G', \psi,z,h, \pi)$ where $\pi$ is a representation of $G'(F)$, and two representations are isomorphic if the underlying inner twists are isomorphic and the corresponding $F$-rational isomorphism of groups identifies the two underlying representations.

\begin{Conjecture}\label{LLCconj1}
Let $\phi_{v}$ be an $L$-parameter for $G^{*}$ a connected, reductive quasi-split group with a choice of Whittaker datum $\mathfrak{w}_{v}$. Then, if we denote by $\Pi_{\phi_{v}}^{\tn{Kott}_{v}^{1/\infty}}$ the set of all isomorphism classes of representations of $B_{e}(G^{*}_{F_{v}})_{\tn{basic}}$-enhanced inner forms of $G^{*}$ whose image under the local Langlands correspondence is $\phi_{v}$, there is a commutative diagram, where both horizontal maps are bijections
\[
\begin{tikzcd}
    \Pi_{\phi_{v}}^{\tn{Kott}_{v}^{1/\infty}} \arrow{d} \arrow["\iota_{\phi_{v},\mathfrak{w}_{v}}"]{r} & \tn{Irr}(S_{\phi_{v}}^{+,\natural}) \arrow{d} \\
    B_{e}(G^{*}_{F_{v}})_{\tn{basic}} \arrow{r} & X^{*}(Z(\widehat{G^{*}})^{\Gamma_{v},+}).
    \end{tikzcd}
\]
The bottom map is the composition of the map from the local analogue \cite[Th\'{e}or\`{e}me 9.12]{Fargues} of Theorem \ref{TNgp}, with the isomorphism $\pi_{1}(G^{*})^{e}_{\Gamma_{v}} \xrightarrow{\sim} X^{*}(Z(\widehat{G^{*}})^{\Gamma_{v},+}$ from \cite[Proposition 12.1]{Fargues}.

Moreover, the bijection $\iota_{\phi_{v},\mathfrak{w}_{v}}$ recovers the bijections
\begin{equation*}
\Pi_{\phi_{v}}^{\tn{Kal}_{v}} \to \tn{Irr}(\pi_{0}(S_{\phi_{v}}^{+})), \hspace{1mm} \Pi_{\phi_{v}}^{\tn{Kott}_{v}} \to \tn{Irr}(S_{\phi_{v}}^{\natural})
\end{equation*}
(recall that $S_{\phi_{v}}^{\natural} := S_{\phi_{v}}/(S_{\phi_{v}} \cap \widehat{G}_{\tn{der}})^{\circ}$) from \cite[Conjecture 6.13.(3)]{Taibi22} and \cite[Conjecture 6.13.(2)]{Taibi22}, respectively, via the maps on irreducible representations induced by the surjective homomorphisms
\begin{equation*}
S_{\phi_{v}}^{+,\natural} \to \pi_{0}(S_{\phi_{v}}^{+}), \hspace{1mm} S_{\phi_{v}}^{+,\natural} \to S_{\phi_{v}}^{\natural},
\end{equation*} 
respectively.

\end{Conjecture}
One can formulate a version of the endoscopic character identities (as in \cite[\S 7.3]{Dil23a}) but we do not do so here.

\subsection{The global Langlands conjectures} \hfill\\
We conclude by defining a multiplicity formula and making some conjectures in the global setting. Let $G^{*}$ be a quasi-split connected reductive group over $F$ with Whittaker datum $\mathfrak{w}$, and fix a discrete $L$-homomorphism $\phi \colon L_{F} \to \prescript{L}{}G^{*}$ with bounded image, where $L_{F}$ is the hypothetical Langlands group of $F$ (as in \cite{Art02}). For each $v \in V_{F}$ we obtain a local parameter $\phi_{v} \colon L_{F_{v}} \to \prescript{L}{}G^{*}$. Conjecture \ref{LLCconj1} gives an $L$-packet $\Pi_{\phi_{v}}$ of tempered representations of $B_{e}(G_{F_{v}})_{\tn{basic}}$-inner twists of $G^{*}_{F_{v}}$ together with a parametrization of this packet in terms of $\phi_{v}$. 

Let $(G,\psi)$ be an inner form of $G^{*}$, which by the same argument as \cite[Lemma 4.1]{Dillery23b} we may enrich to a $B_{e}(G^{*})_{\tn{basic}}$-enhanced inner form $(G,\psi, \mathscr{T},h)$, where $\mathscr{T} \in Z^{1}(\tn{Kott}_{F}^{1/\infty}, G^{*})_{\tn{basic}}$ is the image of $\mathscr{T}_{\tn{sc}} \in Z^{1}(\tn{Kott}_{F}^{1/\infty}, G^{*}_{\tn{sc}})_{\tn{basic}}$. We observe that the canonical inclusion map 
\begin{equation}\label{sceq}
Z^{1}(\tn{Kal}_{F}, G^{*}_{\tn{sc}})_{\tn{basic}}  \hookrightarrow Z^{1}(\tn{Kott}_{F}^{1/\infty}, G^{*}_{\tn{sc}})_{\tn{basic}}
\end{equation}
is an equality, because, viewing $\tn{Kott}_{F}^{1/\infty}$ as a $\mathbb{D}$-gerbe over $\tn{Kal}_{F}$, we have $\tn{Hom}_{\tn{Kal}_{F}}(\mathbb{D}, Z(G^{*}_{\tn{sc}})) = \tn{Hom}_{F}(\mathbb{D}, Z(G^{*}_{\tn{sc}})) = 0$ since $Z(G^{*}_{\tn{sc}})^{\circ}$ is trivial and $\mathbb{D}$ is connected.

Pulling back along the localization map
\begin{equation*}
\tn{Kott}_{v}^{1/\infty} \to \tn{Kott}_{F}^{1/\infty}
\end{equation*}
defined as the product of the localization maps for the two gerbes $\tn{Kal}_{F}$ and $\tn{Kott}_{F}$ gives a family of $B_{e}(G^{*}_{F_{v}})_{\tn{basic}}$-enhanced inner forms $\{(G_{F_{v}},\psi_{v}, \mathscr{T}_{v},h_{v})\}_{v \in V_{F}}$. 

One has an exact sequence of $L_{F}$-modules (acting via $\tn{ad} \circ \phi$) 
\begin{equation*}
1 \to Z(\widehat{G^{*}}) \to \widehat{G^{*}} \to  (\widehat{G^{*}})_{\tn{ad}} \to 1;
\end{equation*} 
Kottwitz \cite[\S 10.2]{Kott84} defines 
\begin{equation*}
S^{\tn{ad}}_{\phi} := \text{ker}[Z_{(\widehat{G^{*}})_{\tn{ad}}}(\phi) \to H^{1}(L_{F}, Z(\widehat{G^{*}})) \to \prod_{v \in V_{F}} H^{1}(L_{F_{v}}, Z(\widehat{G^{*}}))]
\end{equation*}
and sets $\mathcal{S}_{\phi} := \pi_{0}(S^{\tn{ad}}_{\phi})$. The goal is to define an $L$-packet $\Pi_{\phi}$ which is a global analogue of the local packets appearing in Conjecture \ref{LLCconj1} along with pairing 
\begin{equation*}
\mathcal{S}_{\phi} \times \Pi_{\phi} \to \mathbb{C}
\end{equation*}
such that, for a fixed discrete automorphic representation $\pi$ of $G(\mathbb{A}_{F})$, we can combine this pairing with Conjecture \ref{LLCconj1} to write down a conjectural formula for the multiplicity of $\pi$ in the discrete automorphic spectrum of $G$ (we will write the formula explicitly below). 

We define the global $L$-packet 
\begin{equation}\label{eq:globalpi}
\Pi_{\phi} := \{ \pi = \otimes_{v}' \pi_{v} \mid (G_{F_{v}}, \psi, \mathscr{T}_{v}, \bar{h}_{v}, \pi_{v}) \in \Pi_{\phi_{v}}, \iota_{\phi_{v},\mathfrak{w}_{v}}((G_{F_{v}}, \psi, (\mathscr{T}_{v}, \bar{h}_{v}), \pi_{v})) = 1\text{ for a.a. } v\},
\end{equation}
and have the following result thanks to combining \cite[Lemma 5.8]{Dillery23b} with the equality of the inclusion \eqref{sceq} (and Conjecture \ref{LLCconj1}):

\begin{Lemma}
The set $\Pi_{\phi}$ consists of irreducible, admissible, and tempered representations of $G(\mathbb{A}_{F})$.
\end{Lemma}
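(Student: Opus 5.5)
The plan is to reduce to \cite[Lemma 5.8]{Dillery23b}, which proves the analogous statement for the packet built from $\tn{Kal}_{F}$-enhanced inner forms. The essential input is that the local data $(\mathscr{T}_{v}, h_{v})$ at almost all places behave as in the $\tn{Kal}_{F}$ setting, so that almost all local components are unramified and the restricted tensor product makes sense.

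\textbf{Step 1.} Since $\mathscr{T}$ is the image of $\mathscr{T}_{\tn{sc}} \in Z^{1}(\tn{Kott}_{F}^{1/\infty}, G^{*}_{\tn{sc}})_{\tn{basic}}$, the equality \eqref{sceq} shows that $\mathscr{T}_{\tn{sc}}$ is pulled back from a $\tn{Kal}_{F}$-torsor. The localization $\tn{Kott}_{v}^{1/\infty} \to \tn{Kott}_{F}^{1/\infty}$ is the product of the localizations for $\tn{Kott}_{F}$ and $\tn{Kal}_{F}$. Hence each $\mathscr{T}_{v}$ is the pullback, along $\tn{Kott}_{v}^{1/\infty} \to \tn{Kal}_{v}$, of the local rigid torsor $\tn{loc}_{v}(\mathscr{T}_{\tn{sc}})$ pushed to $G^{*}$. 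It follows that the family $\{(G_{F_{v}},\psi_{v},\mathscr{T}_{v},h_{v})\}_{v}$ is the image of the family of $\tn{Kal}_{v}$-rigid inner twists appearing in \cite{Dillery23b}.

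\textbf{Step 2.} By the compatibility clause of Conjecture \ref{LLCconj1}, $\iota_{\phi_{v},\mathfrak{w}_{v}}$ restricted to these enhanced inner forms recovers the $\tn{Kal}_{v}$-bijection $\Pi^{\tn{Kal}_{v}}_{\phi_{v}} \to \tn{Irr}(\pi_{0}(S^{+}_{\phi_{v}}))$ via $S^{+,\natural}_{\phi_{v}} \to \pi_{0}(S^{+}_{\phi_{v}})$. It also sends the trivial character to the trivial character. Therefore the condition ``$\iota_{\phi_{v},\mathfrak{w}_{v}}(\dots)=1$ for almost all $v$'' in \eqref{eq:globalpi} is exactly the condition defining the global packet in \cite{Dillery23b}. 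In particular, $\Pi_{\phi}$ coincides with the packet of loc. cit.

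\textbf{Step 3.} Apply \cite[Lemma 5.8]{Dillery23b}. For almost all $v$, the group $G_{F_{v}}$ is unramified and the torsor $\tn{loc}_{v}(\mathscr{T}_{\tn{sc}})$ is trivial; this is the analogue of Proposition \ref{localtoglobal2}(2). The representation with trivial character is then unramified with respect to a hyperspecial subgroup, which makes $\otimes'_{v}\pi_{v}$ a well-defined irreducible admissible representation. Temperedness is inherited from the boundedness of $\phi$ locally.

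The main obstacle is Step 1. One has to check that the triviality of almost all localizations, and the identification of $h_{v}$, survive the passage from $\tn{Kal}_{F}$ to $\tn{Kott}_{F}^{1/\infty}$, that is, that the localization maps are compatible with the inclusion \eqref{sceq}. I would verify this directly from the product definition of the localization map.
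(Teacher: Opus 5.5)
Your proposal is correct and is the paper's own argument written out in detail. The paper justifies the lemma in one line: use the equality \eqref{sceq} to see that $\mathscr{T}_{\tn{sc}}$, and hence each $\mathscr{T}_{v}$, comes from a $\tn{Kal}_{F}$-rigid inner twist; use the $\tn{Kal}_{v}$-compatibility clause of Conjecture \ref{LLCconj1} to match $\Pi_{\phi}$ with the packet of \cite{Dillery23b}; then apply \cite[Lemma 5.8]{Dillery23b}. One precision for Step 3: the almost-everywhere unramified-localization input is really the $\tn{Kal}_{F}$ result \cite[Proposition 4.14]{Dillery23b}, which \cite[Lemma 5.8]{Dillery23b} already uses, rather than an analogue of Proposition \ref{localtoglobal2}(2), which only concerns classes in $B_{e}(T)$ for tori.
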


We conjecture a strengthening of the above Lemma, which is not needed for our purposes:

\begin{Conjecture}
Let $\mathscr{T} \in Z^{1}(\tn{Kott}_{F}^{1/\infty}, G)_{\tn{basic}}$ be an enrichment of an inner form $(G, \psi)$ of $G^{*}$, with associated family $\{(G_{F_{v}},\psi_{v}, \mathscr{T}_{v},h_{v})\}_{v \in V_{F}}$ of $B_{e}(G^{*}_{F_{v}})_{\tn{basic}}$-enhanced inner forms. Then if we define $\Pi_{\phi}$ as in \eqref{eq:globalpi}, it always consists of irreducible, admissible, and tempered representations of $G(\mathbb{A}_{F})$.
\end{Conjecture}

\begin{Remark}
As in the proof of \cite[Lemma 5.8]{Dillery23b}, the only non-immediate statement is that each $\pi \in \Pi_{\phi}$ is unramified at almost all places. This would follow if we knew that every $\mathscr{T} \in Z^{1}(\tn{Kott}^{1/\infty}_{F}, G)_{\tn{basic}}$ has localization $\mathscr{T}_{v}$ which descends to an element of $Z^{1}_{\tn{\'{e}t}}(O_{F_{v}}, \mathscr{G}_{F_{v}})$ for almost all places of $v$, where $\mathscr{G}$ is an integral model of $G$ defined away from a finite set of places. This unramified localization result is known for both $\tn{Kott}_{F}$ by \cite[Lemma 14.2]{Kot14} and $\tn{Kal}_{F}$ by \cite[Proposition 4.14]{Dillery23b} and uses the explicit construction of the canonical classes defining both gerbes. In particular, this holds when $\mathscr{T}$ is obtained by pulling back a torsor on $\tn{Kal}_{F}$ or $\tn{Kott}_{F}$ via the projection maps.
\end{Remark}

We now define the pairing. For a given $s_{\tn{ad}} \in S^{\tn{ad}}_{\phi}$, lift it to an element $s_{\tn{sc}} \in S^{\tn{sc}}_{\phi}$, the preimage of $S_{\phi}^{\tn{ad}}$ in $(\widehat{G^{*}})_{\tn{sc}}$. We can then find, for each $v \in V_{F}$, elements $\dot{y}_{v}' \in Z((\widehat{G^{*}})_{\tn{sc}})$ and $\dot{y}_{v} \in Z(\widehat{G^{*}})^{\circ}$ such that $\dot{s}_{v} := s_\tn{sc} \cdot \dot{y}_{v}' \cdot y_{v}''' \in S_{\phi_{v}}^{+}$. Denote by $\mathfrak{w}_{v}$ the localization of the global datum $\mathfrak{w}$.

The following result is immediate from \cite[Proposition 5.9]{Dillery23b}, the compatibility of Tate-Nakayama duality from Corollary \ref{TNcompat} (which was proved for tori, but immediately gives compatibility for general reductive $G$ by these isomorphisms' functorial properties), and the $\tn{Kal}_{v}$-compatibility part of Conjecture \ref{LLCconj1}:
\begin{Proposition}
\begin{enumerate}
\item{For a fixed $\pi \in \Pi_{\phi}$ and $s_{\tn{ad}} \in S_{\phi}^{\tn{ad}}$, the value 
\begin{equation*}
\langle \mathscr{T}_{\tn{sc}}, \dot{y}_{v}' \rangle^{-1} \cdot \tn{tr}[\iota_{\phi_{v}, \mathfrak{w}_{v}}((G_{F_{v}}, \psi, \mathscr{T}_{v}, \bar{h}_{v}, \pi_{v}))](\dot{s}_{v})
\end{equation*}
is $1$ for all but finitely many $v \in V_{F}$.}
\item{The function 
\begin{equation*}
s_{\tn{ad}} \mapsto \langle s_{\tn{ad}}, \pi \rangle := \prod_{v \in V_{F}} \langle \mathscr{T}_{\tn{sc}}, \dot{y}_{v}' \rangle^{-1} \cdot \tn{tr}[\iota_{\phi_{v}, \mathfrak{w}_{v}}((G_{F_{v}}, \psi, \mathscr{T}_{v}, \bar{h}_{v}, \pi_{v}))](\dot{s}_{v})
\end{equation*}
descends to a character of $\mathcal{S}_{\phi}$ and is the character of a finite-dimensional representation which does not depend on the choice of $\mathscr{T}_{\tn{sc}}$, the choice of $s_{\tn{sc}}$, $\dot{y}_{v}'$, $y_{v}''$, nor the Whittaker datum $\mathfrak{w}$.}
\end{enumerate}
\end{Proposition}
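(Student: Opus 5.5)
The plan is to reduce the statement to \cite[Proposition 5.9]{Dillery23b}, i.e.\ to the case where all enhanced inner forms come from $\tn{Kal}_{F}$ and the local correspondences are the $\tn{Kal}_{v}$-refined ones. First I use the equality \eqref{sceq}. It says that $\mathscr{T}_{\tn{sc}} \in Z^{1}(\tn{Kott}_{F}^{1/\infty}, G^{*}_{\tn{sc}})_{\tn{basic}}$ is pulled back from a cocycle on $\tn{Kal}_{F}$. Hence $\mathscr{T}$, being its image in $G^{*}$, is pulled back from $Z^{1}(\tn{Kal}_{F}, G^{*})_{\tn{basic}}$ along the projection $\tn{Kott}_{F}^{1/\infty} \to \tn{Kal}_{F}$. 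Localizing commutes with this projection, because the localization map $\tn{Kott}_{v}^{1/\infty} \to \tn{Kott}_{F}^{1/\infty}$ is defined as a product of the two localization maps. So each $\mathscr{T}_{v}$ is the pullback of the $\tn{Kal}_{v}$-torsor $\tn{loc}_{v}(\mathscr{T}_{\tn{Kal}})$.

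Next I compare the local terms. By the compatibility clause of Conjecture \ref{LLCconj1}, for such pulled-back enhanced inner forms the representation $\iota_{\phi_{v},\mathfrak{w}_{v}}(\dots)$ of $S_{\phi_{v}}^{+,\natural}$ is inflated from the $\tn{Kal}_{v}$-parametrization along $S_{\phi_{v}}^{+,\natural} \to \pi_{0}(S_{\phi_{v}}^{+})$. Therefore its trace at $\dot{s}_{v}$ equals the trace of the $\tn{Kal}_{v}$-representation at the image of $\dot{s}_{v}$. I also need the pairing $\langle \mathscr{T}_{\tn{sc}}, \dot{y}'_{v}\rangle$ to agree with the one in \cite{Dillery23b}. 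It is defined via Tate--Nakayama duality for $B_{e}(G^{*}_{\tn{sc}})_{\tn{basic}}$, and Corollary \ref{TNcompat}, extended functorially from tori to reductive groups through Theorem \ref{TNgp} and the maximal torus lifting of Lemma \ref{toruslift}, identifies it with the duality for $H^{1}(\tn{Kal}_{F}, G^{*}_{\tn{sc}})$. With these identifications every factor of the product coincides with the corresponding factor in \cite[Proposition 5.9]{Dillery23b}.

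Both parts then follow from that proposition. Part (1) is the statement that almost all factors equal $1$; it uses that $\mathscr{T}$ comes from $\tn{Kal}_{F}$, which gives unramified localization by \cite[Proposition 4.14]{Dillery23b}. Part (2) is the statement that the product descends to $\mathcal{S}_{\phi}$, is a character of a finite-dimensional representation, and is independent of $\mathscr{T}_{\tn{sc}}$, $s_{\tn{sc}}$, $\dot{y}'_{v}$, $\dot{y}_{v}$ and $\mathfrak{w}$.

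I expect the main obstacle to be making the Tate--Nakayama compatibility precise. The Kottwitz component of $\dot{y}_{v}$ lies in $Z(\widehat{G^{*}})^{\circ}$, and one must check that the $\mathbb{D}$-part of the pairing contributes trivially. Because $G^{*}_{\tn{sc}}$ has finite center, $\mathscr{T}_{\tn{sc}}$ has trivial Newton part, so I expect its class to lie in the $(X_{*}(T_{\tn{sc}})_{\mathbb{Q}})[V_{F^{s}}, \dot{V}]_{0,+,\tn{tor}}$ part identified by Corollary \ref{TNcompat}. The independence of $\mathscr{T}_{\tn{sc}}$ then reduces to the $\tn{Kal}_{F}$ argument.
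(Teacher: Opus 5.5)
Your proposal is correct and takes essentially the paper's route. The paper deduces the statement directly from \cite[Proposition 5.9]{Dillery23b}, combining the equality \eqref{sceq} (so that $\mathscr{T}_{\textnormal{sc}}$, hence $\mathscr{T}$, comes from $\textnormal{Kal}_{F}$ and each $\mathscr{T}_{v}$ from $\textnormal{Kal}_{v}$), the Tate--Nakayama compatibility of Corollary \ref{TNcompat} extended functorially to reductive groups, and the $\textnormal{Kal}_{v}$-compatibility clause of Conjecture \ref{LLCconj1}; your write-up spells out steps the paper leaves implicit, such as localization commuting with the projection to $\textnormal{Kal}_{F}$ and the use of \cite[Proposition 4.14]{Dillery23b} for the almost-everywhere triviality in part (1).
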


The multiplicity conjecture is then:

\begin{Conjecture}\label{multform}
For $\pi$ an irreducible tempered discrete automorphic representation of $G(\A_{F})$, its multiplicity in the discrete automorphic spectrum of $G$ is given by the integer
\begin{equation*}
\sum_{\phi} |\mathcal{S}_{\phi}|^{-1} \sum_{s_{\tn{ad}} \in \mathcal{S}_{\phi}} \langle s_{\tn{ad}}, \pi \rangle,
\end{equation*}
where the outer sum runs over all discrete parameters $\phi$ with bounded image such that $\pi \in \Pi_{\phi}$, where $\Pi_{\phi}$ is as defined above.
\end{Conjecture}

We observe that if one assumes Conjecture \ref{LLCconj1}, including the $\tn{Kal}_{v}$-refined local Langlands compatibility part (one must assume the first part of this conjecture in any case to define the pairing $\langle - , - \rangle$), then Conjecture \ref{multform} is evidently equivalent to \cite[Conjecture 1.2]{Dillery23b}, its analogue using the gerbe $\tn{Kal}_{F}$.

\bibliography{GlobalKF.bib}

@unpublished{Fargues,
  TITLE = {{Sur la gerbe de Kaletha, la courbe et l'ensemble de Kottwitz}},
  AUTHOR = {Fargues, Laurent},
  URL = {https://hal.science/hal-03608672},
  NOTE = {https://hal.science/hal-04234891/document},
  YEAR = {2022},
  MONTH = Mar,
  HAL_ID = {hal-03608672},
  HAL_VERSION = {v1},
}

@misc{stacks-project,
  author       = {The {Stacks project authors}},
  title        = {The {S}tacks project},
  howpublished = {\url{https://stacks.math.columbia.edu}},
  year         = {2025},
}

@misc{DK25,
  author       = {Dillery, Peter and Kedlaya, Kiran},
  title        = {Vector bundles on cones over a {F}argues–{F}ontaine curve},
  howpublished = {\url{https://kskedlaya.org/papers/affine-cone.pdf}},
  year         = {2025},
}

@article {HP04,
    AUTHOR = {Hartl, Urs and Pink, Richard},
     TITLE = {Vector bundles with a {F}robenius structure on the punctured
              unit disc},
   JOURNAL = {Compos. Math.},
  FJOURNAL = {Compositio Mathematica},
    VOLUME = {140},
      YEAR = {2004},
    NUMBER = {3},
     PAGES = {689--716},
      ISSN = {0010-437X,1570-5846},
   MRCLASS = {13A35 (14G22 14H60)},
  MRNUMBER = {2041777},
MRREVIEWER = {Alessandra\ Bertapelle},
       DOI = {10.1112/S0010437X03000216},
       URL = {https://doi.org/10.1112/S0010437X03000216},
}

@article {Ric16,
    AUTHOR = {Richarz, Timo},
     TITLE = {Affine {G}rassmannians and geometric {S}atake equivalences},
   JOURNAL = {Int. Math. Res. Not. IMRN},
  FJOURNAL = {International Mathematics Research Notices. IMRN},
      YEAR = {2016},
    NUMBER = {12},
     PAGES = {3717--3767},
      ISSN = {1073-7928,1687-0247},
   MRCLASS = {14L17 (14F05 14M15 20G25 22E67)},
  MRNUMBER = {3544618},
MRREVIEWER = {Guy\ Rousseau},
       DOI = {10.1093/imrn/rnv226},
       URL = {https://doi.org/10.1093/imrn/rnv226},
}

@article {Kott84,
    AUTHOR = {Kottwitz, Robert E.},
     TITLE = {Stable trace formula: cuspidal tempered terms},
   JOURNAL = {Duke Math. J.},
  FJOURNAL = {Duke Mathematical Journal},
    VOLUME = {51},
      YEAR = {1984},
    NUMBER = {3},
     PAGES = {611--650},
      ISSN = {0012-7094,1547-7398},
   MRCLASS = {11R39 (11F70 11F72 22E55)},
  MRNUMBER = {757954},
MRREVIEWER = {Jean-Pierre\ Labesse},
       DOI = {10.1215/S0012-7094-84-05129-9},
       URL = {https://doi.org/10.1215/S0012-7094-84-05129-9},
}

@article {Dillery23b,
    AUTHOR = {Dillery, Peter},
     TITLE = {R{igid} {inner} {forms} {over} {global} {function} {fields}},
   JOURNAL = {J. Inst. Math. Jussieu},
  FJOURNAL = {Journal of the Institute of Mathematics of Jussieu. JIMJ.
              Journal de l'Institut de Math\'ematiques de Jussieu},
    VOLUME = {24},
      YEAR = {2025},
    NUMBER = {6},
     PAGES = {2181--2256},
      ISSN = {1474-7480,1475-3030},
   MRCLASS = {11F70 (11E72 11R58 11S37 14F20)},
  MRNUMBER = {4976572},
       DOI = {10.1017/S1474748025100972},
       URL = {https://doi.org/10.1017/S1474748025100972},
}

@book {Gir,
    AUTHOR = {Giraud, Jean},
     TITLE = {Cohomologie non ab\'elienne},
    SERIES = {Die Grundlehren der mathematischen Wissenschaften},
    VOLUME = {Band 179},
 PUBLISHER = {Springer-Verlag, Berlin-New York},
      YEAR = {1971},
     PAGES = {ix+467},
   MRCLASS = {14F20 (14L99 18D30 18F20 55B99 55F65)},
  MRNUMBER = {344253},
MRREVIEWER = {R.\ T.\ Hoobler},
}

@misc{Kot14,
  author       = {Kottwitz, Robert},
  title        = {{$B(G)$} for all local and global fields},
  year         = {2014},
  month        = Jan,
  note         = {arXiv:1401.5728 [math.RT]},
  url          = {https://arxiv.org/abs/1401.5728},
}

@misc{Taibi22,
  author       = {Ta{\"i}bi, Olivier},
  title        = {The local {L}anglands conjecture},
  year         = {2025},
  month        = Oct,
  note         = {arXiv:2510.00632 [math.NT]},
  url          = {https://arxiv.org/abs/2510.00632},
}

@article {Lafforgue18,
    AUTHOR = {Lafforgue, Vincent},
     TITLE = {Chtoucas pour les groupes r\'eductifs et param\'etrisation de
              {L}anglands globale},
   JOURNAL = {J. Amer. Math. Soc.},
  FJOURNAL = {Journal of the American Mathematical Society},
    VOLUME = {31},
      YEAR = {2018},
    NUMBER = {3},
     PAGES = {719--891},
      ISSN = {0894-0347,1088-6834},
   MRCLASS = {14G35 (11F70 14D24 14H60)},
  MRNUMBER = {3787407},
MRREVIEWER = {James\ H.\ Stankewicz},
       DOI = {10.1090/jams/897},
       URL = {https://doi.org/10.1090/jams/897},
}

@misc{GL18,
  author       = {Genestier, Alain and Lafforgue, Vincent},
  title        = {Chtoucas restreints pour les groupes r\'eductifs et
                  param\'etrisation de {L}anglands locale},
  year         = {2018},
  month        = Sep,
  note         = {arXiv:1709.00978 [math.AG]},
  url          = {https://arxiv.org/abs/1709.00978},
}

@article {Dil23a,
    AUTHOR = {Dillery, Peter},
     TITLE = {Rigid inner forms over local function fields},
   JOURNAL = {Adv. Math.},
  FJOURNAL = {Advances in Mathematics},
    VOLUME = {430},
      YEAR = {2023},
     PAGES = {Paper No. 109204, 100},
      ISSN = {0001-8708,1090-2082},
   MRCLASS = {11S37 (22E50)},
  MRNUMBER = {4617942},
MRREVIEWER = {Amiya\ Kumar\ Mondal},
       DOI = {10.1016/j.aim.2023.109204},
       URL = {https://doi.org/10.1016/j.aim.2023.109204},
}

@incollection {Lan97,
    AUTHOR = {Langlands, R. P.},
     TITLE = {Representations of abelian algebraic groups},
      NOTE = {Olga Taussky-Todd: in memoriam},
   JOURNAL = {Pacific J. Math.},
  FJOURNAL = {Pacific Journal of Mathematics},
      YEAR = {1997},
     PAGES = {231--250},
      ISSN = {0030-8730,1945-5844},
   MRCLASS = {11R39 (22E55)},
  MRNUMBER = {1610871},
MRREVIEWER = {Volker\ J.\ Heiermann},
       DOI = {10.2140/pjm.1997.181.231},
       URL = {https://doi.org/10.2140/pjm.1997.181.231},
}

@article {HKW,
    AUTHOR = {Hansen, David and Kaletha, Tasho and Weinstein, Jared},
     TITLE = {On the {K}ottwitz conjecture for local shtuka spaces},
   JOURNAL = {Forum Math. Pi},
  FJOURNAL = {Forum of Mathematics. Pi},
    VOLUME = {10},
      YEAR = {2022},
     PAGES = {Paper No. e13, 79},
      ISSN = {2050-5086},
   MRCLASS = {14G45 (11S37)},
  MRNUMBER = {4430954},
MRREVIEWER = {Kazuhiro\ Ito},
       DOI = {10.1017/fmp.2022.7},
       URL = {https://doi.org/10.1017/fmp.2022.7},
}

@incollection {Art02,
    AUTHOR = {Arthur, James},
     TITLE = {A note on the automorphic {L}anglands group},
      NOTE = {Dedicated to Robert V.\ Moody},
   JOURNAL = {Canad. Math. Bull.},
  FJOURNAL = {Canadian Mathematical Bulletin. Bulletin Canadien de
              Math\'ematiques},
    VOLUME = {45},
      YEAR = {2002},
    NUMBER = {4},
     PAGES = {466--482},
      ISSN = {0008-4395,1496-4287},
   MRCLASS = {11R39 (22E55)},
  MRNUMBER = {1941222},
MRREVIEWER = {Mahdi\ Asgari},
       DOI = {10.4153/CMB-2002-049-1},
       URL = {https://doi.org/10.4153/CMB-2002-049-1},
}

@article {Kal18,
    AUTHOR = {Kaletha, Tasho},
     TITLE = {Rigid inner forms vs isocrystals},
   JOURNAL = {J. Eur. Math. Soc. (JEMS)},
  FJOURNAL = {Journal of the European Mathematical Society (JEMS)},
    VOLUME = {20},
      YEAR = {2018},
    NUMBER = {1},
     PAGES = {61--101},
      ISSN = {1435-9855,1435-9863},
   MRCLASS = {11E72 (11S37 22E50)},
  MRNUMBER = {3743236},
MRREVIEWER = {Boris\ \`E.\ Kunyavski\u i},
       DOI = {10.4171/JEMS/759},
       URL = {https://doi.org/10.4171/JEMS/759},
}

@article {KT,
    AUTHOR = {Kaletha, Tasho and Ta\"ibi, Olivier},
     TITLE = {Global rigid inner forms vs isocrystals},
   JOURNAL = {Doc. Math.},
  FJOURNAL = {Documenta Mathematica},
    VOLUME = {28},
      YEAR = {2023},
    NUMBER = {4},
     PAGES = {765--826},
      ISSN = {1431-0635,1431-0643},
   MRCLASS = {11R34 (11F70)},
  MRNUMBER = {4705600},
MRREVIEWER = {\Dbar\cftil o{} Ng\d oc Di\cfudot ep},
       DOI = {10.4171/dm/916},
       URL = {https://doi.org/10.4171/dm/916},
}

@article {KS99,
    AUTHOR = {Kottwitz, Robert E. and Shelstad, Diana},
     TITLE = {Foundations of twisted endoscopy},
   JOURNAL = {Ast\'erisque},
  FJOURNAL = {Ast\'erisque},
    NUMBER = {255},
      YEAR = {1999},
     PAGES = {vi+190},
      ISSN = {0303-1179,2492-5926},
   MRCLASS = {22E55 (11F70 11R34 22-02 22E50)},
  MRNUMBER = {1687096},
MRREVIEWER = {Volker\ J.\ Heiermann},
}

@book {Iak22,
    AUTHOR = {Iakovenko, Sergei},
     TITLE = {Representations of the {K}ottwitz {G}erbes},
      NOTE = {Thesis (Ph.D.)--Rheinische Friedrich-Wilhelms-Universitaet
              Bonn (Germany)},
 PUBLISHER = {ProQuest LLC, Ann Arbor, MI},
      YEAR = {2021},
     PAGES = {50},
      ISBN = {979-8304-99947-2},
   MRCLASS = {99-05},
  MRNUMBER = {4890347},
       URL =
              {https://gateway.proquest.com/openurl?url_ver=Z39.88-2004&rft_val_fmt=info:ofi/fmt:kev:mtx:dissertation&res_dat=xri:pqm&rft_dat=xri:pqdiss:31909536},
}

@misc{FS21,
  author       = {Fargues, Laurent and Scholze, Peter},
  title        = {Geometrization of the local {L}anglands correspondence},
  year         = {2024},
  month        = Feb,
  note         = {arXiv:2102.13459 [math.RT]},
  url          = {https://arxiv.org/abs/2102.13459},
}

@article {HK21,
    AUTHOR = {Hamacher, Paul and Kim, Wansu},
     TITLE = {On {${G}$}-isoshtukas over function fields},
   JOURNAL = {Selecta Math. (N.S.)},
  FJOURNAL = {Selecta Mathematica. New Series},
    VOLUME = {27},
      YEAR = {2021},
    NUMBER = {4},
     PAGES = {Paper No. 75, 34},
      ISSN = {1022-1824,1420-9020},
   MRCLASS = {20G30 (11G09 11S25)},
  MRNUMBER = {4292785},
       DOI = {10.1007/s00029-021-00683-w},
       URL = {https://doi.org/10.1007/s00029-021-00683-w},
}

@book {NSW,
    AUTHOR = {Neukirch, J\"urgen and Schmidt, Alexander and Wingberg, Kay},
     TITLE = {Cohomology of number fields},
    SERIES = {Grundlehren der mathematischen Wissenschaften [Fundamental
              Principles of Mathematical Sciences]},
    VOLUME = {323},
   EDITION = {Second},
 PUBLISHER = {Springer-Verlag, Berlin},
      YEAR = {2008},
     PAGES = {xvi+825},
      ISBN = {978-3-540-37888-4},
   MRCLASS = {11R34 (11-02 11G45 11R23 11S20 11S25 11S31 12G05)},
  MRNUMBER = {2392026},
       DOI = {10.1007/978-3-540-37889-1},
       URL = {https://doi.org/10.1007/978-3-540-37889-1},
}

@article {Kaletha18,
    AUTHOR = {Kaletha, Tasho},
     TITLE = {Global rigid inner forms and multiplicities of discrete
              automorphic representations},
   JOURNAL = {Invent. Math.},
  FJOURNAL = {Inventiones Mathematicae},
    VOLUME = {213},
      YEAR = {2018},
    NUMBER = {1},
     PAGES = {271--369},
      ISSN = {0020-9910,1432-1297},
   MRCLASS = {11R34},
  MRNUMBER = {3815567},
MRREVIEWER = {Meng\ Fai\ Lim},
       DOI = {10.1007/s00222-018-0791-3},
       URL = {https://doi.org/10.1007/s00222-018-0791-3},
}

@unpublished{DLH26,
  author = {Li-Huerta, Siyan Daniel},
  title  = {Courbes et {F}ibr{\'e}s vectoriels en th{\'e}orie de {H}odge {$z$}-adique globale},
  year   = {2026},
  month  = Feb,
  note   = {with an Appendix by Peter Dillery. arXiv:2602.04978 [math.NT]},
}
\bibliographystyle{amsalpha}

\end{document}